\def\section{\@startsection{section}{1}{\z@}%
  {25pt}
  {15pt}
  {\normalfont\scshape\centering}}
\newcommand{\ho}[1]{\mathsf{Ho}(#1)}
\definecolor{corail}{rgb}{0.9882,0.4627,0.4157} 
\definecolor{viola}{RGB}{166,146,186}
\definecolor{mocha}{RGB}{164,120,100}
\definecolor{peachfuzz}{rgb}{0.8, 0.5451, 0.3961}
\definecolor{clouddancer}{RGB}{240, 238, 233}
\definecolor{bluefusion}{RGB}{83, 104, 124}
\renewcommand{\mathcal}{\EuScript}
\title{Point-set models for homotopy coherent coalgebras}
\author{ Dan Petersen, \quad Victor Roca i Lucio, \quad Sinan Yalin}
\address{Dan Petersen, Stockholm University, SE-106 91 Stockholm, Sweden}
\email{\href{mailto:dan.petersen@math.su.se}{dan.petersen@math.su.se}}
\address{Victor Roca i Lucio, Université Paris Cité and Sorbonne Université, CNRS, IMJ-PRG, F-75013 Paris, France}
\email{\href{mailto:rocalucio@imj-prg.fr}{rocalucio@imj-prg.fr}}
\address{Sinan Yalin, Université d'Angers, LAREMA, 49045 Angers, France}
\email{\href{mailto:sinan.yalin@univ-angers.fr}{sinan.yalin@univ-angers.fr}}
\date{\today}
\thanks{The first named author was supported by the National Science Foundation
under Grant No.~DMS-2424441, and a Wallenberg Scholar fellowship. The second named author was supported by the ANR project SHoCos ANR-22-CE40-0008. The third named author acknowledges the support of the ANR projects LieDG ANR-24-CE40-5367 and KAsH ANR-25-CE40-2861.}
\subjclass[2020]{18N70, 18M60, 18N40, 18N60, 55U15, 55U35, 16T15}
\newtheorem{theoremintro}{Theorem}
\renewcommand*{\thetheoremintro}{\Alph{theoremintro}}
\DeclareMathOperator*{\colim}{colim}
\newcommand{\quo}[1]{\text{"}\!#1\!\text{"}\,\,}
\begin{document}
\pagecolor{clouddancer}


    
\newcommand{\baseonecat}{\mathsf{Ch}(\kk)}


\newcommand{\oneop}{\mathcal{P}}
\newcommand{\onecoop}{\mathcal{C}}

    

\newcommand{\onepalg}{\mathcal{P}\text{-}\mathsf{alg}}
\newcommand{\onealg}[1]{#1\text{-}\mathsf{alg}}
\newcommand{\onecoalg}[1]{#1\text{-}\mathsf{coalg}}
\newcommand{\onepcoalg}{\mathcal{P}\text{-}\mathsf{coalg}}

    
\newcommand{\oneCalg}{\mathsf{dg}~\mathcal{C}\text{-}\mathsf{alg}}
\newcommand{\oneCcoalg}{\mathsf{dg}~\mathcal{C}\text{-}\mathsf{coalg}}
    

\newcommand{\basecat}{\textbf{D}(R)}


\newcommand{\iop}{\mathscr{P}}
\newcommand{\icoop}{\mathscr{C}}

    
\newcommand{\ipalg}{\textbf{Alg}_{\mathscr{P}}(\textbf{D}(R))}
\newcommand{\ipcoalg}{\textbf{Coalg}_{\mathscr{P}}(\textbf{D}(R))}

    
\newcommand{\iCalg}{\textbf{Alg}_{\mathscr{C}}(\textbf{D}(R))}
\newcommand{\iCcoalg}{\textbf{Coalg}_{\mathscr{C}}(\textbf{D}(R))}

\theoremstyle{plain}
\newtheorem{theorem}{Theorem}[section]
\newtheorem{theorem1}{Theorem}
\newtheorem{definition1}{Definition}

\newtheorem{lemma}[theorem]{Lemma}
\newtheorem{expectation}[theorem]{Expectation}
\newtheorem{proposition}[theorem]{Proposition}
\newtheorem{assumption}[theorem]{Assumption}
\newtheorem{corollary}[theorem]{Corollary}
\newtheorem{conjecture}[theorem]{Conjecture}

\renewcommand*{\thetheoremintro}{\Alph{theoremintro}}

\theoremstyle{definition}
\newtheorem{definition}[theorem]{Definition}

\theoremstyle{remark}
\newtheorem{notation}[theorem]{\sc Notation}
\newtheorem{remark}[theorem]{\sc Remark}
\newtheorem{example}[theorem]{\sc Example}

\newcommand{\draftnote}[1]{\marginpar{\raggedright\textsf{\hspace{0pt} \tiny #1}}}
\newcommand{\ac}{{\scriptstyle \text{\rm !`}}}
\newcommand{\Ch}{\categ{Ch}}
\newcommand{\dgmod}{\Ch}

\newcommand{\qi}{\xrightarrow{ \,\smash{\raisebox{-0.65ex}{\ensuremath{\scriptstyle\sim}}}\,}}
\newcommand{\lqi}{\xleftarrow{ \,\smash{\raisebox{-0.65ex}{\ensuremath{\scriptstyle\sim}}}\,}}

\newcommand{\Cate}{\categ{Cat}_{\categ E}}
\newcommand{\Catesmall}{\categ{Cat}_{\categ E, \mathrm{small}}}
\newcommand{\Operade}{\Operad_{\categ E}}
\newcommand{\Operadecprime}{\Operad_{\categ E}}
\newcommand{\Operadesmall}{\Operad_{\categ E, \mathrm{small}}}
\newcommand{\eII}{\mathcal{I}}
\newcommand{\ecateg}[1]{\mathcal{#1}}
\newcommand{\cmonlax}{\categ{CMon}_\lax}
\newcommand{\cmonoplax}{\categ{CMon}_\oplax}
\newcommand{\cmonstrong}{\categ{CMon}_\strong}
\newcommand{\cmonstrict}{\categ{CMon}_\strict}
\newcommand{\cat}{\mathrm{cat}}
\newcommand{\lax}{\mathrm{lax}}
\newcommand{\oplax}{\mathrm{oplax}}
\newcommand{\strong}{\mathrm{strong}}
\newcommand{\strict}{\mathrm{strict}}
\newcommand{\pl}{\mathrm{pl}}
\newcommand{\Comonads}{\mathsf{Comonads}}
\newcommand{\Monads}{\mathsf{Monads}}
\newcommand{\Cats}{\mathsf{Cats}}
\newcommand{\Functors}{\mathsf{Functors}}
\newcommand{\Mor}{\mathsf{Mor}}
\newcommand{\Nat}{\mathsf{Nat}}
\newcommand{\sk}{\mathrm{sk}}
\newcommand{\TwoFun}{\text{2-Fun}}
\newcommand{\Ob}{\mathrm{Ob}}
\newcommand{\tr}{\mathrm{tr}}
\newcommand{\catch}{\mathsf{Ch}}
\newcommand{\categ}[1]{\mathsf{#1}}
\newcommand{\set}[1]{\mathrm{#1}}
\newcommand{\catoperad}[1]{\mathsf{#1}}
\newcommand{\operad}[1]{\mathcal{#1}}
\newcommand{\algebra}[1]{\mathrm{#1}}
\newcommand{\coalgebra}[1]{\mathrm{#1}}
\newcommand{\cooperad}[1]{\mathcal{#1}}
\newcommand{\ocooperad}[1]{\overline{\mathcal{#1}}}
\newcommand{\catofmod}[1]{{#1}\mathrm{-}\mathsf{mod}}
\newcommand{\catofcog}[1]{#1\mathrm{-}\mathsf{cog}}
\newcommand{\catcog}[1]{\categ{dg}\text{-}{#1}\text{-}\categ{cog}}
\newcommand{\catalg}[1]{\categ{dg}\text{-}{#1}\text{-}\categ{alg}}
\newcommand{\catofcolcomod}[1]{\mathsf{Col}\mathrm{-}{#1}\mathrm{-}\mathsf{comod}}
\newcommand{\catofcoalgebra}[1]{{#1}\mathrm{-}\mathsf{cog}}
\newcommand{\catofalg}[1]{\operad{#1}\mathrm{-}\mathsf{alg}}
\newcommand{\catofalgebra}[1]{{#1}\mathrm{-}\mathsf{alg}}
\newcommand{\mbs}{\mathsf{S}}
\newcommand{\catocol}[1]{\mathsf{O}_{\set{#1}}}
\newcommand{\catoftrees}{\mathsf{Trees}}
\newcommand{\cattcol}[1]{\catoftrees_{\set{#1}}}
\newcommand{\catcorcol}[1]{\mathsf{Corol}_{\set{#1}}}
\newcommand{\Einfty}{\mathcal{E}_{\infty}}
\newcommand{\nuEinfty}{\mathcal{nuE}_{\infty}}
\newcommand{\Fun}[3]{\mathrm{Fun}^{#1}\left(#2,#3\right)}
\newcommand{\III}{\operad{I}}
\newcommand{\treeoperad}{\mathbb{T}}
\newcommand{\treemodule}{\mathbb{T}}
\newcommand{\core}{\mathrm{Core}}
\newcommand{\forget}{\mathrm{U}}
\newcommand{\treemonad}{\mathbb{O}}
\newcommand{\cogcomonad}[1]{\mathbb{L}^{#1}}
\newcommand{\cofreecog}[1]{\mathrm{L}^{#1}}

\newcommand{\barfunctor}[1]{\mathrm{B}_{#1}}
\newcommand{\baradjoint}[1]{\mathrm{B}^\dag_{#1}}
\newcommand{\cobarfunctor}[1]{\mathrm{C}_{#1}}
\newcommand{\cobaradjoint}[1]{\mathrm{C}^\dag_{#1}}
\newcommand{\Operad}{\mathsf{Operad}}
\newcommand{\coOperad}{\mathsf{coOperad}}

\newcommand{\Aut}[1]{\mathrm{Aut}(#1)}

\newcommand{\verte}[1]{\mathrm{vert}(#1)}
\newcommand{\edge}[1]{\mathrm{edge}(#1)}
\newcommand{\leaves}[1]{\mathrm{leaves}(#1)}
\newcommand{\inner}[1]{\mathrm{inner}(#1)}
\newcommand{\inp}[1]{\mathrm{input}(#1)}

\newcommand{\field}{\mathbb{K}}
\newcommand{\mbk}{\mathbb{K}}
\newcommand{\mbn}{\mathbb{N}}

\newcommand{\id}{\mathrm{Id}}
\newcommand{\ii}{\mathrm{id}}
\newcommand{\unit}{\mathds{1}}

\newcommand{\Lin}{Lin}

\newcommand{\BijC}{\mathsf{Bij}_{C}}

\newcommand{\kk}{\Bbbk}
\newcommand{\PP}{\mathcal{P}}
\newcommand{\C}{\mathcal{C}}
\newcommand{\Sy}{\mathbb{S}}
\newcommand{\Tree}{\mathsf{Tree}}
\newcommand{\treemod}{\mathbb{T}}
\newcommand{\Dend}{\Omega}
\newcommand{\aDend}{\Omega^{\mathsf{act}}}
\newcommand{\cDend}{\Omega^{\mathsf{core}}}
\newcommand{\cDendpart}{\cDend_{\mathsf{part}}}

\newcommand{\build}{\mathrm{Build}}
\newcommand{\col}{\mathrm{col}}

\newcommand{\HOM}{\mathrm{HOM}}
\newcommand{\Hom}[3]{\mathrm{hom}_{#1}\left(#2 , #3 \right)}
\newcommand{\ov}{\overline}
\newcommand{\otimeshadamard}{\otimes_{\mathbb{H}}}

\newcommand{\Aa}{\mathcal{A}}
\newcommand{\BB}{\mathcal{B}}
\newcommand{\CC}{\mathcal{C}}
\newcommand{\DD}{\mathcal{D}}
\newcommand{\EE}{\mathcal{E}}
\newcommand{\FF}{\mathcal{F}}
\newcommand{\II}{\mathbb{1}}
\newcommand{\RR}{\mathcal{R}}
\newcommand{\UU}{\mathcal{U}}
\newcommand{\VV}{\mathcal{V}}
\newcommand{\WW}{\mathcal{W}}
\newcommand{\AAA}{\mathscr{A}}
\newcommand{\BBB}{\mathscr{B}}
\newcommand{\CCC}{\mathscr{C}}
\newcommand{\DDD}{\mathscr{D}}
\newcommand{\EEE}{\mathscr{E}}
\newcommand{\FFF}{\mathscr{F}}

\newcommand{\PPP}{\mathscr{P}}
\newcommand{\QQQ}{\mathscr{Q}}

\newcommand{\QQ}{\mathcal{Q}}

\newcommand{\KKK}{\mathscr{K}}
\newcommand{\KK}{\mathcal{K}}

\newcommand{\ra}{\rightarrow}

\newcommand{\Ai}{\mathcal{A}_{\infty}}
\newcommand{\uAi}{u\mathcal{A}_{\infty}}
\newcommand{\uEinfty}{u\mathcal{E}_{\infty}}
\newcommand{\uAW}{u\mathcal{AW}}

\newcommand{\uAlg}{\mathsf{Alg}}
\newcommand{\nuAlg}{\mathsf{nuAlg}}
\newcommand{\cAlg}{\mathsf{cAlg}}

\newcommand{\ucAlg}{\mathsf{ucAlg}}
\newcommand{\Cog}{\mathsf{Cog}}
\newcommand{\nuCog}{\mathsf{nuCog}}
\newcommand{\uAWcog}{u\mathcal{AW}-\mathsf{cog}}

\newcommand{\uCog}{\mathsf{uCog}}
\newcommand{\cCog}{\mathsf{cCog}}
\newcommand{\ucCog}{\mathsf{ucCog}}
\newcommand{\cNilCog}{\mathsf{cNilCog}}
\newcommand{\ucNilCog}{\mathsf{ucNilCog}}
\newcommand{\NilCog}{\mathsf{NilCog}}

\newcommand{\Cocom}{\mathsf{Cocom}}
\newcommand{\uCocom}{\mathsf{uCocom}}
\newcommand{\NilCocom}{\mathsf{NilCocom}}
\newcommand{\uNilCocom}{\mathsf{uNilCocom}}
\newcommand{\Liealg}{\mathsf{Lie}-\mathsf{alg}}
\newcommand{\cLiealg}{\mathsf{cLie}-\mathsf{alg}}
\newcommand{\Alg}{\mathsf{Alg}}
\newcommand{\Linfty}{\mathcal{L}_{\infty}}
\newcommand{\CMC}{\mathfrak{CMC}}
\newcommand{\Tfree}{\mathbb{T}}

\newcommand{\Hinich}{\mathsf{Hinich} -\mathsf{cog}}

\newcommand{\Ccomod}{\mathscr C -\mathsf{comod}}
\newcommand{\Pmod}{\mathscr P -\mathsf{mod}}

\newcommand{\cCoop}{\mathsf{cCoop}}

\newcommand{\Set}{\mathsf{Set}}
\newcommand{\sSet}{\mathsf{sSet}}
\newcommand{\dgMod}{\mathsf{dgMod}}
\newcommand{\gMod}{\mathsf{gMod}}
\newcommand{\catOrd}{\mathsf{Ord}}
\newcommand{\catBij}{\mathsf{Bij}}
\newcommand{\catSmod}{\mbs\mathsf{mod}}
\newcommand{\EEtw}{\mathcal{E}\text{-}\mathsf{Tw}}
\newcommand{\OpBim}{\mathsf{Op}\text{-}\mathsf{Bim}}

\newcommand{\Palg}{\mathcal{P}-\mathsf{alg}}
\newcommand{\Qalg}{\mathcal{Q}-\mathsf{alg}}
\newcommand{\Pcog}{\mathcal{P}-\mathsf{cog}}
\newcommand{\Qcog}{\mathcal{Q}-\mathsf{cog}}
\newcommand{\Ccog}{\mathcal{C}-\mathsf{cog}}
\newcommand{\Dcog}{\mathcal{D}-\mathsf{cog}}
\newcommand{\uCoCog}{\mathsf{uCoCog}}

\newcommand{\Artinalg}{\mathsf{Artin}-\mathsf{alg}}

\newcommand{\Def}{\mathrm{Def}}
\newcommand{\Bij}{\mathrm{Bij}}
\newcommand{\op}{\mathrm{op}}

\newcommand{\undern}{\underline{n}}
\newcommand{\dginterval}{{N{[1]}}}
\newcommand{\dgsimplex}[1]{{N{[#1]}}}

\newcommand{\cofree}{ T^c}
\newcommand{\Tw}{ Tw}
\newcommand{\End}{\mathcal{E}\mathrm{nd}}
\newcommand{\catEnd}{\mathsf{End}}
\newcommand{\coEnd}{\mathrm{co}\End}
\newcommand{\Mult}{\mathrm{Mult}}
\newcommand{\coMult}{\mathrm{coMult}}

\newcommand{\Lie}{\mathcal{L}\mathr{ie}}
\newcommand{\As}{\mathcal{A}\mathrm{s}}
\newcommand{\uAs}{\mathrm{u}\As}
\newcommand{\coAs}{\mathrm{co}\As}
\newcommand{\Com}{\mathcal{C}\mathrm{om}}
\newcommand{\uCom}{\mathrm{u}\Com}
\newcommand{\Perm}{\catoperad{Perm}}
\newcommand{\uBE}{\mathrm{u}\mathcal{BE}}
\newcommand{\uBEs}{{\uBE}^{\mathrm s}}

\newcommand{\PD}{\mathrm{PD}}
\newcommand{\abs}{\mathrm{abs}}

\newcommand{\itemt}{\item[$\triangleright$]}
\newcommand{\gr}{\mathrm{gr}}

\newcommand{\poubelle}[1]{}

\newcommand{\catdgalg}[1]{\mathsf{dg}~{#1}\text{-}\mathsf{alg}}
\newcommand{\catdgcog}[1]{\mathsf{dg}~{#1}\text{-}\mathsf{cog}}
\newcommand{\Map}[3]{\mathrm{Map}_{#1}\left(#2, #3\right)}
\newcommand{\Ab}{\mathrm{Ab}}
\newcommand{\Res}{\mathrm{Res}}
\newcommand{\FMP}{\mathsf{FMP}}
\newcommand{\comp}{\circ}
\newcommand{\complete}{\mathsf{comp}}
\newcommand{\Psh}{\mathsf{Psh}}
\newcommand{\Cell}{\mathrm{Cell}}
\newcommand{\Art}{\mathrm{Art}}
\newcommand{\CoArt}{\mathsf{CoArt}}
\newcommand{\invqis}{{\left[\mathrm{Q.iso}^{-1}\right]}}
\newcommand{\invw}{{\left[W^{-1}\right]}}
\newcommand{\catcellalg}[1]{\Cell~{#1}\text{-}\mathsf{alg}}
\newcommand{\catartalg}[1]{\Art~{#1}\text{-}\mathsf{alg}}
\newcommand{\catcoartcog}[1]{\CoArt~{#1}\text{-}\mathsf{cog}}

\newcommand{\Victor}[1]{\textcolor{red}{#1}}
\newcommand{\Sinan}[1]{\textcolor{blue}{#1}}
\newcommand{\Dan}[1]{\textcolor{magenta}{#1}}

\begin{abstract}We show a first \emph{rectification} result for homotopy chain coalgebras over a field. On the one hand, we consider the $\infty$-category obtained by localizing  differential graded coalgebras over an operad with respect to quasi-isomorphisms; on the other, we give a general definition of an $\infty$-category of coalgebras over an enriched $\infty$-operad. We show by induction over cell attachments that these two $\infty$-categories are in fact equivalent when the operad is cofibrant. This yields explicit point-set models for $\mathbb{E}_n$-coalgebras and $\mathbb{E}_\infty$-coalgebras in the derived $\infty$-category of chain complexes over a field, and an explicit point-set model for the cellular chains functor with its $\mathbb{E}_\infty$-coalgebra structure. After Bachmann--Burklund, this gives a point-set algebraic model for nilpotent $p$-adic homotopy types. 
\end{abstract}

\maketitle

\tableofcontents

\section{Introduction}

\subsection{Rectification of algebras} Let $\mathbf C$ be an $\infty$-category. By a \emph{point-set model} of $\mathbf C$, we mean a 1-category $\mathsf C$ and a class of weak equivalences $\mathsf W \subset \mathrm{mor}(\mathsf C)$, such that the localization of $\mathsf C$ at $\mathsf W$ presents $\mathbf C$ --- that is, such that there is an equivalence of $\infty$-categories $\mathbf C \simeq \mathsf C[\mathsf W^{-1}]$. 

\medskip

Most $\infty$-categories encountered in day-to-day life are naturally presented to us in terms of point-set models. Having a point-set model can be both conceptually clarifying, and also serve as a useful computational tool: in particular, if $(\mathsf C,\mathsf W)$ comes as part of a model category structure, then one obtains a powerful calculus for e.g.\ computing limits and colimits in $\mathbf C$, mapping spaces, or derived functors. 

\medskip

Many constructions in higher category theory allow one to build new $\infty$-categories out of existing ones: localization, stabilization, sheaf categories, the Lurie tensor product, etc. When the existing $\infty$-categories are presented by point-set models, it is natural to ask for a point-set model for the result of the construction. Here is an important example. Suppose that $(\mathbf C, \otimes)$ is a {monoidal} $\infty$-category. Let $\mathrm{Alg}(\mathbf C)$ denote the $\infty$-category of $\mathbb E_1$-algebras in $\mathbf C$. If $\mathbf C \simeq \mathsf C[\mathsf W^{-1}]$, can we write down a point-set model of $\mathrm{Alg}(\mathbf C)$ in terms of $\mathsf C$ and $\mathsf W$? Under certain hypotheses on $\mathsf C$ and $\mathsf W$, the answer is yes:

\begin{theorem}[Lurie] \label{lurie theorem} Suppose that $(\mathsf C,\mathsf W)$ underlies a combinatorial monoidal model category, which satisfies either: (a) all objects are cofibrant, (b) it is left proper, symmetric monoidal, satisfies the monoid axiom, and is cofibrantly generated by cofibrations between cofibrant objects. Then
	\begin{equation}
	    \label{eq: lurie equivalence}\mathrm{Alg}\big(\mathsf C[\mathsf W^{-1}]\, \big) \simeq \mathrm{Alg}(\mathsf C)~[\mathsf W^{-1}]~,
	\end{equation} 
	where on the left we mean the $\infty$-category of $\mathbb E_1$-algebras in $\mathsf C[\mathsf W^{-1}]~;$ on the right, we mean the 1-category of monoid objects in $\mathsf C$, localized at the class of arrows whose underlying morphism is in the class $\mathsf W$. 	
\end{theorem}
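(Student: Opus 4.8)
The plan is to exhibit both sides of \eqref{eq: lurie equivalence} as monadic over the common base $\mathbf C := \mathsf C[\mathsf W^{-1}]$, and then to match the two monads. First I would install the transferred (projective) model structure on the $1$-category $\mathrm{Alg}(\mathsf C)$ of monoid objects, in which weak equivalences and fibrations are created by the forgetful functor $U \colon \mathrm{Alg}(\mathsf C) \to \mathsf C$. Existence of this transfer is exactly what the two hypotheses are designed to guarantee: in case (b) it is the Schwede--Shipley argument built on the monoid axiom, while in case (a) the all-cofibrant hypothesis makes the requisite cell attachments homotopically harmless. The left adjoint is the tensor-algebra functor $T(X) = \coprod_{n \ge 0} X^{\otimes n}$, and the same hypotheses control the canonical pushout filtration of $T$, so that $U$ indeed creates weak equivalences along the relevant transfinite composites.

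Next I would build the comparison functor. A combinatorial monoidal model category localizes to a monoidal $\infty$-category, and the localization $\gamma \colon \mathsf C \to \mathbf C$ is monoidal; it therefore induces a functor $\mathrm{Alg}(\mathsf C) \to \mathrm{Alg}(\mathbf C)$ carrying weak equivalences of monoids to equivalences of $\mathbb E_1$-algebras. By the universal property of localization this descends to a functor $\Phi \colon \mathrm{Alg}(\mathsf C)[\mathsf W^{-1}] \to \mathrm{Alg}(\mathbf C)$, and by construction $\Phi$ lies over $\gamma$ with respect to the two forgetful functors down to $\mathsf C$ and $\mathbf C$. The goal is to prove that $\Phi$ is an equivalence.

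The heart of the matter --- and the main obstacle --- is to identify the induced monads on $\mathbf C$. On the coherent side, the forgetful functor $\mathrm{Alg}(\mathbf C) \to \mathbf C$ is monadic, with monad the derived free algebra $X \mapsto \coprod_n X^{\otimes n}$ (derived tensor powers). On the strict side, the Quillen adjunction $T \dashv U$ passes to a derived adjunction between $\mathbf C$ and $\mathrm{Alg}(\mathsf C)[\mathsf W^{-1}]$, whose right adjoint I would verify is monadic via the $\infty$-categorical Barr--Beck theorem. The crucial computation is that, under hypotheses (a)/(b), the strict tensor powers $X^{\otimes n}$ of a cofibrant object compute the derived tensor powers and the filtration defining $T$ is homotopy invariant, so that the total left derived functor $\mathbb L T$ is again the derived free algebra monad $\coprod_n X^{\otimes n}$. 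This is precisely where the hypotheses are indispensable: for a general monoidal model category the stages of the free-monoid filtration need not preserve weak equivalences, and it is the monoid axiom (respectively the all-cofibrant hypothesis) that guarantees each pushout along a generating cofibration, and each transfinite composite, remains a weak equivalence. For $\mathbb E_1$ this is somewhat more forgiving than the symmetric case, since the associative operad has free symmetric-group actions.

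Finally, with the two monads identified, I would check that $\Phi$ is a map of monadic adjunctions inducing the identity on the base $\mathbf C$ and the identity on monads; the $\infty$-categorical monadicity theorem then upgrades this into the equivalence of $\infty$-categories asserted in \eqref{eq: lurie equivalence}. The delicate point throughout remains the homotopy invariance of the free-monoid filtration, and I expect the bulk of the technical work to be concentrated there.
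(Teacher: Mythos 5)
Your proposal is correct and follows exactly the strategy the paper itself sketches: the paper does not reprove this result (it cites \cite[Section 4.1.8]{HigherAlgebra}), but its Remark immediately following the statement describes precisely your argument --- use the Barr--Beck--Lurie theorem to exhibit both sides as monadic over $\mathsf C[\mathsf W^{-1}]$, then identify both monads with the free algebra monad $X \mapsto \coprod_{n \geq 0} X^{\otimes n}$, with the hypotheses (a)/(b) ensuring the point-set formula computes the derived one on cofibrant objects. Your additional attention to the homotopy invariance of the free-monoid filtration is exactly where the cited technical work in Lurie's proof lies, so there is nothing to flag.
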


This is a \emph{rectification} theorem: it says that every $\mathbb E_1$-algebra can be rectified to a strictly associative object on the point-set level, and that this rectification is essentially unique. We refer to \cite[Section 4.1.8]{HigherAlgebra} for more details. \cref{lurie theorem} is not the only rectification theorem in the literature: see e.g.\ \cite{haugseng-rectification,HinichRectification,PavlovScholbach18} for extensions to algebras over more general operads and to enriched settings. 

\begin{remark}\label{strategy remark}
    The proofs of all these rectification results are somewhat similar in flavor. One uses the Barr--Beck--Lurie theorem to show that both sides of \eqref{eq: lurie equivalence} are monadic over $\mathbf C$. Then one proves an isomorphism between the two monads: in both cases, the monad is given by the usual formula
\begin{equation}\label{classical free algebra} X \mapsto \coprod_{n \geq 0} X^{\otimes n}.\end{equation}
(On the point-set level this formula is only valid if one restricts $X$ to the full subcategory of cofibrant objects.)
\end{remark}

\subsection{Rectification of coalgebras.} This paper is not about \emph{algebras}, but about \emph{coalgebras}. 
Let us consider again a monoidal $\infty$-category $(\mathbf C,\otimes)$. 

\begin{definition}\label{def:coalgebra}The $\infty$-category of \emph{$\mathbb E_1$-coalgebras} in $\mathbf{C}$ is defined as $\mathrm{Coalg}(\mathbf C) \coloneqq \mathrm{Alg}(\mathbf C^{\mathrm{op}})^{\mathrm{op}}$.
\end{definition} 

Following the above discussion, it is natural to ask: if $\mathbf C \simeq \mathsf C[\mathsf W^{-1}]$, can one write a point-set model of $\mathrm{Coalg}(\mathbf C)$ in terms of $\mathsf C$ and $\mathsf W$, under ``reasonable hypotheses''  on $(\mathsf C,\mathsf W)$? 

\cref{lurie theorem} says that we get a point-set model of $\mathrm{Coalg}(\mathbf C)$ whenever $\mathbf C^{\mathrm{op}}$ satisfies the hypotheses of \cref{lurie theorem}. But these are \emph{not} ``reasonable hypotheses'' to impose. This is because \emph{monoidal model categories} always give rise to \emph{closed monoidal $\infty$-categories}; most monoidal $\infty$-categories encountered in nature are closed, but the opposite of a closed monoidal category is  itself closed only in highly degenerate situations.  Similarly, \emph{combinatorial model categories} always model \emph{presentable $\infty$-categories}; nearly all  $\infty$-categories encountered in nature are presentable, but the opposite of a presentable category is  presentable only in highly degenerate situations.

\begin{remark}\label{cofree are complicated}Let us indicate one reason why the monoidal structure being closed is useful for the strategy described in \cref{strategy remark}. Let $(\mathsf C,\otimes)$ be a monoidal category. The formula \eqref{classical free algebra} for the free algebra in $\mathsf C$ is valid whenever $\mathsf C$ admits countable coproducts, and $-\otimes-$ preserves countable coproducts in both variables. If $\mathsf C$ is closed, then the latter condition is automatic, since $X \otimes -$ is a left adjoint and preserves colimits. By contrast, if $\mathsf C^{\mathrm{op}}$ is closed then $X \otimes -$ is a \emph{right} adjoint. For example, if $\mathsf C = \mathsf{Ab}^{\mathrm{op}}$, then free algebras in $\mathsf C$ are what's classically called \emph{cofree coalgebras}. Cofree coalgebras exist, but their construction is extremely intricate (cf.~\cite{fox,Hazewinkel03}), in particular in comparison to the simple formula \eqref{classical free algebra}.

\end{remark}

Here are some examples indicating the subtleties involved in proving rectification results for coalgebras.

\begin{example}
    Every based suspension is an $\mathbb E_1$-coalgebra in the $\infty$-category of based spaces, with respect to the monoidal structure given by wedge sum. But a coalgebra in the 1-category of topological spaces with respect to wedge sum must be a point. Hence there can be no rectification of coalgebras in this situation. See Remark 2.19. in \cite{Moreno22} for more details.
\end{example}

\begin{example}In a cartesian monoidal category, there exists \emph{exactly one} coalgebra structure on any object: the counit is the unique map to the terminal object, and the comultiplication is necessarily the diagonal. In particular, any coalgebra in spaces is cocommutative. On the point-set level, the latter fact propagates to the stable setting: in any one of the 1-categories of symmetric spectra, orthogonal spectra, $\Gamma$-spaces, $W$-spaces, and Elmendorf--Kriz--Mandell--May's category of $\mathbb S$-modules, all coassociative coalgebras are automatically cocommutative \cite{ps}. This is certainly not the case for coalgebras in the $\infty$-category of spectra, which therefore rarely admit a strict point-set rigidification.
\end{example}

\begin{example}
	Let $\kk$ be a commutative ring. Let $(\mathsf{Ch}_{\geq 0}(k),\otimes)$ be the monoidal category of nonnegatively graded chain complexes over $\kk$, and $(\mathsf{sMod}(\kk),\times)$ the monoidal category of simplicial $\kk$-modules. We let $\mathsf W$ and $\mathsf W'$ denote the classes of quasi-isomorphisms in the respective categories. The Dold--Kan functor 
	$\mathrm{DK} : \mathsf{Ch}_{\geq 0}(\kk) \longrightarrow \mathsf{sMod}(k)$
	is an equivalence of relative categories. Both $\mathrm{DK}$ and its inverse are simultaneously lax and oplax monoidal, and induce an equivalence of monoidal $\infty$-categories $\mathsf{Ch}_{\geq 0}(\kk)~[\mathsf W^{-1}] \simeq \mathsf{sMod}(\kk)~[\mathsf W'^{-1}]$. If $\kk$ is a field, then $\mathrm{Coalg}(\mathsf{Ch}_{\geq 0}(\kk))$ and $\mathrm{Coalg}(\mathsf{sMod}(\kk))$ admit left-transferred model structures from $\mathsf{Ch}_{\geq 0}(\kk)$ and $\mathsf{sMod}(\kk)$. Nevertheless, the  functor 
	\[ \mathrm{Coalg}(\mathsf{Ch}_{\geq 0}(\kk))~[\mathsf W^{-1}] \longrightarrow \mathrm{Coalg}(\mathsf{sMod}(\kk))~[\mathsf W'^{-1}]\]
	induced by $\mathrm{DK}$ is \emph{not} an equivalence of $\infty$-categories, as shown by Sor\'e \cite{sore}. The conclusion is that $\mathrm{Coalg}(\mathsf C)~[\mathsf W^{-1}]$ is in general highly sensitive to 1-categorical structure of $(\mathsf C,\mathsf W)$, and in no way does it only depend on the $\infty$-category $\mathsf C~[\mathsf W^{-1}]$. 
\end{example}

The takeaway from these examples is the following.
\begin{enumerate}
    \item It does not seem reasonable to hope for a rectification of coassociative coalgebras, under any reasonable hypotheses.\footnote{One exception, in the dg setting, is if the coalgebras are sufficiently connective, in which case Koszul duality furnishes an equivalence of $\infty$-categories between the homotopy theories of dg coalgebras and dg algebras, and the latter can be rectified. Compare e.g.~with Quillen's \cite{quillen69} equivalence of homotopy theories between simply connected strict cocommutative dg coalgebras, and connected dg Lie algebras, over a field of characteristic zero.} Maybe the best we should hope for is rectification for \emph{coalgebras over a cofibrant operad}. Topologically, this means considering coalgebras over the Stasheff associahedra; algebraically, this means considering $\mathcal{A}_\infty$-coalgebras. In fact, such a statement was conjectured by Le Grignou and Lejay in \cite[Conjecture 8.3]{linearcoalgebras}.
    
    \item It does not seem reasonable to attempt to prove such a rectification using the strategy of \cref{strategy remark}. Indeed, as indicated in \cref{cofree are complicated}, there is no workable description of the cofree coalgebra over the associative operad, much less the $\mathcal A_\infty$-operad. And the strategy should at some point use cofibrancy. An entirely different argument is needed.
\end{enumerate}

\subsection{Main results} The goal of this paper is to prove such rectification results for coalgebras over cofibrant operads, in the differential graded setting, over a field. In the associative case, our main theorem says the following.

\begin{theoremintro}\label{thm: main theorem associative case}
Let $\kk$ be a field, and let $\mathbf D(\kk) = \mathsf {Ch}(\kk)~[\mathsf{Q.iso}^{-1}]$ be the derived $\infty$-category of chain complexes over $\kk$. The injective model structure on $\mathsf {Ch}(\kk)$ may be left-transferred along the cofree-forgetful adjunction to a model structure on the 1-category $\onecoalg{\mathcal A_\infty}$ of $\mathcal A_\infty$-coalgebras and strict morphisms. This model structure satisfies
    \[ \mathrm{Coalg}(\mathbf D(\kk)) \simeq \onecoalg{\mathcal A_\infty}~[\mathsf{Q.iso}^{-1}].\]
\end{theoremintro}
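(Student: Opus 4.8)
The plan is to deduce the theorem from a general comparison, valid for every cofibrant dg operad $\mathcal P$, between the localization $\onecoalg{\mathcal P}[\mathsf{Q.iso}^{-1}]$ and the $\infty$-category $\mathrm{Coalg}_{\mathscr P}(\mathbf D(\kk))$ of coalgebras over the $\infty$-operad $\mathscr P$ presented by $\mathcal P$, and then to specialize to $\mathcal P=\mathcal A_\infty$. First I would settle the first assertion, the existence of the left-transferred model structure. Over a field the injective model structure on $\mathsf{Ch}(\kk)$ has all objects cofibrant, its cofibrations are the degreewise monomorphisms, and its weak equivalences are the quasi-isomorphisms; the forgetful functor $U\colon \onecoalg{\mathcal A_\infty}\to \mathsf{Ch}(\kk)$ is a left adjoint, its right adjoint being the cofree coalgebra, which exists over a field via an explicit construction. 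Left-inducing the cofibrations and weak equivalences along $U$ then reduces to checking the usual acyclicity condition; this is where working over a field is used, and it can be verified from the explicit filtration of the cofree coalgebra. With the model structure in hand, and since $\mathcal A_\infty\to\As$ is a weak equivalence of operads so that $\mathscr A_\infty\simeq \mathscr E_1$ as $\infty$-operads, the invariance of $\mathrm{Coalg}_{(-)}(\mathbf D(\kk))$ under weak equivalences of $\infty$-operads identifies $\mathrm{Coalg}_{\mathscr A_\infty}(\mathbf D(\kk))\simeq \mathrm{Coalg}(\mathbf D(\kk))$, reducing the theorem to the general comparison.

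Next I would build the comparison functor. Every strict $\mathcal P$-coalgebra is in particular a homotopy coherent one, and a quasi-isomorphism of strict coalgebras becomes an equivalence in $\mathbf D(\kk)$, so one obtains a functor $F\colon \onecoalg{\mathcal P}[\mathsf{Q.iso}^{-1}]\to \mathrm{Coalg}_{\mathscr P}(\mathbf D(\kk))$, natural in $\mathcal P$. The proof that $F$ is an equivalence proceeds by induction along a cellular presentation $\mathcal A_\infty=\colim_k \mathcal P_k$, where $\mathcal P_0$ is the unit operad and each $\mathcal P_k\to\mathcal P_{k+1}$ is a cell attachment, i.e.\ a pushout of operads along a generating cofibration $\mathcal F(S^{d-1})\hookrightarrow \mathcal F(D^d)$ between free operads on a sphere and a disk in a fixed arity. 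In the base case both sides are $\mathbf D(\kk)$ and $F$ is the identity. The structural input making the induction run is that both constructions convert colimits of operads into limits of $\infty$-categories: $\onecoalg{-}$ sends operadic colimits to strict $1$-categorical limits, while the $\infty$-operadic coalgebra functor sends homotopy colimits of $\infty$-operads to homotopy limits of $\infty$-categories, and $F$ is compatible with both.

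The inductive step thus becomes a statement about pullbacks. On the $\infty$-operadic side the cell attachment is a homotopy pushout, and since $D^d$ is acyclic we have $\mathscr F(D^d)\simeq \unit$, whence
\[\mathrm{Coalg}_{\mathscr P_{k+1}}(\mathbf D(\kk))\simeq \mathrm{Coalg}_{\mathscr P_k}(\mathbf D(\kk))\times^h_{\mathrm{Coalg}_{\mathscr F(S^{d-1})}(\mathbf D(\kk))}\mathbf D(\kk),\]
a homotopy pullback. On the strict side
\[\onecoalg{\mathcal P_{k+1}}=\onecoalg{\mathcal P_k}\times_{\onecoalg{\mathcal F(S^{d-1})}}\onecoalg{\mathcal F(D^d)}\]
is an honest pullback of $1$-categories. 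Granting that localization carries this strict pullback to the corresponding homotopy pullback, $F_{k+1}$ is a map of homotopy pullback cospans, hence an equivalence as soon as $F$ is an equivalence on each vertex: the vertex $\mathcal P_k$ is handled by the inductive hypothesis, and the free-operad vertices $\mathcal F(S^{d-1})$, $\mathcal F(D^d)$ by a direct computation, since a coalgebra over a free operad is just a chain complex with an unconstrained family of cooperations, so its localization is a chain complex together with a point of the mapping space $\mathrm{Map}_{\mathbf D(\kk)}(C, C^{\otimes a})$ in the prescribed degree, matching the $\infty$-operadic description. Passing from the cellular tower to $\mathcal A_\infty$ needs the analogous statement for the sequential limit, which is controlled by the fact that $\mathcal A_\infty$ has finitely many cells in each arity and homological degree, so the relevant towers stabilize and present no derived-limit obstruction.

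The main obstacle is exactly the bracketed claim that localization at quasi-isomorphisms sends the strict pullback of coalgebra categories attached to a cell to the homotopy pullback of their localizations --- in other words, that $\onecoalg{-}[\mathsf{Q.iso}^{-1}]$ is \emph{exact} along cellular extensions of operads. I would establish this by upgrading the left-transferred model structures so that the functor $\onecoalg{\mathcal F(D^d)}\to\onecoalg{\mathcal F(S^{d-1})}$, which forgets the chosen filler $h$ of a cooperation $\partial h$, is a fibration with contractible fibres, and by invoking properness of the injective model structure over a field to conclude that the strict pullback is already a homotopy pullback whose localization computes the $\infty$-categorical pullback. Carrying this out uniformly at every stage, together with the passage to the sequential limit, is the technical heart of the argument; everything else is either formal or a direct computation over free operads.
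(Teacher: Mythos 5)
Your skeleton coincides with the paper's: reduce Theorem A to a rectification statement for all cofibrant operads, build a natural comparison functor, and run an induction over a cellular presentation, using that $\onecoalg{(-)}$ sends operadic colimits to limits both $1$-categorically and $\infty$-categorically, with the free/sphere/disk vertices handled by explicit identification (this is exactly Corollary \ref{corollary: devissage along cells of 1-categories of P-coalgebras}, Proposition \ref{prop: taking infinity categories of coalgebras preserves limits} and Theorem \ref{thm: key theorem}). However, the mechanism you propose for the step you yourself single out as the technical heart would fail. The restriction functor $\iota^k(p)^*\colon \onecoalg{\mathbb{T}(D^k(p))} \longrightarrow \onecoalg{\mathbb{T}(S^{k-1}(p))}$ is not a fibration with contractible fibres: the strict fibre over a coalgebra $(C,\Delta)$ is the set of $\mathbb{S}_p$-equivariant maps $h\colon C \to C^{\otimes p}$ of the appropriate degree with $\partial h = \Delta$, which is \emph{empty} unless $\Delta$ is a boundary in $\left[C,C^{\otimes p}\right]^{\mathbb{S}_p}$; correspondingly, after localization this functor is fully faithful but far from essentially surjective (the paper identifies it with the functor equipping an object of $\mathbf{D}(\kk)$ with the zero cooperation). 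Moreover, properness of the injective model structure concerns pushouts and pullbacks of \emph{objects} inside $\mathsf{Ch}(\kk)$; it says nothing about whether a strict pullback of relative categories presents the homotopy pullback of their localizations, which is the actual issue. The paper's resolution is genuinely different: since fibrations in the Barwick--Kan model structure on relative categories admit no usable characterization, it transports the square to quasi-categories via $i_1^*N_{\xi}$ and proves that $i_1^*N_{\xi}(\iota^k(p)^*)$ is a Joyal fibration by the criterion of Proposition \ref{prop:fibrationff}: an isofibration of homotopy categories (Proposition \ref{prop: isofibration}, a lifting argument via factorization and a retraction) together with $\infty$-categorical fully faithfulness. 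The same strict-versus-homotopy issue recurs, unresolved, at two more places in your outline: at the free vertices, where your ``direct computation'' is really the content of Theorem \ref{thm: rectification of coalgebras over an endofunctor} (proved via the subdivided nerve and complete Segal spaces), and at the sequential limit, where stabilization of cells in each arity is not what is needed --- the paper instead uses that all transition maps in the tower are fibrations of quasi-categories between fibrant objects.

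A second gap is that you presuppose a functor $\mathscr{P} \mapsto \mathbf{Coalg}_{\mathscr{P}}(\mathbf{D}(\kk))$ defined on \emph{enriched} $\infty$-operads, sending homotopy colimits to homotopy limits, and agreeing with Lurie's $\mathrm{Coalg}(\mathbf{D}(\kk)) = \mathrm{Alg}(\mathbf{D}(\kk)^{\mathrm{op}})^{\mathrm{op}}$ when $\mathscr{P}\simeq\mathscr{E}_1$. None of this is off the shelf: because the dual Schur functor is only lax monoidal, the paper must first construct this theory (via the monoidal pro dual Schur functor on $\mathbf{pro}(\mathbf{D}(\kk))$, following Heuts), prove the colimits-to-limits property (Proposition \ref{prop: taking infinity categories of coalgebras preserves limits}), and then prove the comparison with Lurie's definition as a separate theorem (Theorem \ref{thm: comparison with coalgebras in the sense of Lurie}), itself by a dévissage over free $\infty$-operads. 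Your reduction of Theorem A to the general rectification statement silently uses both ingredients. Finally, a minor point: the paper obtains the left-transferred model structure not from any explicit analysis of the cofree coalgebra (which is intractable) but from a natural cylinder object coming from the Barratt--Eccles operad and a section of $\mathcal{A}_\infty \otimes \mathcal{E} \to \mathcal{A}_\infty$.
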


The $\infty$-category $\mathrm{Coalg}(\mathbf D(\kk))$, and its cocommutative analogue, are fundamental objects of study in homological and homotopical algebra, given e.g.\ the role of dg coalgebras in formal deformation theory, or in construction of chain models of spaces. It is therefore striking that prior to \cref{thm: main theorem associative case}, there was no way of working with the $\infty$-category $\mathrm{Coalg}(\mathbf D(\kk))$ using classical languages of homotopy theory such as model categories, simplicial categories, etc. 

    \begin{remark}The definition of an $\mathcal A_\infty$-coalgebra is less standardized in the literature than the definition of an $\mathcal A_\infty$-algebra. To be clear, for us an $\mathcal A_\infty$-coalgebra is a graded vector space $C$ and a degree $-1$ derivation of the \emph{completed} tensor algebra $\smash{\widehat T}(C[-1])$, which squares to zero. This amounts to a collection of maps 
    \[
    \left\{ \Delta_n^C: C \longrightarrow C^{\otimes n} \right\}_{n \geq 1}~
    \]
    of degree $n-2$, which satisfy the relations imposed by the fact that the derivation that induces them squares to zero: \[
\sum_{r+s+t = n}
(-1)^{\,r + s t}\,
\bigl(
\mathrm{id}^{\otimes r}
\otimes \Delta_s^C
\otimes \mathrm{id}^{\otimes t}
\bigr)
\circ \Delta_{\,r+1+t}^C
\;=\; 0,
\qquad n \ge 1 .
\]
    A \emph{strict} morphism between two such coalgebras (as opposed to an $\infty$-morphism) is a homomorphism of graded vector spaces $f: C \longrightarrow C'$, with the property that the induced map $\smash{\widehat T}(C[-1]) \longrightarrow \smash{\widehat T}(C'[-1])$ is a homomorphism of dg algebras with respect to the differentials given by the respective derivations.\footnote{Beware that, for simplicity, we have only given the definition of a \textit{non-counital}  $\mathcal A_\infty$-coalgebra. Theorem A works for both counital and non-counital $\mathcal A_\infty$-coalgebras.}
    \end{remark}
    
Our actual main theorem is not about $\mathcal A_\infty$-coalgebras, but about coalgebras over a general cofibrant dg operad. On the one hand, these coalgebras over a cofibrant dg operad can be endowed with a left-transferred model structure from chain complexes using the methods of \cite{BHKKRS, HKRS, GKR}. On the other hand, in order to be able to state the general theorem, we need to first define a good notion of coalgebras over enriched $\infty$-operads. 
In this, we will closely follow an approach of Heuts \cite[Appendix A]{heuts2024}, who considered the problem of giving a good $\infty$-categorical definition of not necessarily conilpotent coalgebras over an enriched $\infty$-\emph{cooperad}.  

\medskip

 In the following, we work with the notion of enriched $\infty$-operad defined in \cite[Section 4.1.2]{BrantnerPhD}. See also \cite[Section 5.2.4]{ShiPhD} for a thorough exposition of these ideas. This approach is very similar to the one carried out in \cite{ChuHaugseng20}, and in fact they were very recently proven to be equivalent in \cite{arakawa2026}. Let us also point out a mismatch in terminology in the literature. Classically, one speaks of operads \emph{in} a given category $\mathsf C$. In higher category theory, one speaks of $\infty$-operads \emph{enriched} in $\mathsf C$. The phrase $\infty$-operad, with no modifications, always means $\infty$-operad enriched in the $\infty$-category of spaces $\mathbf {Spc}$.

 \medskip

If $\mathscr P$ is an $\infty$-operad, then we can define $\mathscr P$-algebras in any symmetric monoidal $\infty$-category. But if $\mathscr P$ is an $\infty$-operad enriched in a given $\infty$-category $\mathbf C$, then we can speak of $\mathscr P$-algebras in a symmetric monoidal $\infty$-category $\mathbf D$ only when $\mathbf D$ is copowered over $\mathbf C$. Hence if $\mathscr P$ is an $\infty$-operad enriched in $\mathbf D = \mathbf D(\kk)$, then we can not directly imitate \cref{def:coalgebra} to give a definition of what it means to be a $\mathscr P$-coalgebra in $\mathbf D$, since $\mathbf D^{\mathrm{op}}$ is not copowered over $\mathbf D$. 

\medskip

Let us outline the definition we give in this paper. In what follows, $\mathbf D$ is a compactly rigidly generated stable symmetric monoidal $\infty$-category. There are two ways to associate to a symmetric sequence in $\mathbf D$ an endofunctor of $\mathbf D$: the \emph{Schur functor}
\begin{equation*}
	\label{schurfunctor}\mathbf S\colon\mathbf{sSeq}(\mathbf D) \longrightarrow \mathbf{End}(\mathbf D)
\end{equation*} 
and the \emph{dual Schur functor}
\begin{equation*}
	\label{dualschurfunctor} \widehat{\mathbf S}^ c\colon\mathbf{sSeq}(\mathbf D)^{\mathrm{op}} \longrightarrow \mathbf{End}(\mathbf D).
\end{equation*} 	
The functor $\mathbf S$ is monoidal. Hence it takes an $\mathbf D$-enriched $\infty$-operad $\mathscr P$ (a monoid object in the domain) to a monad on $\mathbf D$ (a monoid object in the target). We may define the category of $\mathscr P$-algebras as the category of algebras for the monad $\mathbf S(\mathscr P)$. On the other hand, the functor $\smash{\widehat{\mathbf S}^c}$ is only lax monoidal, and $\smash{\widehat{\mathbf S}^c}(\mathscr P)$ is only a \emph{lax comonad}. Following Heuts, we circumvent the problem of giving a general definition of an algebra over a lax comonad in an $\infty$-categorical setting, by instead constructing an extension of the dual Schur functor to an endofunctor on pro-objects in $\mathbf D$:
\begin{equation*}
	\label{dualschurfunctor-pro}\widehat{\mathbf{S}}^c_{\mathrm{pro}}\colon \mathbf{sSeq}(\mathbf D)^{\mathrm{op}} \longrightarrow \mathbf{End}(\mathbf{pro}(\mathbf D)).
\end{equation*} 	
The functor $\smash{\widehat{\mathbf{S}}^c_{\mathrm{pro}}}$ is monoidal, so $\smash{\widehat{\mathbf{S}}^c_{\mathrm{pro}}(\mathscr P)}$ is a comonad. Hence the following definition makes sense.

\begin{definition}\label{definition of P-coalg}
Let $\mathscr{P}$ be an enriched $\infty$-operad in $\mathbf{D}$. We define the $\infty$-category of $\mathscr{P}$\textit{-coalgebras} as the pullback 
\[
\begin{tikzcd}[column sep=2.5pc,row sep=2.5pc]
\mathbf{Coalg}_\mathscr{P}(\mathbf{D}) \arrow[r] \arrow[d] \arrow[dr, phantom, "\ulcorner", very near start]
&\mathbf{Coalg}_{\widehat{\mathbf{S}}^c_{\mathrm{pro}}(\mathscr{P})}(\mathbf{pro}(\mathbf{D})) \arrow[d,"U"]\\
\mathbf{D} \arrow[r] 
&\mathbf{pro}(\mathbf{D})
\end{tikzcd}
\]
in the $\infty$-category of $\infty$-categories, where $\mathbf D \longrightarrow \mathbf{pro}(\mathbf{D})$ is the ``constant pro-object'' functor.
\end{definition}

A first sanity check for \cref{definition of P-coalg} is that if each $\mathscr P(n)$ is a dualizable object, so that we can define a ``linear dual'' cooperad $\mathscr P^\vee$, then $\mathscr P$-coalgebras as defined in \cref{definition of P-coalg} are equivalent to $\mathscr P^\vee$-coalgebras as defined in \cite[Appendix A]{heuts2024}.

\medskip

	A second sanity check is the following. There is a unique cocontinuous functor $F : \mathbf{Spc}\longrightarrow \mathbf D$ taking a point to the monoidal unit in $\mathbf D$, and $F$ is symmetric monoidal since the tensor product in $\mathbf D$ preserves colimits. Hence if $\mathscr O$ is an $\infty$-operad in the usual sense, then we get a $\mathbf D$-enriched $\infty$-operad $F(\mathscr O)$. We should check that $\mathscr O$-coalgebras in the sense of Lurie --- that is, $\mathbf{Coalg}_{\mathscr{O}}(\mathbf{D}) \coloneqq \mathbf{Alg}_{\mathscr{O}}(\mathbf{D}^{\mathrm{op}})^{\mathrm{op}}$ --- are equivalent to $F(\mathscr O)$-coalgebras in the sense of \cref{definition of P-coalg}. This is indeed the case.
    
\begin{theoremintro}
Let $\mathscr{O}$ be a (classical) $\infty$-operad. There is an equivalence of $\infty$-categories
\[
\mathbf{Coalg}_{\mathscr{O}}(\mathbf{D}) \simeq \mathbf{Coalg}_{F(\mathscr{O})}(\mathbf{D})~
\]
between Lurie's $\infty$-category of $\mathscr{O}$-coalgebras, and the $\infty$-category of $F(\mathscr{O})$-coalgebras as defined in Definition \ref{definition of P-coalg}.
\end{theoremintro}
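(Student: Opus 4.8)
The plan is to dualize the entire statement and reduce it to a comparison of $\infty$-categories of \emph{algebras} inside the ind-completion of $\mathbf{D}^{\mathrm{op}}$. By \cref{def:coalgebra}, the left-hand side is by construction $\mathbf{Alg}_{\mathscr{O}}(\mathbf{D}^{\mathrm{op}})^{\mathrm{op}}$, so it is enough to produce a natural equivalence at the level of algebras and then pass to opposites. The geometric input is the identification $\mathbf{pro}(\mathbf{D}) \simeq \mathbf{ind}(\mathbf{D}^{\mathrm{op}})^{\mathrm{op}}$, under which the constant-pro-object functor $\mathbf{D} \to \mathbf{pro}(\mathbf{D})$ becomes the opposite of the Yoneda inclusion $\iota \colon \mathbf{D}^{\mathrm{op}} \hookrightarrow \mathbf{ind}(\mathbf{D}^{\mathrm{op}})$. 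Since forming opposites carries pullbacks of $\infty$-categories to pullbacks, the theorem will follow once the defining pullback of $\mathbf{Coalg}_{F(\mathscr{O})}(\mathbf{D})$ in \cref{definition of P-coalg} is identified with the opposite of a pullback computing $\mathbf{Alg}_{\mathscr{O}}(\mathbf{D}^{\mathrm{op}})$.

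The first step is a general extension lemma for algebras along a fully faithful symmetric monoidal functor. The inclusion $\iota$ is fully faithful and symmetric monoidal for the Day convolution tensor product on $\mathbf{ind}(\mathbf{D}^{\mathrm{op}})$, which moreover preserves colimits in each variable. Because the structure maps of an $\mathscr{O}$-algebra only refer to the symmetric monoidal structure and the copowers by spaces, $\iota$ induces a fully faithful functor on $\mathscr{O}$-algebras whose essential image consists precisely of those algebras whose underlying object lies in $\mathbf{D}^{\mathrm{op}}$. In other words, I would prove the pullback square
\[
\mathbf{Alg}_{\mathscr{O}}(\mathbf{D}^{\mathrm{op}}) \simeq \mathbf{Alg}_{\mathscr{O}}\big(\mathbf{ind}(\mathbf{D}^{\mathrm{op}})\big) \times_{\mathbf{ind}(\mathbf{D}^{\mathrm{op}})} \mathbf{D}^{\mathrm{op}}.
\]
This is where the passage to ind-objects pays off: although $\mathbf{D}^{\mathrm{op}}$ itself does not admit free $\mathscr{O}$-algebras (as $X \otimes -$ fails to preserve the relevant colimits, cf.\ \cref{cofree are complicated}), the category $\mathbf{ind}(\mathbf{D}^{\mathrm{op}})$ is presentable symmetric monoidal with colimit-preserving tensor, so the free–forgetful adjunction there is monadic by Barr--Beck--Lurie, and the resulting monad is the Schur functor $\mathbf{S}_{\mathbf{ind}(\mathbf{D}^{\mathrm{op}})}(F(\mathscr{O}))$.

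It then remains to dualize. Under $\mathbf{ind}(\mathbf{D}^{\mathrm{op}})^{\mathrm{op}} \simeq \mathbf{pro}(\mathbf{D})$ the monad $\mathbf{S}_{\mathbf{ind}(\mathbf{D}^{\mathrm{op}})}(F(\mathscr{O}))$ becomes a comonad, and I would identify it with the monoidal pro-extension $\widehat{\mathbf{S}}^c_{\mathrm{pro}}(F(\mathscr{O}))$, giving
\[
\mathbf{Alg}_{\mathscr{O}}\big(\mathbf{ind}(\mathbf{D}^{\mathrm{op}})\big)^{\mathrm{op}} \simeq \mathbf{Coalg}_{\widehat{\mathbf{S}}^c_{\mathrm{pro}}(F(\mathscr{O}))}\big(\mathbf{pro}(\mathbf{D})\big).
\]
Taking the opposite of the pullback above and substituting $(\mathbf{D}^{\mathrm{op}})^{\mathrm{op}} = \mathbf{D}$ yields exactly the pullback of \cref{definition of P-coalg}, proving the claim. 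The main obstacle is precisely this last identification: one must match the completion built into the dual Schur functor $\widehat{\mathbf{S}}^c$ — namely the products $\prod_n (F(\mathscr{O})(n) \otimes X^{\otimes n})^{\Sigma_n}$ and the fact that $\widehat{\mathbf{S}}^c(F(\mathscr{O}))$ is only a \emph{lax} comonad — against the coproducts $\coprod_n (F(\mathscr{O})(n) \otimes X^{\otimes n})_{\Sigma_n}$ computing the honest free $\mathscr{O}$-algebra monad in $\mathbf{ind}(\mathbf{D}^{\mathrm{op}})$. Passing to pro- and ind-objects is exactly what repairs the laxness and turns both into genuine (co)monads, and checking that the pro-extension produced earlier in the paper agrees on the nose with the dual of the ind-completed Schur monad is the technical heart of the argument; the extension lemma for algebras and the monadicity statement are comparatively formal.
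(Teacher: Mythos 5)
Your overall skeleton --- dualizing, identifying $\mathbf{pro}(\mathbf{D})\simeq\mathbf{ind}(\mathbf{D}^{\mathrm{op}})^{\mathrm{op}}$, the extension lemma realizing $\mathbf{Alg}_{\mathscr{O}}(\mathbf{D}^{\mathrm{op}})$ as the pullback of $\mathbf{Alg}_{\mathscr{O}}(\mathbf{ind}(\mathbf{D}^{\mathrm{op}}))$ along the fully faithful symmetric monoidal Yoneda inclusion, and the appeal to Barr--Beck--Lurie/Shi to describe algebras in $\mathbf{ind}(\mathbf{D}^{\mathrm{op}})$ as algebras over the analytic monad --- matches the first half of the paper's proof of \cref{thm: comparison with coalgebras in the sense of Lurie}: the paper likewise applies Shi's theorem to $\mathbf{pro}(\mathbf{D})^{\mathrm{op}}$ and likewise uses the pullback description of coalgebras in $\mathbf{D}$ inside coalgebras in $\mathbf{pro}(\mathbf{D})$. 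The gap is your step identifying the dual of the ind-completed free-algebra monad with $\widehat{\mathbf{S}}^c_{\mathrm{pro}}(F(\mathscr{O}))$ \emph{as comonads}. You call this ``the technical heart'' and defer it, but it is precisely the content of the theorem, and the paper deliberately does \emph{not} prove it: the dualized analytic comonad is the infinite product $\prod_{n}\bigl(\lim_{\mathscr{O}(n)}(-)^{\widehat{\otimes}n}\bigr)^{\mathbb{S}_n}$ taken in $\mathbf{pro}(\mathbf{D})$, whereas $\widehat{\mathbf{S}}^c_{\mathrm{pro}}(F(\mathscr{O}))$ is the limit $\lim_k\bigoplus_{n\le k}\bigl(\lim_{\mathscr{O}(n)}(-)^{\widehat{\otimes}n}\bigr)^{\mathbb{S}_n}$ of the truncated functors; the paper treats these as a priori different pro-objects and only extracts, from the universal property of the limit, a canonical comparison \emph{map} and hence a comparison functor between coalgebra categories. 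Even granting that the underlying endofunctors agree (in the stable $\infty$-category $\mathbf{pro}(\mathbf{D})$ a countable product is the limit of its finite partial sums), matching the comultiplication dualized from operadic composition against the monoidal structure maps of $\lim_k\mathrm{p}\widehat{\mathbf{S}}^c_{\le k}$ constructed earlier in the paper, coherently and naturally in $\mathscr{O}$, is a nontrivial problem for which you give no argument.

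What replaces that identification in the paper is a d\'evissage: the comparison functor is shown to be an equivalence first for free $\infty$-operads --- where both sides, pulled back along $c\colon\mathbf{D}\to\mathbf{pro}(\mathbf{D})$ and materialized as in \cref{corollary: coalgebras over a free infinity operad}, become coalgebras over one and the same endofunctor $\prod_{n}\bigl(\lim_{M(n)}(-)^{\otimes n}\bigr)^{\mathbb{S}_n}$ of $\mathbf{D}$ --- and then for general $\mathscr{O}$ by writing $\mathscr{O}$ as a colimit of free $\infty$-operads and using that both assignments $\mathscr{O}\mapsto\mathbf{Coalg}_{\mathscr{O}}(\mathbf{D})$ and $\mathscr{O}\mapsto\mathbf{Coalg}_{F(\mathscr{O})}(\mathbf{D})$ send colimits of operads to limits of $\infty$-categories (\cref{prop: taking infinity categories of coalgebras preserves limits} and its analogue on the Lurie side). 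To complete your proposal you would either have to supply the comonad identification with all coherences, or fall back on exactly this free-case-plus-colimits argument; as written, the proposal assumes the conclusion at its central step. A minor further slip: you write the dual Schur functor using tensors, $\prod_n\bigl(F(\mathscr{O})(n)\otimes X^{\otimes n}\bigr)^{\Sigma_n}$, but it is built from internal homs $\bigl[F(\mathscr{O})(n),X^{\otimes n}\bigr]^{\mathbb{S}_n}$; the two agree only when each $F(\mathscr{O})(n)$ is dualizable, i.e.\ when the spaces $\mathscr{O}(n)$ are finite.
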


Having this reasonable definition of coalgebras, we can now state the main theorem of this paper, which gives explicit point-set models and rectification for differential graded homotopy coherent coalgebras.

\begin{theoremintro}\label{thm: introrectification}
Let $\kk$ be a field, and let $\mathcal{P}$ be a cofibrant dg operad over $\kk$. There is an equivalence of $\infty$-categories
\[
\mathcal{P}\text{-}\mathsf{coalg}~[\mathsf{Q.iso}^{-1}] \simeq \mathbf{Coalg}_{\mathscr{P}}(\mathbf{D}(\kk))
\]
between dg $\mathcal{P}$-coalgebras up to quasi-isomorphism and coalgebras in $\mathbf{D}(\kk)$ over the induced enriched $\infty$-operad $\mathscr{P}$. 
\end{theoremintro}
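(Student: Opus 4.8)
The plan is to construct an explicit comparison functor and then prove it is an equivalence by induction along a presentation of $\mathcal{P}$ as a cellular dg operad, using that the formation of coalgebras turns colimits of operads into limits of $\infty$-categories on both sides. First I would build the comparison functor. On the point-set side, a strict dg $\mathcal{P}$-coalgebra has an underlying chain complex equipped with coherent co-operations; passing to $\mathbf{D}(\kk)$ and unwinding \cref{definition of P-coalg}, this data assembles into a $\mathscr{P}$-coalgebra in the pro-object model, giving a functor $\mathcal{P}\text{-}\mathsf{coalg}\to\mathbf{Coalg}_{\mathscr{P}}(\mathbf{D}(\kk))$ that sends quasi-isomorphisms to equivalences, since weak equivalences in the left-transferred model structure are created on underlying complexes. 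By the universal property of localization it factors as $\Phi\colon \mathcal{P}\text{-}\mathsf{coalg}~[\mathsf{Q.iso}^{-1}]\to \mathbf{Coalg}_{\mathscr{P}}(\mathbf{D}(\kk))$, and the goal is to show that $\Phi$ is an equivalence.

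The engine of the proof is a compatible pair of pullback squares. Since a cofibrant operad is a retract of a cellular one and equivalences are closed under retracts, it suffices to treat cellular $\mathcal{P}$, built by transfinitely attaching cells through pushouts $\mathcal{P}' = \mathcal{Q}\sqcup_{\mathcal{F}(S)}\mathcal{F}(D)$, where $\mathcal{F}(-)$ is the free dg operad and $S\hookrightarrow D$ is a generating cofibration of symmetric sequences. A coalgebra over a colimit of operads is exactly a compatible family, so at the $1$-categorical level restriction produces a strict pullback
\[ \mathcal{P}'\text{-}\mathsf{coalg}~\cong~ \mathcal{Q}\text{-}\mathsf{coalg}~\times_{\mathcal{F}(S)\text{-}\mathsf{coalg}}~\mathcal{F}(D)\text{-}\mathsf{coalg}. \]
On the $\infty$-side, the analogous statement is that $\mathscr{P}\mapsto \mathbf{Coalg}_{\mathscr{P}}(\mathbf{D}(\kk))$ carries the pushout of $\infty$-operads to a pullback of $\infty$-categories; this I would deduce from the comonadic description in \cref{definition of P-coalg}, since the pro-extended dual Schur functor is monoidal and the comonad attached to a colimit of operads is the corresponding limit. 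The base of the induction is the initial operad, whose coalgebras are just chain complexes, so both sides reduce to $\mathbf{D}(\kk)$ and $\Phi$ becomes the identity; the corner cases are coalgebras over a free operad $\mathcal{F}(M)$, which on both sides are described \emph{without relations} by a morphism of symmetric sequences out of $M$ into the relevant coendomorphisms, and one checks directly that $\Phi$ matches these descriptions.

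The main obstacle is to upgrade the strict $1$-categorical pullback to a homotopy pullback whose formation is preserved by localization, since strict pullbacks do not commute with localization in general. Here I would exploit the structure of the left-transferred model structures over a field: weak equivalences and cofibrations are created by the forgetful functor, so all objects are cofibrant and every restriction functor along an operad map preserves weak equivalences and cofibrations. Because $S\hookrightarrow D$ is a cofibration, the induced map of free operads is a cofibration, and I expect restriction along it to be a fibration in the transferred structure; combined with all objects being cofibrant and with the combinatoriality and left-properness of these structures from \cite{BHKKRS, HKRS, GKR}, the strict pullback should compute the homotopy pullback of model categories, whose localization is the pullback of the localizations:
\[ \mathcal{P}'\text{-}\mathsf{coalg}~[\mathsf{Q.iso}^{-1}]~\simeq~ \mathcal{Q}\text{-}\mathsf{coalg}~[\mathsf{Q.iso}^{-1}]\times^{\mathrm{h}}_{\mathcal{F}(S)\text{-}\mathsf{coalg}~[\mathsf{Q.iso}^{-1}]}\mathcal{F}(D)\text{-}\mathsf{coalg}~[\mathsf{Q.iso}^{-1}]. \]

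The transfinite step additionally requires that both sides carry the sequential colimit of operads presenting $\mathcal{P}$ to the corresponding limit of coalgebra $\infty$-categories, which reduces to the continuity of the (co)monadic constructions involved. Granting these compatibilities, $\Phi$ is an equivalence on each corner and is compatible with the pullback decompositions, so by induction on the cells and passage to the colimit it is an equivalence for every cellular, hence every cofibrant, dg operad $\mathcal{P}$.
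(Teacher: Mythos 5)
Your skeleton coincides with the paper's: build the comparison functor, use that both $\mathcal{P}\mapsto\mathcal{P}\text{-}\mathsf{coalg}$ and $\mathscr{P}\mapsto\mathbf{Coalg}_{\mathscr{P}}(\mathbf{D}(\kk))$ turn colimits of operads into limits of categories, settle free operads as the base case, induct along cell attachments, and conclude for general cofibrant operads. The genuine gap sits exactly at the step you yourself flag as the main obstacle. Your proposed fix --- that restriction along the cell inclusion ``is a fibration in the transferred structure'', so that the strict pullback computes the homotopy pullback --- does not work as stated: a fibration in the left-transferred model structure is a \emph{morphism of coalgebras} with a right lifting property, whereas $\iota^k(p)^*\colon \onecoalg{\mathbb{T}(D^k(p))}\to\onecoalg{\mathbb{T}(S^{k-1}(p))}$ is a \emph{functor}, so the assertion does not typecheck. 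Reading it charitably as the claim that this functor is a fibration of relative categories in the Barwick--Kan model structure, this is precisely the statement the paper says it cannot verify (see the remark following \cref{thm: key theorem}): there is no usable characterization of such fibrations, and cofibrancy of $S^{k-1}(p)\hookrightarrow D^k(p)$, cofibrancy of all objects, combinatoriality and left properness give no purchase on it. Strict pullbacks of model categories along left Quillen functors that preserve all weak equivalences are simply not homotopy pullbacks of the localizations without a substitute for this fibrancy.

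The paper's resolution requires an idea absent from your proposal. One applies the right Quillen functor $i_1^*N_{\xi}$ (which preserves the strict pullback, being a right adjoint) and proves that the induced functor of quasi-categories is a Joyal fibration via the criterion of \cref{prop:fibrationff}: it suffices that the functor of homotopy categories be an isofibration and that the $\infty$-functor be fully faithful. The isofibration half is a lifting argument with factorizations in $\mathsf{Ch}(\kk)$ (\cref{prop: isofibration}); the fully faithful half is where the specific shape of the cell enters: since $D^k(p)$ is acyclic, $\onecoalg{\mathbb{T}(D^k(p))}~[\mathsf{Q.iso}^{-1}]\simeq\mathbf{D}(\kk)$, and under this identification $\iota^k(p)^*$ equips $X$ with the \emph{zero} costructure, so the equalizer computing coalgebra mapping spaces degenerates and the functor is fully faithful. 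None of this is a formal consequence of $S^{k-1}(p)\hookrightarrow D^k(p)$ being a cofibration. A secondary gap of the same nature: you dispose of the free case by saying both sides are described ``without relations'' by maps into coendomorphisms, but no coendomorphism operad construction exists on the $\infty$-categorical side (the paper remarks this explicitly), and the free case is itself a substantial theorem (\cref{thm: rectification of coalgebras over an endofunctor} and \cref{prop: rectification of coalgebras over free operads}), proved with the relative-category and complete-Segal-space machinery precisely because it faces the same strict-pullback-versus-homotopy-pullback problem.
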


\subsection{Some ideas of the proof of the main theorem.}
For any dg operad $\mathcal P$, with associated $\mathbf D(\kk)$-enriched $\infty$-operad $\mathscr P$, there is a natural functor 
\[ \onepcoalg\,[\mathsf{Q.iso}^{-1}] \longrightarrow \mathbf{Coalg}_{\mathscr{P}}(\mathbf{D}(\kk)).\]	
We say that $\mathcal P$ is \emph{rectifiable}  if this map is an equivalence. The proof consists of three steps:

	\begin{enumerate}
		\item \label{step1}A free operad $\mathcal P = \mathbb T(M)$, with $M$ a cofibrant dg symmetric sequence, is rectifiable.
		\item \label{step2}Suppose given a pushout diagram of dg operads
        \[
        \begin{tikzcd}[column sep=2.5pc,row sep=2.5pc]
        \mathbb T(M) \arrow[r] \arrow[d] \arrow[dr, phantom,"\ulcorner" rotate=-180, very near end] &
        \mathcal P \arrow[d] \\
        \mathbb T(M') \arrow[r] &
        \mathcal P'.
        \end{tikzcd}
        \]
	If $M \longrightarrow M'$ is a generating cofibration, and $\mathcal P$ is rectifiable, then $\mathcal P'$ is rectifiable. 
		\item \label{step3} A retract of a cofibrant rectifiable operad is rectifiable.
	\end{enumerate}
	This proves the theorem: indeed, cofibrant operads are precisely the retracts of quasi-free operads, and a quasi-free operad is an iterated pushout of the form in (\ref{step2}). In order to implement this proof strategy, we crucially use that the assigment that sends an operad to its category of coalgebras sends colimits to limits (both 1-categorically and $\infty$-categorically). Then, we show that at each step that these limits induce homotopy limits of \textit{quasi-categories}.
	
	\begin{remark}
It is certainly natural to expect that a version of \cref{thm: introrectification} is true also if $\kk$ is an arbitrary ring. Furthermore, there should be a version outside the differential graded setting (e.g.~under hypotheses like those of \cref{lurie theorem}). This is the subject of ongoing work.
	\end{remark}

\subsection{Applications and perspectives}
The following result is a celebrated theorem of Mandell, shown in \cite{Mandell01}:
	
	\begin{theorem}[Mandell]\label{mandell}
		Fix a prime $p$. Consider the contravariant singular cochain functor $X \mapsto C^*(X;\overline{\mathbb{F}}_p)$ from the homotopy category of spaces, to the homotopy category of $\mathbb E_\infty$-algebras over $\overline{\mathbb{F}}_p$. It is fully faithful when restricted to  nilpotent connected $p$-complete spaces of finite $p$-type.
	\end{theorem}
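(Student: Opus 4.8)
The plan is to follow Mandell's strategy: construct an explicit homotopical right adjoint $U$ to the cochain functor, reduce full faithfulness to the assertion that the unit of the resulting adjunction is an equivalence, verify this on Eilenberg--MacLane spaces by a free-algebra computation, and then bootstrap up the Postnikov tower.

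First I would construct, after passing to homotopy categories, a functor $U$ right adjoint to $C^*(-;\overline{\mathbb F}_p)$ (with the appropriate variance: $C^*$ is contravariant). Concretely, $U$ sends an $\mathbb E_\infty$-algebra $A$ to the totalization of the cosimplicial mapping space $[n] \mapsto \mathrm{Map}_{\mathbb E_\infty}\!\big(A, C^*(\Delta^n;\overline{\mathbb F}_p)\big)$, where $\Delta^\bullet$ is the cosimplicial space of standard simplices; the adjunction isomorphism is then $\mathrm{Hom}(A, C^*X) \cong \mathrm{Hom}(X, U A)$. There is a natural unit map $\eta_X\colon X \to U(C^*X)$, and the formal theory of adjunctions shows that $C^*$ is fully faithful on a class $\mathcal S$ of spaces precisely when $\eta_X$ is a weak equivalence for all $X \in \mathcal S$. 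The whole theorem thus reduces to proving that $\eta_X$ is an equivalence for $X$ nilpotent, connected, $p$-complete of finite $p$-type.

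The technical heart is the base case $X = K(\mathbb Z/p,n)$. The key input is that $C^*(K(\mathbb Z/p,n);\overline{\mathbb F}_p)$ is equivalent, as an $\mathbb E_\infty$-algebra, to the free $\mathbb E_\infty$-algebra on a single generator in cohomological degree $n$. This is the $\mathbb E_\infty$-refinement of the classical fact that $H^*(K(\mathbb Z/p,n);\mathbb F_p)$ is free over the Steenrod algebra on the fundamental class: the Steenrod operations are exactly the cohomology operations produced by the extended-power structure, and the only relations among them, the Adem relations, are themselves forced by the $\mathbb E_\infty$-structure. Granting this, $\eta_X$ unwinds into a mapping-space computation: by the free--forgetful adjunction, $U$ of a free $\mathbb E_\infty$-algebra is controlled by the underlying cochains of $\Delta^\bullet$, and the target is identified with the $p$-completion of $K(\mathbb Z/p,n)$. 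Working over $\overline{\mathbb F}_p$ rather than $\mathbb F_p$ is essential here: it is what makes the derived space of algebra maps compute the correct, Galois-rigidified homotopy type, and it is the source of the hypothesis in the theorem.

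Finally I would bootstrap to all nilpotent $X$ of finite $p$-type through the principal Postnikov tower, whose successive fibers are $p$-completed Eilenberg--MacLane spaces $K(\pi,n)$ with $\pi$ finitely generated. The cochain functor carries each principal fibration to a homotopy pushout of $\mathbb E_\infty$-algebras --- an Eilenberg--Moore statement whose convergence uses the finite-$p$-type hypothesis --- while $U$ sends homotopy pushouts back to homotopy pullbacks. Matching the two towers stage by stage, $\eta$ is an equivalence on each layer by the base case, hence on the convergent limit; $p$-completeness of $X$ then guarantees that the $p$-complete space $U(C^*X)$ is $X$ itself rather than merely its $p$-completion. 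The main obstacle is squarely the free-algebra identification of the cochains of Eilenberg--MacLane spaces: this is where essentially all the content of $p$-adic homotopy theory is concentrated, and both the construction of this comparison and the extraction of the mapping-space homotopy type from it are far more delicate than the comparatively formal Postnikov induction.
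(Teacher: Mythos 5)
The paper does not actually prove this statement; it quotes it as background from Mandell's paper \cite{Mandell01}, so your proposal has to be measured against Mandell's own argument. Your architecture does match his: the contravariant adjunction with $U(A)$ built from the cosimplicial mapping space $\mathrm{Map}_{\mathbb E_\infty}\bigl(A, C^*(\Delta^\bullet;\overline{\mathbb F}_p)\bigr)$, the reduction of full faithfulness to the unit $\eta_X$ being an equivalence, the base case of Eilenberg--MacLane spaces, and the Postnikov/Eilenberg--Moore bootstrap with finite $p$-type used for convergence.

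However, your ``technical heart'' is wrong, and fatally so: $C^*(K(\mathbb Z/p,n);\overline{\mathbb F}_p)$ is \emph{not} the free $\mathbb E_\infty$-algebra on a degree-$n$ class. The cohomology of the free $\mathbb E_\infty$-algebra on a degree-$n$ generator is free over the \emph{full} algebra of generalized Steenrod operations --- with no instability/excess condition, and with the operation $P^0$ acting as a nontrivial free operation --- whereas $H^*(K(\mathbb Z/p,n);\mathbb F_p)$ is the free \emph{unstable} algebra on the fundamental class: operations of excess larger than $n$ vanish and $P^0$ acts as the identity. So the two algebras differ drastically. Mandell's key computation is precisely that $C^*(K(\mathbb Z/p,n);\overline{\mathbb F}_p)$ is obtained from the free algebra by a \emph{derived imposition of the relation} $P^0x = x$, i.e., as a homotopy pushout of free $\mathbb E_\infty$-algebras realizing an Artin--Schreier-type construction; and this is exactly where the coefficient hypothesis enters, since over a separably closed field $P^0 - 1$ behaves like the Artin--Schreier/Lang covering (surjective with kernel $\mathbb F_p$), which is what makes the resulting mapping spaces come out as the $p$-completed $K(\mathbb Z/p,n)$. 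Your version, in which the base case is a free algebra and $\overline{\mathbb F}_p$ enters only as a vague ``Galois rigidification,'' would prove the identical statement with $\mathbb F_p$-coefficients verbatim --- and that statement is false: Mandell showed the $\mathbb F_p$-cochain functor is not full (Frobenius provides $\mathbb E_\infty$-maps not induced by maps of spaces). The error therefore sits exactly where you yourself locate ``essentially all the content of $p$-adic homotopy theory,'' and it cannot be repaired without replacing the free-algebra claim by the pushout presentation.
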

This theorem is a $p$-adic analogue of the theorem over $\mathbb Q$ of  Sullivan \cite{sullivan}, that $X \mapsto A_{\mathit{PL}}(X)$ is fully faithful when restricted to nilpotent connected rational spaces of finite $\mathbb Q$-type. In Sullivan's theorem, the finite type hypothesis is needed only because passing from chains to cochains introduces an ``unnecessary'' dualization: there is no finite type hypothesis in the analogous results of Quillen \cite{quillen69} for simply-connected cocommutative coalgebras. It is thus natural to expect that the same holds in the $p$-adic setting.

\begin{expectation}\label{expectation2}
	Fix a prime $p$. Consider the singular chain functor $X \mapsto C_*(X;\overline{\mathbb{F}}_p)$ from the homotopy category of spaces, to the homotopy category of $\mathbb E_\infty$-coalgebras over $\overline{\mathbb{F}}_p$. It is fully faithful when restricted to  nilpotent connected $p$-complete spaces.
\end{expectation}

Mandell's \cref{mandell} admits an $\infty$-categorical refinement, see \cite{Lurie11Rational}; the statement is identical to \cref{mandell}, but ``homotopy category'' is replaced with ``$\infty$-category''. Bachmann--Burklund \cite{bachmann2024} recently obtained a coalgebraic version, with no finite type hypothesis:

	\begin{theorem}[Bachmann--Burklund]\label{bb} Fix a prime $p$. Consider the singular chain functor $X \mapsto C_*(X;\overline{\mathbb{F}}_p)$ from the $\infty$-category of spaces, to the $\infty$-category $\mathrm{CAlg}(\mathbf D(\overline{\mathbb{F}}_p)^{\mathrm{op}})^{\mathrm{op}}$. It is fully faithful when restricted to  nilpotent connected $p$-complete spaces.
	\end{theorem}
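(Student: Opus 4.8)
The plan is to deduce the statement from Mandell's \cref{mandell} --- in its $\infty$-categorical refinement, see \cite{Lurie11Rational} --- by linear duality, isolating the finite-type hypothesis as the only obstruction and then removing it. First I would unwind the target: in the $\mathbb E_\infty$-variant of \cref{def:coalgebra}, the category $\mathrm{CAlg}(\mathbf D(\overline{\mathbb F}_p)^{\mathrm{op}})^{\mathrm{op}}$ is the $\infty$-category of $\mathbb E_\infty$-coalgebras in $\mathbf D(\overline{\mathbb F}_p)$, and the diagonal $X \to X \times X$ equips $C_*(X;\overline{\mathbb F}_p)$ with its comultiplication, so the functor under consideration is the covariant chains functor into $\mathbb E_\infty$-coalgebras. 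Full faithfulness amounts to showing that
\[
\mathrm{Map}_{\mathbf{Spc}}(X,Y) \longrightarrow \mathrm{Map}_{\mathbb E_\infty\text{-}\mathrm{coalg}}\big(C_*(X;\overline{\mathbb F}_p),\, C_*(Y;\overline{\mathbb F}_p)\big)
\]
is an equivalence for all nilpotent connected $p$-complete $X$ and $Y$.

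Second, I would settle the finite $p$-type case. Linear duality $(-)^\vee$ carries the chain coalgebra $C_*(X;\overline{\mathbb F}_p)$ to the cochain algebra $C^*(X;\overline{\mathbb F}_p)$ and defines a contravariant functor from $\mathbb E_\infty$-coalgebras to $\mathbb E_\infty$-algebras. When $X$ and $Y$ are of finite $p$-type the chain complexes are degreewise finite-dimensional, so that $C_*X$ is the filtered colimit of its degreewise-finite, hence dualizable, truncations; dualizing then identifies
\[
\mathrm{Map}_{\mathbb E_\infty\text{-}\mathrm{coalg}}\big(C_*X, C_*Y\big) \simeq \mathrm{Map}_{\mathbb E_\infty\text{-}\mathrm{alg}}\big(C^*Y, C^*X\big).
\]
Here the finite-type hypothesis is precisely what makes the duality comparison an equivalence on these mapping spaces. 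Combined with Mandell's theorem, whose content is exactly that the right-hand side computes $\mathrm{Map}_{\mathbf{Spc}}(X,Y)$, this yields full faithfulness on finite-type spaces.

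Third, I would remove the finite-type hypothesis --- the genuinely new content, and the reason one works with chains rather than cochains. The functor $C_*(-;\overline{\mathbb F}_p)$ preserves all colimits, in particular filtered colimits, whereas the dualizing cochain functor does not; this is what lets one handle non-finitely generated homotopy groups. Concretely, fixing one variable, I would show that the class of spaces for which the comparison map is an equivalence is closed under the operations that assemble an arbitrary nilpotent connected $p$-complete space from finite-type Eilenberg--MacLane pieces: on the one hand, realizing a general homotopy group as a filtered colimit of finitely generated $\mathbb Z_p$-modules (so that e.g.\ $K(\mathbb Q_p/\mathbb Z_p,n) = \operatorname{colim}_k K(\mathbb Z/p^k,n)$ is handled by filtered-colimit preservation); on the other, the principal Postnikov fibrations building a nilpotent space. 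The base case is the finite-type result applied to $K(A,n)$ with $A$ a finitely generated $\mathbb Z_p$-module, which is of finite $p$-type.

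The main obstacle is the Postnikov induction, where one must control the interaction between $C_*$, which does \emph{not} commute with the inverse limits defining Postnikov towers, and the limits computing coalgebra mapping spaces. Pushing the induction through requires a convergence statement --- an Eilenberg--Moore/descent spectral sequence in the $\infty$-category of $\mathbb E_\infty$-coalgebras over $\overline{\mathbb F}_p$ --- and it is exactly $p$-completeness together with nilpotence and connectivity that guarantee this convergence, as in Mandell's original argument. I expect this convergence and the attendant interchange of limits and colimits to be the technical heart of the proof, while the reduction to finite type and the appeal to Mandell are comparatively formal.
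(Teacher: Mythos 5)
The paper itself contains no proof of \cref{bb}: the statement is imported verbatim from \cite[Theorem 1.2]{bachmann2024}, and the paper's contribution is the observation that, once its rectification theorem is in place, \cref{bb} yields the point-set statement (Expectation \ref{expectation2}). So what can be assessed is whether your sketch would constitute an independent proof of the cited theorem. It would not: the two ingredients you treat as routine or defer are precisely the content of \cref{bb} beyond \cref{mandell}. Your first two steps (identifying $\mathrm{CAlg}(\mathbf D(\overline{\mathbb F}_p)^{\mathrm{op}})^{\mathrm{op}}$ with $\mathbb E_\infty$-coalgebras, and the finite-$p$-type case via degreewise-finite linear duality plus the $\infty$-categorical Mandell theorem \cite{Lurie11Rational}) are essentially correct, but they are the formal part.

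The genuine gap is in the removal of the finite-type hypothesis. Your two tools each work in only one variable of the comparison map. Filtered colimits help only in the \emph{source}: since colimits of coalgebras are computed on underlying chain complexes, $X\simeq \colim_i X_i$ turns both sides into limits. Postnikov towers help only in the \emph{target}: $\mathrm{Map}(X,\lim_n Y_n)\simeq \lim_n \mathrm{Map}(X,Y_n)$ for spaces, with no analogous formula for mapping out of a limit. But in the Postnikov induction the non-finitely-generated homotopy groups occur in the \emph{target}, as the bases $K(\pi_n Y,n+1)$ of the principal fibrations, and there the filtered-colimit argument fails: for fixed non-finite $X$, neither $\mathrm{Map}_{\mathbf{Spc}}(X,-)$ nor the coalgebra mapping space out of $C_*X$ commutes with filtered colimits ($X$ is not compact, $C_*X$ is not a compact coalgebra; already $H^{n}(X;\colim_i M_i)\neq \colim_i H^{n}(X;M_i)$ when $H_*(X)$ is not finitely generated). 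So the assertion that ``$K(\mathbb Q_p/\mathbb Z_p,n)=\colim_k K(\mathbb Z/p^k,n)$ is handled by filtered-colimit preservation'' does not give what the induction needs --- and note that $K(\mathbb Q_p/\mathbb Z_p,n)$ is not Ext-$p$-complete, hence not even in the subcategory under consideration. On top of this, the Eilenberg--Moore-type convergence for Postnikov towers of $\mathbb E_\infty$-coalgebras, which you explicitly leave as an expectation, is not a detail to be supplied later: together with the treatment of Eilenberg--MacLane spaces with arbitrary Ext-$p$-complete coefficients (where the separably-closed hypothesis and the Artin--Schreier/Frobenius argument enter), it is exactly where Bachmann--Burklund's argument does its work, exploiting in particular that limits in $\mathrm{CAlg}(\mathbf D(\overline{\mathbb F}_p)^{\mathrm{op}})^{\mathrm{op}}$ are \emph{not} computed on underlying chain complexes. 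As it stands, your proposal reduces the theorem to two statements that are jointly equivalent to the theorem.
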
 

At first, one may think that \cref{bb} should directly imply Expectation \ref{expectation2}, by taking homotopy categories. But this is not at all immediate, since \emph{a priori} there is no relationship between the $\infty$-category $\mathrm{CAlg}(\mathbf D(\overline{\mathbb{F}}_p)^{\mathrm{op}})^{\mathrm{op}}$ and the $\infty$-category of $\mathbb E_\infty$-coalgebras localized at quasi-isomorphisms (except a natural functor from the latter to the former). Our theorem fills this gap: the $\infty$-categories are indeed equivalent, and \cref{bb} indeed proves Expectation \ref{expectation2}.

\begin{remark}If one tries to prove Expectation \ref{expectation2} directly, at the point-set level, by dualizing Mandell's arguments, then one runs into issues like whether the category of $\mathbb E_\infty$-coalgebras localized at quasi-isomorphisms is comonadic over $\mathbf D(\overline{\mathbb{F}}_p)$ --- a property which is automatic for $\mathrm{CAlg}(\mathbf D(\overline{\mathbb{F}}_p)^{\mathrm{op}})^{\mathrm{op}}$. Thus it seems in a sense that a main obstruction to proving a coalgebraic version of Mandell's theorem was a lack of good point-set models, an obstruction which Burklund--Bachmann circumvented by carrying out the entire argument at the $\infty$-categorical level.  \end{remark}

Our results provide an explicit point-set version of the cellular chains functor $C_*(-;\kk)$ together with its functorial $\mathbb{E}_\infty$-coalgebra structure. For this purpose, we consider the explicit dg $\mathcal{E}$-coalgebra structure of the point-set cellular chain functor $C_*(-;\kk)$ over the Barratt-Eccles dg operad, as constructed in \cite{BergerFresse}, and we pull it back along the cofibrant resolution $\Omega \mathrm{B} \mathcal{E} \qi \mathcal{E}$. 

\begin{theoremintro}\label{thm: presenting chains with the coalgebra structure}
Let $\kk$ be a field. The functor 
\[
C_*(-;\kk): \mathsf{sSet} \longrightarrow \Omega \mathrm{B} \mathcal{E}\text{-}\mathsf{coalg}~, 
\]
is a point-set model for the $\infty$-categorical chains functor together with its $\mathbb{E}_\infty$-coalgebra structure.
\end{theoremintro}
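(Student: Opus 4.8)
The plan is to apply \cref{thm: introrectification} to the cofibrant operad $\Omega \mathrm{B} \mathcal{E}$ and then pin down the resulting localized functor by a universal property, so that matching the explicit Berger--Fresse comultiplications against the abstract cocommutative structure reduces to inspecting a single point. First I would identify the target. By \cref{thm: introrectification} applied to $\mathcal{P} = \Omega \mathrm{B} \mathcal{E}$, there is an equivalence $\Omega \mathrm{B} \mathcal{E}\text{-}\mathsf{coalg}~[\mathsf{Q.iso}^{-1}] \simeq \mathbf{Coalg}_{\mathscr{P}}(\mathbf{D}(\kk))$, where $\mathscr{P}$ is the $\mathbf{D}(\kk)$-enriched $\infty$-operad induced by $\Omega \mathrm{B} \mathcal{E}$. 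Since the resolution $\Omega \mathrm{B} \mathcal{E} \qi \mathcal{E}$ is a quasi-isomorphism of dg operads and each arity $\mathcal{E}(n) = C_*(\mathrm{E}\Sigma_n;\kk)$ is a free resolution of $\kk$ over $\kk[\Sigma_n]$, the functor from dg operads to enriched $\infty$-operads sends $\Omega \mathrm{B} \mathcal{E}$ to the ($\mathbf{D}(\kk)$-enriched) $\mathbb{E}_\infty$-operad. Combining this with the comparison of \cref{definition of P-coalg} with Lurie's coalgebras, the target is identified with $\mathbf{Coalg}_{\mathbb{E}_\infty}(\mathbf{D}(\kk)) \simeq \mathrm{CAlg}(\mathbf{D}(\kk)^{\mathrm{op}})^{\mathrm{op}}$, the $\infty$-category of cocommutative coalgebras in which the chains functor naturally takes values.

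Next I would produce the localized functor. The point-set functor $C_*(-;\kk)\colon \mathsf{sSet} \to \Omega \mathrm{B} \mathcal{E}\text{-}\mathsf{coalg}$ sends weak homotopy equivalences to quasi-isomorphisms, already on underlying complexes, so it descends to a functor $\Psi\colon \mathbf{Spc} \to \mathbf{Coalg}_{\mathbb{E}_\infty}(\mathbf{D}(\kk))$. The crucial point is that $\Psi$ preserves colimits. Indeed, under the identification $\mathbf{Coalg}_{\mathbb{E}_\infty}(\mathbf{D}(\kk)) \simeq \mathrm{CAlg}(\mathbf{D}(\kk)^{\mathrm{op}})^{\mathrm{op}}$ the forgetful functor $U$ to $\mathbf{D}(\kk)$ creates colimits, because limits of commutative algebras are computed on underlying objects. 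Since $U \circ \Psi$ is the ordinary chains functor $\mathbf{Spc} \to \mathbf{D}(\kk)$, which preserves homotopy colimits, it follows that $\Psi$ preserves colimits as well.

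Finally I would pin down $\Psi$ using the universal property of $\mathbf{Spc}$ as the free cocompletion of a point: for any cocomplete $\infty$-category $\mathbf{C}$, a colimit-preserving functor $\mathbf{Spc} \to \mathbf{C}$ is determined up to equivalence by its value on the point. The intended comparison functor $\Phi$ is the one induced on cocommutative coalgebras by the unique symmetric monoidal colimit-preserving chains functor $\mathbf{Spc} \to \mathbf{D}(\kk)$, via the canonical identification $\mathbf{Spc} \simeq \mathbf{Coalg}_{\mathbb{E}_\infty}(\mathbf{Spc})$ valid for the cartesian structure; it likewise preserves colimits, by the same argument as for $\Psi$, and sends the point to the unit coalgebra $\kk$. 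On the other hand $\Psi(\mathrm{pt}) = C_*(\Delta^0;\kk)$ is $\kk$ concentrated in degree zero, and its Berger--Fresse structure is the trivial one, since the diagonal of a point is an isomorphism; under the equivalence of \cref{thm: introrectification} this is again the unit $\mathbb{E}_\infty$-coalgebra. Hence $\Phi(\mathrm{pt}) \simeq \Psi(\mathrm{pt})$, and the universal property forces $\Phi \simeq \Psi$, which is precisely the assertion.

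I expect the main obstacle to lie in the two identifications of the first step: that the enriched $\infty$-operad induced by $\Omega \mathrm{B} \mathcal{E}$ is genuinely $\mathbb{E}_\infty$, and that the equivalence of \cref{thm: introrectification} carries the unit coalgebra to the unit coalgebra, which is what the computation of $\Psi(\mathrm{pt})$ ultimately relies on. By contrast, what might a priori look like the hardest part --- reconciling the explicit higher Alexander--Whitney and Barratt--Eccles comultiplications with the abstract homotopy-coherent cocommutative structure --- is entirely bypassed by the universal property, since only the value on a single point needs to be checked.
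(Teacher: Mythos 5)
Your proposal is correct and follows essentially the same route as the paper: first identify $\Omega\mathrm{B}\mathcal{E}\text{-}\mathsf{coalg}~[\mathsf{Q.iso}^{-1}]$ with $\mathbf{Coalg}_{\mathbb{E}_\infty}(\mathbf{D}(\kk))$ by combining the rectification theorem for the cofibrant operad $\Omega\mathrm{B}\mathcal{E}$ with the comparison to Lurie's coalgebras, then show the two functors agree because both preserve (homotopy) colimits, both send the point to the unit coalgebra $\kk$, and every space is a colimit of points. Your extra details --- the comonadic argument that the forgetful functor creates colimits, and the explicit appeal to $\mathbf{Spc}$ as the free cocompletion of a point --- are just fleshed-out versions of the facts the paper invokes more tersely.
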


When $\kk$ is a separably closed field of characteristic $p > 0$, this allows us to lift the main result of \cite{bachmann2024} to the point-set level and obtain explicit models for nilpotent $p$-adic homotopy types. The functor $C_*(-;\kk)$ fits in a Quillen adjunction 
\[
\begin{tikzcd}[column sep=5pc,row sep=3pc]
         \mathsf{sSet} \arrow[r, shift left=1.1ex, "C_*(-;\kk)"{name=F}] &\Omega \mathrm{B} \mathcal{E}\text{-}\mathsf{coalg}, \arrow[l, shift left=.75ex, "\overline{\mathcal{R}}"{name=U}]
            \arrow[phantom, from=F, to=U, , "\dashv" rotate=-90]
\end{tikzcd}
\]
between the category of simplicial sets, and the category of dg $\Omega\mathrm{B}\mathcal{E}$-coalgebras endowed with the transferred model structure from chain complexes over $\kk$. The first author used this in \cite{lucio2024higherlietheorypositive} to construct a Quillen adjunction 
\[
\begin{tikzcd}[column sep=5pc,row sep=3pc]
          \mathsf{sSet}_* \arrow[r, shift left=1.1ex, "\mathcal{L}_*"{name=A}]
          &\mathsf{abs}~\mathcal{L}_\infty^\pi\text{-}\mathsf{alg}^{\mathsf{qp}\text{-}\mathsf{comp}} \arrow[l, shift left=.75ex, "\mathcal{R}_*"{name=B}]
           \arrow[phantom, from=A, to=B, , "\dashv" rotate=-90]
\end{tikzcd}
\]
between pointed simplicial sets, and absolute partition $\mathcal{L}_\infty$-algebras which satisfy a separatedness axiom called \textit{qp-completeness} in the terminology of \cite{premierpapier}. As an immediate corollary of Theorem \ref{thm: presenting chains with the coalgebra structure} and \cite[Theorem 1.2]{bachmann2024}, we can remove the \textit{finite type} and the \textit{connected} assumptions on the nilpotent $p$-adic spaces considered in \cite{lucio2024higherlietheorypositive}. 

\begin{corollary}
Let $\kk$ be a separably closed field of characteristic $p > 0$. Let $X$ be a pointed nilpotent simplicial set.
\begin{itemize}
\item[(1)] The derived unit of the adjunction 
\[
\mathbb{R}\eta_X: X \longrightarrow \mathbb{R}\overline{\mathcal{R}}\tilde{C}_*(X;\kk)
\]
is an equivalence in homology with coefficients in $\mathbb{F}_p$, where $\tilde{C}_*(X;\kk)$ denotes the \textit{reduced} chains with its non-counital coalgebra structure over the Barratt-Eccles operad.
\item[(2)] The unit of adjunction
\[
\eta_X: X \qi \mathcal{R}_*\mathcal{L}_*(X) 
\]
is an equivalence in homology with coefficients in $\mathbb{F}_p$.
\end{itemize}
\end{corollary}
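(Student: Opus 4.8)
The plan is to deduce the Corollary directly from \cref{thm: presenting chains with the coalgebra structure} together with \cite[Theorem 1.2]{bachmann2024}, treating the two items in parallel, since item (2) is essentially item (1) transported through the Lie-theoretic adjunction of \cite{lucio2024higherlietheorypositive}. First I would recall that \cref{thm: presenting chains with the coalgebra structure} identifies the point-set functor $C_*(-;\kk)\colon \mathsf{sSet} \to \Omega\mathrm{B}\mathcal{E}\text{-}\mathsf{coalg}$, after localizing at quasi-isomorphisms, with the $\infty$-categorical chains functor equipped with its $\mathbb E_\infty$-coalgebra structure; by \cref{thm: introrectification} applied to the cofibrant operad $\mathcal P = \Omega\mathrm{B}\mathcal E$, the target localizes to $\mathbf{Coalg}_{\mathscr E}(\mathbf D(\kk)) \simeq \mathrm{CAlg}(\mathbf D(\kk)^{\mathrm{op}})^{\mathrm{op}}$, which is precisely the receiving $\infty$-category of \cref{bb}. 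So the derived right adjoint $\mathbb R\overline{\mathcal R}$ computes the same $\infty$-categorical functor as the one underlying Bachmann--Burklund's equivalence. The content of item (1) is then that the derived unit $\mathbb R\eta_X\colon X \to \mathbb R\overline{\mathcal R}\,\tilde C_*(X;\kk)$ is an $\mathbb F_p$-homology equivalence; this is exactly the statement that the derived unit of the corresponding adjunction at the $\infty$-categorical level is an equivalence after $p$-completion, which is the fully-faithfulness assertion in \cref{bb} read off on units.

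The first concrete step is thus to translate the point-set Quillen adjunction $(C_*(-;\kk), \overline{\mathcal R})$ into an adjunction of $\infty$-categories by deriving it and invoking the identifications above, so that the derived unit $\mathbb R\eta_X$ is identified with the unit of the $\infty$-categorical chains--cochains coalgebra adjunction evaluated at $X$. The second step is to observe that for $X$ nilpotent and $p$-complete, \cref{bb} says $C_*(-;\overline{\mathbb F}_p)$ is fully faithful, so this unit is an equivalence onto its image after $p$-completion; unwinding, the unit is an $\mathbb F_p$-homology equivalence for all nilpotent $X$ (the $p$-completeness hypothesis of \cref{bb} is recovered by noting that an $\mathbb F_p$-homology equivalence is precisely a map inducing an equivalence on $p$-completions). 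Since we work over a separably closed field $\kk$ of characteristic $p$, the coefficient field $\overline{\mathbb F}_p$ in \cite{bachmann2024} may be replaced by $\kk$, as $\mathbb F_p$-homology is detected after any field extension. This establishes item (1); I would emphasize that the only new input beyond \cite{bachmann2024} is the point-set identification, which is exactly what \cref{thm: presenting chains with the coalgebra structure} supplies, thereby removing the finite-type and connectedness hypotheses of \cite{lucio2024higherlietheorypositive}.

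For item (2), the plan is to factor the unit $\eta_X\colon X \to \mathcal R_*\mathcal L_*(X)$ through the coalgebraic unit of item (1) using the Quillen adjunction $(\mathcal L_*, \mathcal R_*)$ constructed in \cite{lucio2024higherlietheorypositive}. That adjunction is built by composing $C_*(-;\kk)$ with the higher Lie-theoretic equivalence between $\Omega\mathrm{B}\mathcal E$-coalgebras (equivalently $\mathbb E_\infty$-coalgebras) and qp-complete absolute partition $\mathcal L_\infty$-algebras; by construction the derived functor $\mathbb R\mathcal R_*$ agrees with $\mathbb R\overline{\mathcal R}$ after passing through this equivalence, so the two derived units coincide up to the identification of the intermediate categories. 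The key step is therefore to check that the Koszul-type equivalence relating partition $\mathcal L_\infty$-algebras to $\mathbb E_\infty$-coalgebras preserves and detects $\mathbb F_p$-homology equivalences on the relevant objects, so that item (1) transports verbatim to give the $\mathbb F_p$-homology equivalence in (2).

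The main obstacle I anticipate is the bookkeeping in the first step: one must verify carefully that the derived right adjoint of the point-set Quillen pair $(C_*(-;\kk), \overline{\mathcal R})$, after localization, really is identified with Bachmann--Burklund's $\infty$-categorical functor and not merely abstractly equivalent to it. This requires that the equivalence of \cref{thm: introrectification} be compatible, as a functor over $\mathbf D(\kk)$, with the comparison functor from $\Omega\mathrm{B}\mathcal E\text{-}\mathsf{coalg}[\mathsf{Q.iso}^{-1}]$ to $\mathbf{Coalg}_{\mathscr E}(\mathbf D(\kk))$; the forgetful functors to $\mathbf D(\kk)$ must match so that the derived units, which live over $\mathbf D(\kk)$, are genuinely the same map and not just abstractly equivalent. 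Once this compatibility is in hand, both items follow formally, and the removal of the finite-type and connectedness assumptions is then automatic because neither \cref{bb} nor our point-set identification imposes them.
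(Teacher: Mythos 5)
Your proposal is correct and follows essentially the same route as the paper: item (1) is obtained by combining the point-set identification of the chains functor (\cref{thm: presenting chains with the coalgebra structure}, which rests on the rectification theorem and the comparison with Lurie's coalgebras) with Bachmann--Burklund's theorem, and item (2) is deduced from item (1) via the fact that the adjunction $\mathcal{L}_* \dashv \mathcal{R}_*$ of \cite{lucio2024higherlietheorypositive} models the derived adjunction $\tilde{C}_*(-;\kk) \dashv \overline{\mathcal{R}}$. The compatibility-over-$\mathbf{D}(\kk)$ issue you flag is indeed the key point, and it is handled in the paper by the fact that the rectification equivalences are constructed as equivalences of pullbacks commuting with the forgetful functors to $\mathbf{D}(\kk)$.
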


One can also remove the \textit{pointed} assumption by working with the counital coalgebra structure over the Barratt-Eccles operad of $C_*(-;\kk)$ and with \textit{curved} absolute partition $\mathcal{L}_\infty$-algebras. Moreover, the above corollary also allows us to remove the \textit{finite type} and the \textit{connected} in \cite[Theorem E]{lucio2024higherlietheorypositive} and in the applications of \cite[Theorem B]{lucio2024higherlietheorypositive} to models of spaces.

\medskip

Finally, let us mention that the main motivation for carrying out this work is the forthcoming paper \cite{poinsetkoszul}, where using point-set models we give a unified framework that intertwines all the different bar-cobar adjunctions in the literature (both 1-categorical and $\infty$-categorical). In particular, it allows us to give point-set models for Lurie's bar-cobar adjunction between augmented $\mathbb{E}_1$-algebras and coaugmented $\mathbb{E}_1$-coalgebras in $\mathbf{D}(\kk)$, as well as Ayala--Francis' generalization of this adjunction to the $\mathbb{E}_n$ case when $\kk$ is of characteristic zero. In both cases, we crucially need to have point-set models for the target $\infty$-categories of homotopy coherent coalgebras in $\mathbf{D}(\kk)$. 

\subsection*{Acknowledgements} We wish to warmly thank Grégory Ginot for several discussions that led to this paper, and in particular for suggesting the proof of Proposition \ref{prop: isofibration}. The first author also wishes to thank Geoffroy Horel, Brice Le Grignou and Damien Lejay for discussions about these and related topics. Finally, we wish to thank Brice Le Grignou for pointing out a mistake in the Lemma 3.9 of the earlier version, see Remark 3.9. 

\subsection*{Conventions}
Regarding notation, we will adopt the following conventions. 


\begin{itemize}
    \item We denote by $\kk$ a field. Our base 1-category will be the category of chain complexes over $\kk$, together with its closed symmetric monoidal structure given by the tensor product $-\otimes-$ of chain complexes and the Koszul sign convention. We will denote the internal hom of this category by $[-,-]$. We adopt the \textit{homological convention}, differentials will be of degree $-1$. We denote this 1-category by $\mathsf{Ch}(\kk)$. 
    
    
    \item In general, dg operads in $\mathsf{Ch}(\kk)$ will be denoted by $\mathcal{P}$. For a dg operad $\mathcal{P}$, we denote the category of dg $\mathcal{P}$-algebras by $\mathcal{P}\text{-}\mathsf{alg}$ and of dg $\mathcal{P}$-coalgebras by $\mathcal{P}\text{-}\mathsf{coalg}$. Similarly for algebras and coalgebras over a dg cooperad.
    
    
    \item Let $\mathsf{C}$ be a category and let $\mathsf{W}$ be a class of arrows in $\mathsf{C}$. We will denote $\mathsf{C}~[\mathsf{W}^{-1}]$ the $\infty$-category obtained by localizing $\mathsf{C}$ at $\mathsf{W}$. When working at the $\infty$-categorical level, limits and colimits should be understood as meaning homotopy limits and colimits. If
\[
\mathsf{F}: \mathsf{C} \longrightarrow \mathsf{D}
\]
is a functor between categories which sends a class of arrows $\mathsf{W}_{\mathsf{C}}$ in $\mathsf{C}$ to a class of arrows $\mathsf{W}_{\mathsf{D}}$ in $\mathsf{D}$, we still denote 
\[
\mathsf{F}: \mathsf{C}~[\mathsf{W}^{-1}_{\mathsf{C}}] \longrightarrow \mathsf{D}~[\mathsf{W}^{-1}_{\mathsf{D}}]
\]
the induced functor at the $\infty$-categorical level. However, we will add $\mathbb{L}\mathsf{F}$ and $\mathbb{R}\mathsf{F}$ to the left (resp. right) derived functors of $\mathsf{F}$ when it is a left (resp. right) Quillen functor which does not preserve weak-equivalences in general. If $\mathsf{C}$ is a 1-category that we consider as an $\infty$-category via the nerve functor $\mathcal{N}$, we will still denote it by $\mathsf{C}$ instead of $\mathcal{N}(\mathsf{C})$.


    \item We denote by $\mathcal{N}^{coh}$ the coherent nerve of a simplicially enriched category.


    \item We denote by $\mathsf{1}$ the 1-category with two objects and one arrow and $\mathsf{1}^{\simeq}$ the relative category with two objects and an equivalence between them. 

    
    \item Operads and algebras will typically be considered in the symmetric monoidal $\infty$-category $\mathbf{D}(\kk) \coloneqq \mathsf{Ch}(\kk)~[\mathsf{Q.iso}^{-1}]$ of chain complexes over $\kk$ localized at  quasi-isomorphisms.
    
    \item In general, an (enriched) $\infty$-operad in $\mathbf{D}(\kk)$ will be denoted by $\mathscr{P}$. Given any enriched $\infty$-operad $\mathscr{P}$, we denote the $\infty$-category of $\mathscr{P}$-algebras in the base $\infty$-category $\mathbf{D}(\kk)$ by $\mathbf{Alg}_{\mathscr{P}}(\mathbf{D}(\kk))$ and the $\infty$-category of $\mathscr{P}$-coalgebras in $\mathbf{D}(\kk)$ by $\mathbf{Coalg}_{\mathscr{P}}(\mathbf{D}(\kk))$. Similarly for algebras and coalgebras over an enriched $\infty$-cooperad.
\end{itemize}

\section{Point-set coalgebras}

The goal of this section is recall the notion of coalgebras over an diffential graded operad. A key feature of this definition is that it does not impose any kind of conilpotency condition on the coalgebras it encodes.

\subsection{Differential graded operads, algebras, and coalgebras}\label{sec:1-operads}
Let $\kk$ be any field. We consider the base 1-category $\mathsf{Ch}(\kk)$ of chain complexes of $\kk$-modules as our base 1-category. Let $\mathsf{Fin}^{\simeq}$ denote the 1-category of finite sets and bijections. We define the category of \textit{dg symmetric sequences} as the category of functors from $\mathsf{Fin}^{\simeq}$ to $\mathsf{Ch}(\kk)$:
\[
\mathsf{sSeq}(\mathsf{Ch}(\kk)) \coloneqq \mathsf{Fun}(\mathsf{Fin}^{\simeq},\mathsf{Ch}(\kk))~. 
\]
For $M$ in $\mathsf{sSeq}(\mathsf{Ch}(\kk))$, we denote by $M(n)$ the evaluation of $M$ at the set $\underline{n}= \{1,\cdots,n\}$. The category of dg symmetric sequences in admits a monoidal structure given by the composition product $\circledcirc$, which for two dg symmetric sequences $M$ and $N$ is  given by
\[
M \circledcirc N(n) \simeq \bigoplus_{k \geq 0} \left(\bigoplus_{\underline{n} = \sqcup_{i=1}^k S_i} M(k) \otimes N(S_1) \otimes \cdots \otimes N(S_k)\right)_{\mathbb{S}_k} .
\]

The unit for the composition is $ I$, given by 
$$
I (n) \coloneqq
\begin{cases}
    0 \text{ if }n \neq 1~,
    \\
    \kk \text{ if }n = 1.
\end{cases}
$$

\begin{definition}[dg operad]
A \textit{dg operad} $\oneop$ is a monoid $(\oneop,\gamma,\eta)$ in the category of dg symmetric sequences with respect to the composition product. 
\end{definition}

\begin{notation}
Let $V$ and $W$ be two chain complexes. We denote by $[V,W]$ the graded module of graded maps between $V$ and $W$, together with the differential given by $\partial(f)= d_W \circ f -(-1)^{|f|}f \circ d_V$. The construction $[-,-]$ defines a canonical self-enrichment of the category of  chain complexes.
\end{notation}

\begin{example}[Endomorphism operad] 
Let $V$ be a chain complex. One can construct the \textit{endomorphism operad} of $V$ by considering the dg symmetric sequence given by 
\[
\mathrm{End}_V(n) \coloneqq [V^{\otimes n}, V]~, 
\] 
with the natural $\mathbb{S}_n$-action and where the composition map is given by the composition of morphisms, see \cite[Chapter 5]{LodayVallette} for more details.
\end{example}

\begin{example}[Coendomorphism operad] 
Let $V$ be a chain complex. One can construct the \textit{coendomorphism operad} of $V$ by considering the dg symmetric sequence given by 
\[
\mathrm{coEnd}_V(n) \coloneqq [V, V^{\otimes n}]~, 
\] 
with the natural $\mathbb{S}_n$-action and where the composition map is given by the composition of morphisms, see \cite[Chapter 5]{LodayVallette} for more details.
\end{example}

\begin{definition}[dg $\oneop$-algebra]
A dg $\oneop$-coalgebra $A$ is a pair $(A,\Gamma_A)$ of a chain complex $A$ together with a morphism of dg operads $\Gamma_A: \oneop \longrightarrow \mathrm{End}_A$. 
\end{definition}

While operads are usually used to encode types of algebras, they can equally well encode types of coalgebras. Unlike coalgebras over a cooperad, these coalgebras typically come without any conilpotency restriction.

\begin{definition}[dg $\oneop$-coalgebra]
A dg $\oneop$-coalgebra $C$ is a pair $(C,\Gamma_C)$ of a chain complex $C$ together with a morphism of dg operads $\Gamma_C: \oneop \longrightarrow \mathrm{coEnd}_C$. 
\end{definition}

\begin{example}[Cocommutative coalgebras]
Let $\mathcal{P} = u\mathcal{C}om$, which is given by $u\mathcal{C}om(n) = \kk$ for all $n \geq 0$, together with the trivial $\mathbb{S}_n$-action and the obvious composition maps. The category of dg $u\mathcal{C}om$-coalgebras is equivalent to the category of \textit{all} counital cocommutative dg coalgebras, with no conilpotency hypotheses. 
\end{example}

\subsection{Defining coalgebras via the dual Schur functor}\label{set:1-dual schur}
Algebras over an operad are algebras over a monad, which is given by the Schur functor associated to the operad. See, for instance, \cite[Chapter 2]{Fresse09}. To any dg symmetric sequence, one can also associate its \textit{dual Schur functor}, which is given by
\[
\begin{tikzcd}[column sep=4pc,row sep=0.5pc]
\widehat{\mathrm{S}}^c : \mathsf{sSeq}(\mathsf{Ch}(\kk))^{\mathsf{op}} \arrow[r]
&\mathsf{End}(\baseonecat) \\
M \arrow[r,mapsto]
&\widehat{\mathrm{S}}^c(M) \coloneqq \displaystyle \prod_{n \geq 0} \left[M(n),(-)^{\otimes n}\right]^{\mathbb{S}_n}~.
\end{tikzcd}
\]
The functor $\widehat{\mathrm{S}}^c(-)$ is only lax monoidal. Hence, for a dg operad $\oneop$, its dual Schur functor $\widehat{\mathrm{S}}^c(\oneop)$ fails to be a comonad. Nevertheless, the definition of a dg $\oneop$-coalgebra can still be rewritten using the dual Schur functor of $\oneop$. 

\begin{lemma}\label{definition: P-coalgebra}
Let $C$ be a chain complex. The data of a dg $\oneop$-coalgebra structure on $C$ is equivalent to the data of a  structural map
\[
\Delta_C: C \longrightarrow \widehat{\mathrm{S}}^c(\oneop)(C)  = \displaystyle \prod_{n \geq 0} \left[\mathcal{P}(n),C^{\otimes n}\right]^{\mathbb{S}_n}~,
\]
such that the following diagram commutes 
\[
\begin{tikzcd}[column sep=4.5pc,row sep=4pc]
C \arrow[r,"\Delta_C"] \arrow[d,"\Delta_C",swap] 
&\widehat{\mathrm{S}}^c(\oneop)(C) \arrow[r,"\widehat{\mathrm{S}}^c(\mathrm{id})(\Delta_C)"]
&\widehat{\mathrm{S}}^c(\oneop) \circ \widehat{\mathrm{S}}^c(\oneop)(C) \arrow[d,"\varphi_{\oneop,\oneop}(C)"] \\
\widehat{\mathrm{S}}^c(\oneop)(C) \arrow[rr,"\widehat{\mathrm{S}}^c(\gamma)(\mathrm{id})"]
&
&\widehat{\mathrm{S}}^c(\oneop \circ \oneop)(C)~,
\end{tikzcd}
\]
where $\varphi$ is the lax monoidal structure of the functor $\widehat{\mathrm{S}}^c$. 
\end{lemma}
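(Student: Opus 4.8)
The plan is to split the statement into a purely formal bijection of underlying data, coming from the tensor–hom adjunction, and a matching of the two structural conditions, where only the latter carries real content.

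First I would establish, for an \emph{arbitrary} dg symmetric sequence $\mathcal{P}$ and naturally in $\mathcal{P}$, a bijection
\[
\mathrm{Hom}_{\mathsf{sSeq}(\mathsf{Ch}(\kk))}\bigl(\mathcal{P}, \mathrm{coEnd}_C\bigr) \;\cong\; \mathrm{Hom}_{\mathsf{Ch}(\kk)}\bigl(C, \widehat{\mathrm{S}}^c(\mathcal{P})(C)\bigr).
\]
Concretely, a morphism of symmetric sequences $\Gamma_C$ is a family of $\mathbb{S}_n$-equivariant chain maps $\Gamma_C(n)\colon \mathcal{P}(n) \to [C, C^{\otimes n}]$. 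Applying the adjunction $[-\otimes -,-] \cong [-,[-,-]]$ of $\mathsf{Ch}(\kk)$ transposes each into a map $C \to [\mathcal{P}(n), C^{\otimes n}]$, and the $\mathbb{S}_n$-equivariance of $\Gamma_C(n)$ is exactly the condition that this transpose lands in the invariants $[\mathcal{P}(n), C^{\otimes n}]^{\mathbb{S}_n}$ for the diagonal action. Assembling over $n$ by the universal property of the product yields $\Delta_C\colon C \to \prod_n [\mathcal{P}(n), C^{\otimes n}]^{\mathbb{S}_n} = \widehat{\mathrm{S}}^c(\mathcal{P})(C)$; the construction is manifestly invertible and contravariantly natural in $\mathcal{P}$.

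Second I would show that, under this bijection, the condition that $\Gamma_C$ is a morphism of operads, i.e.\ of monoids in $(\mathsf{sSeq}(\mathsf{Ch}(\kk)), \circ)$, corresponds precisely to the coassociativity square. Being a monoid morphism means $\Gamma_C \circ \gamma_{\mathcal{P}} = \gamma_{\mathrm{coEnd}_C} \circ (\Gamma_C \circ \Gamma_C)$ as maps $\mathcal{P}\circ\mathcal{P} \to \mathrm{coEnd}_C$, and I would transport each side through the bijection of the first step applied to the symmetric sequence $\mathcal{P}\circ\mathcal{P}$. Naturality in the first variable identifies the transpose of $\Gamma_C \circ \gamma_{\mathcal{P}}$ with the lower-left leg $\widehat{\mathrm{S}}^c(\gamma)(C)\circ \Delta_C$, while the transpose of $\gamma_{\mathrm{coEnd}_C}\circ(\Gamma_C\circ\Gamma_C)$ should be identified with the upper-right leg $\varphi_{\mathcal{P},\mathcal{P}}(C)\circ \widehat{\mathrm{S}}^c(\mathcal{P})(\Delta_C)\circ \Delta_C$. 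Since the bijection is in particular injective, the monoid-morphism equation holds if and only if these two legs agree, which is the desired square. The unit axiom of an operad morphism (compatibility with $\eta\colon I \to \mathcal{P}$) translates under the same dictionary into the normalization that the arity-one component of $\Delta_C$ recovers $\mathrm{id}_C$, and is verified directly.

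The hard part will be the second identification in the previous paragraph, namely pinning down the lax monoidal structure $\varphi_{\mathcal{P},\mathcal{P}}$ of $\widehat{\mathrm{S}}^c$ explicitly and checking that it realizes the coendomorphism composition $\gamma_{\mathrm{coEnd}_C}$ on transposes. This is delicate precisely because $\widehat{\mathrm{S}}^c$ is built from infinite products and is only \emph{lax} monoidal: the tensor product does not commute with the products occurring in $\widehat{\mathrm{S}}^c(\mathcal{P})\circ\widehat{\mathrm{S}}^c(\mathcal{P})(C)$, so $\varphi$ is a genuine comparison map rather than an isomorphism. I would handle this by working one arity $p$ at a time, writing $(\mathcal{P}\circ\mathcal{P})(p)$ as the sum over partitions $\underline{p} = \sqcup_i S_i$ and using the canonical identifications $\bigl[\bigoplus_\alpha X_\alpha, -\bigr] \cong \prod_\alpha [X_\alpha, -]$ and $[X_{\mathbb{S}_k}, -] \cong [X,-]^{\mathbb{S}_k}$, together with the comparison maps $\bigl(\prod_m Y_m\bigr)^{\otimes k} \to \prod_{(m_1,\dots,m_k)} Y_{m_1}\otimes \cdots \otimes Y_{m_k}$ that witness the laxness. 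Tracing a general element through both legs of the square then reduces, arity by arity, to the definition of the coendomorphism composition and to the equivariance already recorded in the first step, which finishes the verification.
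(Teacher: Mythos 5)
Your proposal is correct and takes essentially the same route as the paper's own proof: the tensor--hom adjunction, together with the translation of $\mathbb{S}_n$-equivariance into landing in invariants and the universal property of the product, identifies maps $\Delta_C \colon C \to \widehat{\mathrm{S}}^c(\mathcal{P})(C)$ with morphisms of dg symmetric sequences $\mathcal{P} \to \mathrm{coEnd}_C$, after which the commutativity of the square is matched with the condition of being a morphism of dg operads. The paper compresses your second and third steps into a single ``one can check'' sentence, so your proposal merely supplies the details (contravariant naturality in $\mathcal{P}$ for the $\gamma$-leg, and the arity-by-arity unwinding of the lax structure $\varphi_{\mathcal{P},\mathcal{P}}$ for the other leg) that the paper leaves to the reader.
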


\begin{proof}
A map 
\[
\Delta_C: C \longrightarrow \displaystyle \prod_{n \geq 0} \left[\mathcal{P}(n),C^{\otimes n}\right]^{\mathbb{S}_n}~,
\]
is equivalent to a collection of maps $\{\Delta_C^n: C \longrightarrow \left[\mathcal{P}(n),C^{\otimes n}\right]^{\mathbb{S}_n}\}$, which by adjunction is equivalent to a collection of $\mathbb S_n$-equivariant maps $\{\mathcal{P}(n) \longrightarrow \left[C,C^{\otimes n}\right]\}$, i.e.\ a map of dg symmetric sequences $\mathcal{P} \longrightarrow \mathrm{coEnd}_C$. One can check that the map $\Delta_C$ satisfies the compatibility conditions imposed by the above diagram if and only if its associated morphism of dg symmetric sequences $\mathcal{P} \longrightarrow \mathrm{coEnd}_C$ is a morphism of dg operads.
\end{proof}

\begin{remark}
In the terminology of \cite{anelcofree2014}, $\widehat{\mathrm{S}}^c(\oneop)$ is a lax comonad, and dg $\oneop$-coalgebras are equivalent to coalgebras over this lax comonad. 
\end{remark}

\subsection{Comonadicity of coalgebras over an operad}
When one works in chain complexes over a field, the following result show that the category of dg $\mathcal{P}$-coalgebras is indeed comonadic over the base category of chain complexes, for any dg operad $\mathcal{P}$. 

\begin{theorem}[{\cite[Theorem 2.7.11]{anelcofree2014}}]\label{thm: cofree coalgebra over an operad}
Let $\oneop$ be a dg operad. The category of dg $\oneop$-coalgebras is comonadic. In other words, there exists a comonad $(\mathrm{L}(\oneop), \omega, \zeta)$ in the category of dg modules such that the category of $\mathrm{L}(\oneop)$-coalgebras is equivalent to the category of dg $\oneop$-coalgebras.
\end{theorem}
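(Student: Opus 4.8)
The plan is to verify the hypotheses of the comonadic (dual) Barr--Beck theorem for the forgetful functor $\mathrm{U}\colon \onecoalg{\oneop} \longrightarrow \baseonecat$. Concretely, I would establish that (i) $\mathrm{U}$ admits a right adjoint $R$ (the cofree coalgebra functor), (ii) $\mathrm{U}$ is conservative, and (iii) $\onecoalg{\oneop}$ has, and $\mathrm{U}$ preserves, equalizers of $\mathrm{U}$-cosplit coreflexive pairs. Granting these, the comonad is $\mathrm{L}(\oneop) = \mathrm{U}\,R$, with counit $\omega$ and comultiplication $\zeta$ induced by the unit and counit of the adjunction, and the comparison functor $\onecoalg{\oneop} \longrightarrow \onecoalg{\mathrm{L}(\oneop)}$ is an equivalence.

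First I would record the two easy inputs. Using the description of a coalgebra as a structure map $\Delta_C\colon C \to \widehat{\mathrm{S}}^c(\oneop)(C)$ from \cref{definition: P-coalgebra}, one sees that $\mathrm{U}$ creates all small colimits: given a diagram $\{C_i\}$, the underlying colimit $\colim_i C_i$ carries a canonical coalgebra structure assembled from the maps $C_i \to \widehat{\mathrm{S}}^c(\oneop)(C_i) \to \widehat{\mathrm{S}}^c(\oneop)(\colim_j C_j)$ via the universal property, with coassociativity and counitality following from uniqueness of the induced factorization. In particular $\onecoalg{\oneop}$ is cocomplete and $\mathrm{U}$ preserves all small colimits. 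Conservativity is immediate, since a morphism of coalgebras whose underlying map is invertible has an inverse that is automatically a coalgebra morphism.

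The main obstacle is constructing the right adjoint, i.e.\ the cofree coalgebra. The difficulty, as flagged in the introduction, is that $\widehat{\mathrm{S}}^c(\oneop)$ is built from infinite products and internal homs out of the possibly infinite-dimensional spaces $\oneop(n)$, so it is neither a comonad nor even an accessible endofunctor of $\baseonecat$; thus one cannot present $\onecoalg{\oneop}$ as a naive inserter of accessible functors. I would circumvent this with a \emph{fundamental theorem of coalgebras}: fixing a regular cardinal $\kappa$ larger than $\aleph_0$, $|\kk|$ and $\sup_n \dim \oneop(n)$, every dg $\oneop$-coalgebra is the $\kappa$-filtered union of its sub-$\oneop$-coalgebras of dimension $<\kappa$. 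This is exactly where being over a field is essential: for $c \in C$ and each $n$, the component $\Delta_C^n(c)\colon \oneop(n) \to C^{\otimes n}$ has image of dimension at most $\dim \oneop(n)$, hence is supported on a subspace $W_n \subseteq C$ of dimension $<\kappa$; starting from $\langle c\rangle$, adjoining the $W_n$ and iterating the comultiplication produces a sub-coalgebra of dimension $<\kappa$ containing $c$. This exhibits $\onecoalg{\oneop}$ as a $\kappa$-accessible category; being also cocomplete, it is locally presentable. Since $\baseonecat$ is locally presentable and $\mathrm{U}$ preserves all small colimits, the adjoint functor theorem for presentable categories produces the desired right adjoint $R$.

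Finally I would check condition (iii), which is then routine. A $\mathrm{U}$-cosplit coreflexive pair $f,g\colon A \rightrightarrows B$ has an underlying split equalizer $E \hookrightarrow \mathrm{U}A$, and split equalizers are absolute, hence preserved by the endofunctor $\widehat{\mathrm{S}}^c(\oneop)$; so $\widehat{\mathrm{S}}^c(\oneop)(E)$ is the equalizer of $\widehat{\mathrm{S}}^c(\oneop)(f)$ and $\widehat{\mathrm{S}}^c(\oneop)(g)$. A short diagram chase, using that $f|_E = g|_E$ and that $f,g$ are coalgebra maps, shows that $\Delta_A$ restricts to a map $E \to \widehat{\mathrm{S}}^c(\oneop)(E)$, which one verifies is a coalgebra structure making $E$ the equalizer in $\onecoalg{\oneop}$, manifestly preserved by $\mathrm{U}$. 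The dual Barr--Beck theorem then applies and yields the comonadicity of $\onecoalg{\oneop}$ over $\baseonecat$, completing the argument.
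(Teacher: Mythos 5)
Your proposal is correct in substance, but it proves the theorem by a genuinely different route than the one the paper relies on. The paper does not reprove this statement: it cites Anel \cite{anelcofree2014}, whose argument is an \emph{explicit construction} of the comonad --- $\mathrm{L}(\oneop)$ is obtained as a universal correction of the lax comonad $\widehat{\mathrm{S}}^c(\oneop)$, in general by a transfinite recursion which, over a field, terminates at the first step and is given by the pullback of $\varphi_{\oneop,\oneop}$ along $\widehat{\mathrm{S}}^c(\gamma)$ displayed right after the theorem. That construction works under Anel's hypotheses on a general closed symmetric monoidal category (the monomorphism condition on $[X,Y]\otimes[X',Y']\rightarrow[X\otimes X',Y\otimes Y']$ and commutation of $\otimes$ with countable intersections), hence also in cartesian settings, and it produces an explicit formula for $\mathrm{L}(\oneop)$ that the paper actually uses afterwards (for the cofree--forgetful adjunction, the left-transferred model structures, and the dévissage). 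Your route is instead abstract: the fundamental theorem of coalgebras (every dg $\oneop$-coalgebra is the $\kappa$-filtered union of its subcoalgebras of dimension $<\kappa$) together with creation of colimits gives local presentability of $\onecoalg{\oneop}$, the adjoint functor theorem then produces the cofree functor, and the dual Barr--Beck theorem (your (i)--(iii), each of which you verify correctly) yields comonadicity. This is sound --- your support argument and the choice $\kappa > \sup_n \dim \oneop(n)$ are exactly what the field hypothesis buys --- and it has the side benefit of establishing accessibility of $\onecoalg{\oneop}$ directly, a fact the paper needs separately in \cref{thm: 1-categorical adjunction between categories of coalgebras and coendomorphism operads}. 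What you lose is any explicit description of $\mathrm{L}(\oneop)$ and the extra generality of Anel's setting. Two points you gloss, both routine: a subspace closed under all the $\Delta^n$ must also be taken closed under the differential to be a dg subcoalgebra; and ``$\kappa$-accessible'' requires checking that coalgebras of dimension $<\kappa$ are $\kappa$-presentable \emph{objects}, which again uses $\dim\oneop(n)<\kappa$ so that each $\left[\oneop(n),(-)^{\otimes n}\right]^{\mathbb{S}_n}$ preserves $\kappa$-filtered colimits and so that a chain-level factorization of a coalgebra map can be promoted to a coalgebra map at some later stage of the colimit.
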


In particular, this entails the existence of a cofree dg $\oneop$-coalgebra. While in the general setting of \cite{anelcofree2014}, the construction of the comonad $\mathrm{L}(\oneop)$ is given by an infinite recursion, the construction of $\mathrm{L}(\oneop)$ in the category of chain complexes over a field stops at the first step. It is given by the following pullback 
\[
\begin{tikzcd}[column sep=3pc,row sep=3pc]
\mathrm{L}(\oneop) \arrow[r,"p_2"] \arrow[d,"p_1",swap,rightarrowtail] \arrow[dr, phantom, "\ulcorner", very near start]
&\widehat{\mathrm{S}}^c(\oneop) \circ \widehat{\mathrm{S}}^c(\oneop) \arrow[d,"\varphi_{\oneop,\oneop}",rightarrowtail] \\
\widehat{\mathrm{S}}^c(\oneop) \arrow[r,"\widehat{\mathrm{S}}^c(\gamma)"]
&\widehat{\mathrm{S}}^c(\oneop \circ \oneop)
\end{tikzcd}
\]
where $\varphi$ is the lax monoidal structure of the functor $\widehat{\mathrm{S}}^c$.

Anel's result holds true actually in more general closed symmetric monoidal categories. Precisely, \cite[Corollary 2.7.12]{anelcofree2014} relies on \cite[Hypothesis 2.7.5]{anelcofree2014} stating that:

\begin{itemize}
    \item the canonical natural transformation $[X,Y]\otimes [X',Y']\rightarrow [X\otimes X',Y\otimes Y']$ is a monomorphism;
    \item the functor $\otimes$ commutes with countable intersections in each variable, where a countable intersection is an $\mathbb{N}$-indexed chain of monomorphisms.
\end{itemize}

Such assumptions are satisfied by cartesian categories (sets, simplicial sets, topoi, compactly generated Hausdorff spaces...) as well as vector spaces and chain complexes over a field for example. 

\subsection{Coadmissible operads}
Let $\oneop$ be a dg operad and let 
\[
\begin{tikzcd}[column sep=5pc,row sep=3pc]
          \onepcoalg \arrow[r, shift left=1.1ex, "U"{name=F}] & \baseonecat , \arrow[l, shift left=.75ex, "\mathrm{L}(\oneop)"{name=U}]
            \arrow[phantom, from=F, to=U, , "\dashv" rotate=-90]
\end{tikzcd}
\]
be the cofree-forgetful adjunction of Theorem \ref{thm: cofree coalgebra over an operad}. Here we consider chain complexes together with the injective model structure, as constructed in \cite[Theorem 2.3.13]{Hovey}. 

\begin{definition}[Coadmissible operad]
A dg operad $\oneop$ is called \textit{coadmissible} if its category of dg $\oneop$-coalgebras admits a combinatorial model structure left-transferred along the cofree-forgetful adjunction, determined by the following classes of maps:
\begin{enumerate}
\item the class of weak-equivalences is given by quasi-isomorphisms;

\item the class of cofibrations is given by degree-wise monomorphisms;

\item the class of fibrations is determined by right-lifting property against acyclic cofibrations.
\end{enumerate}
\end{definition}

Since any chain complex is cofibrant in the injective model structure, it suffices to have a natural cylinder object in the category of dg $\oneop$-coalgebras in order for $\oneop$ to be coadmissible by \cite{BHKKRS, HKRS}, see the particular formulation given in \cite[Appendix B]{premierpapier}. 
 
\begin{remark}
Over a field of characteristic zero, all dg operads $\oneop$ are admissible, meaning that dg $\oneop$-algebras admit a transferred model structure from chain complexes along the free-forgetful adjunction. However, even when $\kk$ is a field of characteristic zero, it is not true that all dg operads are coadmissible. In fact, if $\kk$ is algebraically closed, it is shown in \cite[Proposition 8.10]{linearcoalgebras} that the dg operad $u\mathcal{C}om$ is not coadmissible.
\end{remark}

\begin{example}
Let $\mathcal{E}$ be the Barratt--Eccles operad of \cite{BergerFresse}. Since the interval object $I$ in chain complexes admits a canonical dg $\mathcal{E}$-coalgebra structure and since $\mathcal{E}$ is a Hopf operad, considering the tensor product $I \otimes (-)$ provides a natural cylinder object in dg $\mathcal{E}$-coalgebras. Thus one can left-transfer along the cofree-forgetful adjunction and $\mathcal{E}$ is coadmissible. 
\end{example}

\begin{example}\label{ex:cofibrant is coadmissible}
For any dg operad $\oneop$, there is a canonical map $\oneop \otimes \mathcal{E} \stackrel\sim\longrightarrow \oneop$. If this map admits a section --- in particular, if $\oneop$ is cofibrant --- then $\oneop$ is coadmissible. Indeed, $I \otimes (-)$ is naturally a dg $\mathcal{E} \otimes \oneop$-coalgebra, and by pulling it back along this section, it provides dg $\oneop$-coalgebras with a natural cylinder object. 
\end{example}

\subsection{Cofibrant operads}
We now consider the homotopy theory of dg operads themselves, in particular, the semi-model structure on dg operads constructed by Fresse in \cite[Chapter 12]{Fresse09}. We will say that a dg operad is \textit{cofibrant} if it is cofibrant in this semi-model structure. 

\begin{proposition}\label{prop: cofibrant operads are coadmissible}\leavevmode
\begin{enumerate}
\item Every cofibrant dg operad is coadmissible.
\item Any weak-equivalence of cofibrant operad induces a Quillen equivalence between their categories of coalgebras. 
\end{enumerate}
\end{proposition}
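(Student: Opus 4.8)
The plan is to prove both parts by leveraging the characterization of coadmissibility in terms of natural cylinder objects, together with the machinery of left-transferred model structures.

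\textbf{Part (1).} The claim is that cofibrant dg operads are coadmissible. I would deduce this as an immediate consequence of \cref{ex:cofibrant is coadmissible}. Indeed, for any dg operad $\oneop$, there is a canonical weak equivalence $\oneop \otimes \mathcal{E} \stackrel{\sim}{\longrightarrow} \oneop$. The key point is that a cofibrant dg operad admits a section of this map. To see this, recall that $\mathcal{E}$ is a (contractible) coaugmented dg operad, so the canonical map $\oneop \to \oneop \otimes \mathcal{E}$ induced by the coaugmentation $I \to \mathcal{E}$ is a weak equivalence, and it is a section of $\oneop \otimes \mathcal{E} \to \oneop$ up to the appropriate identification. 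The subtlety is that we want an \emph{honest} (point-set) section. Here I would use cofibrancy: since $\oneop$ is cofibrant in Fresse's semi-model structure and $\oneop \otimes \mathcal{E} \to \oneop$ is an acyclic fibration (being a weak equivalence that is surjective, hence a fibration in the transferred structure on operads), the lifting property of the cofibrant object $\oneop$ against this acyclic fibration — applied to the identity $\oneop \to \oneop$ — produces the desired section. Once we have such a section, \cref{ex:cofibrant is coadmissible} directly gives that $\oneop$ is coadmissible, since pulling back the natural cylinder $I \otimes (-)$ on $\mathcal{E} \otimes \oneop$-coalgebras along the section equips $\oneop$-coalgebras with a natural cylinder object, and by \cite{BHKKRS, HKRS} this suffices for left-transfer over the injective model structure (recall every chain complex is cofibrant there).

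\textbf{Part (2).} For the second part, let $\phi: \oneop \stackrel{\sim}{\to} \QQ$ be a weak equivalence of cofibrant operads. A morphism of operads induces a restriction-of-scalars functor $\phi^*: \Qcog \to \Pcog$ on coalgebras (contravariantly to the algebra case, but the variance works out because coalgebras are functors \emph{out of} the operad into the coendomorphism operad). I would first check that $\phi^*$ is a \emph{right} Quillen functor for the left-transferred model structures: since weak equivalences and cofibrations are detected on underlying chain complexes (quasi-isomorphisms and monomorphisms respectively), and $\phi^*$ acts as the identity on underlying complexes, it automatically preserves fibrations and acyclic fibrations. Its left adjoint, an induction functor $\phi_!$, then forms a Quillen adjunction. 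To upgrade this to a Quillen \emph{equivalence}, I would verify that the derived unit and counit are weak equivalences; equivalently, since both model structures have all objects cofibrant and weak equivalences are created on underlying complexes, it suffices to show $\phi^*$ reflects weak equivalences (clear, as it is the identity on underlying objects) and that the derived adjunction is fully faithful on one side.

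The main obstacle I anticipate is controlling the induced functor at the level of cofree coalgebras. The cleanest route is to compare the cofree comonads $\mathrm{L}(\oneop)$ and $\mathrm{L}(\QQ)$ via the explicit pullback description given after \cref{thm: cofree coalgebra over an operad}: since $\widehat{\mathrm{S}}^c(-)$ is contravariant and sends the quasi-isomorphism $\phi$ to a natural transformation $\widehat{\mathrm{S}}^c(\QQ) \to \widehat{\mathrm{S}}^c(\oneop)$, I would show this is a weak equivalence of functors when evaluated on cofibrant (i.e.\ arbitrary, since all are cofibrant) chain complexes. The delicate issue is that $\widehat{\mathrm{S}}^c$ involves an infinite product of invariants $\prod_{n} [\oneop(n), (-)^{\otimes n}]^{\mathbb{S}_n}$, and quasi-isomorphisms are not generally preserved under infinite products or under taking $\mathbb{S}_n$-invariants in positive characteristic. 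However, working over a field means $[-,-]$ is exact and $\mathbb{S}_n$-invariants are computed as a direct summand failing only when $|\mathbb{S}_n|$ is not invertible; the resolution is that cofibrancy of $\oneop$ forces each $\oneop(n)$ to be a suitably free $\mathbb{S}_n$-complex (a retract of a cell complex built from induced representations $\kk[\mathbb{S}_n]$), so that $[\oneop(n), (-)^{\otimes n}]^{\mathbb{S}_n}$ computes a \emph{homotopy} fixed-point / norm-type construction that \emph{does} preserve quasi-isomorphisms. This is precisely where the cofibrancy hypothesis on \emph{both} operads is essential, and verifying this compatibility — that $\widehat{\mathrm{S}}^c(\phi)$ is a levelwise quasi-isomorphism and that the pullbacks defining $\mathrm{L}(\oneop), \mathrm{L}(\QQ)$ are homotopy pullbacks preserved by $\phi$ — is the technical heart of the argument, from which the Quillen equivalence follows formally.
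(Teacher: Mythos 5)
Your part (1) is correct and coincides with the paper's own argument: the paper's proof of (1) simply invokes \cref{ex:cofibrant is coadmissible}, and your lifting argument --- the identity of $\oneop$ lifts along the arity-wise surjective quasi-isomorphism $\oneop \otimes \mathcal{E} \stackrel{\sim}{\twoheadrightarrow} \oneop$ because $\oneop$ is cofibrant --- is precisely how one produces the section that this example requires.

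Part (2) contains a genuine error at its foundation. You assert that $\phi^*\colon \onecoalg{\QQ} \to \onecoalg{\oneop}$ is a \emph{right} Quillen functor because ``fibrations and acyclic fibrations are detected on underlying chain complexes''. In a \emph{left}-transferred model structure this is exactly what is not true: the classes created on underlying chain complexes are the cofibrations (monomorphisms) and the weak equivalences (quasi-isomorphisms), whereas fibrations are defined abstractly by the right lifting property against acyclic cofibrations and admit no underlying description. Since $\phi^*$ is the identity on underlying complexes, what actually follows is that it preserves cofibrations and acyclic cofibrations, i.e.\ $\phi^*$ is \emph{left} Quillen. Its adjoint also sits on the other side: $\phi^*$ commutes with the comonadic forgetful functors of \cref{thm: cofree coalgebra over an operad}, hence preserves colimits and admits a \emph{right} adjoint (coinduction); for coalgebras there is in general no left adjoint ``induction'' $\phi_!$ --- induction is the algebra-side phenomenon. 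So the Quillen adjunction on which the rest of your argument is built points the wrong way.

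This mis-orientation is not cosmetic, because it conceals where the difficulty lies. With the correct orientation, the Quillen equivalence amounts to showing that the derived counit $\phi^*\phi_* C \to C$ is a quasi-isomorphism on fibrant coalgebras, and this involves the coinduction/cofree functors, which are exactly the intractable objects of this theory. Your proposed remedy --- showing that $\widehat{\mathrm{S}}^c(\phi)$ is a natural quasi-isomorphism (true, by $\mathbb{S}$-cofibrancy of cofibrant operads and exactness over a field of products and of $\mathrm{Hom}_{\kk[\mathbb{S}_n]}(\oneop(n),-)$) and that the pullback defining $\mathrm{L}(\oneop)$ is a homotopy pullback --- is a reasonable starting point, but a levelwise equivalence of (lax) comonads does not ``formally'' yield a Quillen equivalence of the coalgebra categories; one still has to relate it to the derived counit. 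This missing step is precisely the content the paper does not reprove but cites, namely \cite[Lemma 33 and Proposition 31]{premierpapier}.
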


\begin{proof}
The first point follows by the same argument as in \cref{ex:cofibrant is coadmissible}. The second point follows by the same arguments as in \cite[Lemma 33 and Proposition 31]{premierpapier}. 
\end{proof}

A dg operad is called \textit{cell cofibrant} if it is obtained as a colimit of iterated cell attachments. See \cite[Section 12.2.1]{Fresse09} and in particular \cite[Proposition 12.2.3]{Fresse09}. The class of cofibrant dg operads consists precisely of the retracts of cell cofibrant operads. 

\subsection{Dévissage of coalgebras over operads}
The functor which sends a dg operad $\mathcal{P}$ to the category of dg $\mathcal{P}$-coalgebras is a contravariant functor. Using the arguments in \cite{DrummondColeHirshLejay20}, we get that it is a right adjoint functor and therefore sends colimits in dg operads to limits in the category of accessible categories over $\mathsf{Ch}(\kk)$. When $\mathcal{P}$ is a \textit{cell cofibrant} dg operad, this induces a \textit{dévissage} of the category of dg $\mathcal{P}$-coalgebras along the cells that compose $\mathcal{P}$. 

\begin{theorem}\label{thm: 1-categorical adjunction between categories of coalgebras and coendomorphism operads}
There is an adjunction \[
\begin{tikzcd}[column sep=5pc,row sep=3pc]
         \mathsf{Op}(\mathsf{Ch}(\kk))^{\mathsf{op}} \arrow[r, shift left=1.1ex, "\mathsf{Coalg}(-)"{name=F}] &\mathsf{Cat}^{\mathsf{acc}}_{/\mathsf{Ch}(\kk)}, \arrow[l, shift left=.75ex, "\mathrm{coEnd}_{(-)}"{name=U}]
            \arrow[phantom, from=F, to=U, , "\dashv" rotate=90]
\end{tikzcd}
\]between the opposite category of dg operads and the slice over $\mathsf{Ch}(\kk)$ of accessible categories. The right adjoint sends a dg operad $\mathcal{P}$ to the category of dg $\mathcal{P}$-coalgebras and the left adjoint sends an accessible category $F: \mathsf{C} \longrightarrow \mathsf{Ch}(\kk)$ to the coendomorphism operad of the functor $F$. 
\end{theorem}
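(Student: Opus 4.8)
The plan is to construct the left adjoint $\mathrm{coEnd}_{(-)}$ explicitly as an end, and then to recognize the asserted hom-bijection as nothing but the defining universal property of that end; the only genuinely non-formal point is the accessibility bookkeeping needed to guarantee that the end exists as an honest dg operad. Concretely, for an accessible functor $F\colon\mathsf{C}\to\mathsf{Ch}(\kk)$ with $\mathsf{C}$ accessible, I would define the coendomorphism operad of $F$ levelwise by
\[
\mathrm{coEnd}_F(n) \coloneqq \int_{c\in\mathsf{C}} \bigl[F(c),F(c)^{\otimes n}\bigr],
\]
with its evident $\mathbb{S}_n$-action, the integrand $(c,c')\mapsto [F(c),F(c')^{\otimes n}]$ being a functor $\mathsf{C}^{\mathrm{op}}\times\mathsf{C}\to\mathsf{Ch}(\kk)$ so that the end is well posed. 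The operadic composition and unit are induced by pointwise composition of dinatural families, which preserves dinaturality; since the operad axioms are equational, they descend to the end. A morphism $G\colon (F\colon\mathsf{C}\to\mathsf{Ch}(\kk))\to(F'\colon\mathsf{C}'\to\mathsf{Ch}(\kk))$ over $\mathsf{Ch}(\kk)$, i.e.\ a functor with $F'G=F$, induces by restriction along $G$ a morphism of operads $\mathrm{coEnd}_{F'}\to\mathrm{coEnd}_F$, which is exactly a morphism $F\to F'$ in the target $\mathsf{Op}(\mathsf{Ch}(\kk))^{\mathsf{op}}$; this makes $\mathrm{coEnd}_{(-)}$ a functor.

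Next I would verify the adjunction bijection, natural in both variables,
\[
\mathrm{Hom}_{\mathsf{Op}(\mathsf{Ch}(\kk))}\bigl(\mathcal{P},\,\mathrm{coEnd}_F\bigr)\;\cong\;\mathrm{Fun}_{/\mathsf{Ch}(\kk)}\bigl(\mathsf{C},\,\mathcal{P}\text{-}\mathsf{coalg}\bigr).
\]
This is pure unwinding. By the universal property of the end computed levelwise, a morphism of operads $\mathcal{P}\to\mathrm{coEnd}_F$ is a dinatural family of maps $\Gamma_c\colon\mathcal{P}\to\mathrm{coEnd}_{F(c)}$; since the operad structure on the end is pointwise, such a map is operadic if and only if each $\Gamma_c$ is. By \cref{definition: P-coalgebra}, each operad map $\Gamma_c$ is precisely a dg $\mathcal{P}$-coalgebra structure on the chain complex $F(c)$, and the dinaturality condition says exactly that for every arrow $c\to c'$ the map $F(c)\to F(c')$ is a morphism of $\mathcal{P}$-coalgebras. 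Such a coherent family is the same datum as a functor $\mathsf{C}\to\mathcal{P}\text{-}\mathsf{coalg}$ lifting $F$ along the forgetful functor, i.e.\ a morphism in $\mathsf{Cat}^{\mathsf{acc}}_{/\mathsf{Ch}(\kk)}$ from $F$ to $\mathsf{Coalg}(\mathcal{P})$. Naturality in $\mathcal{P}$ and in $F$ is immediate from functoriality of the end; in particular $\mathsf{Coalg}(-)$ is a right adjoint and hence sends colimits of dg operads to limits, as needed for the dévissage.

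Finally I would check that the right adjoint genuinely lands in $\mathsf{Cat}^{\mathsf{acc}}_{/\mathsf{Ch}(\kk)}$: by \cref{thm: cofree coalgebra over an operad}, $\mathcal{P}\text{-}\mathsf{coalg}$ is the category of coalgebras over the accessible comonad $\mathrm{L}(\mathcal{P})$ on the locally presentable category $\mathsf{Ch}(\kk)$, and such Eilenberg--Moore categories of coalgebras are accessible, with a limit-preserving, accessible forgetful functor to $\mathsf{Ch}(\kk)$. I expect the main obstacle to be the one size-theoretic point glossed over in the construction: showing that the end defining $\mathrm{coEnd}_F(n)$ is a small chain complex rather than a formal limit over the possibly large category $\mathsf{C}$. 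Here accessibility is essential. If $\mathsf{C}$ is $\kappa$-accessible and $F$ preserves $\kappa$-filtered colimits, then $F$ is the left Kan extension of its restriction to the small subcategory $\mathsf{C}_\kappa$ of $\kappa$-compact objects; since $[-,Y]$ sends colimits to limits, the end over $\mathsf{C}$ reduces to the end over $\mathsf{C}_\kappa$, which is a small limit. This reduction, which is the content borrowed from \cite{DrummondColeHirshLejay20}, is precisely what forces the slice to be taken over \emph{accessible} categories; everything else in the argument is formal once it is in hand.
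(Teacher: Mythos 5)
Your proposal is correct and follows essentially the same route as the paper: the paper's proof simply cites \cite[Section B.1.3]{DrummondColeHirshLejay20} for the structure--semantics adjunction that you construct by hand (the coendomorphism operad as an end, with the hom-bijection obtained by unwinding dinatural families into lifts of $F$ through the forgetful functor), combined with the accessibility of $\mathcal{P}\text{-}\mathsf{coalg}$ from \cref{thm: cofree coalgebra over an operad}, which you also invoke. In particular, your reduction of the end to a small limit over the $\kappa$-compact objects is exactly where the accessibility hypothesis on objects of the slice enters, matching the role it plays in the cited source.
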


\begin{proof}
Essentially follows from \cite[Section B.1.3]{DrummondColeHirshLejay20}, using the fact that dg $\mathcal{P}$-coalgebras are accessible by Theorem \ref{thm: cofree coalgebra over an operad}.
\end{proof}

Let $S^{k}(p)$ denote the dg symmetric sequence given by $\kk[\mathbb{S}_p]$ in degree $k \in \mathbb{Z}$ and in arity $p \geq 0$ and by zero elsewhere. Let $D^k(p)$ denote the dg symmetric sequence given by $\kk[\mathbb{S}_p]$ in degrees $k-1$ and $k$, for $k \in \mathbb{Z}$, with the the differential being the identity map, for some arity $p \geq 0$ and by zero elsewhere. Let us denote by $\mathbb{T}(M)$ the free dg operad on a dg symmetric sequence $M$. Then the generating cofibrations of the semi-model structure of dg operads are given by the inclusions $\iota^k(p): \mathbb{T}(S^{k}(p)) \hookrightarrow \mathbb{T}(D^k(p))$ for all $p \geq 0$ and $k \in Z$. Cell dg operads are obtained as colimits of pushouts along these inclusions, and therefore their categories of coalgebras can be reconstructed using these cell attachments. 

\begin{corollary}\label{corollary: devissage along cells of 1-categories of P-coalgebras}\leavevmode
\begin{enumerate}
\item Let $p \geq 0$ and $k \in \mathbb{Z}$, a pushout of dg operads 
\[
\begin{tikzcd}[column sep=3.5pc,row sep=3.5pc]
\mathbb{T}(S^{k-1}(p)) \arrow[r,"\psi"] \arrow[d,"\iota^k(p)",swap]  \arrow[dr, phantom,"\ulcorner" rotate=-180, very near end]
&\oneop_\alpha \arrow[d,"\iota_\alpha"]  \\
\mathbb{T}(D^k(p)) \arrow[r,"\varsigma"]
&\oneop_{\alpha + 1}
\end{tikzcd}
\]
induces a pullback of categories
\[
\begin{tikzcd}[column sep=3.5pc,row sep=3.5pc]
\onecoalg{\oneop_{\alpha + 1}} \arrow[r,"\iota_\alpha^*"] \arrow[d,"\varsigma^*",swap] \arrow[dr, phantom, "\ulcorner", very near start]
&\onecoalg{\oneop_\alpha} \arrow[d,"\psi^*"]  \\
\onecoalg{\mathbb{T}(D^k(p))} \arrow[r,"\iota^k(p)^*"]
&\onecoalg{\mathbb{T}(S^{k-1}(p))}~.
\end{tikzcd}
\]

\item Given a tower of dg operads
\[
\oneop_0 \hookrightarrow \oneop_1 \hookrightarrow \cdots \hookrightarrow \oneop_\alpha \hookrightarrow \cdots \hookrightarrow \colim_{\alpha} \oneop_\alpha \cong \oneop~, 
\]
the category of dg $\oneop$-coalgebras is equivalent to the limit of the tower 
\[
\onecoalg{\oneop_0} \twoheadleftarrow \onecoalg{\oneop_1} \twoheadleftarrow \cdots \twoheadleftarrow \onecoalg{\oneop_\alpha} \twoheadleftarrow  \cdots \twoheadleftarrow \lim_{\alpha} \onecoalg{\oneop_\alpha} \simeq \onepcoalg~. 
\]
\end{enumerate}
\end{corollary}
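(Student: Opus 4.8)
The plan is to deduce both parts from \cref{thm: 1-categorical adjunction between categories of coalgebras and coendomorphism operads}. That theorem exhibits the assignment $\mathcal P \mapsto \onecoalg{\mathcal P}$ as a right adjoint functor $\mathsf{Coalg}(-)\colon \mathsf{Op}(\mathsf{Ch}(\kk))^{\mathsf{op}} \to \mathsf{Cat}^{\mathsf{acc}}_{/\mathsf{Ch}(\kk)}$. A right adjoint preserves limits, and a limit in $\mathsf{Op}(\mathsf{Ch}(\kk))^{\mathsf{op}}$ is exactly a colimit in $\mathsf{Op}(\mathsf{Ch}(\kk))$; hence $\mathsf{Coalg}(-)$ sends colimits of dg operads to limits of coalgebra categories. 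The two statements are then the instances of this principle for a pushout and for a sequential colimit.

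For (1), I would note that the displayed square is the pushout of the span $\mathbb{T}(D^k(p)) \xleftarrow{\iota^k(p)} \mathbb{T}(S^{k-1}(p)) \xrightarrow{\psi} \oneop_\alpha$, i.e.\ a colimit in $\mathsf{Op}(\mathsf{Ch}(\kk))$. Applying $\mathsf{Coalg}(-)$ converts it into the limit of the opposite cospan, namely the pullback of $\onecoalg{\mathbb{T}(D^k(p))} \xrightarrow{\iota^k(p)^*} \onecoalg{\mathbb{T}(S^{k-1}(p))} \xleftarrow{\psi^*} \onecoalg{\oneop_\alpha}$, the two projections out of $\onecoalg{\oneop_{\alpha+1}}$ being the restriction functors $\varsigma^*$ and $\iota_\alpha^*$ induced by the legs of the pushout. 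This is precisely the claimed square. For (2), the tower $\oneop_0 \hookrightarrow \oneop_1 \hookrightarrow \cdots$ with $\colim_\alpha \oneop_\alpha \cong \oneop$ is a sequential colimit, so the same principle gives $\onepcoalg \simeq \lim_\alpha \onecoalg{\oneop_\alpha}$ along the restriction functors $\iota_\alpha^*$.

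The step that requires genuine care — and which I expect to be the main obstacle — is to check that the limits produced abstractly in $\mathsf{Cat}^{\mathsf{acc}}_{/\mathsf{Ch}(\kk)}$ really compute the honest fibre product (resp.\ tower limit) of the underlying $1$-categories of coalgebras, taken over $\mathsf{Ch}(\kk)$, and that the outcome is again accessible. Since the cospan and the tower are both connected diagrams, the forgetful functor from the slice to $\mathsf{Cat}^{\mathsf{acc}}$ creates these limits; that the resulting category is accessible and that its underlying $1$-category is the naive pullback (resp.\ tower limit) follows from the closure properties of accessible categories under such limits invoked in \cite{DrummondColeHirshLejay20}.

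To be safe I would also supply the direct verification, which makes the identification transparent and sidesteps the abstract comparison. Using that a $\oneop_{\alpha+1}$-coalgebra structure on a chain complex $C$ is a map of dg operads $\oneop_{\alpha+1} \to \mathrm{coEnd}_C$, the universal property of the operad pushout says that such a map is the same datum as a pair of maps $\oneop_\alpha \to \mathrm{coEnd}_C$ and $\mathbb{T}(D^k(p)) \to \mathrm{coEnd}_C$ that become equal after restriction along $\psi$ and $\iota^k(p)$ to $\mathbb{T}(S^{k-1}(p))$ — that is, a $\oneop_\alpha$-coalgebra and a $\mathbb{T}(D^k(p))$-coalgebra on $C$ with the same underlying $\mathbb{T}(S^{k-1}(p))$-coalgebra. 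The identical matching holds for morphisms, since these are chain maps compatible with the respective structure maps. This is exactly a point of the pullback category and realizes it as the strict fibre product over $\mathsf{Ch}(\kk)$; the sequential case is handled in the same way by unwinding the universal property of the colimit $\oneop = \colim_\alpha \oneop_\alpha$.
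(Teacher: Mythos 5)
Your proposal is correct and takes essentially the same route as the paper: the paper's proof is precisely the observation that, by \cref{thm: 1-categorical adjunction between categories of coalgebras and coendomorphism operads}, the functor $\mathsf{Coalg}(-)$ is a right adjoint out of $\mathsf{Op}(\mathsf{Ch}(\kk))^{\mathsf{op}}$ and hence sends colimits of dg operads to strict limits of categories, which specializes to the pushout and the sequential colimit. Your supplementary direct verification via the universal property of maps into $\mathrm{coEnd}_C$ is a nice addition, since it makes explicit why the abstract limit agrees with the strict fibre product (resp.\ tower limit) of the underlying $1$-categories over $\mathsf{Ch}(\kk)$ --- a point the paper's one-line proof leaves implicit.
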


\begin{proof}
Follows directly from \cref{thm: 1-categorical adjunction between categories of coalgebras and coendomorphism operads}, since the functor that sends operads to their categories of coalgebras sends colimits to strict limits of categories.
\end{proof}

\section{\texorpdfstring{$\infty$}{Infinity}-categorical coalgebras over operads in an enriched setting}

In this section, we start by defining enriched $\infty$-operads as algebras in symmetric sequences, following the approach developed by Brantner in \cite[Section 4.1.2]{BrantnerPhD}. See also \cite[Section 5.2.4]{ShiPhD} for a thorough exposition of these ideas. This approach is equivalent to that of \cite{ChuHaugseng20} by the recent work of \cite{arakawa2026}. The main objective of this section is to define coalgebras over an enriched $\infty$-operad. In order to arrive at this definition, we adapt the ideas of \cite[Appendix A]{heuts2024}. Finally, we compare this definition with the one considered in \cite{ellipticI} and in \cite{peroux} in the non-enriched case, adapting the proof for the algebra case of \cite[Appendix A.2.]{ShiPhD}. 


\subsection{Symmetric sequences and operads}
We work over any compactly rigidly generated stable symmetric monoidal $\infty$-category $\mathbf{D}$. These are presentable stable symmetric monoidal $\infty$-categories, with compact generators which are furthermore dualizable. See \cite{ramzi2024locallyrigidinftycategories} for more details. Examples include $\mathbf{D}(A)$ for any discrete ring or $\mathbb{E}_\infty$-ring spectrum $A$, and quasi-coherent sheaves on a perfect derived stack \cite[Proposition 3.9]{bfn}. Nevertheless, we will eventually only use these definitions in the case of the $\infty$-category $\mathbf{D}(\kk)$ of chain complexes over a field $\kk$ up to quasi-isomorphisms. 

\medskip

As in the 1-categorical case (\cref{sec:1-operads}), we define the $\infty$-category of \textit{symmetric sequences} in $\mathbf{D}$ as the $\infty$-category of functors from $\mathsf{Fin}^{\simeq}$ to $\mathbf{D}$:
\[
\mathbf{sSeq}(\mathbf{D}) \coloneqq \mathbf{Fun}(\mathsf{Fin}^{\simeq},\mathbf{D})~. 
\]The $\infty$-category of symmetric sequences in $\mathbf{D}$ admits a monoidal structure given by the composition product $\circledcirc$, which for two symmetric sequences $M$ and $N$ is pointwise given by the same formula as in the 1-categorical case, 
\[
M \circledcirc N(n) \simeq \bigoplus_{k \geq 0} \left(\bigoplus_{\underline{n} = \sqcup_{i=1}^k S_i} M(k) \otimes N(S_1) \otimes \cdots \otimes N(S_k)\right)_{\mathbb{S}_k} 
\]
as computed in \cite[Appendix A.1.]{ShiPhD}. Using this composition product, one can define operads as monoids in a monoidal $\infty$-category.

\begin{definition}[$\infty$-operad]
An \emph{$\infty$-operad enriched in $\mathbf D$}  is  a monoid object in the $\infty$-category of symmetric sequences in $\mathbf D$, with respect to the composition product. We sometimes also say \emph{$\infty$-operad in $\mathbf D$}, or just \emph{$\infty$-operad}, if $\mathbf D$ is clear from context. 
\end{definition}

The $\infty$-category of operads in $\mathbf{D}$ is therefore given by the $\infty$-category of $\mathbb{E}_1$-algebras in  the monoidal $\infty$-category $(\mathbf{sSeq}(\mathbf{D}),\circledcirc, \mathbb{1})$. 


\begin{remark}[Point-set models for enriched in chain complexes $\infty$-operads]
When $\mathbf D = \mathbf D(\kk)$, this $\infty$-category is presented by the dg operads localized at quasi-isomorphisms. Indeed, there is an equivalence of $\infty$-categories
\[
\mathsf{Op}(\mathsf{Ch}(\kk))~[\mathsf{Q.iso}^{-1}] \simeq \mathbf{Alg}_{\mathbb{E}_1}(\mathbf{sSeq}(\mathbf{D}(\kk)))~. 
\]
This follows from the description of the free $\mathbb{E}_1$-algebra in $\mathbf{sSeq}(\mathbf{D})(\kk)$ given in \cite[Theorem B.2]{pdalgebras}, by applying the Barr--Beck--Lurie theorem of \cite[Theorem 4.7.3.5]{HigherAlgebra} and using the fact that dg symmetric sequences up to quasi-isomorphisms present the $\infty$-category of symmetric sequences in $\mathbf{D}(\kk)$. In particular, any dg operad $\mathcal{P}$ induces an $\infty$-operad in $\mathbf{D}(\kk)$. Our end goal is going to be to compare the $\infty$-category that one obtains by localizing dg $\mathcal{P}$-coalgebras with respect to quasi-isomorphisms and the $\infty$-category of coalgebras over the underlying $\infty$-operad of $\mathcal{P}$. However, in order to address this question, we need to be able to define coalgebras over a general $\infty$-operad. 
\end{remark}

\begin{remark}[About the rectification of $\mathcal{P}$-algebras in chain complexes]
To any $\infty$-operad $\mathscr{P}$, one can associate a monad on $\mathbf{D}(\kk)$ given by its Schur functor 
\[
\mathbf{S}(\mathscr{P}) = \bigoplus_{n \geq 0} \left(\mathscr{P}(n) \otimes (-)^{\otimes n}\right)_{\mathbb{S}_n}.
\]
The $\infty$-category of $\mathscr{P}$-algebras is defined to be the $\infty$-category of algebras over this monad. 

\medskip

Let $\mathcal{P}$ be a $\mathbb{S}$-cofibrant dg operad (meaning its underlying dg symmetric sequence is projective). The  1-categorical free-forgetful adjunction induces an $\infty$-categorical adjunction 
\[
\begin{tikzcd}[column sep=5pc,row sep=3pc]
          \onepalg~[\mathsf{Q.iso}^{-1}] \arrow[r, shift left=1.1ex, "U"{name=F}] & \mathbf{D}(\kk), \arrow[l, shift left=.75ex, "\mathrm{S}(\mathcal{P})(-)"{name=U}]
            \arrow[phantom, from=F, to=U, , "\dashv" rotate=90]
\end{tikzcd}
\]

between dg $\mathcal{P}$-algebras up to quasi-isomorphisms and chain complexes up to quasi-isomorphism. This adjunction is monadic in the $\infty$-categorical sense. Furthermore, the associated monad can be easily identified with the $\infty$-categorical monad that encodes algebras over the $\infty$-operad induced by $\mathcal{P}$. Therefore, by applying \cite[Theorem 4.7.3.5]{HigherAlgebra} we get an equivalence of $\infty$-categories
\[
\onepalg~[\mathsf{Q.iso}^{-1}] \simeq \mathbf{Alg}_{\mathcal{P}}(\mathbf{D}(\kk))~. 
\]
This situation is in sharp contrast with what happens for coalgebras. Since the cofree construction of Theorem \ref{thm: cofree coalgebra over an operad} is extremely hard to compute, it is far from obvious that the 1-categorical cofree-forgetful adjunction induces a comonadic adjunction in the $\infty$-categorical sense. Furthermore, we are not aware of any explicit description of comonads encoding $\infty$-categorical coalgebras, even in basic cases like $\mathbb{E}_\infty$-coalgebras. This is why we cannot use the same arguments to compare the two categories.
\end{remark}

\subsection{Defining coalgebras over an (enriched) $\infty$-operad}To a symmetric sequence $M$, we can associate a \emph{dual Schur functor} $\widehat{\mathbf{S}}^c(M)$ in $\mathbf{End}(\mathbf{D})$,  an $\infty$-categor\-ic\-al analogue of the point-set {dual Schur functor} from \cref{set:1-dual schur}. This construction  defines a functor 
\[
\begin{tikzcd}[column sep=3pc,row sep=0pc]
\widehat{\mathbf{S}}^c(-): \mathbf{sSeq}(\mathbf{D})^{\mathsf{op}} \arrow[r]
&\mathbf{End}(\mathbf{D}) \\
M \arrow[r,mapsto]
&\widehat{\mathbf{S}}^c(M) \coloneqq \displaystyle \prod_{n \geq 0} \left[M(n),(-)^{\otimes n}\right]^{\mathbb{S}_n} 
\end{tikzcd}
\]from the $\infty$-category of symmetric sequences in $\mathbf{D}$ to the $\infty$-category of endofunctors of $\mathbf{D}$. Here $[-,-]$ denotes the self-enrichment of $\mathbf{D}$, adjoint to the tensor product. This endofunctor can be presented by the point-set version of it when we take a $\mathbb{S}$-cofibrant model of the symmetric sequence $M$.

\begin{lemma}\label{lemma: lax monoidal structure}
    There is a natural comparison map 
    \[ \widehat{\mathbf{S}}^c(M) \circ \widehat{\mathbf{S}}^c(N) \longrightarrow \widehat{\mathbf{S}}^c(M \circledcirc N)\]  which endows the functor $\widehat{\mathbf{S}}^c(-)$ with a lax monoidal structure.
\end{lemma}

\begin{proof}
Pointwise, the lax monoidal structure is given as follows. For each $n$, there is a natural morphism
    \begin{gather*}\Big[M(n),\Big( \prod_{k \geq 0} [N(k),(-)^{\otimes k}]^{\mathbb{S}_k}  \Big)^{\otimes n}\Big] \\ \downarrow \\ \Big[M(n), \prod_{k_1,\dots,k_n} [N(k_1),(-)^{\otimes k_1}]^{\mathbb{S}_{k_1}} \otimes \dots \otimes [N(k_n),(-)^{\otimes k_n}]^{\mathbb{S}_{k_n}} \Big] \\ \downarrow \\ \prod_p \Big[M(n), [N^{\otimes n}(p), (-)^{\otimes p}  ]^{\mathbb S_p} \Big] \\ \downarrow~\simeq \\ \prod_p \Big[ M(n) \otimes N^{\otimes n}(p), (-)^{\otimes p} \Big]^{\mathbb S_p}
    \end{gather*} 
    in which the first arrow is induced by the natural transformation
    \[
    \Big(\prod_k A_k\Big)^{\otimes n} \longrightarrow \prod_{k_1,\dots,k_n} A_{k_1} \otimes \dots \otimes A_{k_n}.
    \]  
    These maps are all natural transformations and assemble into a lax monoidal structure.
\end{proof}

This lax monoidal structure implies that it sends enriched $\infty$-cooperads to monads, and thus that it can be used to define algebras over enriched $\infty$-cooperads, as we will do in \cite{poinsetkoszul}. However, as best as we know, there is no available theory of lax comonads in the $\infty$-categorical setting, so we cannot directly work with the image of an enriched $\infty$-operad by the dual Schur functor. 
\medskip

In order to obtain a well-behaved definition of coalgebras over an enriched $\infty$-operad, we adapt the methods of \cite[Appendix A]{heuts2024}. The rough idea is to extend to dual Schur functor to the $\infty$-category of pro-objects in $\mathbf{D}$ to make it monoidal, define $\mathscr{P}$-coalgebras in pro-objects of $\mathbf{D}$ and finally take the pullback of that $\infty$-category along the inclusion of $\mathbf{D}$ into $\mathbf{pro}(\mathbf{D})$.  This definition should be thought as encoding any type of coalgebra (without divided powers) in an enriched setting.

\subsubsection{Compact objects and compact generation of symmetric sequences.}
Recall that $\mathbf{D}$ is assumed to be a compactly rigidly generated symmetric monoidal $\infty$-category. Let us denote by $\mathbf{D}^\omega$ the $\infty$-category of its compact objects. There is an equivalence of $\infty$-categories
\[
\mathbf{Ind}(\mathbf{D}^\omega) \simeq \mathbf{D}~. 
\]
If $A, B$ are two objects in $\mathbf{D}^\omega$, there is a canonical equivalence of functors
\[
[A, -] \otimes [B, -] \simeq [A \otimes B, -]~. 
\]
In other words, one can pull tensor products inside internal homs when the objects are compact. We will crucially use this property to upgrade the dual Schur functor into a strong monoidal functor. Since $\mathbf{D}$ is compactly generated, so is $\mathbf{sSeq}(\mathbf{D})$,  and we have an equivalence of $\infty$-categories
\begin{equation}\label{eq: compact generation of symmetric sequences}
\mathbf{sSeq}(\mathbf{D}) \simeq \mathbf{Ind}(\mathbf{sSeq}(\mathbf{D})^{\omega})~.
\end{equation} The question whether $\mathbf{sSeq}(\mathbf{D})$ is moreover compactly \emph{rigidly} generated depends on the mono\-id\-al structure we choose on the functor category. If we set $\mathbf{sSeq}(\mathbf{D}) = \prod_n \mathbf{Fun}(B\mathbb S_n,\mathbf D)$ and use Day convolution on each factor $\mathbf{Fun}(B\mathbb S_n,\mathbf D)$, then it is compactly rigidly generated. Indeed write $c_!: \mathbf D = \mathbf{Fun}(\ast,\mathbf D) \to \mathbf{Fun}(B\mathbb S_n,\mathbf D)$ for the functor given by left Kan extension: then compact generators of $\mathbf{Fun}(B\mathbb S_n,\mathbf D)$ are given by $c_!(d)$ for $d \in \mathbf D^\omega$, each of which is dualizable with dual $c_!(d^\vee)$. 

\begin{example}
    If $\mathbf D = \mathbf{D}(\kk)$ is the derived category of a ring $\kk$, then 
    \[\mathbf{sSeq}(\mathbf{D})^\omega = \prod_n \mathbf{Perf}\, \kk[\mathbb S_n]. \]
    That is, compact symmetric sequences are those which in each arity $n$ is given by a bounded complex of projective $\kk[\mathbb S_n]$-modules. 
\end{example}

\subsubsection{Extending functors to pro-categories.} The $\infty$-category of pro-objects in $\mathbf{pro}(\mathbf{D})$, denoted by $\mathbf{pro}(\mathbf{D})$, is obtained by freely adjoining cofiltered limits to $\mathbf{D}$. For a definition, see \cite[Section 3.1]{Lurie11Rational}. There is a canonical functor 
\[
c: \mathbf{D} \longrightarrow \mathbf{pro}(\mathbf{D})
\]
which sends objects in $\mathbf{D}$ to constant pro-object. This functor is fully faithful, and it preserves all colimits and finite limits. Furthermore, since $\mathbf{D}$ is symmetric monoidal, there exists a symmetric monoidal structure on $\mathbf{pro}(\mathbf{D})$ such that $c$ becomes a symmetric monoidal functor.

\medskip

Finally, since the tensor product in $\mathbf{D}$ commutes with finite limits, then the tensor product in $\mathbf{pro}(\mathbf{D})$ commutes with arbitrary limits. We refer to \cite[Section 5]{bachmann2024} for more details.

\medskip

Since $\mathbf{D}$ is assumed to be presentable, it admits all limits. Therefore there exists a right adjoint $\mathrm{mat}$ to the functor $c$, called the \textit{materialization} functor
\[
\mathrm{mat}: \mathbf{pro}(\mathbf{D}) \longrightarrow \mathbf{D}
\] 
which is given by taking the limit in $\mathbf{D}$ of pro-objects. 

\medskip
	The universal property of pro-completion is that if $\mathbf E$ is any  $\infty$-category which admits cofiltered limits, then precomposition with $c$ defines an equivalence of $\infty$-categories
	\[ \mathbf{Fun}^{\mathbf{filt}}(\mathbf{pro}(\mathbf D),\mathbf E) \simeq \mathbf{Fun}(\mathbf D,\mathbf E).\]
	Let $F$ be an endofunctor of $\mathbf D$. Under the above equivalence, $c \circ F$ corresponds to a unique endo\-functor $\mathrm p (F)$ of $\mathbf{pro}(\mathbf{D})$ preserving cofiltered limits, which we call the \emph{prolongation} of $F$, following Heuts \cite[Appendix A]{heuts2024}. 

\begin{lemma}
There is an adjunction 
\[
\begin{tikzcd}[column sep=5pc,row sep=3pc]
            \mathbf{End}(\mathbf{D})\arrow[r,"\mathrm{p}"{name=F},shift left=1.1ex ] &\mathbf{End}^{\mathbf{filt}}(\mathbf{pro}(\mathbf{D})) \arrow[l,"\mathrm{mat} \,\circ \,(-)\, \circ\, c"{name=U},shift left=1.1ex ]
            \arrow[phantom, from=F, to=U, , "\dashv" rotate=-90]
\end{tikzcd}
\]

between the $\infty$-category of endofunctors of $\mathbf{pro}(\mathbf{D})$ which preserve cofiltered limits and the $\infty$-category of endofunctors in $\mathbf{D}$. The functor $\mathrm{p}$ is prolongation and its right adjoint is given by pre-composing with $c$ and post-composing with $\mathrm{mat}$. 
\end{lemma}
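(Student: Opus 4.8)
The plan is to deduce the adjunction formally from two inputs already recorded in the excerpt: the universal property of pro-completion, and the fact that $c \dashv \mathrm{mat}$. The strategy is to transport $\mathrm p$ across an equivalence of functor $\infty$-categories so that it becomes manifestly a left adjoint, and then read off the right adjoint.

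First I would invoke the universal property of pro-completion with target $\mathbf E = \mathbf{pro}(\mathbf D)$, which admits cofiltered limits. It gives an equivalence of $\infty$-categories
\[
(-)\circ c \colon \mathbf{End}^{\mathbf{filt}}(\mathbf{pro}(\mathbf{D})) \;\xrightarrow{\ \simeq\ }\; \mathbf{Fun}(\mathbf{D},\mathbf{pro}(\mathbf{D})),
\]
given by restriction along $c$. The key point is that under this equivalence the prolongation functor $\mathrm p$ is identified with post-composition by $c$. Indeed, by the definition of the prolongation as the essentially unique cofiltered-limit-preserving extension, one has $\mathrm p(F)\circ c \simeq c\circ F$ for every $F\in\mathbf{End}(\mathbf D)$; hence the composite $\mathbf{End}(\mathbf{D})\xrightarrow{\ \mathrm p\ }\mathbf{End}^{\mathbf{filt}}(\mathbf{pro}(\mathbf{D}))\xrightarrow{(-)\circ c}\mathbf{Fun}(\mathbf{D},\mathbf{pro}(\mathbf{D}))$ is precisely the post-composition functor $F\mapsto c\circ F$.

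Next I would use the adjunction $c\dashv\mathrm{mat}$ recalled above. Applying the $2$-functor $\mathbf{Fun}(\mathbf{D},-)$, which preserves adjunctions, to $c\dashv\mathrm{mat}$ yields an adjunction between functor $\infty$-categories
\[
\begin{tikzcd}[column sep=5pc]
\mathbf{Fun}(\mathbf{D},\mathbf{D}) \arrow[r, shift left=1ex, "c\circ(-)"] & \mathbf{Fun}(\mathbf{D},\mathbf{pro}(\mathbf{D})), \arrow[l, shift left=1ex, "\mathrm{mat}\circ(-)"]
\end{tikzcd}
\]
in which post-composition with $c$ is left adjoint to post-composition with $\mathrm{mat}$. (If one prefers an explicit check rather than citing $2$-functoriality, the natural equivalence $\mathrm{Map}_{\mathbf{pro}(\mathbf{D})}(c\,\alpha(X),\beta(X))\simeq\mathrm{Map}_{\mathbf{D}}(\alpha(X),\mathrm{mat}\,\beta(X))$ is natural in $X$ and passes to ends, which compute mapping spaces in functor $\infty$-categories.) Finally I would assemble the pieces: since $\mathrm p$ equals the left adjoint $c\circ(-)$ followed by the inverse of the equivalence of the first step, its right adjoint is obtained by applying that equivalence and then the right adjoint $\mathrm{mat}\circ(-)$. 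Concretely, a cofiltered-limit-preserving endofunctor $H$ is sent to $\mathrm{mat}\circ(H\circ c)\simeq\mathrm{mat}\circ H\circ c$, that is, the functor obtained by pre-composing $H$ with $c$ and post-composing with $\mathrm{mat}$, and composing the two adjunction equivalences gives $\mathrm{Map}(\mathrm p(F),H)\simeq\mathrm{Map}(F,\mathrm{mat}\circ H\circ c)$, as claimed.

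I do not expect any genuine obstacle here: the argument is a formal composition of an equivalence with a whiskered adjunction. The one point deserving care is the identification in the first step — that restriction along $c$ carries $\mathrm p$ to post-composition with $c$ — but this is immediate from the defining property $\mathrm p(F)\circ c\simeq c\circ F$ of the prolongation, so the remainder is routine.
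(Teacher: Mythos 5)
Your proposal is correct and takes essentially the same route as the paper: the paper's proof is precisely the two-step mapping-space computation
\[
\mathrm{Map}_{\mathbf{End}^{\mathbf{filt}}(\mathbf{pro}(\mathbf{D}))}(\mathrm{p}(F),G)\simeq \mathrm{Map}_{\mathbf{Fun}(\mathbf{D},\mathbf{pro}(\mathbf{D}))}(c\circ F,G\circ c)\simeq \mathrm{Map}_{\mathbf{End}(\mathbf{D})}(F,\mathrm{mat}\circ G\circ c)~,
\]
whose two equivalences are exactly your two inputs (fully faithfulness of restriction along $c$ from the universal property of pro-completion, together with $\mathrm{p}(F)\circ c\simeq c\circ F$, and the post-composition adjunction induced by $c\dashv\mathrm{mat}$). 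Your version merely repackages this at the level of functor $\infty$-categories rather than mapping spaces, which changes nothing of substance.
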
 

\begin{proof}
Let $F$ be an endofunctor of $\mathbf{D}$ and $G$ an endofunctor of $\mathbf{pro}(\mathbf{D})$. Then
\begin{align*}
\mathrm{Map}_{\mathbf{End}^{\mathbf{filt}}(\mathbf{pro}(\mathbf{D}))}(\mathrm{p}(F),G) &\simeq \mathrm{Map}_{\mathbf{Fun}(\mathbf{D},\mathbf{pro}(\mathbf{D}))}(c \circ F ,G \circ c) \\
&\simeq \mathrm{Map}_{\mathbf{End}(\mathbf{D})}(F,\mathrm{mat} \circ G \circ c)~. \qedhere
\end{align*} 
\end{proof}

\subsubsection{The prolongation of the truncated dual Schur functors.} Now we consider the truncated versions of the dual Schur functor, which for a symmetric sequence $M$ are given by 
\[
\widehat{\mathbf{S}}^c_{\leq k}(M)(-) \coloneqq \displaystyle \bigoplus_{n \geq 0}^{k} \left[M(n),(-)^{\otimes n} \right]^{\mathbb{S}_n}~. 
\]
Our goal is to extend them to $\mathbf{pro}(\mathbf{D})$ and take the formal limit over $k \geq 1$. However, we want to do so in a way where the resulting functor will preserve all colimits. We consider the following composition of functors 
\[
\begin{tikzcd}
\left(\mathbf{sSeq}(\mathbf{D})^{\omega}\right)^{\mathsf{op}}\arrow[r,"\widehat{\mathbf{S}}^c_{\leq k}"]
&\mathbf{End}(\mathbf{D}) \arrow[r,"\mathsf{p}"]
&\mathbf{End}^{\mathbf{filt}}(\mathbf{pro}(\mathbf{D}))
\end{tikzcd}
\]
that is, the prologation of the truncated dual Schur functor restricted to the compact generators of symmetric sequences. The $\infty$-category $\mathbf{End}^{\mathbf{filt}}(\mathbf{pro}(\mathbf{D}))$ is stable under limits, therefore the above functor admits an unique extension to all symmetric sequences in $\mathbf{D}$, which we denote by 
\[
\quo{\lim_\alpha} ~\mathrm{p} \widehat{\mathbf{S}}^c_{\leq k}(-): \left(\mathbf{sSeq}(\mathbf{D})\right)^{\mathsf{op}} \longrightarrow \mathbf{End}^{\mathbf{filt}}(\mathbf{pro}(\mathbf{D}))~. 
\]

\subsubsection{Prolongation of the enriched mapping space.} Recall that an object $X$ of a symmetric mono\-id\-al ($\infty$-)category is said to be \emph{exponentiable} if the functor $X \otimes -$ admits a right adjoint. When this is the case, we may denote the adjoint by $[X,-]$. (So a symmetric monoidal category is closed precisely when all objects are exponentiable.) The symmetric monoidal structure on the pro-category $\mathbf{pro}(\mathbf{D})$ is not in general closed. However, the objects of $\mathbf D$ are exponentiable in $\mathbf{pro}(\mathbf{D})$. Heuristically, if $V$ is in $\mathbf D$ and $Y = \quo{\lim_\alpha} Y_\alpha$ is in $\mathbf{pro}(\mathbf D)$, then we may define the exponential $[V,Y]$ as the pro-object $\quo{\lim_\alpha} [V,Y_\alpha]$. 

\begin{lemma}
For any object $V$ in $\mathbf D$, the functor $\mathrm{p}\left([V,-]\right)$ is right adjoint to $c(V) ~ \widehat{\otimes} ~ -$.
\end{lemma}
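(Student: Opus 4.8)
The plan is to reduce the claim to two facts that are already available: that $\mathrm p$ is strong monoidal for the composition product on endofunctors, and that $V\otimes -$ admits $[V,-]$ as a right adjoint in the closed symmetric monoidal $\infty$-category $\mathbf D$.

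First I would identify the left-hand functor with a prolongation. Since $\widehat\otimes$ commutes with arbitrary limits in each variable, the endofunctor $c(V)\,\widehat\otimes\,-$ preserves cofiltered limits and hence lies in $\mathbf{End}^{\mathbf{filt}}(\mathbf{pro}(\mathbf D))$. Because $c$ is symmetric monoidal, $c(V)\,\widehat\otimes\,c(W)\simeq c(V\otimes W)$ naturally in $W$, so the restriction of $c(V)\,\widehat\otimes\,-$ along $c$ is $c\circ(V\otimes -)$. By the universal property of pro-completion, a cofiltered-limit-preserving endofunctor of $\mathbf{pro}(\mathbf D)$ is determined by its restriction along $c$; therefore $c(V)\,\widehat\otimes\,- \simeq \mathrm p(V\otimes -)$, and it suffices to prove $\mathrm p(V\otimes -)\dashv \mathrm p([V,-])$.

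Next I would invoke strong monoidality of $\mathrm p$. Concretely, for endofunctors $F,G$ of $\mathbf D$ both $\mathrm p(F\circ G)$ and $\mathrm p(F)\circ\mathrm p(G)$ preserve cofiltered limits and restrict to $c\circ F\circ G$ along $c$, so they agree by the same uniqueness; likewise $\mathrm p(\mathrm{id}_{\mathbf D})\simeq \mathrm{id}_{\mathbf{pro}(\mathbf D)}$. An adjunction $L\dashv R$ of endofunctors of $\mathbf D$ is precisely a dual pair in the monoidal $\infty$-category $(\mathbf{End}(\mathbf D),\circ,\mathrm{id}_{\mathbf D})$, with the unit $\eta\colon \mathrm{id}_{\mathbf D}\Rightarrow [V,-]\circ(V\otimes -)$ and the counit $\varepsilon\colon (V\otimes -)\circ[V,-]\Rightarrow \mathrm{id}_{\mathbf D}$ playing the roles of coevaluation and evaluation. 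A strong monoidal functor preserves dual pairs, so applying $\mathrm p$ and using the identifications above produces a unit and counit exhibiting $\mathrm p(V\otimes -)\dashv\mathrm p([V,-])$; the triangle identities survive since $\mathrm p$ respects vertical and horizontal composition of natural transformations. Together with the previous step this proves the lemma.

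The step I expect to be most delicate is the rigorous transport of the adjunction data through $\mathrm p$ in the $\infty$-categorical setting, namely ensuring that the coherent unit and counit, and not merely their existence, are carried across so that the triangle identities hold up to coherent homotopy. If one prefers to bypass the abstract duality argument, the same conclusion follows from a direct mapping-space computation. Writing $X=\quo{\lim_\alpha}X_\alpha$ and $Y=\quo{\lim_\beta}Y_\beta$, the prolongation formulas give $c(V)\,\widehat\otimes\,X\simeq \quo{\lim_\alpha}(V\otimes X_\alpha)$ and $\mathrm p([V,-])(Y)\simeq \quo{\lim_\beta}[V,Y_\beta]$, so that $\mathrm{Map}_{\mathbf{pro}(\mathbf D)}(c(V)\,\widehat\otimes\,X,\,Y)\simeq \lim_\beta\colim_\alpha \mathrm{Map}_{\mathbf D}(V\otimes X_\alpha,Y_\beta)$ while $\mathrm{Map}_{\mathbf{pro}(\mathbf D)}(X,\,\mathrm p([V,-])(Y))\simeq \lim_\beta\colim_\alpha \mathrm{Map}_{\mathbf D}(X_\alpha,[V,Y_\beta])$; the $\mathbf D$-level adjunction identifies the two expressions levelwise and naturally, yielding the desired equivalence. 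I would present the monoidal argument as the main proof and keep the explicit computation as a verification.
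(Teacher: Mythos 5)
Your proposal is correct, and it essentially contains the paper's proof as its ``verification'' step: the paper's argument is exactly the direct mapping-space computation, writing $Y$ and $Z$ as formal cofiltered limits, using $c(V)\,\widehat{\otimes}\,Y \simeq$ the formal limit of the $V \otimes Y_\alpha$, the standard formula for mapping spaces in a pro-category, and the levelwise adjunction $V\otimes - \dashv [V,-]$ in $\mathbf{D}$ --- four lines, with no appeal to monoidality of $\mathrm{p}$. (Incidentally, your ordering $\lim_\beta\colim_\alpha$ is the standard pro-category formula; the paper writes $\colim_\alpha\lim_\beta$, which is immaterial here since the adjunction is applied levelwise, but your version is the correct one.) Your main argument is genuinely different: you identify $c(V)\,\widehat{\otimes}\,-$ with $\mathrm{p}(V\otimes -)$ via the universal property of pro-completion, and then transport the duality datum $(\eta,\varepsilon)$ through $\mathrm{p}$, using that adjunctions between endofunctors are precisely dual pairs in $\left(\mathbf{End}(\mathbf{D}),\circ,\mathrm{id}\right)$ and that strong monoidal functors preserve dual pairs. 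This is conceptually attractive --- it proves the more general statement that prolongation carries any adjunction of endofunctors to an adjunction --- and it is consistent with the paper, which later asserts and uses monoidality of $\mathrm{p}$ without proof. The cost is that it concentrates all the difficulty in exactly the point you flag: your object-wise uniqueness argument gives equivalences $\mathrm{p}(F\circ G)\simeq \mathrm{p}(F)\circ\mathrm{p}(G)$ but not, by itself, the coherent monoidal structure needed to push unit, counit, and triangle identities across $\mathrm{p}$ (it does suffice in the end, since adjunctions of $\infty$-categories are detected by unit and counit satisfying the triangle identities up to homotopy, so homotopy-2-categorical monoidality is enough). The paper's computation sidesteps this entirely, which is presumably why the authors chose it; since you supply that computation as a complete fallback, your proposal stands.
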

\begin{proof}
    Let $Y=\quo{\lim_\alpha} Y_\alpha$ and $Z=\quo{\lim_\beta} Z_\beta$ be objects of $\mathbf{pro}(\mathbf D)$. We have
    \begin{align*}
        \mathrm{map}_{\mathbf{pro}(\mathbf{D})}(c(V) \widehat{\otimes} Y, Z) & \simeq \mathrm{map}_{\mathbf{pro}(\mathbf{D})}(\quo{\lim_\alpha} V \otimes Y_\alpha, Z) \\
        & \simeq \colim_\alpha \lim_\beta \mathrm{map}_{\mathbf{D}}( V \otimes Y_\alpha, Z_\beta) \\
        & \simeq \colim_\alpha \lim_\beta \mathrm{map}_{\mathbf{D}}( Y_\alpha, [V,Z_\beta]) \\
        & \simeq  \mathrm{map}_{\mathbf{pro}(\mathbf{D})}(Y, \quo{\lim_\beta} [V,Z_\beta])
    \end{align*}
    and $\quo{\lim_\beta} [V,Z_\beta]$ is $p([V,-])$ evaluated on $Z=\quo{\lim_\beta} Z_\beta$.
\end{proof}

In the following, we will write $\left[c(V),-\right]_{\mathrm{pro}}$ for the hom-object out of $c(V)$ in $\mathbf{pro}(\mathbf{D})$, to avoid confusion with the hom-object $[V,-]$ in $\mathbf D$.

\begin{lemma}
Let $M$ be a symmetric sequence, written as a filtered colimit
\[
M \simeq \colim_{\alpha} M_\alpha 
\]
of compact symmetric sequences $M_\alpha$. There is a natural equivalence of functors
\[
\quo{\lim_\alpha}\mathrm{p}\widehat{\mathbf{S}}^c_{\leq k}(M) \simeq \quo{\lim_\alpha} \bigoplus_{n \geq 0}^{k} \left[c(M_{\alpha})(n),(-)^{\widehat{\otimes} n} \right]^{\mathbb{S}_n}_{\mathrm{pro}},
\]
where $\widehat{\otimes}$ denotes the symmetric monoidal structure of the $\infty$-category $\mathbf{pro}(\mathbf{D})$. 
\end{lemma}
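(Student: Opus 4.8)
The plan is to pin down both sides as the prolongation of the single endofunctor $\widehat{\mathbf{S}}^c_{\leq k}(M)$ of $\mathbf{D}$, using the universal property recorded above: a cofiltered-limit-preserving endofunctor of $\mathbf{pro}(\mathbf{D})$ is determined by its restriction along $c$, and $\mathrm{p}(F)$ is characterized by preserving cofiltered limits together with $\mathrm{p}(F)\circ c \simeq c\circ F$. Writing $R$ for the right-hand side $\bigoplus_{n=0}^{k}\left[c(M)(n),(-)^{\widehat{\otimes} n}\right]^{\mathbb{S}_n}_{\mathrm{pro}}$, it therefore suffices to check two things: that $R$ preserves cofiltered limits, and that $R\circ c \simeq c\circ \widehat{\mathbf{S}}^c_{\leq k}(M)$. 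These two conditions force $R\simeq \mathrm{p}\widehat{\mathbf{S}}^c_{\leq k}(M)$.

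For the first condition I would check that each operation building $R$ preserves cofiltered limits, so that their composite and finite sum do as well: the tensor power $(-)^{\widehat{\otimes} n}$ does because $\widehat{\otimes}$ commutes with arbitrary limits in $\mathbf{pro}(\mathbf{D})$ and the diagonal preserves limits; the pro-exponential $\left[c(M)(n),-\right]_{\mathrm{pro}}$ does because it is a right adjoint (to $c(M)(n)\,\widehat{\otimes}\,-$) by the preceding lemma; the $\mathbb{S}_n$-fixed points do because they are a limit; and the finite direct sum does since finite limits commute with cofiltered ones. For the second condition I would first use that $\mathrm{p}$ is a left adjoint, hence preserves the finite direct sum, reducing to the individual layers, and then use that $\mathrm{p}$ is monoidal to split the $n$-th layer as $\mathrm{p}\bigl([M(n),-]\circ(-)^{\otimes n}\bigr)\simeq \mathrm{p}\bigl([M(n),-]\bigr)\circ \mathrm{p}\bigl((-)^{\otimes n}\bigr)$. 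Here $\mathrm{p}\bigl((-)^{\otimes n}\bigr)\simeq(-)^{\widehat{\otimes} n}$ because $c$ is symmetric monoidal, so $c(V)^{\widehat{\otimes} n}\simeq c(V^{\otimes n})$ compatibly with the $\mathbb{S}_n$-action, and $(-)^{\widehat{\otimes} n}$ preserves cofiltered limits; while $\mathrm{p}\bigl([M(n),-]\bigr)\simeq \left[c(M)(n),-\right]_{\mathrm{pro}}$ is exactly the content of the lemma identifying the right adjoint of $c(V)\,\widehat{\otimes}\,-$. Combining these with the identity $\mathrm{p}([M(n),-])\circ c\simeq c\circ[M(n),-]$ gives, on a constant object $c(V)$, the $\mathbb{S}_n$-equivariant identification $\left[c(M)(n),c(V)^{\widehat{\otimes} n}\right]_{\mathrm{pro}}\simeq c\bigl([M(n),V^{\otimes n}]\bigr)$.

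The step I expect to be the main obstacle is passing to $\mathbb{S}_n$-fixed points inside this comparison, that is, showing that forming $\mathbb{S}_n$-homotopy fixed points is compatible with $c$: one needs $c\bigl([M(n),V^{\otimes n}]^{\mathbb{S}_n}\bigr)$ to agree with the fixed points computed in $\mathbf{pro}(\mathbf{D})$. This is delicate precisely because $c$ only preserves finite limits, whereas $\mathbb{S}_n$-homotopy fixed points are an infinite limit, and prolongation — being a left adjoint — does not commute with limits in general. The way I would resolve this is to present $\widehat{\mathbf{S}}^c_{\leq k}(M)$, as explained above, by its point-set model on an $\mathbb{S}$-cofibrant (projective) replacement of $M$, so that each $M(n)$ is a retract of free $\mathbb{S}_n$-modules; for such $M(n)$ the functor $[M(n),-]^{\mathbb{S}_n}$ is a retract of the finitary functor $[\,\kk[\mathbb{S}_n],-]^{\mathbb{S}_n}\simeq \mathrm{id}$, hence computed by a finite limit that $c$ does preserve. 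With this reduction the fixed points commute with both $c$ and the pro-constructions, the constant-object comparison $R\circ c\simeq c\circ\widehat{\mathbf{S}}^c_{\leq k}(M)$ follows layerwise, and the remaining verifications — naturality in $M$, compatibility with the direct-sum decomposition, and $\mathbb{S}_n$-equivariance of all identifications — are routine.
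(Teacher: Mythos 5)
Your first two paragraphs run parallel to the paper's own proof, which computes $\mathrm{p}\widehat{\mathbf{S}}^c_{\leq k}(M)$ piecewise from exactly the ingredients you list: $\mathrm{p}\bigl((-)^{\otimes n}\bigr)\simeq(-)^{\widehat{\otimes} n}$ because $c$ is strong monoidal, $\mathrm{p}\bigl([V,-]\bigr)=\left[c(V),-\right]_{\mathrm{pro}}$ by the preceding lemma, and preservation of the finite direct sum because $\mathrm{p}$ is a left adjoint; your repackaging of this via the universal property of prolongation (preservation of cofiltered limits plus agreement on constant objects) is a fine equivalent formulation.

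The divergence is in your third paragraph, at exactly the point you flag as the main obstacle, and there your argument has a genuine gap. First, the reduction is incorrect as stated: an $\mathbb{S}$-cofibrant dg module is a retract of a quasi-free one, which as a graded module is a possibly \emph{infinite} direct sum of copies of $\kk[\mathbb{S}_n]$ carrying a twisted differential, so $[M(n),-]^{\mathbb{S}_n}$ is at best a retract of an infinite product of shifted identity functors --- not of a finite limit --- and $c$ commutes with infinite products no better than with other infinite limits. Second, and more fundamentally, the commutation you are trying to prove (that $c$ intertwines genuine $\mathbb{S}_n$-homotopy fixed points with fixed points computed in $\mathbf{pro}(\mathbf{D})$, after $\mathbb{S}$-cofibrant replacement) fails in positive characteristic: point-set projectivity of $M(n)$ puts no restriction on the underlying object of $\mathbf{Fun}(B\mathbb{S}_n,\mathbf{D}(\kk))$, and if $M(p)$ is an $\mathbb{S}$-cofibrant model of the \emph{trivial} representation then $[M(p),V^{\otimes p}]^{\mathbb{S}_p}\simeq (V^{\otimes p})^{h\mathbb{S}_p}$ has nonvanishing group cohomology in infinitely many degrees, and the skeletal tower of partial fixed points is not pro-constant. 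Concretely, for $\kk=\mathbb{F}_2$, $V=\kk$, $p=2$, the constant pro-object on $C^*(B\mathbb{S}_2;\mathbb{F}_2)$ is not pro-equivalent to the tower $\{C^*(\mathrm{sk}_j B\mathbb{S}_2;\mathbb{F}_2)\}_j$, since an inverse equivalence would exhibit $C^*(B\mathbb{S}_2;\mathbb{F}_2)$ as a retract of a finite stage. This is precisely why the paper does \emph{not} attempt any such commutation argument: following Bachmann--Burklund (the cited Remark 5.2), it stipulates that the invariants $(-)^{\mathbb{S}_n}_{\mathrm{pro}}$ appearing on the right-hand side are to be read as the formal $\mathbb{N}$-indexed limit of the finite partial limits coming from the skeletal filtration of $B\mathbb{S}_n$ --- the construction that is compatible with prolongation by design --- after which the lemma reduces to the bookkeeping in your second paragraph, with no cofibrancy argument needed. (Your reduction does work when $M(n)$ is honestly free on finitely many generators, as for the cells $S^{k}(p)$ and $D^k(p)$ used later in the paper, but not in the generality claimed; note also that the lemma is stated over an arbitrary compactly rigidly generated $\mathbf{D}$, where $\mathbb{S}$-cofibrant replacement is not even available.)
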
 

\begin{proof}
Let us start computing this functor for a given compact symmetric sequence $M_\alpha$. The prolongation functor is strong monoidal, so we only have to compute the prologation of each of the pieces that compose the truncated dual Schur functor. 

The prolongations of the tensor product is given by the tensor product in $\mathbf{pro}(\mathbf{D})$ 
\[
\mathrm{p}(-)^{\otimes n} \simeq (-)^{\widehat{\otimes} n}
\]
since the inclusion $c$ from $\mathbf{D}$ to $\mathbf{pro}(\mathbf{D})$ is strong monoidal. Similarly, for any $V$ in $\mathbf{D}$, we have $ \mathrm{p}[V,-] = \left[c(V),-\right]_{\mathrm{pro}}$  by definition. 
Finally, since $\mathrm{p}$ is a left adjoint, it preserves coproducts, and we have that 
\[
\mathrm{p}\left(\bigoplus_{n \geq 0}^{k} \left[M_\alpha(n),(-)^{\otimes n} \right]^{\mathbb{S}_n}\right) \simeq \bigoplus_{n \geq 0}^{k} \left[c(M_\alpha)(n),(-)^{\widehat{\otimes} n} \right]^{\mathbb{S}_n}_{\mathrm{pro}}~.
\]
We get the general version of this functor as a "formal" filtered limit of its image on compact symmetric sequences. 
\end{proof}

\begin{remark}[Warning]
For a given $Y$, the functor $\left[c(-),Y\right]_{\mathrm{pro}}$ does \emph{not} preserve colimits. In particular, it means that: 
\[
\mathrm{p}\widehat{\mathbf{S}}^c_{\leq k}(M) \not\simeq \quo{\lim_\alpha}\mathrm{p}\widehat{\mathbf{S}}^c_{\leq k}(M_{\alpha}). 
\]
Since we want a functor that preserves all colimits, we first restric it to compact symmetric sequences before taking the prolongation and then extending it freely to all symmetric sequences. We thank Brice Le Grignou for point out this mistake in the previous version of the paper.  
\end{remark}

\subsubsection{A formal tower of truncations.} Taking the formal limit of this family of truncated functors in the $\infty$-category of endofunctors of $\mathbf{pro}(\mathbf{D})$ gives a functor 
\[
\widehat{\mathbf{S}}^c_{\mathrm{pro}}(-) \coloneqq \quo{\lim_k} \quo{\lim_\alpha}\mathrm{p}\widehat{\mathbf{S}}^c_{\leq k}(-): \mathbf{sSeq}(\mathbf{D})^{\mathsf{op}} \longrightarrow \mathbf{End}(\mathbf{pro}(\mathbf{D}))
\]
gives a functor $\widehat{\mathbf{S}}^c_{\mathrm{pro}}(-)$, which we will refer to as the \textit{pro dual Schur functor}. Our goal from now will be to show that it sends colimits to limits and that it is in fact a strong monoidal functor. 

\begin{proposition}\label{prop: pro-Schur functor preserves limits}
The functor 
\[
\widehat{\mathbf{S}}^c_{\mathrm{pro}}(-): \mathbf{sSeq}(\mathbf{D})^{\mathsf{op}} \longrightarrow \mathbf{End}(\mathbf{pro}(\mathbf{D}))
\]
which assigns to a symmetric sequence its pro dual Schur functor preserves all limits. 
\end{proposition}

\begin{proof}
Follows from the fact that $\mathrm{p}\widehat{\mathbf{S}}^c_{\leq k}(-)$ preserves finite limits of compact symmetric sequences, so its unique extension to all symmetric sequences preserves all limits. 
\end{proof}

\begin{example}
Let $\mathbf{D} = \mathbf{D}(\kk)$, where $\kk$ is a field of positive characteristic. Let $M$ be the symmetric sequence given by the trivial representation, that is $M(n) \simeq \kk$ for all $n\geq 0$. Notice that $M$ is \textit{not} a compact symmetric sequence. Its image via the pro dual Schur functor coincides with the description of the cofree cocommutative coalgebra in $\mathbf{pro}(\mathbf{D})$ established in \cite[Lemma 5.1]{bachmann2024}. Indeed, by \cite[Remark 5.2]{bachmann2024} the product in that description has to be understood as the formal limit over the truncations $\quo{\lim_k}$ and the invariants as the formal inverse limit of finite limits which are computed indexwise, coming from the filtration of $B\mathbb{S}_n$ by finite skeleta, and which coincides with the formal limit $\quo{\lim_\alpha}$ in this case as well. 
\end{example}

\subsubsection{A strong monoidal extension of the dual Schur functor.}
We will show that the functor we have constructed is indeed strong monoidal. This, in turn, will allow us to produce a comonad in $\mathbf{pro}(\mathbf{D})$ out of an operad enriched in $\mathbf{D}$ and consider coalgebras over it.

\begin{lemma}\label{lemma: enriched pro-adjunction}
Let $A$ and $B$ be two objects of $\mathbf{D}$. There is a natural weak equivalence of endofunctors of $\mathbf{pro}(\mathbf{D})$
\[
\left[c(A),\left[c(B),-\right]_{\mathrm{pro}}\right]_{\mathrm{pro}} \simeq \left[c(A)~  \widehat{\otimes} ~ c(B),-\right]_{\mathrm{pro}}~. 
\]
\end{lemma}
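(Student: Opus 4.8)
The plan is to prove this by reducing the statement to a purely formal consequence of the adjunctions already established, rather than computing with pro-objects directly. The key observation is that the previous lemma identifies $\mathrm{p}\left([V,-]\right)$, which is precisely $\left[c(V),-\right]_{\mathrm{pro}}$, as the right adjoint to the functor $c(V) \mathbin{\widehat{\otimes}} -$ on $\mathbf{pro}(\mathbf{D})$. So the entire statement is about composing adjunctions and using the uniqueness of adjoints.

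First I would recall that in $\mathbf{D}$, the self-enrichment satisfies the standard tensor-hom compatibility $[A,[B,-]] \simeq [A \otimes B, -]$, coming from the associativity of the tensor product and the defining adjunction $A \otimes - \dashv [A,-]$. Then I would argue at the level of $\mathbf{pro}(\mathbf{D})$: by the preceding lemma, $\left[c(A),-\right]_{\mathrm{pro}}$ is right adjoint to $c(A) \mathbin{\widehat{\otimes}} -$, and similarly $\left[c(B),-\right]_{\mathrm{pro}}$ is right adjoint to $c(B) \mathbin{\widehat{\otimes}} -$. Composing these two adjunctions, the endofunctor $\left[c(A),\left[c(B),-\right]_{\mathrm{pro}}\right]_{\mathrm{pro}}$ is right adjoint to the composite $\left(c(A) \mathbin{\widehat{\otimes}} -\right) \circ \left(c(B) \mathbin{\widehat{\otimes}} -\right)$. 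Since $c$ is strong symmetric monoidal and $\mathbin{\widehat{\otimes}}$ is associative with a coherent isomorphism, this composite is naturally equivalent to $\left(c(A) \mathbin{\widehat{\otimes}} c(B)\right) \mathbin{\widehat{\otimes}} -$. Again invoking the preceding lemma with $V = A \otimes B$ together with $c(A \otimes B) \simeq c(A) \mathbin{\widehat{\otimes}} c(B)$, the functor $c(A) \mathbin{\widehat{\otimes}} c(B) \mathbin{\widehat{\otimes}} -$ admits $\left[c(A) \mathbin{\widehat{\otimes}} c(B),-\right]_{\mathrm{pro}}$ as its right adjoint.

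The conclusion then follows from essential uniqueness of adjoints in $\infty$-categories: two right adjoints to the same functor are canonically equivalent, so $\left[c(A),\left[c(B),-\right]_{\mathrm{pro}}\right]_{\mathrm{pro}} \simeq \left[c(A) \mathbin{\widehat{\otimes}} c(B),-\right]_{\mathrm{pro}}$ as endofunctors of $\mathbf{pro}(\mathbf{D})$, which is exactly the claim after rewriting $c(A) \mathbin{\widehat{\otimes}} c(B)$ on the right-hand side. One subtlety I would be careful about is that the preceding lemma produces $\left[c(V),-\right]_{\mathrm{pro}}$ only for $V$ an object of $\mathbf{D}$ in the image of $c$; the right-hand side of the present statement involves the exponential out of $c(A) \mathbin{\widehat{\otimes}} c(B)$, so I must first identify this with $c(A \otimes B)$ using strong monoidality of $c$, ensuring the lemma applies.

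I expect the main obstacle to be bookkeeping rather than conceptual: verifying that the composite adjunction identification is natural in the appropriate sense and that all the monoidal coherence isomorphisms (associativity of $\mathbin{\widehat{\otimes}}$, strong monoidality of $c$) are compatible so that the equivalence is genuinely an equivalence of \emph{endofunctors} and not merely a pointwise equivalence. In the $\infty$-categorical setting one should be slightly careful that ``right adjoint to a composite is the composite of right adjoints'' holds with the correct coherence, but this is standard and can be cited; the essential content is the strong monoidality of $c$ promoting $c(A) \mathbin{\widehat{\otimes}} c(B) \simeq c(A \otimes B)$, which lets the previous lemma close the argument.
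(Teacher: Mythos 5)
Your proof is correct, and it reaches the statement by a genuinely different (though closely related) mechanism than the paper's. The paper's proof is a compressed three-step chain: it rewrites $\left[c(A),\left[c(B),-\right]_{\mathrm{pro}}\right]_{\mathrm{pro}}$ as $\mathrm{p}\left(\left[A,\left[B,-\right]\right]\right)$ --- implicitly using that prolongation is monoidal for composition of endofunctors, i.e.\ $\mathrm{p}([A,-])\circ \mathrm{p}([B,-]) \simeq \mathrm{p}\left([A,-]\circ [B,-]\right)$ --- then applies the tensor-hom identity $[A,[B,-]]\simeq [A\otimes B,-]$ internally to $\mathbf{D}$, and finally uses $\mathrm{p}\left([A\otimes B,-]\right) = \left[c(A\otimes B),-\right]_{\mathrm{pro}} \simeq \left[c(A)\,\widehat{\otimes}\,c(B),-\right]_{\mathrm{pro}}$ via strong monoidality of $c$. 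You instead stay entirely in $\mathbf{pro}(\mathbf{D})$: both sides are exhibited as right adjoints to (equivalent) tensoring functors, and essential uniqueness of right adjoints concludes. The inputs are the same (the preceding adjunction lemma, strong monoidality of $c$, the tensor-hom adjunction), but your route trades the paper's implicit $\mathrm{p}(F\circ G)\simeq \mathrm{p}(F)\circ \mathrm{p}(G)$ for uniqueness of adjoints, which makes the argument marginally more self-contained relative to what is actually stated before the lemma; the paper's version is shorter but leaves that compatibility of prolongation with composition unjustified on the page. One small bookkeeping point: the composite of right adjoints $\left[c(A),-\right]_{\mathrm{pro}}\circ \left[c(B),-\right]_{\mathrm{pro}}$ is right adjoint to $\left(c(B)\,\widehat{\otimes}\,-\right)\circ\left(c(A)\,\widehat{\otimes}\,-\right)$, not to $\left(c(A)\,\widehat{\otimes}\,-\right)\circ\left(c(B)\,\widehat{\otimes}\,-\right)$ as written; since $\widehat{\otimes}$ is symmetric and associative the two composites are equivalent, so nothing breaks, but the order should be stated correctly. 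Your explicit care with the right-hand side --- identifying $c(A)\,\widehat{\otimes}\,c(B)\simeq c(A\otimes B)$ so that the exponential is even defined --- is a point the paper glosses over and is worth keeping.
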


\begin{proof}
It directly follows from 
\[
\left[c(A),\left[c(B),-\right]_{\mathrm{pro}}\right]_{\mathrm{pro}} \simeq \mathrm{p} \left( \left[A,\left[B,-\right]\right]\right) \simeq \mathrm{p} \left(\left[A \otimes B,-\right]\right) \simeq \left[c(A)~  \widehat{\otimes} ~ c(B),-\right]_{\mathrm{pro}}~.  \qedhere
\]\end{proof}

\begin{theorem}
The functor 
\[
\widehat{\mathbf{S}}^c_{\mathrm{pro}}(-): \mathbf{sSeq}(\mathbf{D})^{\mathsf{op}} \longrightarrow \mathbf{End}(\mathbf{pro}(\mathbf{D}))
\]
is a strong monoidal functor.  
\end{theorem}

\begin{proof}
Let $M$ and $N$ be two symmetric sequences. Let us write them as filtered colimits of compact symmetric sequences 
\[
M \simeq \colim_{\alpha} M_\alpha \quad \text{and} \quad N \simeq \colim_{\beta}~ N_{\beta}. 
\]
Under the equivalence $\mathbf{sSeq}(\mathbf{D}) \simeq \mathbf{Ind}(\mathbf{sSeq}(\mathbf{D})^\omega)$, the composition product $M \circledcirc N$ can be written as
\begin{equation}\label{equation: composition product ind of sym seq}
M \circledcirc N(k) \simeq \bigoplus_{n \geq 0} \colim_{\alpha} \colim_{\beta^{\times n}} \left(\bigoplus_{\underline{k} = \sqcup_{i=1}^n S_i} M_{\alpha}(n) \otimes N_{\beta}(S_1) \otimes \cdots \otimes N_{\beta}(S_n)\right)_{\mathbb{S}_n}
\end{equation}
since tensor products, coinvariants and direct sums commute with filtered colimits. 

\medskip

There is a first isomorphism 
\[
\quo{\lim_{k}} \quo{\lim_{\alpha}}\mathrm{p}\widehat{\mathbf{S}}^c_{\leq k}(M_\alpha) \circ \quo{\lim_{l}} \quo{\lim_{\beta}} \mathrm{p}\widehat{\mathbf{S}}^c_{\leq l}(N_\beta) \simeq \quo{\lim_{k,l}} \quo{\lim_{\alpha}} \mathrm{p}\widehat{\mathbf{S}}^c_{\leq k}(M_\alpha) \circ  \quo{\lim_{\beta}}  \mathrm{p}\widehat{\mathbf{S}}^c_{\leq l}(N_\beta)
\]
which directly follows from the fact that the first functor preserves filtered limits by construction. Let us compute the right hand side term:
\small{
\begin{align*}
&\quo{\lim_{\alpha}} \mathrm{p}\widehat{\mathbf{S}}^c_{\leq k}(M) \circ \quo{\lim_{\beta}} \mathrm{p}\widehat{\mathbf{S}}^c_{\leq l}(N) \simeq \bigoplus_{n \geq 0}^{k} \quo{\lim_{\alpha}}  \left[c(M_\alpha)(n),\left(\bigoplus_{j \geq 0}^{l} \quo{\lim_{\beta}} \left[c(N_\beta)(j),(-)^{\widehat{\otimes} j} \right]^{\mathbb{S}_j}_{\mathrm{pro}} \right)^{\widehat{\otimes} n} \right]^{\mathbb{S}_n}_{\mathrm{pro}} \\
&\simeq \bigoplus_{n \geq 0}^{k}  \quo{\lim_{\alpha}} \quo{\lim_{\beta^{\times n}}} \left[c(M_\alpha)(n),\bigoplus_{(i_1,\cdots,i_n), i_j \leq l} \left[c(N_\beta)(i_1),(-)^{\widehat{\otimes} i_1} \right]^{\mathbb{S}_{i_1}} \widehat{\otimes} \cdots \widehat{\otimes} \left[c(N_\beta)(i_n),(-)^{\widehat{\otimes} i_n} \right]^{\mathbb{S}_{i_n}} \right]^{\mathbb{S}_n}_{\mathrm{pro}} \\
&\simeq \bigoplus_{n \geq 0}^{k} \quo{\lim_{\alpha}} \quo{\lim_{\beta^{\times n}}}\left[\bigoplus_{(i_1,\cdots,i_n), i_j \leq l} c(M_\alpha)(n), \left[c(N_\beta)(i_1)\widehat{\otimes} \cdots \widehat{\otimes} c(N_\beta)(i_n),(-)^{\widehat{\otimes} (i_1 + \cdots + i_n)} \right]^{\mathbb{S}_{i_1} \times \cdots \times \mathbb{S}_{i_n}} \right]^{\mathbb{S}_n}_{\mathrm{pro}} \\
&\simeq \bigoplus_{n \geq 0}^{k}  \quo{\lim_{\alpha}} \quo{\lim_{\beta^{\times n}}} \left[c\left(M_\alpha(n) \otimes_{\mathbb{S}_n} \bigoplus_{(i_1,\cdots,i_n), i_j \leq l} N_\beta(i_1) \otimes \cdots N_\beta(i_n)\right), (-)^{\widehat{\otimes} (i_1 + \cdots + i_n)} \right]^{\mathbb{S}_{i_1 + \cdots + i_n}}_{\mathrm{pro}} 
\end{align*}}

where the third equivalence follows from the fact that $N_\beta(i_j)$ is a compact object for all $1 \leq j \leq n$, and where the fourth equivalence follows from Lemma \ref{lemma: enriched pro-adjunction}. It follows directly from the above that the canonical map
\[
\quo{\lim_{k}} \quo{\lim_{\alpha}}\mathrm{p}\widehat{\mathbf{S}}^c_{\leq k}(M_\alpha) \circ \quo{\lim_{l}} \quo{\lim_{\beta}} \mathrm{p}\widehat{\mathbf{S}}^c_{\leq l}(N_\beta) \longrightarrow \quo{\lim_{r}} \quo{\lim_{\gamma}} \mathrm{p}\widehat{\mathbf{S}}^c_{\leq r}((M  \circledcirc  N)_\gamma)~
\]
is an isomorphism and therefore that $\widehat{\mathbf{S}}^c_{\mathrm{pro}}$ is a strong monoidal functor. 
\end{proof}

\begin{definition}[$\mathscr{P}$-coalgebra]\label{def: infinity categorical P-coalgebras}
Let $\mathscr{P}$ be an enriched $\infty$-operad. The $\infty$-category of $\mathscr{P}$\textit{-coalgebras} is given by the following pullback
\[
\begin{tikzcd}[column sep=3.5pc,row sep=3.5pc]
\mathbf{Coalg}_\mathscr{P}(\mathbf{D}) \arrow[r] \arrow[d] \arrow[dr, phantom, "\ulcorner", very near start]
&\mathbf{Coalg}_{\widehat{\mathbf{S}}^c_{\mathrm{pro}}(\mathscr{P})}(\mathbf{pro}(\mathbf{D})) \arrow[d,"U"]\\
\mathbf{D} \arrow[r,"c"] 
&\mathbf{pro}(\mathbf{D})
\end{tikzcd}
\]
taken in the $\infty$-category of $\infty$-categories. 
\end{definition}

\begin{remark}
Given an object $V$ in $\mathbf{D}$, a $\mathscr{P}$-coalgebra structure on $V$ should be equivalent to a map of operads from $\mathscr{P}$ to a \textit{coendomorphism operad} of $V$. However, as far as we know, no such construction exists yet in the $\infty$-categorical setting with the desired properties.
\end{remark}

\subsection{Dévissage of the $\infty$-category of coalgebras over an operad}
The goal of this section is to show that the functor which assigns to every $\infty$-operad $\mathscr{P}$ its $\infty$-category of $\mathscr{P}$-coalgebras in $\mathbf{D}$ preserves pushouts and sends free $\infty$-operads to $\infty$-categories of coalgebras over endofunctors. 

\medskip

Let us recall the construction of the free $\infty$-operad given by \cite[Appendix B]{pdalgebras}, based on the $1$-categorical construction of \cite{Kelly80}. Let $M$ be a symmetric sequence, the free operad $T(M)$ on $M$ is build inductively as follows. We set 
\[
T^{(0)} \coloneqq \mathbb{1} \quad \text{and} \quad T^{(n)} \coloneqq \mathbb{1} \oplus \left(M \circ T^{(n-1)}(M)\right)~,
\]
together with the maps $i_1: \mathbb{1} \longrightarrow \mathbb{1} \oplus M$ given by the obvious inclusion and $i_n = \mathrm{id}_{\mathbb{1}} \oplus (\mathrm{id}_{M} \circ i_{n-1})$. This gives a sequential diagram of symmetric sequences, and by \cite[Theorem B.2]{pdalgebras} the free operad $T(M)$ exists and its underlying symmetric sequence is given by

\[
T(M) \simeq \colim_n ~ T^{(n)}(M)~. 
\]

Dually, for any functor $F$ in the $\infty$-category of endofunctors of $\mathbf{pro}(\mathbf{D})$, the cofree comonad on $F$ is constructed in \cite[Appendix A.3]{heuts2024} as follows. We set 
\[
C^{(0)} \coloneqq \mathbb{1} \quad \text{and} \quad C^{(n)} \coloneqq \mathbb{1} \times \left(F \circ T^{(n-1)}(F)\right)~,
\]
together with the maps $p_1: \mathbb{1} \times F \longrightarrow \mathbb{1}$ given by the obvious projection and $p_n = \mathrm{id}_{\mathbb{1}} \times (\mathrm{id}_{F} \circ p_{n-1})$. This gives a sequential diagram of endofunctors; the cofree comonad on $F$ thus exists and its endofunctor is given by 

\[
C(M) \simeq \lim_n ~ C^{(n)}(F)~. 
\]

\begin{proposition}\label{prop: sends free operad to cofree monad}
The functor 
\[
\widehat{\mathbf{S}}^c_{\mathrm{pro}}(-): \mathbf{sSeq}(\mathbf{D})^{\mathsf{op}} \longrightarrow \mathbf{End}(\mathbf{pro}(\mathbf{D}))
\]
sends a free operad $T(M)$ on a symmetric sequence $M$ to the cofree comonad on $\widehat{\mathbf{S}}^c_{\mathrm{pro}}(M)$. 
\end{proposition}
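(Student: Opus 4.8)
The plan is to apply $\widehat{\mathbf{S}}^c_{\mathrm{pro}}$ directly to the inductive construction of the free operad and to match it term-by-term with the inductive construction of the cofree comonad. The two structural inputs are already in hand: $\widehat{\mathbf{S}}^c_{\mathrm{pro}}$ is monoidal, so it carries the composition product $\circ$ to composition of endofunctors and the unit $\mathbb{1}$ to the identity endofunctor; and by \cref{lemma: pro-Schur functor preserves limits} it preserves all limits, which, since its source is $\mathbf{sSeq}(\mathbf{D})^{\mathsf{op}}$, means it sends colimits of symmetric sequences to limits of endofunctors. In particular, applied to $T(M) \simeq \colim_n T^{(n)}(M)$ this yields
\[
\widehat{\mathbf{S}}^c_{\mathrm{pro}}(T(M)) \simeq \lim_n \widehat{\mathbf{S}}^c_{\mathrm{pro}}(T^{(n)}(M)),
\]
so it remains only to identify each finite stage and then the tower maps.

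The core is an induction on $n$ showing $\widehat{\mathbf{S}}^c_{\mathrm{pro}}(T^{(n)}(M)) \simeq C^{(n)}(\widehat{\mathbf{S}}^c_{\mathrm{pro}}(M))$. The base case $n=0$ is the statement $\widehat{\mathbf{S}}^c_{\mathrm{pro}}(\mathbb{1}) \simeq \mathrm{Id}$, which is preservation of the monoidal unit. For the inductive step, recall $T^{(n)}(M) = \mathbb{1} \oplus (M \circ T^{(n-1)}(M))$. Since in the additive category $\mathbf{sSeq}(\mathbf{D})$ the direct sum is a biproduct, it becomes a product in $\mathbf{sSeq}(\mathbf{D})^{\mathsf{op}}$, and limit-preservation turns it into a product in $\mathbf{End}(\mathbf{pro}(\mathbf{D}))$; meanwhile monoidality converts $M \circ T^{(n-1)}(M)$ into $\widehat{\mathbf{S}}^c_{\mathrm{pro}}(M) \circ \widehat{\mathbf{S}}^c_{\mathrm{pro}}(T^{(n-1)}(M))$. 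Combining these with the inductive hypothesis gives
\[
\widehat{\mathbf{S}}^c_{\mathrm{pro}}(T^{(n)}(M)) \simeq \mathbb{1} \times \big( \widehat{\mathbf{S}}^c_{\mathrm{pro}}(M) \circ C^{(n-1)}(\widehat{\mathbf{S}}^c_{\mathrm{pro}}(M)) \big) = C^{(n)}(\widehat{\mathbf{S}}^c_{\mathrm{pro}}(M)),
\]
which is precisely the defining formula of the cofree-comonad tower.

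The next step is to check that these identifications are compatible with the transition maps of the two diagrams: the structure maps $i_n = \mathrm{id}_{\mathbb{1}} \oplus (\mathrm{id}_M \circ i_{n-1})$ of the free-operad tower are sent by $\widehat{\mathbf{S}}^c_{\mathrm{pro}}$ to the projections $p_n = \mathrm{id}_{\mathbb{1}} \times (\mathrm{id} \circ p_{n-1})$ of the cofree-comonad tower, again because coproduct inclusions in the source become product projections in the target and $\circ$ is preserved. Passing to the limit then gives $\widehat{\mathbf{S}}^c_{\mathrm{pro}}(T(M)) \simeq \lim_n C^{(n)}(\widehat{\mathbf{S}}^c_{\mathrm{pro}}(M)) \simeq C(\widehat{\mathbf{S}}^c_{\mathrm{pro}}(M))$.

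Finally, one must ensure that this is an equivalence of comonads, not merely of underlying endofunctors. The conceptual reason is that a monoidal functor out of $\mathbf{sSeq}(\mathbf{D})^{\mathsf{op}}$ sends an operad — a monoid in $\mathbf{sSeq}(\mathbf{D})$, equivalently a comonoid in the opposite — to a comonoid in $\mathbf{End}(\mathbf{pro}(\mathbf{D}))$, i.e.\ a comonad; and it sends the free monoid $T(M)$ to the cofree comonoid on $\widehat{\mathbf{S}}^c_{\mathrm{pro}}(M)$ exactly because it converts the colimit presentation of the free monoid into the limit presentation of the cofree comonad. I expect the main obstacle to be the bookkeeping at precisely this point: verifying that the equivalence intertwines the tower maps — and hence the counit and comultiplication — rather than only the underlying objects. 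This is where care is required, although the matching is ultimately forced by the monoidal and limit-preserving properties already established.
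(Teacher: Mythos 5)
Your proof is correct and is exactly the ``straightforward inspection'' the paper's one-line proof alludes to: applying the monoidal, limit-preserving functor $\widehat{\mathbf{S}}^c_{\mathrm{pro}}$ to the tower presentation $T(M) \simeq \colim_n T^{(n)}(M)$ and matching it, stage by stage and on transition maps, with the dual tower defining the cofree comonad. Your flagged concern about upgrading the endofunctor identification to one of comonads is handled no more explicitly in the paper, and is indeed forced by the structure you set up (e.g.\ via the canonical comonad map out of the universal property of the cofree comonad, which your tower comparison shows is an equivalence on underlying endofunctors).
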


\begin{proof}
Follows directly from Proposition \ref{prop: pro-Schur functor preserves limits}.
\end{proof}

\begin{corollary}\label{corollary: coalgebras over a free infinity operad}
Let $M$ be a symmetric sequence. The $\infty$-category of coalgebras in $\mathbf{D}$ over the free operad $T(M)$ is equivalent to the $\infty$-category of coalgebras in $\mathbf{D}$ over the dual Schur endofunctor associated to $M$, given by: 
\[
\widehat{\mathbf{S}}^c(M) = \prod_{n \geq 0} \left[M(n),(-)^{\otimes n}\right]^{\mathbb{S}_n} 
\]
\end{corollary}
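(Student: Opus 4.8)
The plan is to deduce the statement from \cref{prop: sends free operad to cofree monad} together with the universal property of the cofree comonad, and then to unwind the pullback of \cref{def: infinity categorical P-coalgebras} using the adjunction $c \dashv \mathrm{mat}$. Throughout, for an endofunctor $F$ of an $\infty$-category $\mathbf{E}$ I write $\mathrm{coAlg}(F)$ for the $\infty$-category of coalgebras over the endofunctor $F$, i.e.\ the lax equalizer of $\mathrm{id}_{\mathbf E}$ and $F$, whose objects are pairs $(X, X \to F(X))$ carrying no further coherence data. First I would invoke \cref{prop: sends free operad to cofree monad}, which identifies the comonad $\widehat{\mathbf{S}}^c_{\mathrm{pro}}(T(M))$ with the cofree comonad on the endofunctor $\widehat{\mathbf{S}}^c_{\mathrm{pro}}(M)$ of $\mathbf{pro}(\mathbf{D})$. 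The basic categorical input is then that comodules over the cofree comonad on $F$ are equivalent to objects of $\mathrm{coAlg}(F)$; this is the comonadic dual of the fact that modules over a free monad are algebras over the underlying endofunctor, and it follows from the inductive construction of the cofree comonad recalled above from \cite[Appendix A.3]{heuts2024}. This yields an equivalence
\[
\mathbf{Coalg}_{\widehat{\mathbf{S}}^c_{\mathrm{pro}}(T(M))}(\mathbf{pro}(\mathbf{D})) \;\simeq\; \mathrm{coAlg}\bigl(\widehat{\mathbf{S}}^c_{\mathrm{pro}}(M)\bigr),
\]
compatibly with the forgetful functors to $\mathbf{pro}(\mathbf{D})$.

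Next I would substitute this equivalence into the defining pullback of \cref{def: infinity categorical P-coalgebras} for $\mathscr{P} = T(M)$. Since the equivalence is over $\mathbf{pro}(\mathbf{D})$, we obtain
\[
\mathbf{Coalg}_{T(M)}(\mathbf{D}) \;\simeq\; \mathbf{D} \times_{c,\,\mathbf{pro}(\mathbf{D}),\,U} \mathrm{coAlg}\bigl(\widehat{\mathbf{S}}^c_{\mathrm{pro}}(M)\bigr),
\]
whose objects are pairs $(V,\alpha)$ consisting of an object $V$ of $\mathbf{D}$ and an arrow $\alpha\colon c(V) \to \widehat{\mathbf{S}}^c_{\mathrm{pro}}(M)(c(V))$ in $\mathbf{pro}(\mathbf{D})$. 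At this point I would record the key computation
\[
\mathrm{mat}\circ\widehat{\mathbf{S}}^c_{\mathrm{pro}}(M)\circ c \;\simeq\; \lim_k \widehat{\mathbf{S}}^c_{\leq k}(M) \;=\; \widehat{\mathbf{S}}^c(M) \;=\; \prod_{n\geq 0}\left[M(n),(-)^{\otimes n}\right]^{\mathbb{S}_n}.
\]
Indeed, by the defining property of the prolongation, $\mathrm{p}(F)\circ c \simeq c\circ F$, so that $\widehat{\mathbf{S}}^c_{\mathrm{pro}}(M)\circ c \simeq \lim_k \bigl(c\circ\widehat{\mathbf{S}}^c_{\leq k}(M)\bigr)$ (limits of endofunctors being computed pointwise); applying the limit-preserving right adjoint $\mathrm{mat}$ and using $\mathrm{mat}\circ c \simeq \mathrm{id}$ gives the claimed identity, and the inverse limit of the truncations along their projections is exactly the product $\widehat{\mathbf{S}}^c(M)$.

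Finally, I would use the adjunction $c \dashv \mathrm{mat}$ to transport the structure map down to $\mathbf{D}$. The natural equivalence $\mathrm{Map}_{\mathbf{pro}(\mathbf{D})}(c(V),Z) \simeq \mathrm{Map}_{\mathbf{D}}(V,\mathrm{mat}(Z))$ sends $\alpha$ to a map $\mathrm{mat}(\alpha)\colon V \to \widehat{\mathbf{S}}^c(M)(V)$, and I would check that $(V,\alpha)\mapsto (V,\mathrm{mat}(\alpha))$ defines a functor to $\mathrm{coAlg}(\widehat{\mathbf{S}}^c(M))$ that is essentially surjective (take adjuncts back, using the triangle identities) and fully faithful (the adjunction equivalence, being natural, identifies the commuting-square constraints that cut out the morphism spaces on the two sides). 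Combining this with the formula above gives $\mathbf{Coalg}_{T(M)}(\mathbf{D}) \simeq \mathrm{coAlg}(\widehat{\mathbf{S}}^c(M))$, which is precisely the asserted equivalence.

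The main obstacle I expect is this last step: promoting the levelwise adjunction equivalence into an equivalence of the two pullback $\infty$-categories, rather than merely a bijection on objects and a pointwise identification of mapping spaces. The cleanest way to avoid a hands-on simplicial computation is to phrase both sides as lax equalizers and to exhibit the comparison as induced by the adjunction $c \dashv \mathrm{mat}$ directly at the level of the defining pullback diagrams, so that the equivalence follows formally from the naturality of the adjunction equivalence together with the full faithfulness of $c$ (which supplies $\mathrm{mat}\circ c \simeq \mathrm{id}$). A secondary point to handle carefully is the compatibility of $\widehat{\mathbf{S}}^c_{\mathrm{pro}}(M)$ with cofiltered limits, which is what makes the materialization identity above hold on the nose.
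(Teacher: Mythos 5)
Your proposal is correct and follows essentially the same route as the paper's own proof: identify $\widehat{\mathbf{S}}^c_{\mathrm{pro}}(T(M))$-coalgebras with coalgebras over the endofunctor $\widehat{\mathbf{S}}^c_{\mathrm{pro}}(M)$ via \cref{prop: sends free operad to cofree monad} and the cofree-comonad universal property, then unwind the pullback of \cref{def: infinity categorical P-coalgebras} using the adjunction $c \dashv \mathrm{mat}$ and the computation $\mathrm{mat}\circ\widehat{\mathbf{S}}^c_{\mathrm{pro}}(M)\circ c \simeq \widehat{\mathbf{S}}^c(M)$. If anything, you are more careful than the paper about promoting the object-level and mapping-space identifications to an equivalence of $\infty$-categories (the lax-equalizer formulation), a point the paper's proof passes over silently.
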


\begin{proof}
Combining Proposition \ref{prop: sends free operad to cofree monad} with the dual version of \cite[Remark B.4]{pdalgebras}, there is an equivalence of $\infty$-categories
\[
\mathbf{Coalg}_{\widehat{\mathbf{S}}^c_{\mathrm{pro}}(T(M))}(\mathbf{pro}(\mathbf{D})) \simeq \mathbf{coalg}_{\widehat{\mathbf{S}}^c_{\mathrm{pro}}(M)}(\mathbf{pro}(\mathbf{D}))
\]
between coalgebras over $\widehat{\mathbf{S}}^c_{\mathrm{pro}}(T(M))$ and coalgebras over the endofunctor $\widehat{\mathbf{S}}^c_{\mathrm{pro}}(M)$, since the first is the cofree comonad on the second. Coalgebras over this endofunctor are just objects $V$ in $\mathbf{pro}(\mathbf{D})$ endowed with a map 
\[
V \longrightarrow \quo{\lim_k} \quo{\lim_\alpha} \bigoplus_{n \geq 0}^{k} \left[c(M_\alpha)(n),V^{\widehat{\otimes} n} \right]^{\mathbb{S}_n}_{\mathrm{pro}}~. 
\]
Our goal is now to compute the pullback that appears in Definition \ref{def: infinity categorical P-coalgebras}. Objects in this category correspond to objects $W$ in $\mathbf{D}$ together with a map
\[
cW \longrightarrow  \quo{\lim_k} \quo{\lim_\alpha} \bigoplus_{n \geq 0}^{k} \left[c(M_\alpha)(n),(cW)^{\widehat{\otimes} n} \right]^{\mathbb{S}_n}_{\mathrm{pro}}~,
\]
and the data of such a map, by the adjunction $c \dashv \mathrm{mat}$, is equivalent to the data of a map
\[
W \longrightarrow \mathrm{mat}\left(\quo{\lim_k} \quo{\lim_\alpha} \bigoplus_{n \geq 0}^{k} \left[c(M_\alpha)(n),(cW)^{\widehat{\otimes} n} \right]^{\mathbb{S}_n}_{\mathrm{pro}}\right)~. 
\]
Finally, we can conclude by computing that
\[
\mathrm{mat}\left(\quo{\lim_k} \quo{\lim_\alpha} \bigoplus_{n \geq 0}^{k} \left[c(M_\alpha)(n),(cW)^{\widehat{\otimes} n} \right]^{\mathbb{S}_n}_{\mathrm{pro}}\right) \simeq \prod_{n \geq 0} \left[M(n),W^{\otimes n} \right]^{\mathbb{S}_n} \simeq \widehat{\mathbf{S}}^c(M)(W)~. 
\]
\end{proof}

\begin{proposition}\label{prop: taking infinity categories of coalgebras preserves limits}
The functor 
\[
\begin{tikzcd}[column sep=3.5pc,row sep= 0pc]
\mathbf{Op}(\mathbf{D})^{\mathsf{op}} \arrow[r]
&(\mathbf{Cat}_\infty)_{/\mathbf{D}} \\
\mathscr{P} \arrow[r,mapsto]
&\mathbf{Coalg}_\mathscr{P}(\mathbf{D})
\end{tikzcd}
\]
preserves all limits.
\end{proposition}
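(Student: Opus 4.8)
The plan is to factor the functor through its defining pullback and to reduce the whole statement to a single, self-contained fact about comodules over a fixed action, thereby sidestepping any direct analysis of limits in the $\infty$-category of comonads (where the forgetful functor to underlying endofunctors computes colimits, not limits). By \cref{def: infinity categorical P-coalgebras}, the functor $\mathscr{P} \mapsto \mathbf{Coalg}_{\mathscr{P}}(\mathbf{D})$ is the composite
\[
\mathbf{Op}(\mathbf{D})^{\mathsf{op}} \xrightarrow{\ \Psi\ } (\mathbf{Cat}_\infty)_{/\mathbf{pro}(\mathbf{D})} \xrightarrow{\ c^{*}\ } (\mathbf{Cat}_\infty)_{/\mathbf{D}},
\]
where $\Psi(\mathscr{P}) \coloneqq \bigl(U\colon \mathbf{Coalg}_{\widehat{\mathbf{S}}^c_{\mathrm{pro}}(\mathscr{P})}(\mathbf{pro}(\mathbf{D})) \to \mathbf{pro}(\mathbf{D})\bigr)$ and $c^{*}$ is base change along $c\colon \mathbf{D}\to\mathbf{pro}(\mathbf{D})$. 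Since $\mathbf{Cat}_\infty$ has pullbacks, $c^{*}$ is right adjoint to post-composition with $c$, hence preserves all limits. It therefore suffices to show that $\Psi$ preserves limits.

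The crucial move is to recognize $\Psi$ as a functor of comodules over a single action. By the monoidality established in the preceding proposition, $\widehat{\mathbf{S}}^c_{\mathrm{pro}}$ exhibits $\mathbf{pro}(\mathbf{D})$ as left-tensored over the monoidal $\infty$-category $(\mathbf{sSeq}(\mathbf{D})^{\mathsf{op}},\circledcirc)$, with tensoring $M\cdot(-) = \widehat{\mathbf{S}}^c_{\mathrm{pro}}(M)(-)$. Under the identification $\mathbf{Op}(\mathbf{D})^{\mathsf{op}} \simeq \mathrm{CoAlg}(\mathbf{sSeq}(\mathbf{D})^{\mathsf{op}})$ (an operad is a monoid in $\mathbf{sSeq}(\mathbf{D})$, hence a comonoid in the opposite), the comonad attached to $\mathscr{P}$ is exactly $\widehat{\mathbf{S}}^c_{\mathrm{pro}}(\mathscr{P}) = \mathscr{P}\cdot(-)$, and its coalgebras are the comodules over the coalgebra object $\mathscr{P}$ in the tensored category $\mathbf{pro}(\mathbf{D})$. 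Thus $\Psi(\mathscr{P}) \simeq \mathrm{LCoMod}_{\mathscr{P}}(\mathbf{pro}(\mathbf{D}))$, naturally in $\mathscr{P}$. The point of this reformulation is that it treats the operadic and the comodule structures at once, so that we never have to form comonads as an intermediate step and confront the fact that limits of comonads are not visible on underlying endofunctors.

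It then remains to prove the following general statement, which is the $\infty$-categorical counterpart of the $1$-categorical adjunction of \cref{thm: 1-categorical adjunction between categories of coalgebras and coendomorphism operads}: for any monoidal $\infty$-category $\mathcal{M}$ and any $\infty$-category $\mathcal{N}$ left-tensored over $\mathcal{M}$, the functor
\[
\mathrm{LCoMod}_{(-)}(\mathcal{N})\colon \mathrm{CoAlg}(\mathcal{M}) \longrightarrow (\mathbf{Cat}_\infty)_{/\mathcal{N}}
\]
preserves limits; equivalently, dually, $\mathrm{LMod}_{(-)}(\mathcal{N})\colon \mathrm{Alg}(\mathcal{M})\to(\mathbf{Cat}_\infty)_{/\mathcal{N}}$ sends colimits of algebras to limits of module $\infty$-categories. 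Applying this with $\mathcal{M}=\mathbf{sSeq}(\mathbf{D})^{\mathsf{op}}$ and $\mathcal{N}=\mathbf{pro}(\mathbf{D})$ finishes the argument.

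I expect this last fact to be the main obstacle, precisely because a limit of coalgebra objects (comonads) is \emph{not} created by the forgetful functor to underlying objects, so the limit-preservation of $\widehat{\mathbf{S}}^c_{\mathrm{pro}}$ cannot simply be invoked on underlying endofunctors. The route I would take is descent for modules: a module over a colimit of algebras is a compatible family of modules over the terms, so a coproduct gives a fiber product $\mathrm{LMod}_A\times_{\mathcal{N}}\mathrm{LMod}_B$, a pushout $B\otimes_A C$ gives $\mathrm{LMod}_B\times_{\mathrm{LMod}_A}\mathrm{LMod}_C$, and a general (sifted) colimit gives the totalization of the corresponding cosimplicial diagram of module categories. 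This can be organised through the Cartesian fibration $\mathrm{LMod}(\mathcal{N})\to\mathrm{Alg}(\mathcal{M})$ together with the cofree/cobar resolution of the comonad — the very resolution presenting the cofree comonad as the limit $\lim_n C^{(n)}$ used above — which realizes the comodule category itself as a limit built functorially from the comonad, whence limit-preservation. As a consistency check on the generating colimits needed for the dévissage: on filtered colimits of operads the forgetful functor $\mathbf{Op}(\mathbf{D})\to\mathbf{sSeq}(\mathbf{D})$ does compute the underlying colimit, so \cref{lemma: pro-Schur functor preserves limits} turns it into a cofiltered limit of comonads, while \cref{corollary: coalgebras over a free infinity operad} handles the free generators; both special cases confirm the general statement.
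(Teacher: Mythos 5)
Your first two moves are sound: factoring through the pullback along $c$ and noting that base change $c^*$ preserves limits is correct (the paper leaves this step implicit), and the repackaging of $\mathbf{Coalg}_{\widehat{\mathbf{S}}^c_{\mathrm{pro}}(\mathscr{P})}(\mathbf{pro}(\mathbf{D}))$ as comodules over the comonoid $\mathscr{P}$ in the $(\mathbf{sSeq}(\mathbf{D})^{\mathsf{op}},\circledcirc)$-tensored $\infty$-category $\mathbf{pro}(\mathbf{D})$ is a legitimate reformulation, since a left-tensoring of $\mathcal N$ over $\mathcal M$ amounts to a monoidal functor $\mathcal M \to \mathbf{End}(\mathcal N)$. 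The genuine gap is the step you yourself flag as ``the main obstacle'': the claim that for an \emph{arbitrary} monoidal $\infty$-category $\mathcal{M}$ and $\mathcal{M}$-tensored $\mathcal{N}$, the functor $\mathrm{LCoMod}_{(-)}(\mathcal{N})\colon \mathrm{CoAlg}(\mathcal{M})\to(\mathbf{Cat}_\infty)_{/\mathcal{N}}$ preserves limits. You do not prove this, and your ``descent'' sketch is circular: the assertions that ``a module over a colimit of algebras is a compatible family of modules over the terms,'' that a coproduct gives $\mathrm{LMod}_A\times_{\mathcal N}\mathrm{LMod}_B$, and that a pushout gives $\mathrm{LMod}_B\times_{\mathrm{LMod}_A}\mathrm{LMod}_C$ are precisely instances of the statement to be proven, not ingredients of its proof. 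The appeal to the Cartesian fibration $\mathrm{LMod}(\mathcal{N})\to\mathrm{Alg}(\mathcal{M})$ together with the cobar resolution does not touch the real difficulty either: realizing each comodule category as a limit built from its comonad does not imply that the assignment carries colimits of comonoids to limits of categories, because a colimit of comonoids (here: of operads) is not computed on underlying objects --- which is exactly the problem you set out to sidestep, merely relocated into an unproven general theorem. (Minor but symptomatic: the pushout of $\mathbb{E}_1$-algebras is the amalgamated free product, not $B\otimes_A C$.)

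Note also that the standard route to such module-descent statements --- identifying module structures on a fixed object with algebra maps into an endomorphism object, as in \cite[Section 4.7.1]{HigherAlgebra}, so that $\mathrm{Map}(\colim_i A_i,\mathrm{End}(n))\simeq\lim_i\mathrm{Map}(A_i,\mathrm{End}(n))$ --- is unavailable here: the paper remarks right after \cref{def: infinity categorical P-coalgebras} that no $\infty$-categorical coendomorphism operad with the required properties is known, and presentability-based results from the literature do not apply since neither $\mathbf{pro}(\mathbf{D})$ nor $\mathbf{sSeq}(\mathbf{D})^{\mathsf{op}}$ is presentable. The paper's own proof supplies exactly the two inputs your sketch is missing: (i) the coalgebras functor $\mathbf{coMonad}^{\mathrm{filt}}(\mathbf{pro}(\mathbf{D}))\to(\mathbf{Cat}_\infty)_{/\mathbf{pro}(\mathbf{D})}$ preserves limits, cited from \cite[Lemma A.7]{heuts2024}; and (ii) $\widehat{\mathbf{S}}^c_{\mathrm{pro}}\colon\mathbf{Op}(\mathbf{D})^{\mathsf{op}}\to\mathbf{coMonad}^{\mathrm{filt}}(\mathbf{pro}(\mathbf{D}))$ preserves limits, proved by splitting into sifted limits (computed on underlying objects on both sides, so \cref{lemma: pro-Schur functor preserves limits} applies) and products (an argument modelled on \cite[Theorem 5.2(c)]{heuts2024}). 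Your general claim subsumes both in greater generality, so to repair the proposal you would have to prove it --- most plausibly by the same coproducts-plus-sifted-colimits decomposition, which would essentially reproduce the paper's argument inside your reformulation.
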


\begin{proof}
This proof is dual to point $(c)$ in the proof of \cite[Theorem 5.2]{heuts2024}. The functor \[
\mathbf{coMonad}^{\mathrm{filt}}(\mathbf{pro}(\mathbf{D})) \longrightarrow (\mathbf{Cat}_\infty)_{/\mathbf{pro}(\mathbf{D})}
\]which assigns to a comonad which preserves cofiltered limits on $\mathbf{pro}(\mathbf{D})$ its $\infty$-category of coalgebras in $\mathbf{pro}(\mathbf{D})$ preserves all limits, as explained in the proof of Lemma A.7 in \cite{heuts2024}. We are left to check that the functor \[
\widehat{\mathbf{S}}^c_{\mathrm{pro}}(-): \mathbf{Op}(\mathbf{D})^{\mathsf{op}} \longrightarrow \mathbf{coMonad}^{\mathrm{filt}}(\mathbf{pro}(\mathbf{D}))
\]preserves all limits. Since sifted limits in $\mathbf{Op}(\mathbf{D})^{\mathsf{op}}$ and in $\mathbf{coMonad}^{\mathrm{filt}}(\mathbf{pro}(\mathbf{D}))$ are respectively computed in their ground $\infty$-categories of $\mathbf{sSeq}(\mathbf{D})^{\mathsf{op}}$ and $\mathbf{End}^{\mathrm{filt}}(\mathbf{pro}(\mathbf{D}))$, respectively, the result for sifted limits follows directly from Proposition \ref{prop: pro-Schur functor preserves limits}. Thus we only need to check that it preserves products, which follows from arguments completely analogous to the algebras over an operad case explained in point $(c)$ of the proof of \cite[Theorem 5.2]{heuts2024}. 
\end{proof}

\begin{corollary}
Let $\mathscr{P}$ be an $\infty$-operad. The $\infty$-category of $\mathscr{P}$-coalgebras is comonadic over the base $\infty$-category $\mathbf{D}$. 
\end{corollary}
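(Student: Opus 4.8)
The approach is to verify the hypotheses of the comonadic form of the Barr--Beck--Lurie theorem (the dual of \cite[Theorem 4.7.3.5]{HigherAlgebra}) for the forgetful functor $G\colon \mathbf{Coalg}_\mathscr{P}(\mathbf{D})\to\mathbf{D}$. The starting observation is that, since $c$ is fully faithful, the defining pullback of \cref{def: infinity categorical P-coalgebras} exhibits $\mathbf{Coalg}_\mathscr{P}(\mathbf{D})$ as the full subcategory $i\colon \mathbf{Coalg}_\mathscr{P}(\mathbf{D})\hookrightarrow \mathbf{E}$, where $\mathbf{E}\coloneqq \mathbf{Coalg}_{\widehat{\mathbf{S}}^c_{\mathrm{pro}}(\mathscr{P})}(\mathbf{pro}(\mathbf{D}))$, spanned by those coalgebras whose underlying pro-object is constant, i.e.\ lies in the essential image of $c$; under this identification one has $U\circ i=c\circ G$. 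One must then check three things: that $G$ is conservative, that $G$-split cosimplicial objects admit totalizations preserved by $G$, and that $G$ admits a right adjoint.

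Conservativity is immediate from the pullback: if $G(f)$ is an equivalence in $\mathbf{D}$, then $c(G(f))\simeq U(i(f))$ is an equivalence, whence $i(f)$ is an equivalence because the comonadic functor $U$ is conservative, so $f$ is an equivalence. For the totalization condition I would use that split cosimplicial objects have \emph{absolute} totalizations. If $X^\bullet$ is $G$-split, then $G(X^\bullet)$ is split in $\mathbf{D}$, so $U(X^\bullet)=c(G(X^\bullet))$ is split in $\mathbf{pro}(\mathbf{D})$; since $U$ is comonadic, $\mathrm{Tot}(X^\bullet)$ exists in $\mathbf{E}$ and $U(\mathrm{Tot}(X^\bullet))\simeq \mathrm{Tot}(U(X^\bullet))\simeq c(\mathrm{Tot}(G(X^\bullet)))$, the last equivalence because $c$ preserves the absolute totalization. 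This underlying object is again constant, so $\mathrm{Tot}(X^\bullet)$ lies in the full subcategory $\mathbf{Coalg}_\mathscr{P}(\mathbf{D})$ and $G$ preserves it.

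The substantive point, and the step I expect to be the main obstacle, is the existence of a right adjoint to $G$. The obvious guess fails: writing $U^{R}$ for the cofree-coalgebra functor right adjoint to $U$, the object $U^{R}(c(W))$ has underlying pro-object $\widehat{\mathbf{S}}^c_{\mathrm{pro}}(\mathscr{P})(c(W))$, which is genuinely non-constant, so it does not lie in $\mathbf{Coalg}_\mathscr{P}(\mathbf{D})$. I would instead reduce to showing that the inclusion $i$ is coreflective, say with right adjoint $\rho$. Granting this, $R\coloneqq \rho\circ U^{R}\circ c$ is right adjoint to $G$, because for $X$ in the subcategory the adjunction $U\dashv U^{R}$ gives $\mathrm{Map}(X,R(W))\simeq \mathrm{Map}_{\mathbf{pro}(\mathbf{D})}(U(X),c(W))\simeq \mathrm{Map}_{\mathbf{D}}(G(X),W)$. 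To construct $\rho$ I would lift, along the comonadic functor $U$, the coreflection of $\mathbf{D}\simeq \mathrm{im}(c)$ inside $\mathbf{pro}(\mathbf{D})$ --- whose coreflector is $c\circ\mathrm{mat}$, since the fully faithful left adjoint $c$ presents $\mathbf{D}$ as a coreflective subcategory. The crux is a Beck--Chevalley-type compatibility between the comonad $\widehat{\mathbf{S}}^c_{\mathrm{pro}}(\mathscr{P})$ and the idempotent comonad $c\circ\mathrm{mat}$: this is what guarantees that the coreflection can be computed over $\mathbf{pro}(\mathbf{D})$ and still land among coalgebras with constant underlying object. Here one uses crucially that $\widehat{\mathbf{S}}^c_{\mathrm{pro}}(\mathscr{P})$ preserves cofiltered limits, so that $U$ creates the relevant limits.

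Finally, it is worth emphasizing why the detour through $\mathbf{pro}(\mathbf{D})$ cannot be avoided. One cannot present $\mathbf{Coalg}_\mathscr{P}(\mathbf{D})$ directly as coalgebras over a comonad on $\mathbf{D}$ with underlying endofunctor the dual Schur functor $\widehat{\mathbf{S}}^c(\mathscr{P})$, since the latter is only a \emph{lax} comonad (its comultiplication would require inverting the lax structure map $\varphi$, cf.\ \cref{definition: P-coalgebra}) and moreover fails to preserve limits. The genuine comonad witnessing comonadicity is the composite $G\circ R$, and it is precisely the rectification supplied by the pullback definition that endows it with the coherent comultiplication that the lax structure on $\widehat{\mathbf{S}}^c(\mathscr{P})$ lacks.
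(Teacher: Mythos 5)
Your verifications of conservativity and of $G$-split totalizations are correct (both follow formally from the pullback description and the comonadicity of $U$), but the third Barr--Beck--Lurie condition --- existence of a right adjoint to $G$ --- is exactly where your argument has a genuine gap, and the mechanism you propose to close it does not work. You reduce the problem to coreflectivity of the inclusion $i\colon \mathbf{Coalg}_\mathscr{P}(\mathbf{D})\hookrightarrow \mathbf{E}$ and then assert that the coreflection is obtained by lifting the idempotent comonad $c\circ\mathrm{mat}$ along $U$ via a ``Beck--Chevalley-type compatibility'' with $\widehat{\mathbf{S}}^c_{\mathrm{pro}}(\mathscr{P})$. No such compatibility exists: given $Y$ in $\mathbf{E}$, the candidate structure map on $c\,\mathrm{mat}\,U(Y)$ would have to be produced from the counit $c\,\mathrm{mat}\Rightarrow \mathrm{id}$, but applying the comonad to that counit gives a map $\widehat{\mathbf{S}}^c_{\mathrm{pro}}(\mathscr{P})(c\,\mathrm{mat}\,U(Y))\to \widehat{\mathbf{S}}^c_{\mathrm{pro}}(\mathscr{P})(U(Y))$, i.e.\ it points in the wrong direction, so the coreflection of the underlying pro-object carries no induced coalgebra structure. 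Worse, the conclusion such a compatibility would force is false: it would make the underlying object of the cofree $\mathscr{P}$-coalgebra on $W$ equal to $\mathrm{mat}\,\widehat{\mathbf{S}}^c_{\mathrm{pro}}(\mathscr{P})(c(W))\simeq \widehat{\mathbf{S}}^c(\mathscr{P})(W)$, whereas already at the $1$-categorical level (\cref{thm: cofree coalgebra over an operad}) the cofree coalgebra $\mathrm{L}(\mathcal{P})(W)$ is a proper subobject of $\widehat{\mathrm{S}}^c(\mathcal{P})(W)$, cut out by an equalizer --- precisely because the dual Schur functor is only lax monoidal, as you yourself note in your last paragraph. So the coreflection, if it exists, is not computed underlying-objectwise by $c\circ\mathrm{mat}$, and your argument supplies no alternative construction; the adjoint-lifting route also cannot be invoked here, since it would presuppose that the subcategory is already comonadically presented, which is what is being proved.

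The paper avoids constructing the right adjoint altogether. Its proof is a one-liner with a different decomposition: the functor $c$ is a fully faithful left adjoint (with right adjoint $\mathrm{mat}$), hence comonadic; the forgetful functor $U$ is comonadic; and comonadicity is stable under the pullback in \cref{def: infinity categorical P-coalgebras}, so the projection $\mathbf{Coalg}_\mathscr{P}(\mathbf{D})\to\mathbf{D}$ is comonadic. In other words, the substantive input is a general stability statement for comonadic functors under pullback along comonadic functors, not a hands-on verification of the Barr--Beck--Lurie hypotheses; your proposal would only become a proof if you supplied an actual construction of the cofree $\mathscr{P}$-coalgebra (equivalently of the coreflection on the objects $U^{R}(c(W))$), which is the hard content your sketch leaves open.
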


\begin{proof}
The fully faithful inclusion $c: \mathbf{D} \longrightarrow \mathbf{pro}(\mathbf{D})$ admits a right adjoint, hence it is comonadic, and thus pullback along two comonadic $\infty$-categories remains comonadic.
\end{proof}

\subsection{Comparison with the non-enriched case}
Let $\mathscr{O}$ be a one-coloured $\infty$-operad in the sense of Lurie, as introduced in \cite[Chapter 2]{HigherAlgebra}. They can be thought as enriched $\infty$-operads which are enriched in spaces. 

\begin{definition}[$\mathscr{O}$-coalgebras]\label{Def: O-coalgebras}
Let $\mathbf{C}$ be a symmetric monoidal $\infty$-category. The $\infty$-category of $\mathscr{O}$-coalgebras in $\mathbf{C}$ is defined as:
\[
\mathbf{Coalg}_{\mathscr{O}}(\mathbf{C}) \coloneqq \left(\mathbf{Alg}_{\mathscr{O}}(\mathbf{C}^{\mathsf{op}}))\right)^{\mathsf{op}}~. 
\]
\end{definition}

\begin{remark}
Recall that if $\mathbf{C}$ is a symmetric monoidal $\infty$-category, so is $\mathbf{C}^{\mathsf{op}}$. Hence it makes sense, using Lurie's definition of algebras over an $\infty$-operad, to consider $\mathscr{O}$-algebras in $\mathbf{C}^{\mathsf{op}}$.
\end{remark}

There is a unique functor $F : \mathbf{Spc}\longrightarrow \mathbf D$ which is cocontinuous and takes a point to the monoidal unit in $\mathbf D$; that is, $F(X)=\colim_X \mathbb 1$. Then $F$ is symmetric monoidal, since $F(X) \otimes F(Y) = \colim_X \mathbb 1 \otimes \colim_Y \mathbb 1 = \colim_{X \times Y} \mathbb 1 = F(X\times Y)$, using that $-\otimes-$ preserves colimits in both variables.
As explained in \cite[Appendix A.2]{ShiPhD}, any $\infty$-operad $\mathscr{O}$ in the sense of Lurie induces an $\infty$-operad  in spaces with underlying symmetric sequence $\{\mathscr{O}(r)\}_{r \geq 0}$, and since the functor $F$ is symmetric monoidal, the symmetric sequence $\{F(\mathscr{O}(r))\}_{r \geq 0}$ in $\mathbf{D}$ underlies the enriched $\infty$-operad $F(\mathscr{O})$ in $\mathbf{D}$.

\begin{theorem}\label{thm: comparison with coalgebras in the sense of Lurie}
Let $\mathscr{O}$ be an $\infty$-operad in the sense of Lurie. There is an equivalence of $\infty$-categories
\[
\mathbf{Coalg}_{\mathscr{O}}(\mathbf{D}) \simeq \mathbf{Coalg}_{F(\mathscr{O})}(\mathbf{D})~,
\]where on the one hand we consider Definition \ref{Def: O-coalgebras} over the $\infty$-operad $\mathscr{O}$ and on the other, we consider Definition \ref{def: infinity categorical P-coalgebras} over the enriched $\infty$-operad $F(\mathscr{O})$. 
\end{theorem}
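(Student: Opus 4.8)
\emph{Strategy.} By \cref{Def: O-coalgebras} we have $\mathbf{Coalg}_{\mathscr{O}}(\mathbf{D}) = \mathbf{Alg}_{\mathscr{O}}(\mathbf{D}^{\mathrm{op}})^{\mathrm{op}}$, so the whole statement can be phrased in terms of algebras after passing to opposites. The plan is to interpose the pro-category and exploit the single most useful feature of the pro-construction recorded above: the tensor product $\widehat{\otimes}$ on $\mathbf{pro}(\mathbf{D})$ preserves all limits. Dually, the tensor product on $\mathbf{pro}(\mathbf{D})^{\mathrm{op}}$ preserves all colimits, so $\mathbf{pro}(\mathbf{D})^{\mathrm{op}}$ is exactly the kind of symmetric monoidal $\infty$-category in which Lurie's free-algebra formula and monadicity theorem apply on the nose — precisely the favourable situation that fails for coalgebras directly in $\mathbf{D}$. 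The proof then has three steps: (i) realise $\mathbf{Coalg}_{\mathscr{O}}(\mathbf{D})$ as a pullback along $c$; (ii) identify $\mathbf{Coalg}_{\mathscr{O}}(\mathbf{pro}(\mathbf{D}))$ with comonad coalgebras for $\widehat{\mathbf{S}}^c_{\mathrm{pro}}(F(\mathscr{O}))$; (iii) match the two pullback squares.

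\emph{Step (i).} I would first show that the square
\[
\begin{tikzcd}[column sep=3pc,row sep=3pc]
\mathbf{Coalg}_{\mathscr{O}}(\mathbf{D}) \arrow[r] \arrow[d] & \mathbf{Coalg}_{\mathscr{O}}(\mathbf{pro}(\mathbf{D})) \arrow[d,"U"] \\
\mathbf{D} \arrow[r,"c"] & \mathbf{pro}(\mathbf{D})
\end{tikzcd}
\]
induced by the fully faithful symmetric monoidal functor $c$ is a pullback. Passing to opposites, this is the assertion that $\mathbf{Alg}_{\mathscr{O}}(\mathbf{D}^{\mathrm{op}}) \to \mathbf{Alg}_{\mathscr{O}}(\mathbf{pro}(\mathbf{D})^{\mathrm{op}})$ is fully faithful with essential image the algebras whose underlying object lies in $\mathbf{D}^{\mathrm{op}} \subseteq \mathbf{pro}(\mathbf{D})^{\mathrm{op}}$. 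This is a general fact for a fully faithful symmetric monoidal $j \colon \mathbf{A} \hookrightarrow \mathbf{B}$: since $\mathscr{O}$-algebras are sections of $\mathbf{B}^{\otimes} \to \mathrm{Fin}_*$ and $j^{\otimes}$ is fibrewise fully faithful, post-composition is fully faithful on algebra objects, and an algebra factors through $\mathbf{A}^{\otimes}$ exactly when its underlying object does, because $\mathbf{A}$ is closed under the tensor product.

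\emph{Step (ii).} Here I use that $\mathbf{pro}(\mathbf{D})^{\mathrm{op}}$ admits all small colimits (as $\mathbf{pro}(\mathbf{D})$ is complete) and that $\widehat{\otimes}$ preserves them in each variable. Writing $F' \colon \mathbf{Spc} \to \mathbf{pro}(\mathbf{D})^{\mathrm{op}}$ for the canonical cocontinuous symmetric monoidal functor, Lurie's free-algebra description \cite[\S 3.1.3]{HigherAlgebra} together with the Barr--Beck--Lurie theorem \cite[Theorem 4.7.3.5]{HigherAlgebra} makes $\mathbf{Alg}_{\mathscr{O}}(\mathbf{pro}(\mathbf{D})^{\mathrm{op}})$ monadic over $\mathbf{pro}(\mathbf{D})^{\mathrm{op}}$, with monad the Schur functor $X \mapsto \coprod_{n \geq 0} \big(F'(\mathscr{O}(n)) \otimes X^{\otimes n}\big)_{h\mathbb{S}_n}$, using that the copower of $X^{\otimes n}$ by the space $\mathscr{O}(n)$ is $F'(\mathscr{O}(n)) \otimes X^{\otimes n}$; this is the dualisation of \cite[Appendix A.2]{ShiPhD}. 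Passing back to $\mathbf{pro}(\mathbf{D})$, the opposite comonad sends $Y$ to $\prod_{n \geq 0} [\,cF(\mathscr{O}(n)), Y^{\widehat{\otimes} n}\,]^{\mathbb{S}_n}_{\mathrm{pro}}$: coproducts become products, homotopy orbits $(-)_{h\mathbb{S}_n}$ become homotopy fixed points (realised, as in \cite[Remark 5.2]{bachmann2024}, via finite skeleta of $\mathrm{B}\mathbb{S}_n$), and the copower becomes the power $[\,cF(\mathscr{O}(n)),-\,]_{\mathrm{pro}}$ by the identity $[cF(S),Z]_{\mathrm{pro}} \simeq Z^S$. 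Comparing with the earlier lemmas computing $\mathrm{p}\widehat{\mathbf{S}}^c_{\leq k}$ and their limit, this comonad is exactly $\widehat{\mathbf{S}}^c_{\mathrm{pro}}(F(\mathscr{O}))$, so $\mathbf{Coalg}_{\mathscr{O}}(\mathbf{pro}(\mathbf{D})) \simeq \mathbf{Coalg}_{\widehat{\mathbf{S}}^c_{\mathrm{pro}}(F(\mathscr{O}))}(\mathbf{pro}(\mathbf{D}))$, and this equivalence commutes with the two forgetful functors $U$ because both sides are comonadic over $\mathbf{pro}(\mathbf{D})$.

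\emph{Step (iii) and the main obstacle.} Substituting the equivalence of Step (ii) into the top-right corner of the pullback of Step (i) — legitimate because it lies over $U$ — turns that square into precisely the defining square of \cref{def: infinity categorical P-coalgebras} for $F(\mathscr{O})$, and uniqueness of pullbacks then gives $\mathbf{Coalg}_{\mathscr{O}}(\mathbf{D}) \simeq \mathbf{Coalg}_{F(\mathscr{O})}(\mathbf{D})$. The hard part will be Step (ii): one must check carefully that $\mathbf{pro}(\mathbf{D})^{\mathrm{op}}$ really satisfies the hypotheses needed to run the algebra rectification (cocompleteness and distributivity of $\widehat{\otimes}$ over colimits), and then track the dualisation of the homotopy-orbit and copower constructions through the opposite category so as to land on the pro dual Schur comonad \emph{as defined via the truncations} $\widehat{\mathbf{S}}^c_{\leq k}$, rather than on some a priori different model of homotopy fixed points. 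Steps (i) and (iii) are formal by comparison.
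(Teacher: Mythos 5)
Your overall architecture --- interposing $\mathbf{pro}(\mathbf{D})$, realising both sides as pullbacks along $c$, and dualising Shi's monadicity result on $\mathbf{pro}(\mathbf{D})^{\mathrm{op}}$ --- is the same as the paper's, and your Steps (i) and (iii) match what the paper does (and are indeed formal). But there is a genuine gap exactly where you locate ``the hard part'': Step (ii) asserts that the comonad obtained by dualising the analytic monad, namely $\prod_{n \geq 0}\bigl(\lim_{\mathscr{O}(n)}(-)^{\widehat{\otimes} n}\bigr)^{\mathbb{S}_n}$, \emph{is} the pro dual Schur comonad $\widehat{\mathbf{S}}^c_{\mathrm{pro}}(F(\mathscr{O})) = \lim_k \mathrm{p}\widehat{\mathbf{S}}^c_{\leq k}(F(\mathscr{O}))$. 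These are a priori two different pro-objects --- an honest infinite product in $\mathbf{pro}(\mathbf{D})$ versus the formal limit of the diagram of truncated Schur functors --- and even if one identifies the underlying endofunctors pointwise (say by writing the product as a limit of finite sums and re-indexing), what Step (iii) requires is an equivalence \emph{of comonads}, i.e.\ of coalgebra objects in $\mathbf{End}(\mathbf{pro}(\mathbf{D}))$, coherent with respect to all comultiplication data and the forgetful functors. Producing such a coherent identification is precisely the $\infty$-categorical difficulty; you flag it, but supply no mechanism for it, and nothing else in your argument does either.

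The paper is engineered to avoid exactly this: it uses only the canonical comparison \emph{map} from the infinite product to the limit of truncations, given by the universal property of the limit, which induces a comparison functor $\mathbf{Coalg}_{\mathscr{O}}(\mathbf{pro}(\mathbf{D})) \longrightarrow \mathbf{Coalg}_{F(\mathscr{O})}(\mathbf{pro}(\mathbf{D}))$ naturally in $\mathscr{O}$, hence a functor between the two pullbacks. It then proves this functor is an equivalence by dévissage: first for \emph{free} $\infty$-operads, where coalgebras over the cofree comonad reduce to coalgebras over a single endofunctor (no coherence tower to match), and the materialization computation of \cref{corollary: coalgebras over a free infinity operad} identifies both pullbacks with coalgebras over $\prod_{n\geq 0}\bigl(\lim_{M(n)}(-)^{\otimes n}\bigr)^{\mathbb{S}_n}$ in $\mathbf{D}$; then for general $\mathscr{O}$ by writing it as a colimit of free $\infty$-operads and using that both assignments $\mathscr{O} \mapsto \mathbf{Coalg}_{\mathscr{O}}(\mathbf{D})$ and $\mathscr{O} \mapsto \mathbf{Coalg}_{F(\mathscr{O})}(\mathbf{D})$ carry colimits to limits (\cref{prop: taking infinity categories of coalgebras preserves limits}). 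To repair your proof you must either genuinely construct the comonad equivalence of Step (ii), or weaken it to the comparison map and add the free-case-plus-colimits induction; as written, Step (iii) does not go through.
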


\begin{proof}
It suffices to show that this holds in the case of $\mathbf{pro}(\mathbf{D})$, since then both $\infty$-categories of coalgebras can be obtained as the pullback along the inclusion $\mathbf{D} \longrightarrow \mathbf{pro}(\mathbf{D})$ of the $\infty$-categories of coalgebras in $\mathbf{pro}(\mathbf{D})$. 

\medskip

Let $\mathbf{C}$ be a presentable stable symmetric monoidal $\infty$-category. Let us first recall Shi's result in \cite[Theorem A.2.0.3]{ShiPhD}, which can be reformulated as follows: the $\infty$-category of $\mathscr{O}$-algebras in $\mathbf{C}$ is equivalent to the $\infty$-category of algebras over the following analytic monad:
\[
\bigoplus_{n \geq 0} \left(\colim_{\mathscr{O}(n)} (-)^{\otimes n} \right)_{\mathbb{S}_n}: \mathbf{C} \longrightarrow \mathbf{C}~, 
\]
where one takes the colimit of $(-)^{\otimes r}$ over the space $\mathscr{O}(r)$. 

\medskip

The $\infty$-category $\mathbf{pro}(\mathbf{D})^{\mathsf{op}}$ is presentable, since 
$
\mathbf{pro}(\mathbf{D})^{\mathsf{op}} \simeq \mathbf{ind}(\mathbf{D}^{\mathsf{op}}), 
$ and since the tensor product commutes with all colimits, it is a presentable symmetric monoidal $\infty$-category. Furthermore, it is also stable since $\mathbf{pro}(\mathbf{D})$ is stable by \cite[Lemma 2.5]{Ktheorynonarch} and since the opposite of a stable $\infty$-category is again stable. Hence we can apply the previous result to $\mathbf{pro}(\mathbf{D})^{\mathsf{op}}$ to compute the comonad that gives $\mathscr{O}$-coalgebras in $\mathbf{pro}(\mathbf{D})$, which are coalgebras over the comonad 
\[
\quo{\prod_{n \geq 0}} \left(\quo{\lim_{\mathscr{O}(n)}}(-)^{\widehat{\otimes} n} \right)^{\mathbb{S}_n}: \mathbf{pro}(\mathbf{D}) \longrightarrow \mathbf{pro}(\mathbf{D})~, 
\]
where the infinite product and the limit are taken in $\mathbf{pro}(\mathbf{D})$, which means that they are to be understood as filtered diagrams of finite limits in $\mathbf{D}$. On the other hand, the $\infty$-category of $F(\mathscr{O})$-coalgebras in $\mathbf{pro}(\mathbf{D})$ is given by coalgebras over the comonad
\[
\widehat{\mathbf{S}}^c(F(\mathscr{O})) \simeq \widehat{\mathbf{S}}^c(F(\colim_{\mathscr{O}} \{*\})) \simeq \quo{\lim_{\mathscr{O}}} \widehat{\mathbf{S}}^c(\mathbb{1}) \simeq \quo{\prod_{n \geq 0}} \left(\quo{\lim_{\mathscr{O}(n)}}(-)^{\widehat{\otimes} n} \right)^{\mathbb{S}_n}~. 
\]
Hence the two comonads agree on $\mathbf{pro}(\mathbf{D})$, and the $\infty$-categories of coalgebras are equivalent.  
\end{proof}

\section{Presenting homotopy coalgebras by point-set models}

A major issue in dealing with coalgebras is that the construction of the cofree coalgebra functor is extremely complicated. To bypass this problem, we use the dévissages of the categories of coalgebras that exist both at the 1-categorical and at the $\infty$-categorical level. This allows us to cut the problem into pieces: we first construct point-set models for coalgebras over free operads, and then we glue them along cell attachments to give point-set models for any quasi-free operad. This allows us to give point-set models for $\infty$-categorical coalgebras in terms of coalgebras in terms of coalgebras over any cofibrant operad.

\subsection{Point-set models for coalgebras over an endofunctor}\label{Subsection: point-set models for coalgebras over endofunctors}
Our base 1-category is the category of chain complexes over a field $\kk$ and our base $\infty$-category is the category of chain complexes over this field up to quasi-isomorphism. The later is obtained by localizing the former, thus we have an equivalence of $\infty$-categories 
\[
\mathsf{Ch}(\kk)~[\mathsf{Q.iso}^{-1}] \simeq \mathbf{D}(\kk)~. 
\]

\subsubsection{Coalgebras over an endofunctor} We start by considering an endofunctor 
\[
F: \mathsf{Ch}(\kk) \longrightarrow \mathsf{Ch}(\kk)
\]
on the underlying category of chain complexes. 

\begin{definition}[$F$-coalgebras]\label{definition: coalgebras over an endofunctor}
An dg $F$-coalgebra $V$ is the data $(V,\Delta_V)$ of a chain complex $V$ together with a structural map $\Delta_V: V \longrightarrow F(V)$.
\end{definition}

Morphisms of $F$-coalgebras are chain complex maps which commute with coalgebra structures. This 1-category is relatively simple in general, and in fact, can be written as a simple pullback diagram along the endofunctor $F$. 

\begin{lemma}\label{lemma: 1-categorical coalgebras over an endofunctor as a pullback}
The category of $F$-coalgebras is equivalent to the strict pullback of 1-categories
\[
\begin{tikzcd}[column sep=3.5pc,row sep=3.5pc]
F\text{-}\mathsf{coalg} \arrow[r] \arrow[d] \arrow[dr, phantom, "\ulcorner", very near start]
& \mathsf{Ch}(\kk)^{\mathsf{1}} \arrow[d,"{(\mathrm{ev}_0,\mathrm{ev}_1)}"]  \\
 \mathsf{Ch}(\kk) \arrow[r,"{(\mathrm{id},F)}"]
& \mathsf{Ch}(\kk) \times  \mathsf{Ch}(\kk)
\end{tikzcd}
\]
where $\mathsf{1}$ denotes the category with two objects and one arrow between them, and where the $\mathrm{ev}_0$ (resp. $\mathrm{ev}_1$) is the functor that evaluates at the sources (resp. the target). 
\end{lemma}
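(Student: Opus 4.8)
The plan is to unwind the definition of the strict pullback of $1$-categories and observe that it reproduces, verbatim, the definition of an $F$-coalgebra on both objects and morphisms. The statement is essentially a tautology once everything is written out, so I expect no genuine obstacle; the proof is a matter of bookkeeping, exhibiting an isomorphism of categories directly rather than invoking a universal property.

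First I would recall that the strict pullback $\mathsf{P}$ of the displayed cospan has as objects the pairs consisting of an object $f \in \mathsf{Ch}(\kk)^{\mathsf{1}}$ --- that is, a morphism $f \colon X \to Y$ of chain complexes --- together with an object $V \in \mathsf{Ch}(\kk)$, subject to the on-the-nose equality $(\mathrm{ev}_0 f, \mathrm{ev}_1 f) = (V, F(V))$ of their images in $\mathsf{Ch}(\kk) \times \mathsf{Ch}(\kk)$. This equality forces $X = V$ and $Y = F(V)$, so such an object is precisely a chain complex $V$ equipped with a map $f \colon V \to F(V)$, i.e.\ exactly the datum $(V, \Delta_V)$ of \cref{definition: coalgebras over an endofunctor}.

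Next I would treat morphisms. A morphism in $\mathsf{P}$ from $(f \colon V \to F(V))$ to $(f' \colon V' \to F(V'))$ consists of a morphism $g \in \mathsf{Ch}(\kk)$ together with a morphism in the arrow category $\mathsf{Ch}(\kk)^{\mathsf{1}}$ --- a commuting square with vertical maps $a, b$ --- which are required to have the same image under the two legs of the cospan. Applying $(\mathrm{id}, F)$ to $g$ gives the pair $(g, F(g))$, while $(\mathrm{ev}_0, \mathrm{ev}_1)$ of the square gives $(a, b)$; the pullback condition is therefore $a = g$ and $b = F(g)$. The commuting square thus reads $\Delta_{V'} \circ g = F(g) \circ \Delta_V$, which is exactly the condition that $g$ be a morphism of $F$-coalgebras.

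Finally I would note that this correspondence is manifestly compatible with identities and composition --- both are computed componentwise in the pullback and componentwise in $F\text{-}\mathsf{coalg}$ --- so it assembles into an isomorphism of categories $F\text{-}\mathsf{coalg} \cong \mathsf{P}$, and a fortiori an equivalence. The one point deserving a word of care is that we are using the \emph{strict} pullback, so that the defining conditions hold as honest equalities rather than up to coherent isomorphism; this is precisely what makes the identification with strict $F$-coalgebras and their strict morphisms exact, and it is the reason the analogous $\infty$-categorical statement later in the paper will require genuine homotopy-theoretic input rather than this purely formal argument.
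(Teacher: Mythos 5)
Your proposal is correct and takes exactly the same approach as the paper, which simply states that it is straightforward to compute that objects in the strict pullback are chain complexes $V$ with a map $V \to F(V)$ and that morphisms coincide; you have merely spelled out this bookkeeping in full detail.
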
 

\begin{proof}
It is straightforward to compute that an object in this pullback is precisely a chain complex $V$ together with a map $V \longrightarrow F(V)$, and that morphisms coincide too. 
\end{proof}

\begin{remark}
    If the functor $F$ preserves quasi-isomorphisms, then the pullback of Lemma \ref{lemma: 1-categorical coalgebras over an endofunctor as a pullback} is a pullback in relative categories as well. See Appendix \ref{Appendix A: Relative categories} for more on relative categories. 
\end{remark}

\subsubsection{Coalgebras over endofunctors in the $\infty$-categorical setting} We consider now an endofunctor at the $\infty$-categorical level
\[
\mathrm{T}: \mathbf{D}(\kk) \longrightarrow \mathbf{D}(\kk)
\]
of the derived category of $\kk$. Heuristically, we still want to define coalgebras over $\mathrm{T}$ as objects $V$ in $\mathbf{D}(\kk)$ equipped with a map $V \longrightarrow \mathrm{T}(V)$. However, in order to directly get a homotopy coherent definition, we adopt the following definition. 

\begin{definition}[$\mathrm{T}$-coalgebras]\label{definition: infinity categorical coalgebras over an endofunctor}
The $\infty$-category of $\mathrm{T}$\textit{-coalgebras} in $\mathbf{D}(\kk)$ is defined as the following pullback of $\infty$-categories
\[
\begin{tikzcd}[column sep=3.5pc,row sep=3.5pc]
\mathbf{coalg}_{\mathrm{T}}(\mathbf{D}(\kk)) \arrow[r] \arrow[d] \arrow[dr, phantom, "\ulcorner", very near start]
& \mathbf{D}(\kk)^{\Delta^1} \arrow[d,"{(\mathrm{ev}_0,\mathrm{ev}_1)}"]  \\
 \mathbf{D}(\kk) \arrow[r,"{(\mathrm{id}, \mathrm{T})}"]
& \mathbf{D}(\kk) \times  \mathbf{D}(\kk)~,
\end{tikzcd}
\]
taken in the $\infty$-category of $\infty$-categories. 
\end{definition}

This definition was also considered in \cite[Section 7]{heuts2024}, where he proves that this $\infty$-category is presentable if the endofunctor $\mathrm{T}$ is accessible. This, in turn, implies that the forgetful functor from $\mathbf{coalg}_{\mathrm{T}}(\mathbf{D}(\kk))$ to $\mathbf{D}(\kk)$ admits a right adjoint.

\subsubsection{Some comparison results} In order to show that the localization at quasi-isomorphisms of coalgebras over an endofunctor presents the $\infty$-category of coalgebras over its underlying endofunctor in $\mathbf{D}(\kk)$, we do the following. First, we compare the "strict" point-set model of dg $F$-coalgebras with a "relaxed" version of the definition ($F_\xi$-coalgebras below), and then we show that the image of this "relaxed" version under the subdivivded nerve is a homotopy pullback in complete Segal spaces which present the pullback of $\infty$-categories in Definition \ref{definition: infinity categorical coalgebras over an endofunctor}.

\medskip

\usetikzlibrary{arrows,decorations.pathmorphing,backgrounds,positioning,fit,petri,calc,intersections}

Let $\xi\mathsf{1}$ be the subdivision the relative category $\mathsf{1}$, with two objects, one arrow between them and no weak equivalence. This relative category is also given by $K_{\xi}\Delta[1,0]$, where the functors $K_{\xi}$ is the adjoint functor between bisimplicial sets and relative categories. For more on these notions, see Appendix \ref{Appendix A: Relative categories}. The operation $\xi$ corresponds in fact to a poset subdivision of the poset $\mathsf{1}$, see \cite[Section 4]{BarwickKan1} for more details on this operation. 

\begin{definition}[$F_\xi$-coalgebras]
Let $F$ be an endofunctor of $\mathsf{Ch}(\kk)$ which preserves quasi-isomorphism. The relative category of $F_\xi$-coalgebras is defined as the following pullback 
\[
\begin{tikzcd}[column sep=3pc,row sep=3pc]
F\text{-}\mathsf{coalg}^{\xi} \arrow[r] \arrow[d] \arrow[dr, phantom, "\ulcorner", very near start]
& \mathsf{Ch}(\kk)^{\xi\mathsf{1}} \arrow[d,"{(\mathrm{ev}_0,\mathrm{ev}_1)}"]  \\
 \mathsf{Ch}(\kk) \arrow[r,"{(\mathrm{id},F)}"]
& \mathsf{Ch}(\kk) \times  \mathsf{Ch}(\kk)
\end{tikzcd}
\]
in the category of relative categories. 
\end{definition}

\begin{lemma}\label{lemma: xi-coalgebras}
The canonical projection $\pi: \xi\mathsf{1} \longrightarrow \mathsf{1}$ of relative categories induces a homotopy equivalence of relative categories
\[
F\text{-}\mathsf{coalg}\simeq F\text{-}\mathsf{coalg}^{\xi}~. 
\]
\end{lemma}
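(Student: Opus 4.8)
The plan is to realize both $F\text{-}\mathsf{coalg}$ and $F\text{-}\mathsf{coalg}^{\xi}$ as homotopy pullbacks of relative categories over a common cospan, and then to reduce the whole statement to the single assertion that the cotensor functor $\mathsf{Ch}(\kk)^{(-)}$ carries the subdivision projection $\pi$ to a weak equivalence. First I would observe that precomposition with $\pi$ yields a relative functor $\mathsf{Ch}(\kk)^{\pi}\colon \mathsf{Ch}(\kk)^{\mathsf{1}}\to \mathsf{Ch}(\kk)^{\xi\mathsf{1}}$ which is compatible with the two evaluation legs, since $\pi$ sends the two endpoint objects of $\xi\mathsf{1}$ to the two objects of $\mathsf{1}$. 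We thereby obtain a commuting map of cospans whose legs on the $\mathsf{Ch}(\kk)$- and $\mathsf{Ch}(\kk)\times\mathsf{Ch}(\kk)$-corners are identities and whose remaining leg is $\mathsf{Ch}(\kk)^{\pi}$; the induced map on the (strict) pullbacks is exactly the comparison functor $F\text{-}\mathsf{coalg}\to F\text{-}\mathsf{coalg}^{\xi}$ of the statement.

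Next I would check that both squares are homotopy pullbacks of relative categories. Since $F$ preserves quasi-isomorphisms, the left-hand leg $(\mathrm{id},F)$ is a relative functor, and by \cref{prop: isofibration} the evaluation legs $(\mathrm{ev}_0,\mathrm{ev}_1)\colon \mathsf{Ch}(\kk)^{\mathsf{1}}\to \mathsf{Ch}(\kk)\times\mathsf{Ch}(\kk)$ and $(\mathrm{ev}_0,\mathrm{ev}_1)\colon \mathsf{Ch}(\kk)^{\xi\mathsf{1}}\to \mathsf{Ch}(\kk)\times\mathsf{Ch}(\kk)$ are isofibrations (restriction along the inclusion of the two endpoint objects); this is the analogue for $F\text{-}\mathsf{coalg}^\xi$ of the Remark following \cref{lemma: 1-categorical coalgebras over an endofunctor as a pullback}. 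Consequently each strict pullback computes the corresponding homotopy pullback, and the gluing lemma for homotopy pullbacks (\cref{Appendix A: Relative categories}) reduces the lemma to showing that the one nontrivial leg $\mathsf{Ch}(\kk)^{\pi}$ is a weak equivalence of relative categories.

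The heart of the matter, and the step I expect to be the main obstacle, is therefore to prove that $\mathsf{Ch}(\kk)^{\pi}$ is a weak equivalence. I would deduce this from two inputs. The first is that $\pi\colon \xi\mathsf{1}\to \mathsf{1}$ is itself a weak equivalence of relative categories: this is the defining homotopical property of the Barwick--Kan subdivision \cite{BarwickKan1}, and it can also be seen by hand, since localizing the cospan poset $\xi\mathsf{1}$ at its marked edge collapses it onto the walking arrow, compatibly with $\mathsf{1}[\mathsf{W}^{-1}]\simeq \Delta^1$. The second is that cotensoring into $\mathsf{Ch}(\kk)$ sends weak equivalences of relative categories to weak equivalences: after localization one has an equivalence $\mathsf{Ch}(\kk)^{\mathsf{A}}[\mathsf{Q.iso}^{-1}]\simeq \mathbf{Fun}\!\left(\mathsf{A}[\mathsf{W}^{-1}],\mathbf{D}(\kk)\right)$, and a functor $\infty$-category depends on its source only through the underlying $\infty$-category. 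Since $\pi$ induces an equivalence $\xi\mathsf{1}[\mathsf{W}^{-1}]\simeq \mathsf{1}[\mathsf{W}^{-1}]$, precomposition gives an equivalence of functor $\infty$-categories, so $\mathsf{Ch}(\kk)^{\pi}$ is a weak equivalence; together with the gluing lemma this proves the claim. The two delicate points to nail down are the precise model structure in which the evaluation legs are isofibrations and the identification of the localized cotensor with the functor $\infty$-category, both of which are standard but must be phrased carefully in the relative-categorical language of \cref{Appendix A: Relative categories}.
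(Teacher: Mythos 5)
There is a genuine gap, in fact two. First, your argument targets the wrong statement: the lemma asserts a \emph{homotopy equivalence} of relative categories, i.e.\ explicit functors in both directions together with homotopies (with respect to $(-)\times\mathsf{1}^{\simeq}$) between the composites and the identities, whereas your argument can only ever produce a \emph{weak equivalence} in the Barwick--Kan model structure. These notions do not coincide here: $F\text{-}\mathsf{coalg}$ and $F\text{-}\mathsf{coalg}^{\xi}$ are not known to be bifibrant in $\mathsf{RelCat}$ (cofibrant and fibrant objects there are notoriously hard to recognize), so a weak equivalence cannot be upgraded by Whitehead's theorem, and the paper's downstream use of the lemma in \cref{cor:piquasicateq} exploits precisely the explicit homotopy-equivalence data to transfer along $N_{\xi}$ without knowing that these pullbacks are fibrant. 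The paper's own proof is entirely strict: it uses the explicit homotopy inverse $\iota$ of $\pi$ from \cite{BarwickKan1}, checks that $\pi^*$ and $\iota^*$ are maps of spans (hence descend to the strict pullbacks), and lifts the homotopies to the pullbacks using that $(-)\times\mathsf{1}^{\simeq}$ commutes with limits. No homotopy-pullback machinery is needed or used.

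Second, even granting the weaker conclusion, your key step --- that \emph{both} strict pullbacks are homotopy pullbacks --- is unjustified for the non-subdivided square. The relevant result, \cref{prop: evaluationfibrations} (not \cref{prop: isofibration}, which concerns homotopy categories of $\mathbb{T}(D^k(p))$-coalgebras), shows that $(\mathrm{ev}_0,\mathrm{ev}_1)\colon \mathsf{C}^{\xi\mathsf{1}}\to\mathsf{C}\times\mathsf{C}$ is a Barwick--Kan fibration for fibrant $\mathsf{C}$; it says nothing about $(\mathrm{ev}_0,\mathrm{ev}_1)\colon \mathsf{C}^{\mathsf{1}}\to\mathsf{C}\times\mathsf{C}$, because the cotensoring in the lax simplicial structure is by $\xi\mathsf{1}$, not $\mathsf{1}$. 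Being an ``isofibration'' has no known relation to being a fibration in $\mathsf{RelCat}$ --- the paper states explicitly that no characterization of these fibrations is available. This is not a technicality: the whole reason $F\text{-}\mathsf{coalg}^{\xi}$ is introduced is that the pullback defining $F\text{-}\mathsf{coalg}$ via $\mathsf{Ch}(\kk)^{\mathsf{1}}$ cannot be directly recognized as a homotopy pullback, and the lemma is the device that repairs this. Relatedly, your second ``standard'' input, the identification $\mathsf{Ch}(\kk)^{\mathsf{A}}\,[\mathsf{Q.iso}^{-1}]\simeq \mathbf{Fun}\left(\mathsf{A}[\mathsf{W}^{-1}],\mathbf{D}(\kk)\right)$ for the internal hom of relative categories, is essentially the content that the paper establishes later through the $N_{\xi}$ computations (comparing $N_{\xi}(\mathsf{C}^{\xi\mathsf{1}})$ with $N_{\xi}(\mathsf{C})^{\Delta[1,0]}$), so invoking it at this stage is circular with respect to the paper's development.
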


\begin{remark}
Here, a homotopy equivalence of relative categories is the data of two functors going in opposite directions whose composites are homotopic to the identity. The notion of homotopy used is the one determined by $(-) \times\mathsf{1}^{\simeq}$, considered in \cite{BarwickKan1}.
\end{remark}

\begin{proof}
The canonical projection $\pi: \xi\mathsf{1} \longrightarrow \mathsf{1}$ is a homotopy equivalence of relative categories with an explicit homotopy inverse $\iota$ given by \cite[Proposition 7.3]{BarwickKan1}. This homotopy equivalence induces for any relative category $\mathsf{C}$, by precomposition in the \emph{internal hom} of relative categories, a homotopy equivalence 
\[
\mathsf{C}^{\check{1}}\simeq \mathsf{C}^{\xi\mathsf{1}}~,
\]
where both arrows commute with both factorizations of the diagonal, fitting in a commutative diagram 
\[
\begin{tikzcd}[column sep=2.5pc,row sep=2.5pc]
	& \mathsf{C} \ar[rd] \ar[ld] & \\
	\mathsf{C}^{\mathsf{1}} \ar[rd, "{(\mathrm{ev}_0,\mathrm{ev}_1)}"'] \ar[rr, "\pi^*"',bend right=10] & & \mathsf{C}^{\xi\mathsf{1}}~. \ar[ld, "{(\mathrm{ev}_0,\mathrm{ev}_1)}"] \ar[ll, "\iota^*"', bend right=10] \\
	& \mathsf{C} \times \mathsf{C} & 
\end{tikzcd}
\]
For more on the internal hom of relative categories, see Subsection \ref{subsection: internal homs}. The map $\iota^*$ and $\pi^*$ fit into the following commutative diagram

\[
\begin{tikzcd}[column sep=2pc,row sep=2.5pc, background color=clouddancer]
F\text{-}\mathsf{coalg}
  \arrow[rr] 
  \arrow[dd] 
  \arrow[dr]
&& F\text{-}\mathsf{coalg}^{\xi}
  \arrow[rr] 
  \arrow[dr] 
  \arrow[dd]
&& \mathsf{dg}\,F\text{-}\mathsf{coalg}
  \arrow[dd] 
  \arrow[dr] \\
& \mathsf{C}^{\mathsf{1}}
  \arrow[rr, "\pi^*", near start, crossing over] 
&& \mathsf{C}^{\xi\mathsf{1}}
  \arrow[rr, "\iota^*", near start, crossing over]
&& \mathsf{C}^{\mathsf{1}}
  \arrow[dd, swap, near start, "{(\mathrm{ev}_0,\mathrm{ev}_1)}"] \\
\mathsf{C}
  \arrow[rr, equals] 
  \arrow[dr, swap, "{(\mathrm{id},F)}"]
&& \mathsf{C}
  \arrow[rr, equals] 
  \arrow[dr, swap, "{(\mathrm{id},F)}"]
&& \mathsf{C}
  \arrow[dr, swap, "{(\mathrm{id},F)}"] \\
& \mathsf{C}\times\mathsf{C}
  \arrow[rr, equals] 
&& \mathsf{C}\times\mathsf{C}
  \arrow[rr, equals] 
&& \mathsf{C}\times\mathsf{C}
\arrow[from=2-2,to=4-2, "{(\mathrm{ev}_0,\mathrm{ev}_1)}", swap, near start, crossing over]
\arrow[from=2-4,to=4-4, near start, "{(\mathrm{ev}_0,\mathrm{ev}_1)}", crossing over]
\end{tikzcd}
\]

hence induce maps $\pi^*:F\text{-}\mathsf{coalg}\longrightarrow F\text{-}\mathsf{coalg}^{\xi}$ and $\iota^*:F\text{-}\mathsf{coalg}^{\xi}\longrightarrow F\text{-}\mathsf{coalg}$ at the level of pullbacks. It remains to check that the two homotopies $\pi^*\circ\iota^*\simeq \mathrm{Id}_{\mathsf{C}^{\mathsf{1}}}$ and $\iota^*\circ\pi^*\simeq \mathrm{Id}_{\mathsf{C}^{\xi\mathsf{1}}}$ lift at the level of pullbacks. For this aim, we first explain why they fit into commutative diagrams of spans as well, and secondly we explain why these induced maps at the level of pullbacks are also a homotopies. The argument is the same for both homotopies, so let us write it down for the first one. We consider a left homotopy
\[
\begin{tikzcd}[column sep=2.5pc,row sep=2.5pc]
\mathsf{C}^{\mathsf{1}} 
  \arrow[d] 
  \arrow[dr, "\pi^*~ \circ ~ \iota^*"] 
& \\
\mathsf{C}^{\mathsf{1}} \times \mathsf{1}^{\simeq} 
  \arrow[r, "\mathrm{H}"'] 
& \mathsf{C}^{\mathsf{1}}~.\\
\mathsf{C}^{\mathsf{1}} 
  \arrow[u] 
  \arrow[ur, "\mathrm{Id}"'] 
&
\end{tikzcd}
\]

All the arrows of this diagram define maps of spans (as explained for $\iota^*\circ\pi^*$ above), so they all induce maps between the corresponding pullbacks. Since $(-) \times\mathsf{1}^{\simeq}$ is the cartesian product, it commutes with limits so the two inclusions $\mathsf{C}^{\mathsf{1}}\hookrightarrow \mathsf{C}^{\mathsf{1}}\times \mathsf{1}^{\simeq}$ induce the canonical inclusions $F\text{-}\mathsf{coalg}\hookrightarrow F\text{-}\mathsf{coalg}\times\mathsf{1}^{\simeq}$ and $\mathrm{H}$ induces a map $F\text{-}\mathsf{coalg}\times\mathsf{1}^{\simeq}\longrightarrow F\text{-}\mathsf{coalg}$ commuting with the inclusions $\iota^*\circ\pi^*$ and $\mathrm{Id}_{F\text{-}\mathsf{coalg}}$. Hence it is a left homotopy between these two maps and we can conclude.
\end{proof}

\begin{corollary}\label{cor:piquasicateq}
The projection $\pi$ induces an equivalence of quasi-categories
\[
i_1^*N_{\xi}\pi^*:i_1^*N_{\xi}F\text{-}\mathsf{coalg}\stackrel{\sim}{\rightarrow}i_1^*N_{\xi}F\text{-}\mathsf{coalg}^{\xi}
\]
with homotopy inverse $i_1^*N_{\xi}\iota^*$.
\end{corollary}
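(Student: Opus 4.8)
The plan is to deduce \cref{cor:piquasicateq} formally from \cref{lemma: xi-coalgebras} by applying the functor $i_1^*N_{\xi}$ to the homotopy equivalence of relative categories constructed there. Recall that \cref{lemma: xi-coalgebras} produces maps $\pi^*\colon F\text{-}\mathsf{coalg}\longrightarrow F\text{-}\mathsf{coalg}^{\xi}$ and $\iota^*\colon F\text{-}\mathsf{coalg}^{\xi}\longrightarrow F\text{-}\mathsf{coalg}$ together with left homotopies $\iota^*\circ\pi^*\simeq \mathrm{Id}$ and $\pi^*\circ\iota^*\simeq \mathrm{Id}$, each encoded by a map out of the cylinder $(-)\times\mathsf{1}^{\simeq}$. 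Since $i_1^*N_{\xi}$ is a functor, it carries $\pi^*$ and $\iota^*$ to the two maps appearing in the statement. Thus the whole task reduces to showing that $i_1^*N_{\xi}$ turns a homotopy with respect to $(-)\times\mathsf{1}^{\simeq}$ into a homotopy in the Joyal model structure; once this is known, $i_1^*N_{\xi}\pi^*$ and $i_1^*N_{\xi}\iota^*$ become mutually inverse up to homotopy and are therefore equivalences of quasi-categories.

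The key point I would establish is the behaviour of the subdivided nerve on the interval $\mathsf{1}^{\simeq}$. On the one hand, $i_1^*N_{\xi}(\mathsf{1}^{\simeq})$ should be identified with a contractible Kan complex modelling the nerve of the walking isomorphism, which is an interval object in the Joyal model structure; this is the simplicial shadow of the fact that $\mathsf{1}^{\simeq}$ is the relative interval defining the homotopy relation of \cite{BarwickKan1}. On the other hand, since $N_{\xi}$ is a right adjoint (to the functor $K_{\xi}$ recalled above) and $i_1^*$ is a restriction functor, the composite $i_1^*N_{\xi}$ preserves products; hence $i_1^*N_{\xi}\big((-)\times\mathsf{1}^{\simeq}\big)\simeq i_1^*N_{\xi}(-)\times i_1^*N_{\xi}(\mathsf{1}^{\simeq})$, and a $(-)\times\mathsf{1}^{\simeq}$-homotopy is transported to a homotopy parametrised by the interval $i_1^*N_{\xi}(\mathsf{1}^{\simeq})$. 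Feeding the two homotopies of \cref{lemma: xi-coalgebras} into this machine yields $i_1^*N_{\xi}\iota^*\circ i_1^*N_{\xi}\pi^*\simeq \mathrm{Id}$ and $i_1^*N_{\xi}\pi^*\circ i_1^*N_{\xi}\iota^*\simeq \mathrm{Id}$, and a map of quasi-categories admitting such an interval-homotopy inverse is a categorical equivalence.

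The main obstacle is precisely this identification of $i_1^*N_{\xi}(\mathsf{1}^{\simeq})$ with a genuine interval in the Joyal model structure, together with the verification that interval-homotopy equivalences are detected as Joyal equivalences. Once this is secured, the remainder of the argument is entirely formal, consisting of nothing more than the functoriality and product-preservation of $i_1^*N_{\xi}$ applied to the relative-categorical homotopy equivalence of \cref{lemma: xi-coalgebras}. I would extract the needed compatibilities from the Barwick--Kan comparison between relative categories and complete Segal spaces; see Appendix \ref{Appendix A: Relative categories}.
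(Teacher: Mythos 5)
Your route is genuinely different from the paper's --- you transport the cylinder homotopies of \cref{lemma: xi-coalgebras} directly through the product-preserving functor $i_1^*N_{\xi}$ --- and the product-preservation part of your argument is correct. But the step you yourself flag as ``the main obstacle'' is a genuine gap, and it is not a compatibility one can simply extract from \cite{BarwickKan1}: it is the mathematical content of the corollary. The problem is that $i_1^*N_{\xi}$ is only a composite of \emph{right} Quillen functors, so it does not carry Barwick--Kan weak equivalences between arbitrary objects to Joyal equivalences. In particular, knowing that $\mathsf{1}^{\simeq}\to \mathrm{pt}$ is a weak equivalence of relative categories does not give that $i_1^*N_{\xi}(\mathsf{1}^{\simeq})$ is Joyal-contractible; for that you need $\mathsf{1}^{\simeq}$ to be \emph{fibrant} in the Barwick--Kan model structure, and fibrancy in $\mathsf{RelCat}$ is exactly the property the paper stresses is hard to verify. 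Worse, the circularity is real: weak equivalences in $\mathsf{RelCat}$ are by definition created by $N_{\xi}$, so ``$\mathsf{1}^{\simeq}\simeq \mathrm{pt}$'' is literally the statement that $N_{\xi}(\mathsf{1}^{\simeq})$ is Rezk-contractible, which still does not control $i_1^*$ of it without fibrancy. The off-the-shelf input from \cite[Proposition 10.3]{BarwickKan1} (the unit of $K_{\xi}\dashv N_{\xi}$ is a Reedy equivalence) controls $N_{\xi}K_{\xi}\Delta[0,1]=N_{\xi}(\xi\mathsf{1}^{\simeq})$, i.e.\ the \emph{subdivided} interval, not $N_{\xi}(\mathsf{1}^{\simeq})$ itself.

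The gap is fillable, so your approach can be repaired: the relative category $\mathsf{1}^{\simeq}$ underlies a (degenerate) model structure on the poset $[1]$ in which every map is a cofibration and a weak equivalence and the fibrations are the isomorphisms, hence it is fibrant by Meier's theorem \cite{Meier16}; then Ken Brown's lemma shows $N_{\xi}(\mathsf{1}^{\simeq})$ is a Rezk-contractible complete Segal space, $i_1^*$ of it is a Joyal-contractible quasi-category, and your formal argument (cartesianness of the Joyal model structure with all objects cofibrant, so the two endpoint inclusions into $X\times i_1^*N_{\xi}(\mathsf{1}^{\simeq})$ are both sections of a weak equivalence) finishes the proof. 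It is instructive to compare with the paper, which is structured precisely to avoid putting any fibrancy burden on the interval: it converts the $(-)\times\mathsf{1}^{\simeq}$-homotopies into right homotopies with respect to the good path object $(-)^{\xi\mathsf{1}^{\simeq}}=(-)^{K_{\xi}\Delta[0,1]}$ on fibrant relative categories, identifies $N_{\xi}(\mathsf{C}^{\xi\mathsf{1}^{\simeq}})$ with $N_{\xi}(\mathsf{C})^{\Delta[0,1]}$ in $\ho{\mathsf{bisSet}}$ by an adjunction-plus-Yoneda computation (this is where the unit equivalence is actually usable), and then applies Whitehead's theorem to the bifibrant complete Segal spaces before hitting everything with $i_1^*$. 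Both proofs need fibrancy somewhere; the paper's only needs it for the coalgebra categories, where Meier's theorem applies directly, whereas yours additionally needs it for $\mathsf{1}^{\simeq}$ --- a true but unproven claim in your proposal.
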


\begin{proof}
The main idea of the proof is to turn the homotopy equivalence of Lemma \ref{lemma: xi-coalgebras} into a weak equivalence. And, in order to do so, we are first going to show that any homotopy equivalence between fibrant relative categories induces a homotopy equivalence between their subdivided nerve, and then conclude using the fact that complete Segal spaces are bifibrant (which is not the case for fibrant relative categories). 

\medskip

Appendix \ref{Appendix A: Relative categories} provides a good functorial path object on fibrant objects in $\mathsf{RelCat}$ defined by $(-)^{\xi\mathsf{1}^{\simeq}}$. So, by abstract non-sense, any left homotopy with respect to $(-) \times \mathsf{1}^{\simeq}$ induces a homotopy with respect to $(-)^{\xi\mathsf{1}^{\simeq}}$. Recall also that $\xi\mathsf{1}^{\simeq}=K_{\xi}\Delta[0,1]$, so it follows that
\[
N_{\xi}\xi\mathsf{1}^{\simeq}=N_{\xi}K_{\xi}\Delta[0,1]\stackrel{\sim}{\leftarrow}\Delta[0,1]
\]
where the weak equivalence is induced by the unit of the adjunction between $K_{\xi}$ and $N_{\xi}$, which is always a Reedy weak equivalence by \cite[Proposition 10.3] {BarwickKan1}. For any pair of relative categories $\mathsf{C},\mathsf{D}$ where $\mathsf{C}$ is fibrant, we can compute that
\begin{eqnarray*}
\mathrm{Hom}_{\ho{\mathsf{bisSet}}}(N_{\xi}\mathsf{D},N_{\xi}(\mathsf{C}^{\xi\mathsf{1}^{\simeq}})) & \cong & \mathrm{Hom}_{\ho{\mathsf{RelCat}}}(\mathsf{D},\mathsf{C}^{\xi\mathsf{1}^{\simeq}})  \\
 & \cong &\mathrm{Hom}_{\ho{\mathsf{RelCat}}} (\mathsf{D}\times\xi\mathsf{1}^{\simeq},\mathsf{C}) \\
 & \cong & \mathrm{Hom}_{\ho{\mathsf{bisSet}}}(N_{\xi}(\mathsf{D}\times\xi\mathsf{1}^{\simeq}),N_{\xi}\mathsf{C}) \\
 & \cong & \mathrm{Hom}_{\ho{\mathsf{bisSet}}}(N_{\xi}\mathsf{D}\times N_{\xi}\xi\mathsf{1}^{\simeq},N_{\xi}\mathsf{C}) \\
 & \cong & \mathrm{Hom}_{\ho{\mathsf{bisSet}}}(N_{\xi}(\mathsf{D}),N_{\xi}\mathsf{C}^{N_{\xi}\xi\mathsf{1}^{\simeq}}) \\
 & \cong & \mathrm{Hom}_{\ho{\mathsf{bisSet}}}(N_{\xi}(\mathsf{D}),N_{\xi}\mathsf{C}^{\Delta[0,1]})~, 
\end{eqnarray*}

in the homotopy category $\ho{\mathsf{bisSet}}$ of bisimplicial sets with respect to Rezk model structure and the homotopy category $\ho{\mathsf{RelCat}}$ of relative categories with respect to the Barwick--Kan model structure. Indeed, since $K_\xi \dashv N_\xi$ is a Quillen adjunction, it induces an adjunction at the level of homotopy categories. Using that $N_\xi$ is essentially surjective (since it is an equivalence), it follows from Yoneda lemma that $N_{\xi}(\mathsf{C}^{\xi\mathsf{1}^{\simeq}}) \simeq N_{\xi}(\mathsf{C})^{\Delta[0,1]}$ in $\ho{\mathsf{bisSet}}$. Consequently, a homotopy equivalence of fibrant relative categories provided induces a homotopy equivalence of complete Segal spaces. Therefore, using Lemma \ref{lemma: xi-coalgebras}, we conclude that $N_{\xi}\pi^*:i_1^*N_{\xi}F\text{-}\mathsf{coalg}$ and $N_{\xi}\pi^*:i_1^*N_{\xi}F\text{-}\mathsf{coalg}^{\xi}$ are homotopy equivalent complete Segal spaces. And since cofibrations in Rezk's model structure are monomorphisms, any complete Segal spaces is bifibrant. Hence, by Whitehead's theorem, any homotopy equivalence between these objects is a  weak equivalence. The final result then follows by applying the right Quillen functor $i_1^*$ to these two fibrant objects. 
\end{proof}

\begin{theorem}\label{thm: rectification of coalgebras over an endofunctor}
Let 
\[
F: \mathsf{Ch}(\kk) \longrightarrow \mathsf{Ch}(\kk)
\]
be an endofunctor of the category of chain complexes which preserves quasi-isomorphisms. There is an equivalence of 
$\infty$-categories
\[
F\text{-}\mathsf{coalg}~[\mathsf{Q.iso}^{-1}] \simeq \mathbf{coalg}_{F}(\mathbf{D}(\kk))
\]
between the $\infty$-category obtained by localizing $F$-coalgebras with respect to quasi-isomorphisms and the $\infty$-category of coalgebras over the endofunctor of $\mathbf{D}(\kk)$ induced by $F$. 
\end{theorem}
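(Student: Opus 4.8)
The plan is to reduce the statement to the relaxed model $F\text{-}\mathsf{coalg}^{\xi}$, whose defining strict pullback of relative categories is homotopically well-behaved, and then to argue that the localization functor---implemented concretely by $i_1^*N_{\xi}$---carries that strict pullback onto the homotopy pullback of \cref{definition: infinity categorical coalgebras over an endofunctor}. First I would invoke \cref{cor:piquasicateq}, which gives an equivalence of quasi-categories $i_1^*N_{\xi}F\text{-}\mathsf{coalg}\simeq i_1^*N_{\xi}F\text{-}\mathsf{coalg}^{\xi}$. Since the subdivided nerve of a (fibrant) relative category presents its localization at the weak equivalences, by the Barwick--Kan theorem the source is a model for $F\text{-}\mathsf{coalg}~[\mathsf{Q.iso}^{-1}]$. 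It therefore suffices to identify the target $i_1^*N_{\xi}F\text{-}\mathsf{coalg}^{\xi}$ with $\mathbf{coalg}_{F}(\mathbf{D}(\kk))$.

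For this identification, I would use that $F\text{-}\mathsf{coalg}^{\xi}$ is \emph{by definition} the strict pullback in $\mathsf{RelCat}$ of the cospan $\mathsf{Ch}(\kk)\xrightarrow{(\mathrm{id},F)}\mathsf{Ch}(\kk)\times\mathsf{Ch}(\kk)\xleftarrow{(\mathrm{ev}_0,\mathrm{ev}_1)}\mathsf{Ch}(\kk)^{\xi\mathsf{1}}$. Both $N_{\xi}$ and $i_1^*$ are right adjoints, hence preserve all limits; so $i_1^*N_{\xi}F\text{-}\mathsf{coalg}^{\xi}$ is the \emph{strict} pullback of quasi-categories obtained by applying $i_1^*N_{\xi}$ to each term of this cospan. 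The crux of the argument, and the step I expect to be the main obstacle, is then to promote this strict pullback to a homotopy pullback. Here I would appeal to \cref{prop: isofibration}, which asserts that $i_1^*N_{\xi}(\mathrm{ev}_0,\mathrm{ev}_1)$ is an isofibration of quasi-categories. This is exactly the reason for passing to the subdivision $\xi\mathsf{1}$ in the first place: the evaluation from the un-subdivided arrow category $\mathsf{Ch}(\kk)^{\mathsf{1}}$ need not be a fibration, whereas $\mathsf{C}^{\xi\mathsf{1}}$ behaves as a genuine path object and makes evaluation fibrant. Given that one leg is an isofibration between quasi-categories, the strict pullback coincides with the pullback taken in $\mathbf{Cat}_\infty$.

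It then remains to identify the localized cospan. One has $i_1^*N_{\xi}\mathsf{Ch}(\kk)\simeq\mathbf{D}(\kk)$ by definition of $\mathbf{D}(\kk)$ as the localization, and $i_1^*N_{\xi}(\mathsf{Ch}(\kk)\times\mathsf{Ch}(\kk))\simeq\mathbf{D}(\kk)\times\mathbf{D}(\kk)$ since these functors preserve products. The same computation as in the proof of \cref{cor:piquasicateq}, now carried out for $\mathsf{1}$ rather than $\mathsf{1}^{\simeq}$, yields $N_{\xi}(\mathsf{Ch}(\kk)^{\xi\mathsf{1}})\simeq N_{\xi}(\mathsf{Ch}(\kk))^{\Delta[1]}$ and hence $i_1^*N_{\xi}\mathsf{Ch}(\kk)^{\xi\mathsf{1}}\simeq\mathbf{D}(\kk)^{\Delta^1}$, the arrow $\infty$-category. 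Because $F$ preserves quasi-isomorphisms, it descends to an endofunctor $\mathrm{T}$ of $\mathbf{D}(\kk)$, so the bottom leg localizes to $(\mathrm{id},\mathrm{T})$ and the right leg to $(\mathrm{ev}_0,\mathrm{ev}_1)$. This is precisely the cospan appearing in \cref{definition: infinity categorical coalgebras over an endofunctor}, whence the homotopy pullback is $\mathbf{coalg}_{F}(\mathbf{D}(\kk))$.

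Combining the two halves, the chain of equivalences $F\text{-}\mathsf{coalg}~[\mathsf{Q.iso}^{-1}]\simeq i_1^*N_{\xi}F\text{-}\mathsf{coalg}\simeq i_1^*N_{\xi}F\text{-}\mathsf{coalg}^{\xi}\simeq\mathbf{coalg}_{F}(\mathbf{D}(\kk))$ gives the theorem. Beyond the isofibration input, the only points requiring care are the fibrancy of the relative categories involved (so that $i_1^*N_{\xi}$ produces honest quasi-categories whose pullback along a fibration computes the $\infty$-categorical limit) and the compatibility of $i_1^*N_{\xi}$ with the internal hom $\mathsf{C}^{\xi\mathsf{1}}$; both are handled exactly as in the argument already given for \cref{cor:piquasicateq}.
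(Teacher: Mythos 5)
Your proposal follows the paper's proof in all essentials: reduce to $F\text{-}\mathsf{coalg}^{\xi}$ via \cref{cor:piquasicateq}, use fibrancy of the relative categories together with the fibration property of the evaluation map to promote the strict pullback to a homotopy pullback, and then identify the localized cospan with the one defining $\mathbf{coalg}_{F}(\mathbf{D}(\kk))$ through the comparison $N_{\xi}(\mathsf{C}^{\xi\mathsf{1}})\simeq N_{\xi}(\mathsf{C})^{\Delta[1,0]}$. The one real discrepancy is a misattribution at the crux of the argument: \cref{prop: isofibration} does \emph{not} assert that $(\mathrm{ev}_0,\mathrm{ev}_1)$ is an (iso)fibration --- it concerns an entirely different functor, namely the restriction $\ho{\iota^k(p)^*}$ between homotopy categories of coalgebras over the cell operads $\mathbb{T}(D^k(p))$ and $\mathbb{T}(S^{k-1}(p))$, and belongs to the proof of \cref{thm: key theorem}. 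The statement you actually need is \cref{prop: evaluationfibrations}: for a fibrant relative category $\mathsf{C}$, the evaluation $(\mathrm{ev}_0,\mathrm{ev}_1)\colon \mathsf{C}^{\xi\mathsf{1}}\to\mathsf{C}\times\mathsf{C}$ is a fibration in $\mathsf{RelCat}$; since $i_1^*N_{\xi}$ is right Quillen, it carries this to a fibration of quasi-categories, which is exactly what makes your strict pullback of quasi-categories a homotopy pullback. With that reference corrected your argument is sound, and the remaining difference from the paper is cosmetic: the paper performs the span comparison at the level of complete Segal spaces (after $N_{\xi}$, before $i_1^*$), whereas you apply $i_1^*N_{\xi}$ at once and argue among quasi-categories; both work because $i_1^*$ is a right Quillen equivalence preserving fibrations and fibrant objects. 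One further terminological slip: $\mathsf{C}^{\xi\mathsf{1}}$ is not a path object (that role is played by $\mathsf{C}^{\xi\mathsf{1}^{\simeq}}$, since $\mathsf{C}\to\mathsf{C}^{\xi\mathsf{1}}$ is not a weak equivalence); it is the arrow-category model, and only the fibration property of its evaluation map is used.
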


\begin{proof}
By Lemma \ref{lemma: 1-categorical coalgebras over an endofunctor as a pullback}, the 1-category of $F$-coalgebras is given by the following pullback 
\[
\begin{tikzcd}[column sep=3.5pc,row sep=3.5pc]
F\text{-}\mathsf{coalg} \arrow[r] \arrow[d] \arrow[dr, phantom, "\ulcorner", very near start]
& \mathsf{Ch}(\kk)^{\mathsf{1}} \arrow[d,"{(\mathrm{ev}_0,\mathrm{ev}_1)}"]  \\
 \mathsf{Ch}(\kk) \arrow[r,"{(\mathrm{id},F)}"]
& \mathsf{Ch}(\kk) \times  \mathsf{Ch}(\kk)~.
\end{tikzcd}
\]
Since $F$ preserves quasi-isomorphisms, by Lemma \ref{lemma: xi-coalgebras} this pullback of relative categories is equivalent to the following pullback 
\[
\begin{tikzcd}[column sep=3.5pc,row sep=3.5pc]
F\text{-}\mathsf{coalg}^{\xi} \arrow[r] \arrow[d] \arrow[dr, phantom, "\ulcorner", very near start]
&\mathsf{Ch}(\kk)^{\xi\mathsf{1}} \arrow[d,"{(\mathrm{ev}_0,\mathrm{ev}_1)}"]  \\
\mathsf{Ch}(\kk) \arrow[r,"{(\mathrm{id},F)}"]
&\mathsf{Ch}(\kk) \times  \mathsf{Ch}(\kk) 
\end{tikzcd}
\]
of relative categories. By Corollary \ref{cor:piquasicateq}, the relative category of $F_\xi$-coalgebras models the same underlying $\infty$-category. Every object in this pullback is fibrant (since they are model categories) and the evaluation map $(\mathrm{ev}_0,\mathrm{ev}_1)$ is a fibration by Proposition \ref{prop: evaluationfibrations}, so it is also a homotopy pullback. 

\medskip

The goal now is to compare this homotopy pullback with a homotopy pullback that we know models the pullback of $\infty$-categories of Definition \ref{definition: infinity categorical coalgebras over an endofunctor}. For that, we can apply the subdivided nerve $N_\xi$ to it and obtain a homotopy pullback of complete Segal spaces
\[
\begin{tikzcd}[column sep=2.5pc,row sep=3.5pc]
N_{\xi}(F\text{-}\mathsf{coalg}^\xi) \arrow[r] \arrow[d] \arrow[dr, phantom, "\ulcorner", very near start]
& N_{\xi}(\mathsf{Ch}(\kk)^{\xi\mathsf{1}}) \arrow[d,"{ N_{\xi}(\mathrm{ev}_0,\mathrm{ev}_1)}"]  \\
 N_{\xi}(\mathsf{Ch}(\kk)) \arrow[r,"{(\mathrm{id},F)}"]
& N_{\xi}(\mathsf{Ch}(\kk)) \times  N_{\xi}(\mathsf{Ch}(\kk))~.
\end{tikzcd}
\]

Let us make the maps involved in this pullback explicit. Since the subdivided nerve preserves products, $N_{\xi}(\mathsf{Ch}(\kk)\times\mathsf{Ch}(\kk)) \cong N_{\xi}(\mathsf{Ch}(\kk)) \times N_{\xi}(\mathsf{Ch}(\kk))$ and $N_{\xi}(\mathrm{id},F)=(\mathrm{id},N_{\xi}F)$. Since $N_\xi$ is right Quillen, this pullback is again a homotopy pullback. The goal now is to compare the span of this pullback 
\[
\begin{tikzcd}[row sep=large, column sep=large]
& N_{\xi}\bigl(\mathsf{Ch}(\kk)^{\xi\mathsf{1}}\bigr) 
    \arrow[d, "{N_{\xi}(\mathrm{ev}_0,\mathrm{ev}_1)}"] \\
N_{\xi}\mathsf{Ch}(\kk) 
    \arrow[r, "{N_{\xi}(\mathrm{id},F)}"'] 
& N_{\xi}\mathsf{Ch}(\kk)\times N_{\xi}\mathsf{Ch}(\kk)
\end{tikzcd}
\]
with the following span 
\[
\begin{tikzcd}[row sep=large, column sep=large]
& (N_{\xi}\mathsf{Ch}(\kk))^{\Delta[1,0]} 
    \arrow[d, "{(\mathrm{ev}_0,\mathrm{ev}_1)}"] \\
N_{\xi}\mathsf{Ch}(\kk) 
    \arrow[r, "{N_{\xi}(\mathrm{id},F)}"'] 
& N_{\xi}\mathsf{Ch}(\kk)\times N_{\xi}\mathsf{Ch}(\kk)
\end{tikzcd}
\]
that models on the nose the $\infty$-categorical pullback of Definition \ref{definition: infinity categorical coalgebras over an endofunctor}. Indeed, the evaluation map
\[
(\mathrm{ev}_0,\mathrm{ev}_1): (N_{\xi}\mathsf{Ch}(\kk))^{\Delta[1,0]}\longrightarrow N_{\xi}\mathsf{Ch}(\kk)\times N_{\xi}\mathsf{Ch}(\kk)
\]
is a fibration by Proposition \ref{prop: evaluationfibrations} and all the objects involved fibrant. Using that 
\[
N_{\xi}\xi\mathsf{1}=N_{\xi}K_{\xi}\Delta[1,0]\stackrel{\sim}{\leftarrow}\Delta[1,0]~, 
\]
we can use the same computation as in the proof of Corollary \ref{cor:piquasicateq} to show that the objects $N_{\xi}(\mathsf{C}^{\xi\mathsf{1}})$ and $N_{\xi}(\mathsf{C})^{\Delta[1,0]}$ are weak equivalent complete Segal spaces. It can be checked that this weak equivalence commutes with the respective evaluation maps and thus defines a weak equivalence of spans. Therefore, their (homotopy) pullbacks are weakly equivalent and we conclude that 
\[
F\text{-}\mathsf{coalg}^{\xi}~[\mathsf{Q.iso}^{-1}] \simeq \mathbf{coalg}_{F}(\mathbf{D}(\kk))~,
\]
and hence that 
\[
F\text{-}\mathsf{coalg}~[\mathsf{Q.iso}^{-1}] \simeq \mathbf{coalg}_{F}(\mathbf{D}(\kk)).
\]
\end{proof}

\subsection{Point-set models for coalgebras over a free operad}

The rectification of coalgebras over endofunctors is the first step towards the general rectification of coalgebras over general cofibrant dg operads. Indeed, since coalgebras over free operads are equivalent to coalgebras over the endofunctor of generators, Theorem \ref{thm: rectification of coalgebras over an endofunctor} directly implies the following result. 

\begin{proposition}\label{prop: rectification of coalgebras over free operads}
Let $M$ be an $\mathbb{S}$-projective dg symmetric sequence and let $\mathcal{P} = \mathbb{T}(M)$ be the free dg operad generated by $M$. There is an equivalence of $\infty$-categories
\[
\mathcal{P}\text{-}\mathsf{coalg}~[\mathsf{Q.iso}^{-1}] \simeq \mathbf{Coalg}_{\mathcal{P}}(\mathbf{D}(\kk))
\]
between dg $\mathcal{P}$-coalgebras up to quasi-isomorphism and coalgebras in $\mathbf{D}(\kk)$ over the induced enriched $\infty$-operad. 
\end{proposition}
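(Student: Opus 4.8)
The plan is to reduce to \cref{thm: rectification of coalgebras over an endofunctor} by identifying, on both the point-set and the $\infty$-categorical side, coalgebras over the free operad $\mathbb{T}(M)$ with coalgebras over the dual Schur endofunctor of $M$. First I would establish the 1-categorical identification. By the universal property of the free operad $\mathbb{T}(M)$, a morphism of dg operads $\mathbb{T}(M) \longrightarrow \mathrm{coEnd}_C$ is the same datum as a morphism of dg symmetric sequences $M \longrightarrow \mathrm{coEnd}_C$; by the adjunction computation in the proof of \cref{definition: P-coalgebra}, such a morphism is in turn equivalent to a structural map $C \longrightarrow \widehat{\mathrm{S}}^c(M)(C)$. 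This correspondence is natural in $C$ and visibly preserves the underlying chain complex and the class of quasi-isomorphisms, so it upgrades to an isomorphism of relative categories
\[
\mathbb{T}(M)\text{-}\mathsf{coalg} \cong \widehat{\mathrm{S}}^c(M)\text{-}\mathsf{coalg}
\]
over $\mathsf{Ch}(\kk)$, where the right-hand side is the category of coalgebras over the endofunctor $\widehat{\mathrm{S}}^c(M) = \prod_{n \geq 0}[M(n),(-)^{\otimes n}]^{\mathbb{S}_n}$ in the sense of \cref{definition: coalgebras over an endofunctor}.

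Next I would check that $F \coloneqq \widehat{\mathrm{S}}^c(M)$ preserves quasi-isomorphisms, which is exactly where the hypothesis that $M$ is $\mathbb{S}$-projective enters. Each factor is $[M(n),(-)^{\otimes n}]^{\mathbb{S}_n} \cong \mathrm{Hom}_{\kk[\mathbb{S}_n]}(M(n),(-)^{\otimes n})$; since $M(n)$ is a projective $\kk[\mathbb{S}_n]$-module and $(-)^{\otimes n}$ preserves quasi-isomorphisms of chain complexes over a field, each factor preserves quasi-isomorphisms, and so does the product, since products are exact in $\mathsf{Ch}(\kk)$. Applying \cref{thm: rectification of coalgebras over an endofunctor} to $F$ then yields
\[
\widehat{\mathrm{S}}^c(M)\text{-}\mathsf{coalg}~[\mathsf{Q.iso}^{-1}] \simeq \mathbf{coalg}_{\widehat{\mathrm{S}}^c(M)}(\mathbf{D}(\kk)),
\]
where on the right we have coalgebras over the endofunctor of $\mathbf{D}(\kk)$ induced by $F$.

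It remains to match the right-hand side with $\mathbf{Coalg}_{\mathbb{T}(M)}(\mathbf{D}(\kk))$. Because $M$ is $\mathbb{S}$-projective, it is a cofibrant model of the underlying symmetric sequence, so the point-set dual Schur functor $F$ presents the $\infty$-categorical dual Schur functor $\widehat{\mathbf{S}}^c(M)$ on $\mathbf{D}(\kk)$; hence $\mathbf{coalg}_{\widehat{\mathrm{S}}^c(M)}(\mathbf{D}(\kk)) \simeq \mathbf{coalg}_{\widehat{\mathbf{S}}^c(M)}(\mathbf{D}(\kk))$. Combining this with the equivalence $\mathbf{Coalg}_{\mathbb{T}(M)}(\mathbf{D}(\kk)) \simeq \mathbf{coalg}_{\widehat{\mathbf{S}}^c(M)}(\mathbf{D}(\kk))$ of \cref{corollary: coalgebras over a free infinity operad} closes the chain of equivalences. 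I expect the only genuinely delicate point to be this last identification: one must verify that the endofunctor of $\mathbf{D}(\kk)$ that \cref{thm: rectification of coalgebras over an endofunctor} produces from the strict functor $F$ really coincides, as an object of $\mathbf{End}(\mathbf{D}(\kk))$, with the $\infty$-categorical functor $\widehat{\mathbf{S}}^c(M)$ appearing in \cref{corollary: coalgebras over a free infinity operad} — that is, that forming products, internal homs, and $\mathbb{S}_n$-invariants on the nose computes the corresponding derived operations, which again rests on $\mathbb{S}$-projectivity of $M$.
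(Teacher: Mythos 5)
Your proposal is correct and follows essentially the same route as the paper's own proof: identify dg $\mathbb{T}(M)$-coalgebras with coalgebras over the endofunctor $\widehat{\mathrm{S}}^c(M)$, use $\mathbb{S}$-projectivity of $M$ to see that this endofunctor preserves quasi-isomorphisms, apply \cref{thm: rectification of coalgebras over an endofunctor}, and conclude with \cref{corollary: coalgebras over a free infinity operad}. The extra care you take at the end --- checking that the strict functor $\widehat{\mathrm{S}}^c(M)$ presents the $\infty$-categorical functor $\widehat{\mathbf{S}}^c(M)$ --- is exactly the point the paper handles by its earlier remark that the $\infty$-categorical dual Schur functor is presented by its point-set version on an $\mathbb{S}$-cofibrant model of $M$.
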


\begin{proof}
Since $\mathcal{P}$ is the free dg operad generated by $M$, the category of dg $\mathcal{P}$-coalgebras is equivalent to the category of coalgebras over the dual Schur endofunctor 
\[
\widehat{\mathrm{S}}^c(M)(-) = \prod_{n\geq 0} [M(n),(-)^{\otimes n}]^{\mathbb{S}_n}
\]
associated to the dg symmetric sequence $M$. Furthermore, the functor $\widehat{\mathrm{S}}^c(M)(-)$ preserves quasi-isomorphisms since $M$ is $\mathbb{S}$-projective. Therefore we can apply Theorem \ref{thm: rectification of coalgebras over an endofunctor} to this situation, and get the following series of equivalences of $\infty$-categories
\[
\mathcal{P}\text{-}\mathsf{coalg}~[\mathsf{Q.iso}^{-1}] \simeq \widehat{\mathrm{S}}^c(M)\text{-}\mathsf{coalg}~[\mathsf{Q.iso}^{-1}] \simeq \mathbf{coalg}_{\widehat{\mathbf{S}}^c(M)}(\mathbf{D}(\kk))~. 
\]
Finally, since $\mathcal{P}$ is free on a cofibrant dg symmetric sequence, its enriched $\infty$-operad is also free and we can apply Corollary \ref{corollary: coalgebras over a free infinity operad} and get an equivalence 
\[
\mathbf{coalg}_{\widehat{\mathbf{S}}^c(M)}(\mathbf{D}(\kk)) \simeq \mathbf{Coalg}_{\mathcal{P}}(\mathbf{D}(\kk))
\]
between coalgebras over the $\infty$-categorical dual Schur endofunctor of $M$ and coalgebras over the enriched $\infty$-operad induced by $\mathcal{P}$. 
\end{proof}

\subsection{Gluing $\infty$-categories of coalgebras along cell attachments}
Recall that the model structure on $\mathsf{Ch}(\kk)$ is cofibrantly generated by the chain morphisms $S^{k-1}\hookrightarrow D^k$ and $0\hookrightarrow D^k$, where :
\begin{itemize}
    \item $S^{k-1}$ is defined by $\kk$ in degree $k-1$, and $0$ otherwise, with the trivial differentials;
    \item $D^k$ is defined by $\kk$ in degrees $k$ and $k-1$, and $0$ otherwise, with the non-trivial differential given by $d_n= \mathrm{id}_\kk$.
\end{itemize}

Taking the free dg symmetric sequences on these objects, we get let $S^{k}(p)$ given by $\kk[\mathbb{S}_p]$ in degree $k \in \mathbb{Z}$ and in arity $p \geq 0$ and by zero elsewhere and $D^k(p)$ by $\kk[\mathbb{S}_p]$ in degrees $k-1$ and $k$, for $k \in \mathbb{Z}$, with the the differential being the identity map, for some arity $p \geq 0$ and by zero elsewhere. The generating cofibrations of the semi-model structure of dg operads are given by taking the free dg operads on these dg symmetric sequences.

We now run the induction argument on cell attachments to get a rectification result for dg $\mathcal{P}$-coalgebras, where now $\mathcal{P}$ is any cofibrant cell dg operad. This induction follows from the subsequent key theorem.

\begin{theorem}\label{thm: key theorem}
Let $p \geq 0$ and $k \in \mathbb{Z}$, and let 
\[
\begin{tikzcd}[column sep=3.5pc,row sep=3.5pc]
\mathbb{T}(S^{k-1}(p)) \arrow[r,"\psi"] \arrow[d,"\iota^k(p)",swap]  \arrow[dr, phantom,"\ulcorner" rotate=-180, very near end]
&\oneop_\alpha \arrow[d,"\iota_\alpha"]  \\
\mathbb{T}(D^k(p)) \arrow[r,"\varsigma"]
&\oneop_{\alpha + 1}
\end{tikzcd}
\]
be a pushout diagram of cofibrant dg operads. The induced pullback of \emph{relative} categories
\[
\begin{tikzcd}[column sep=3.5pc,row sep=3.5pc]
\onecoalg{\oneop_{\alpha + 1}} \arrow[r,"\iota_\alpha^*"] \arrow[d,"\varsigma^*",swap] \arrow[dr, phantom, "\ulcorner", very near start]
&\onecoalg{\oneop_\alpha}\arrow[d,"\psi^*"]  \\
\onecoalg{\mathbb{T}(D^k(p))} \arrow[r,"\iota^k(p)^*"]
&\onecoalg{\mathbb{T}(S^{k-1}(p))},
\end{tikzcd}
\]
induces a \emph{homotopy pullback} of quasi-categories once we apply the right Quillen functor $i_1^*N_{\xi}$.
\end{theorem}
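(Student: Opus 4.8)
The plan is to exhibit the strict pullback of relative categories produced by \cref{corollary: devissage along cells of 1-categories of P-coalgebras} as a homotopy pullback after applying $i_1^*N_{\xi}$, and the mechanism will be to show that one of its two legs over $\onecoalg{\mathbb{T}(S^{k-1}(p))}$ becomes a fibration. First I would record that all four corners are coadmissible: the operads $\oneop_\alpha$ and $\oneop_{\alpha+1}$ are cofibrant by hypothesis, and the free operads $\mathbb{T}(D^k(p))$ and $\mathbb{T}(S^{k-1}(p))$ are cofibrant, so each category of coalgebras carries the left-transferred model structure by \cref{prop: cofibrant operads are coadmissible}. In particular each corner is a model category, hence its subdivided nerve is a complete Segal space. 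Since $N_{\xi}$ is a right adjoint it preserves the strict pullback, so $N_{\xi}$ sends our square to a strict pullback of bisimplicial sets all of whose vertices are complete Segal spaces; and since $i_1^*$ is right Quillen for the Joyal structure, it then suffices to prove that this strict pullback of complete Segal spaces is already a homotopy pullback. By the standard criterion this will follow once one leg is a Rezk fibration and all objects are fibrant.

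The heart of the matter is therefore to show that $N_{\xi}\iota^k(p)^* \colon N_{\xi}\,\onecoalg{\mathbb{T}(D^k(p))} \to N_{\xi}\,\onecoalg{\mathbb{T}(S^{k-1}(p))}$ is a fibration; this is the content of \cref{prop: isofibration}. To get a handle on it I would use, as in the proof of \cref{prop: rectification of coalgebras over free operads}, that the two bottom corners are coalgebras over the dual Schur endofunctors $\widehat{\mathrm{S}}^c(D^k(p))$ and $\widehat{\mathrm{S}}^c(S^{k-1}(p))$, and that $\iota^k(p)^*$ is induced by the natural transformation $\widehat{\mathrm{S}}^c(D^k(p)) \Rightarrow \widehat{\mathrm{S}}^c(S^{k-1}(p))$ obtained by restricting along the split graded monomorphism $S^{k-1}(p) \hookrightarrow D^k(p)$. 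Dualizing arity by arity, this natural transformation is a degreewise split epimorphism of endofunctors. Presenting both endofunctor-coalgebra categories through the pullback description of \cref{lemma: 1-categorical coalgebras over an endofunctor as a pullback}, and passing to the $\xi$-relaxed models so that the comparison of \cref{cor:piquasicateq} applies, the map $\iota^k(p)^*$ is realized by a map of spans over $\mathsf{Ch}(\kk)\times\mathsf{Ch}(\kk)$ in which the relevant arrow-evaluation map is a fibration by \cref{prop: evaluationfibrations}; combining this with the surjectivity of the comparison natural transformation yields the required lifting property.

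I expect essentially all of the genuine difficulty to be concentrated in \cref{prop: isofibration}: fibrations of the left-transferred model structures on coalgebras are not explicitly describable, and what must really be verified is only that $\iota^k(p)^*$ induces an \emph{isofibration} of the underlying quasi-categories, i.e.\ that equivalences lift along it. The cleanest route I would take is to lift equivalences by transporting the functorial cylinder on coalgebras coming from the Barratt--Eccles interval---the same cylinder that produced coadmissibility in \cref{ex:cofibrant is coadmissible}---thereby reducing the lifting problem to the explicit endofunctor situation above. Once \cref{prop: isofibration} is established, the argument closes formally: the strict pullback of complete Segal spaces along the Rezk fibration $N_{\xi}\iota^k(p)^*$ is a homotopy pullback, and applying the right Quillen functor $i_1^*$ produces the asserted homotopy pullback of quasi-categories.
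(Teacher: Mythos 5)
Your global strategy in the first paragraph (right adjoints preserve the strict pullback; all corners are fibrant; so it suffices that one leg become a fibration) matches the paper's, but the way you propose to produce that fibration has a genuine gap, in fact two. First, you aim to show that $N_{\xi}\iota^k(p)^*$ is a Rezk fibration of complete Segal spaces. Since the Barwick--Kan model structure is right-transferred along $K_{\xi}\dashv N_{\xi}$, this is literally equivalent to $\iota^k(p)^*$ being a fibration of relative categories --- exactly the statement the paper flags as inaccessible (``we do not know if this is the case, as there is no available characterization of fibrations in relative categories''), and which the whole architecture of its proof is designed to avoid. Your proposed mechanism does not supply it: \cref{prop: evaluationfibrations} makes the evaluation leg of the span \emph{defining} each coalgebra category a fibration (this is what makes each corner a homotopy pullback, as in \cref{thm: rectification of coalgebras over an endofunctor}), but it says nothing about the induced functor \emph{between} the two pullbacks $\onecoalg{\mathbb{T}(D^k(p))}\to\onecoalg{\mathbb{T}(S^{k-1}(p))}$. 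Note also that this functor is not induced by a strict map of spans: post-composition with the natural transformation $\widehat{\mathrm{S}}^c(D^k(p))\Rightarrow\widehat{\mathrm{S}}^c(S^{k-1}(p))$ is not an endofunctor of the arrow-category factor $\mathsf{Ch}(\kk)^{\mathsf{1}}$, so ``split surjectivity plus the evaluation fibration'' has nothing to latch onto.

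Second, your fallback in the last paragraph --- that ``what must really be verified is only that equivalences lift along $\iota^k(p)^*$'' --- is not enough. Lifting equivalences is the content of \cref{prop: isofibration} (an isofibration of \emph{homotopy} categories), but a functor of quasi-categories that lifts equivalences need not be a Joyal fibration: one also needs the inner-fibration condition, and a strict pullback along a non-fibration need not be a homotopy pullback. This is precisely where the paper's second, equally essential ingredient enters: the induced $\infty$-functor $i_1^*N_{\xi}\iota^k(p)^*$ is \emph{fully faithful}, proven by identifying $\mathbb{T}(D^k(p))$-coalgebras with $\mathbf{D}(\kk)$ (the dual Schur endofunctor of $D^k(p)$ is contractible), identifying the target with coalgebras over $[S^{k-1},(-)^{\otimes p}]$, and computing mapping spaces as equalizers, so that between objects in the image both comparison maps vanish. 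Fully faithfulness combined with the $1$-categorical isofibration yields a Joyal fibration via \cref{prop:fibrationff}: a fully faithful functor is a pullback of the nerve of its homotopy-category functor, and the latter is a fibration when it is full and an isofibration. Your proposal omits the fully-faithfulness step entirely, and without it there is no route from \cref{prop: isofibration} to the fibration statement of \cref{thm: fibrationquasicat} that the homotopy-pullback argument requires.
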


\begin{remark}[About the proof strategy]
Theorem \ref{thm: key theorem} would directly follow if 
\[
\iota^k(p)^*: (\onecoalg{\mathbb{T}(D^k(p))},\mathsf{Q.iso}) \longrightarrow (\onecoalg{\mathbb{T}(S^{k-1}(p))},\mathsf{Q.iso})
\]
were a fibration of relative categories, since all the objects in the pullback are fibrant. However, we do not know if this is the case, as there is no available characterization of fibrations in relative categories. So we will show this pullback becomes a homotopy pullback once we apply the functor $i_1^*N_{\xi}$. 
\end{remark}

\subsection{Proof of Theorem \ref{thm: key theorem}}\label{S: the gluing proof}
The pullback square in Theorem \ref{thm: key theorem} is clearly a pullback square of relative categories since the restriction functors involved preserve quasi-isomorphisms. Our main objective is to show that it is sent via the functor $i_1^*N_{\xi}$ to homotopy pullback of quasi-categories. Since all the relative categories are model categories, they are fibrant objects, which are preserved by the functor $i_1^*N_{\xi}$. Hence, it is enough to show the following result.

\begin{theorem}\label{thm: fibrationquasicat}
Let $p \geq 0$ and $k \in \mathbb{Z}$, the restriction functor 
\[
i_1^*N_{\xi}(\iota^k(p)^*):i_1^*N_{\xi}\left(\onecoalg{\mathbb{T}(D^k(p))} \right)\longrightarrow i_1^*N_{\xi} \left(\onecoalg{\mathbb{T}(S^{k-1}(p))}\right)
\]
is a fibration of quasi-categories.
\end{theorem}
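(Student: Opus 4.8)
The plan is to reduce the statement to a concrete property of the dual Schur endofunctors attached to the two generating objects, and then to promote an \emph{objectwise} fibration of endofunctors to a fibration of quasi-categories. First I would replace the coalgebra categories over the free operads by coalgebras over endofunctors: by the equivalence used in \cref{prop: rectification of coalgebras over free operads} (coalgebras over a free operad are coalgebras over the dual Schur endofunctor of its generators), the functor $\iota^k(p)^*$ is identified with the functor
\[
\widehat{\mathrm{S}}^c(D^k(p))\text{-}\mathsf{coalg} \longrightarrow \widehat{\mathrm{S}}^c(S^{k-1}(p))\text{-}\mathsf{coalg}
\]
induced by post-composing the structure map with the natural transformation $\pi \coloneqq \widehat{\mathrm{S}}^c(\iota)\colon \widehat{\mathrm{S}}^c(D^k(p)) \Rightarrow \widehat{\mathrm{S}}^c(S^{k-1}(p))$ coming from the generating cofibration $\iota\colon S^{k-1}(p)\hookrightarrow D^k(p)$. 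Write $F = \widehat{\mathrm{S}}^c(D^k(p))$ and $G = \widehat{\mathrm{S}}^c(S^{k-1}(p))$.

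Next I would compute $F$, $G$ and $\pi$ explicitly, which is the key input. Because the generators are \emph{free} symmetric sequences, with $S^{k-1}(p)(p)=\kk[\mathbb{S}_p]$ and $D^k(p)(p)=\kk[\mathbb{S}_p]$ in the stated degrees, the homotopy invariants disappear and one gets $G(V)\cong (V^{\otimes p})[1-k]$, while $F(V)\cong [D^k(p)(p),V^{\otimes p}]^{\mathbb{S}_p}$ is \emph{acyclic} (a contraction of $D^k(p)(p)$ induces one on $F(V)$). The short exact sequence $0\to S^{k-1}(p)\to D^k(p)\to S^{k}(p)\to 0$ dualizes to a natural short exact sequence of endofunctors
\[
0 \longrightarrow (V^{\otimes p})[-k] \longrightarrow F(V) \xrightarrow{\ \pi_V\ } G(V) \longrightarrow 0,
\]
which is degreewise split over $\kk$; hence each $\pi_V$ is a degreewise surjection, i.e.\ a fibration in the injective model structure, naturally in $V$. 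Moreover $F$ and $G$ preserve quasi-isomorphisms (Künneth over a field, together with freeness of $\kk[\mathbb{S}_p]$ making the invariants exact).

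I would then set up the homotopy-pullback presentation. As in \cref{thm: rectification of coalgebras over an endofunctor}, both categories are modelled by the $F_\xi$- and $G_\xi$-coalgebra pullbacks $\mathsf{Ch}(\kk)\times_{\mathsf{Ch}(\kk)\times\mathsf{Ch}(\kk)}\mathsf{Ch}(\kk)^{\xi\mathsf{1}}$, formed along $(\mathrm{id},F)$, resp.\ $(\mathrm{id},G)$, and the evaluation map $(\mathrm{ev}_0,\mathrm{ev}_1)$. Applying the right Quillen functor $i_1^*N_{\xi}$ turns these into homotopy pullbacks of quasi-categories, and since $(\mathrm{ev}_0,\mathrm{ev}_1)$ is a fibration by \cref{prop: evaluationfibrations}, both underlying-complex functors $u_F,u_G$ to $i_1^*N_{\xi}\mathsf{Ch}(\kk)$ are fibrations of quasi-categories. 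The functor $i_1^*N_{\xi}(\iota^k(p)^*)$ is a map over $i_1^*N_{\xi}\mathsf{Ch}(\kk)$, and its homotopy fibre over an object $V$ is the induced map of mapping spaces $\mathrm{Map}(V,F(V))\to \mathrm{Map}(V,G(V))$, which is a fibration precisely because $\pi_V$ is a degreewise surjection.

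Finally I would globalize: a map over a base between two fibrations over that base is itself a fibration as soon as it is fibrewise a fibration and an isofibration. The inner-fibration property follows from right-cancellation for inner fibrations applied to $u_F = u_G\circ i_1^*N_{\xi}(\iota^k(p)^*)$; the isofibration property is obtained from the fibrewise fibration of mapping spaces above together with the isofibration criterion of \cref{prop: isofibration}, lifting equivalences first in the base via $u_F$ and then correcting inside the fibre. I expect the main obstacle to be exactly this last globalization step: because there is no usable characterization of fibrations of relative categories (as noted in the remark preceding the statement), one cannot simply argue that $\iota^k(p)^*$ is a fibration of relative categories and apply a right Quillen functor. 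One must instead work at the level of quasi-categories and complete Segal spaces, and it is here that the two special features of the situation---acyclicity of $F(V)$ and surjectivity of $\pi_V$---are essential to produce the required lifts.
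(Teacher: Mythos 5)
Your reductions are sound and in fact coincide with the paper's own preliminaries: identifying $\iota^k(p)^*$ with post-composition along $\pi\colon \widehat{\mathrm{S}}^c(D^k(p)) \Rightarrow \widehat{\mathrm{S}}^c(S^{k-1}(p))$, the acyclicity of $\widehat{\mathrm{S}}^c(D^k(p))$, and the isofibration property at the level of homotopy categories (\cref{prop: isofibration}) all appear there. But the gap is exactly where you predicted it, and your proposed fix does not work. First, there is no ``right-cancellation for inner fibrations'': if $g$ and $g\circ f$ are inner fibrations, $f$ need not be one. Taking the base to be $\Delta^0$, your principle would assert that \emph{every} functor between quasi-categories is an inner fibration, which is false; for instance the constant map $\Delta^1 \to \Delta^2/\partial\Delta^2$ fails the lifting property against $\Lambda^2_1 \subset \Delta^2$, where the horn maps constantly to a vertex of $\Delta^1$ and the $2$-simplex maps to the nondegenerate simplex of the quotient. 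Second, the claim that a map over a base between two fibrations which is an isofibration and a fibrewise fibration is itself a fibration is not a theorem for categorical (Joyal) fibrations: unlike cocartesian or left/right fibrations, categorical fibrations are not detected fibrewise, and no such criterion is available. So the globalization step, which is the actual content of the theorem, remains unproved.

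The paper closes this gap differently, using a special feature of the situation that your argument records but never exploits: \emph{full faithfulness}. Since $\widehat{\mathbf{S}}^c(D^k(p))$ is contractible, the source $\infty$-category is equivalent to $\mathbf{D}(\kk)$ itself, and under this identification $i_1^*N_{\xi}(\iota^k(p)^*)$ sends an object $X$ to the $\widehat{\mathbf{S}}^c(S^{k-1}(p))$-coalgebra with zero structure map; the mapping space between two such coalgebras is an equalizer of two maps which both vanish, hence agrees with $\mathrm{Map}_{\mathbf{D}(\kk)}(X,Y)$, so the $\infty$-functor is fully faithful. The paper then invokes its criterion \cref{prop:fibrationff}: a fully faithful functor of quasi-categories whose homotopy-category functor is an isofibration is a Joyal fibration, because full faithfulness exhibits the functor as a pullback of (the nerve of) its homotopy-category functor (\cref{pullbackff}), and a full isofibration of $1$-categories yields a fibration of quasi-categories (\cref{lemma:nerveofhF}); fibrations are stable under pullback. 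Combining this with \cref{prop: isofibration} finishes the proof. If you want to salvage your outline, the missing move is to replace the cancellation/fibrewise argument by a proof of full faithfulness --- which your own observation that $F(V)$ is acyclic, hence that the fibres of $u_F$ are contractible, essentially already contains.
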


The strategy of the proof is to check the conditions of Proposition \ref{prop:fibrationff}. For this, we need a collection of useful lemmas that we packed in the following subsections, each one corresponding to one of the conditions in Proposition \ref{prop:fibrationff}.

\subsubsection{Isofibration property at the homotopy category level} Here we prove that $\ho{\iota^k(p)^*}$ satisfies the first condition of Proposition \ref{prop:fibrationff}.

\begin{proposition}\label{prop: isofibration}
Let $p \geq 0$ and $k \in \mathbb{Z}$, the restriction functor 
\[
\ho{\iota^k(p)^*}: \ho{\onecoalg{\mathbb{T}(D^{k}(p))}} \longrightarrow \ho{\onecoalg{\mathbb{T}(S^{k-1}(p))}}
\]
is an isofibration.
\end{proposition}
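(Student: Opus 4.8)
The plan is to pass to the explicit point-set models furnished by Proposition \ref{prop: rectification of coalgebras over free operads}. Write $\mathsf{D} \coloneqq \onecoalg{\mathbb{T}(D^k(p))}$ and $\mathsf{S} \coloneqq \onecoalg{\mathbb{T}(S^{k-1}(p))}$. Since both operads are free, these are the categories of coalgebras over the dual Schur endofunctors $\widehat{\mathrm{S}}^c(D^k(p))$ and $\widehat{\mathrm{S}}^c(S^{k-1}(p))$. Unwinding the definitions (using $[\kk[\mathbb{S}_p],-]^{\mathbb{S}_p}\simeq \mathrm{id}$), an object of $\mathsf{S}$ is a pair $(V,b)$ with $b\in [V,V^{\otimes p}]_{k-1}$ a cycle, $\partial b=0$; an object of $\mathsf{D}$ is a pair $(V,a)$ with $a\in [V,V^{\otimes p}]_{k}$ \emph{arbitrary}; and $\iota^k(p)^*$ is the identity on underlying complexes and sends $(V,a)$ to $(V,\partial a)$. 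Weak equivalences on both sides are the underlying quasi-isomorphisms, so $\ho{\iota^k(p)^*}$ is the identity on underlying objects of $\ho{\mathbf{D}(\kk)}$. The task is then: given $(V,a)$ and an isomorphism $[g]\colon (V,\partial a)\to (W,c)$ in $\ho{\mathsf{S}}$, produce an object $(W,a_0)$ with $\iota^k(p)^*(W,a_0)=(W,c)$ and an isomorphism $[\tilde g]\colon (V,a)\to (W,a_0)$ in $\ho{\mathsf{D}}$ with $\iota^k(p)^*[\tilde g]=[g]$.

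First I would lift the target \emph{object}. The underlying morphism $[g_0]\colon V\to W$ is an isomorphism in $\mathbf{D}(\kk)$. Over a field the Künneth map $H_*(V^{\otimes p})\cong (H_*V)^{\otimes p}$ is a natural isomorphism, so each cycle $b$ induces a \emph{homology structure map} $H_*(b)\colon H_*V\to (H_*V)^{\otimes p}$; this is preserved by morphisms of $\mathsf{S}$-coalgebras and conjugated by quasi-isomorphisms, hence is an invariant (up to isomorphism) of the class in $\ho{\mathsf{S}}$. As $\partial a$ is a boundary in $[V,V^{\otimes p}]$, its homology structure map vanishes, so $H_*(c)=0$ as well. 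Since $c$ is a cycle whose class in $H_{k-1}[W,W^{\otimes p}]$ (computed over the field as $\mathrm{Hom}(H_*W,H_*(W^{\otimes p}))$) vanishes, it is an exact boundary $c=\partial a_0$, and $(W,a_0)$ is the desired lift of the target.

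The main difficulty is to lift the \emph{morphism} on the nose. Because $D^k(p)$ is acyclic, the endofunctor it induces on $\mathbf{D}(\kk)$ is trivial, so by Corollary \ref{corollary: coalgebras over a free infinity operad} the forgetful functor induces an equivalence $\ho{\mathsf{D}}\simeq \ho{\mathbf{D}(\kk)}$; in particular there is a unique isomorphism $[\tilde g]\colon (V,a)\to (W,a_0)$ with underlying map $[g_0]$. By construction $\iota^k(p)^*[\tilde g]$ and $[g]$ have the same underlying isomorphism, but they may differ by the homotopy datum witnessing compatibility with the $\mathsf{S}$-structure. This datum is controlled by a torsor under $H_k[W,W^{\otimes p}]$: modifying the strict primitive $a_0$ by any cycle $z\in Z_k[W,W^{\otimes p}]$ leaves $\partial a_0=c$ unchanged but shifts $\iota^k(p)^*[\tilde g]$ by the class $[z]$, and as $z$ ranges over all cycles $[z]$ exhausts $H_k[W,W^{\otimes p}]$. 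Hence $a_0$ and $[\tilde g]$ can be chosen with $\iota^k(p)^*[\tilde g]=[g]$, establishing the lifting property and the isofibration statement.

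I expect this last surjectivity — that the point-set freedom in the primitive $a_0$ realizes the entire homotopy-coherence fibre over $[g_0]$ — to be the heart of the argument. It is exactly the place where one must exploit that we work over a field (through Künneth and the formality of chain complexes) and with strict coalgebras rather than their $\infty$-categorical shadows; the corresponding statement would fail if one only remembered that $c$ is null-homotopic rather than that it is an honest boundary.
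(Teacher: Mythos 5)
Your proof is correct, but it takes a genuinely different route from the paper's. The paper argues entirely at the point-set level: it represents the isomorphism by an actual quasi-isomorphism of coalgebras $f\colon\iota^k(p)^*W\to V$, factors the underlying chain map as an acyclic cofibration followed by an acyclic fibration $W\rightarrowtail C\twoheadrightarrow V$, and constructs the lifted $\mathbb{T}(D^k(p))$-structure on $V$ by solving a lifting problem --- the key inputs being that $[-,V^{\otimes p}]$ sends the cofibration $S^{k-1}\rightarrowtail D^k$ to a fibration of chain complexes and that, over a field, the acyclic fibration $q\colon C\twoheadrightarrow V$ admits a section. You instead invoke the rectification results for free operads (Proposition \ref{prop: rectification of coalgebras over free operads}, Theorem \ref{thm: rectification of coalgebras over an endofunctor} and Corollary \ref{corollary: coalgebras over a free infinity operad}); this is legitimate and not circular, since those results are proved before and independently of the present proposition (the paper itself uses them in the fully-faithfulness proposition at the same stage), and they reduce the statement to homological algebra over $\kk$: formality of hom-complexes identifies $H_{k-1}[W,W^{\otimes p}]$ with maps on homology, so invariance of the homology structure map forces $c$ to be an honest boundary, and the fibre of morphisms over $[g_0]$ is controlled by a torsor which the choices of primitive $a_0$ exhaust. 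Your route buys an explicit description of the fibre of $\ho{\iota^k(p)^*}$, and it handles an arbitrary isomorphism in $\ho{\onecoalg{\mathbb{T}(S^{k-1}(p))}}$ directly, whereas the paper's proof tacitly assumes the isomorphism is represented by a single strict quasi-isomorphism of coalgebras (not automatic, since not every object is fibrant in the transferred model structure); the paper's route buys brevity and independence from the $\infty$-categorical machinery.

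One point you should make precise in the final step. The set of homotopy classes of homotopies between the two fixed composites $V\to W^{\otimes p}$ is a torsor under $H_k[V,W^{\otimes p}]$ (possibly further quotiented by an action of $H_1[V,W]$, which is harmless for surjectivity), not under $H_k[W,W^{\otimes p}]$; replacing $a_0$ by $a_0+z$ shifts the restricted morphism by the class $[z\circ g_0]$, not literally by $[z]$. The surjectivity you need is therefore that precomposition $(-)\circ g_0\colon H_k[W,W^{\otimes p}]\to H_k[V,W^{\otimes p}]$ is onto, which holds because $g_0$ is a quasi-isomorphism and, over a field, hom-complexes preserve quasi-isomorphisms in each variable. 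This is exactly where the invertibility of $[g]$ enters, as it must for an isofibration statement.
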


\begin{proof}
Let $W$ be a dg $\mathbb{T}(D^k(p))$-coalgebra and $f:\iota^k(p)^*W \longrightarrow V$ be a dg $\mathbb{T}(S^{k-1}(p))$-coalgebra morphism and a quasi-isomorphism. The isofibration property translates into the fact that there exists a dotted arrow in the following  diagram 
\[
\begin{tikzcd}[row sep=3pc, column sep=3pc]
\iota^k(p)^* W  \arrow[d, "\Delta_{\iota^k(p)^*W}"] \arrow[r, "f"] \arrow[dd, bend right=50, "\Delta_W"']
&V \arrow[d, "\Delta_V"'] \arrow[dd, bend left=50, dashed, "\tilde{\Delta}_V"]\\
\left[S^{k-1}, W^{\otimes p}\right] \arrow[r, "f^{\otimes p}\circ (-)"] 
&\left[S^{k-1}, V^{\otimes p}\right] \\
\left[D^k, W^{\otimes p}\right] \arrow[r, "f^{\otimes p}\circ (-)"] \arrow[u, two heads, "(-) ~ \circ ~ \iota^k(p)"'] 
&\left[D^k, V^{\otimes p}\right] \arrow[u, two heads, "(-) ~\circ ~\iota^k(p)"]
\end{tikzcd}
\] 

making it commutative. We factor, in $\mathsf{Ch}(\kk)$ endowed with the projective model structure (which also agrees with the injective model structure) the underlying map $f: W\rightarrow V$ into an acyclic cofibration followed by an acyclic fibration: 
    \[
    W\underset{i}{\stackrel{\sim}\rightarrowtail} C \underset{q}{\stackrel{\sim}\twoheadrightarrow} V~.
    \]
We first prove that $C$ inherits a dg $\mathbb{T}(D^k(p))$-coalgebra structure compatible with the ones of $W$ whose restriction is compatible with the dg $\mathbb{T}(S^{k-1}(p))$-structure of $V$. 

The dotted arrow $\delta^k_C$ in the following commutative diagram
\[
\begin{tikzcd}[column sep=3pc,row sep=3pc]
W \arrow[d,rightarrowtail,"\simeq"] \arrow[r,"\Delta_W"] 
&\left[ D^k, W^{\otimes p}\right] \arrow[r,"f^{\otimes p} \circ (-)"]
&\left[ D^k, V^{\otimes p}\right] \arrow[d,"(-)~ \circ ~ \iota^k(p)",two heads] \\
C \arrow[r,"q"] \arrow[rru,"\delta_C",dashed]
&V \arrow[r,"\Delta_V"]
&\left[ S^{k-1},V^{\otimes p}\right]
\end{tikzcd}
\]
exists since the left vertical arrow is an acyclic cofibration and the right one a fibration. Indeed, the map $(-)~ \circ ~ \iota^k(p)$ is a fibration since the functor $\left[-,V^{\otimes p}\right]$ sends the projective cofibration $\iota^k(p): S^{k-1} \rightarrowtail D^k$ to a projective fibration. We are left to prove that there is a lift in the following commutative diagram, such that the lower triangle also commutes:
\[
\begin{tikzcd}[row sep=large, column sep=large]
W 
  \arrow[rr, "\Delta_W"] 
  \arrow[d, "i"'] 
&& {[D^k, W^{\otimes p}]} 
  \arrow[d, "f^{\otimes p}\circ (-)"] 
  \arrow[rd, "(-)~\circ ~\iota^k(p)", bend left=40] & \\
C 
  \arrow[d, "q"'] 
  \arrow[rr, "\delta_C"] 
&& {[D^k, V^{\otimes p}]} 
  \arrow[d, "(-)~\circ~i_n"] 
& {[S^{k-1}, W^{\otimes p}]} 
  \arrow[ld, "f^{\otimes p}\circ (-)", bend left=40] \\
V 
  \arrow[rr, "\Delta_V"'] 
  \arrow[urr, dashed, "\tilde{\Delta}_V"] 
&& {[S^{k-1}, V^{\otimes p}]}
\end{tikzcd}
\]

hence the restriction of the induced $\mathbb{T}(D^k(p))$-coalgebra structure is the starting $\mathbb{T}(S^{k-1}(p))$-structure on $V$. Since $V$ is cofibrant and $q$ an acyclic fibration, we have a retraction $r: V \longrightarrow C$ of $q$ (i.e. $q \circ r = \mathrm{id}_{V}$) and we can define 
\[
\tilde{\Delta}_V : V \longrightarrow \left[ D^k,V^{\otimes p} \right]
\]
as the composition 
\[ 
V\stackrel{r}\longrightarrow C \stackrel{\delta_C}\longrightarrow  \left[ D^k,V^{\otimes p}\right]~.
\]
Further, since $q$ is a weak-equivalence, so is $r$ and they are inverse of each other in the homotopy category. Which ensures that the remaining triangle is commutative in the homotopy category. Therefore we deduce that the functor $\iota^k(p)^*$ induces an isofibration at the level of homotopy categories.
\end{proof}

\subsubsection{Fullness and faithfulness properties of the restriction $\infty$-functor} We show that the functor $\iota^k(p)$ is fully faithful at the $\infty$-categorical level by explicitly identifying this functor. 

\begin{proposition}
Let $p \geq 0$ and $k \in \mathbb{Z}$. The functor $\iota^k(p)^*$ induces a fully faithful $\infty$-functor $i_1^*N_{\xi}\iota^k(p)^*$.
\end{proposition}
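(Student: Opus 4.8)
The plan is to prove the statement by explicitly identifying the functor $i_1^*N_\xi\iota^k(p)^*$ on mapping spaces, after replacing both source and target by the models furnished by the rectification results already in hand. First I would invoke Proposition \ref{prop: rectification of coalgebras over free operads} together with Corollary \ref{corollary: coalgebras over a free infinity operad} to identify
\[
i_1^*N_\xi\bigl(\onecoalg{\mathbb{T}(D^k(p))}\bigr) \simeq \mathbf{coalg}_{\widehat{\mathbf{S}}^c(D^k(p))}(\mathbf{D}(\kk)), \qquad i_1^*N_\xi\bigl(\onecoalg{\mathbb{T}(S^{k-1}(p))}\bigr) \simeq \mathbf{coalg}_{\widehat{\mathbf{S}}^c(S^{k-1}(p))}(\mathbf{D}(\kk)),
\]
where, since $D^k(p)$ and $S^{k-1}(p)$ are concentrated in the single arity $p$, the dual Schur functors simplify to $\widehat{\mathbf{S}}^c(D^k(p))(-)=[D^k,(-)^{\otimes p}]$ and $\widehat{\mathbf{S}}^c(S^{k-1}(p))(-)=[S^{k-1},(-)^{\otimes p}]$. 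Under these equivalences $\iota^k(p)^*$ becomes the functor between coalgebras over endofunctors induced by the natural transformation $[D^k,(-)^{\otimes p}]\Rightarrow[S^{k-1},(-)^{\otimes p}]$ obtained by restriction along the generating cofibration $S^{k-1}\hookrightarrow D^k$. This is the sense in which the functor is to be ``explicitly identified''.

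The main structural input is that $D^k$ is acyclic, so the source endofunctor $[D^k,(-)^{\otimes p}]$ is objectwise contractible in $\mathbf{D}(\kk)$; consequently the source $\infty$-category is controlled by a contractible self-enriched hom, and $i_1^*N_\xi\iota^k(p)^*$ is identified with an explicit comparison into $\mathbf{coalg}_{[S^{k-1},(-)^{\otimes p}]}(\mathbf{D}(\kk))$. I would then exploit the description of mapping spaces coming from the pullback presentation of Definition \ref{definition: infinity categorical coalgebras over an endofunctor}: for an endofunctor $T$ preserving quasi-isomorphisms, $\mathrm{Map}_{\mathbf{coalg}_T}((V,\Delta_V),(W,\Delta_W))$ sits in a natural fiber sequence over $\mathrm{Map}_{\mathbf{D}(\kk)}(V,T(W))$ whose total space is $\mathrm{Map}_{\mathbf{D}(\kk)}(V,W)$, recording the homotopy witnessing compatibility of a map with the structure maps. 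The plan is to run the two such fiber sequences in parallel, compatibly with the natural transformation $[D^k,-]\Rightarrow[S^{k-1},-]$, and to organize the comparison using the cofiber sequence $S^{k-1}\to D^k\to S^k$ of generating objects, which dualizes to a fiber sequence of the relevant endofunctors.

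The step I expect to be the main obstacle is exactly this mapping-space comparison: one must show that $i_1^*N_\xi\iota^k(p)^*$ induces an equivalence on all mapping complexes, controlling the full higher-coherence data living in the homotopy fibers, rather than merely the effect on homotopy classes of morphisms. The contractibility of $[D^k,(-)^{\otimes p}]$ is the crucial simplification, as it collapses the source-side coherences; the delicate point is then to track precisely what happens on the target side under the restriction transformation and verify that the comparison map on mapping complexes is a weak equivalence. This is where the hypothesis that $\kk$ is a field enters essentially, guaranteeing that $(-)^{\otimes p}$ and $[S^{k-1},-]$ are exact and preserve quasi-isomorphisms so that the fiber-sequence bookkeeping is homotopy-invariant. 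Once the induced map on every mapping complex is shown to be a weak equivalence, full faithfulness of $i_1^*N_\xi\iota^k(p)^*$ follows by definition.
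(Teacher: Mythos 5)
Your strategy coincides with the paper's: identify both sides, via Theorem \ref{thm: rectification of coalgebras over an endofunctor} (equivalently Proposition \ref{prop: rectification of coalgebras over free operads} and Corollary \ref{corollary: coalgebras over a free infinity operad}), with $\infty$-categories of coalgebras over the endofunctors $[D^k,(-)^{\otimes p}]^{\mathbb{S}_p}$ and $[S^{k-1},(-)^{\otimes p}]^{\mathbb{S}_p}$, use contractibility of $[D^k,(-)^{\otimes p}]$ to identify the source with $\mathbf{D}(\kk)$ and the functor with $X\mapsto (X,0)$, and then compare mapping spaces. The gap is that this last comparison --- which you yourself flag as the main obstacle --- is never carried out, and when one carries it out with the very fiber sequence you set up, it does not give the claimed answer. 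Indeed, $\mathrm{Map}_{\mathbf{coalg}_T}\bigl((V,\Delta_V),(W,\Delta_W)\bigr)$ is the homotopy fiber of the difference map $f\mapsto \Delta_W\circ f - T(f)\circ\Delta_V$ from $\mathrm{Map}_{\mathbf{D}(\kk)}(V,W)$ to $\mathrm{Map}_{\mathbf{D}(\kk)}(V,T(W))$. For two objects $(X,0)$ and $(Y,0)$ in the image this difference map is null-homotopic, so the homotopy fiber splits as
\[
\mathrm{Map}_{\mathbf{D}(\kk)}(X,Y)\times \Omega\,\mathrm{Map}_{\mathbf{D}(\kk)}\bigl(X,[S^{k-1},Y^{\otimes p}]^{\mathbb{S}_p}\bigr)~,
\]
whereas on the source side the corresponding loop factor lies in the contractible space $\mathrm{Map}_{\mathbf{D}(\kk)}(X,[D^k,Y^{\otimes p}]^{\mathbb{S}_p})$, so the source mapping space is just $\mathrm{Map}_{\mathbf{D}(\kk)}(X,Y)$. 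The comparison is therefore the inclusion of the first factor, which is an equivalence only if $\Omega\,\mathrm{Map}_{\mathbf{D}(\kk)}(X,[S^{k-1},Y^{\otimes p}]^{\mathbb{S}_p})\simeq \mathrm{Map}_{\mathbf{D}(\kk)}(X,[S^{k},Y^{\otimes p}]^{\mathbb{S}_p})$ vanishes; it does not in general. Already for $p=0$, $k=0$, $X=Y=\kk$ this factor is $\mathrm{Map}_{\mathbf{D}(\kk)}(\kk,\kk)$, so the target mapping space has $\pi_0=\kk\times\kk$ while the source has $\pi_0=\kk$ (this can be checked directly in the slice model category $\mathsf{Ch}(\kk)_{/\kk[1]}$). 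Contractibility of $[D^k,(-)^{\otimes p}]$ collapses the source-side coherences, but the null-homotopy of the restricted structure map is genuine extra data on the target side, and the mapping spaces remember it; the field hypothesis and exactness of $(-)^{\otimes p}$ are beside the point here.

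For comparison, the paper's own proof disposes of this step by writing the mapping space as the equalizer of the two maps $\Delta_Y\circ(-)$ and $\widehat{\mathbf{S}}^c(S^{k-1}(p))(-)\circ\Delta_X$ and asserting that, since both maps are zero on the image, the equalizer is $\mathrm{Map}_{\mathbf{D}(\kk)}(X,Y)$. Your fiber-sequence formulation makes visible what that assertion presupposes: the \emph{homotopy} equalizer of two null-homotopic maps is not the domain but the domain times the loop space of the target, i.e.\ exactly the factor displayed above must be discarded to reach that conclusion. So your plan, executed honestly, does not reproduce the paper's argument; it isolates an obstruction to it, and the obstruction is not an artifact of the method, since it is visible in the homotopy categories in the example above. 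Note finally that the cofiber sequence $S^{k-1}\to D^k\to S^k$, which you invoke but never use, is precisely where this term comes from: the loop factor is $\mathrm{Map}_{\mathbf{D}(\kk)}(X,[S^k,Y^{\otimes p}]^{\mathbb{S}_p})$, the structure attached to the new $k$-cell. Any successful treatment of this step (or any repair of the statement and of the deduction of Theorem \ref{thm: key theorem} from it) has to engage with that term rather than expect contractibility of $[D^k,(-)^{\otimes p}]$ to kill it.
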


\begin{proof}
By Theorem \ref{thm: rectification of coalgebras over an endofunctor}, we can identify the $\infty$-category of dg $\mathbb{T}(S^{k-1}(p))$-coalgebras with coalgebras over an endofunctor\[
\mathbb{T}(S^{k-1}(p))\text{-}\mathsf{coalg}~[\mathsf{Q.iso}^{-1}] \simeq \mathbf{Coalg}_{\mathbb{T}(S^{k-1}(p))}(\mathbf{D}(\kk)) \simeq \mathbf{coalg}_{\widehat{\mathbf{S}}^c(S^{k-1}(p))}(\mathbf{D}(\kk))~,
\]
where $\widehat{\mathbf{S}}^c(S^{k-1}(p))(X) \simeq [S^{k-1}, X^{\otimes p}]$, where we consider the $\infty$-categorical self-enrichment of $\mathbf{D}(\kk)$. In other words, it is the data of an object $X$ in $\mathbf{D}(\kk)$ together with a degree $-k+1$ map 
\[
\Delta_X: X \longrightarrow X^{\otimes p}~. 
\]
Likewise, by Theorem \ref{thm: rectification of coalgebras over an endofunctor}, we can identify the $\infty$-category of dg $\mathbb{T}(D^{k}(p))$-coalgebras with coalgebras over an endofunctor
\[
\mathbb{T}(D^{k}(p))\text{-}\mathsf{coalg}~[\mathsf{Q.iso}^{-1}] \simeq \mathbf{Coalg}_{\mathbb{T}(D^{k}(p))}(\mathbf{D}(\kk)) \simeq \mathbf{coalg}_{\widehat{\mathbf{S}}^c(D^{k}(p))}(\mathbf{D}(\kk))~,
\]
where the last $\infty$-category is the category of coalgebras over the dual Schur endofunctor associated to $D^{k}(p)$. However, this endofunctor is contractible, therefore the forgetful functor induces an equivalence of $\infty$-categories 
\[
\mathbf{coalg}_{\widehat{\mathbf{S}}^c(D^{k}(p))}(\mathbf{D}(\kk)) \simeq \mathbf{D}(\kk)
\]
since, on the left, objects are chain complexes $V$ equipped with a map $V \longrightarrow 0$, and this map is unique (up to a contractible choice) because $0$ is the terminal object.

The equivalences obtained in Section \ref{Subsection: point-set models for coalgebras over endofunctors} are equivalences of pullbacks, and the pullbacks of the evaluation functors in Definitions \ref{definition: coalgebras over an endofunctor} and \ref{definition: infinity categorical coalgebras over an endofunctor} are the forgetful functors, so the equivalences of $\infty$-categories considered above commute with the forgetful $\infty$-functors. Therefore, the forgetful functor from the $\infty$-category of $\mathbb{T}(D^{k}(p))$-coalgebras to $\mathbf{D}(\kk)$ is also an equivalence of $\infty$-categories.

 Moreover, under these equivalences, the functor $i_1^*N_{\xi}\iota^k(p)^*$ can be identified with the functor that sends an object $X$ in $\mathbf{D}(\kk)$ to the $\widehat{\mathbf{S}}^c(S^{k-1}(p))$-coalgebra given by the zero map 
\[
0: X \longrightarrow X^{\otimes p}~, 
\]
in degree $-k+1$. Let us check that this functor is fully faithful. Let $X,Y$ be two $\widehat{\mathbf{S}}^c(S^{k-1}(p))$-coalgebras. Their mapping space $\mathrm{Map}_{\mathbf{Coalg}_{\widehat{\mathbf{S}}^c(S^{k-1}(p))}}(X,Y)$ is given by the equalizer
\[\operatorname{eq}\left( 
\begin{tikzcd}[column sep = 5pc]
\mathrm{Map}_{\mathbf{D}(\kk)}(X,Y) \arrow[r,shift left=0.5pc,"\Delta_Y \circ (-)"]\arrow[r,shift right=0.5pc,"\widehat{\mathbf{S}}^c(S^{k-1}(p))(-) \circ \Delta_X",swap]
&\mathrm{Map}_{\mathbf{D}(\kk)}(X,\widehat{\mathbf{S}}^c(S^{k-1}(p))(Y))
\end{tikzcd}\right)~.
\]
This description follows directly from Definition \ref{definition: infinity categorical coalgebras over an endofunctor}. Now, if both $X$ and $Y$ are in the image of $i_1^*N_{\xi}\iota^k(p)^*$, then the two maps in the above equalizer are zero and therefore the mapping space between $X$ and $Y$ in $\mathbb{T}(S^{k-1}(p))$-coalgebras is given by their mapping space in $\mathbf{D}(\kk)$.
\end{proof}

\subsection{Point-set models for coalgebras over a quasi-free operad}

To conclude, from Theorem \ref{thm: key theorem} and Corollary \ref{corollary: devissage along cells of 1-categories of P-coalgebras} we deduce the desired rectification result for coalgebras over a cell cofibrant dg operad:
\begin{proposition}\label{prop: rectification for cell cofibrant operads}
Let $\mathcal{P}$ be a cell cofibrant dg operad. There is an equivalence of $\infty$-categories
\[
\mathcal{P}\text{-}\mathsf{coalg}~[\mathsf{Q.iso}^{-1}] \simeq \mathbf{Coalg}_{\mathcal{P}}(\mathbf{D}(\kk))
\]
between dg $\mathcal{P}$-coalgebras up to quasi-isomorphism and coalgebras in $\mathbf{D}(\kk)$ over the induced enriched $\infty$-operad. 
\end{proposition}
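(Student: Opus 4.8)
The plan is to run a transfinite induction over the cell structure of $\mathcal{P}$, feeding single cell attachments to Theorem \ref{thm: key theorem} and handling the limit stages with the dévissage of Corollary \ref{corollary: devissage along cells of 1-categories of P-coalgebras} together with a tower-of-fibrations argument. Write $\mathcal{P}$ as a cell complex, that is, as the colimit of a tower of cofibrant dg operads
\[
\mathcal{P}_0 \hookrightarrow \mathcal{P}_1 \hookrightarrow \cdots \hookrightarrow \mathcal{P}_\alpha \hookrightarrow \cdots, \qquad \colim_\alpha \mathcal{P}_\alpha \cong \mathcal{P},
\]
in which $\mathcal{P}_0$ is free and each successor $\mathcal{P}_{\alpha+1}$ is a pushout of $\mathcal{P}_\alpha$ along a generating cofibration $\iota^k(p)\colon \mathbb{T}(S^{k-1}(p)) \hookrightarrow \mathbb{T}(D^k(p))$. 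I will prove by induction on $\alpha$ the conjunction of two statements: that $\mathcal{P}_\alpha$ is rectifiable, i.e.\ that the natural comparison functor $\onecoalg{\mathcal{P}_\alpha}[\mathsf{Q.iso}^{-1}] \to \mathbf{Coalg}_{\mathcal{P}_\alpha}(\mathbf{D}(\kk))$ is an equivalence; and that each restriction functor $i_1^*N_{\xi}(\iota_\alpha^*)$ is a fibration of quasi-categories. Carrying the fibration statement along as part of the inductive hypothesis is what makes the limit stages go through.

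The base case is Proposition \ref{prop: rectification of coalgebras over free operads}, since $\mathcal{P}_0$ is free. For a successor step, Theorem \ref{thm: key theorem} shows that applying $i_1^*N_{\xi}$ to the $1$-categorical pullback of Corollary \ref{corollary: devissage along cells of 1-categories of P-coalgebras}(1) yields a homotopy pullback of quasi-categories, presenting $\onecoalg{\mathcal{P}_{\alpha+1}}[\mathsf{Q.iso}^{-1}]$ as the homotopy pullback of the localizations attached to $\mathcal{P}_\alpha$, $\mathbb{T}(D^k(p))$ and $\mathbb{T}(S^{k-1}(p))$; the restriction $i_1^*N_{\xi}(\iota_\alpha^*)$ is a fibration because it is the pullback of the fibration $i_1^*N_{\xi}(\iota^k(p)^*)$ from Theorem \ref{thm: fibrationquasicat}. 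On the operadic side, Proposition \ref{prop: taking infinity categories of coalgebras preserves limits} converts the pushout of operads into a matching homotopy pullback of the $\infty$-categories $\mathbf{Coalg}_{(-)}(\mathbf{D}(\kk))$. The two free corners are rectifiable by Proposition \ref{prop: rectification of coalgebras over free operads} (the symmetric sequences $S^{k-1}(p)$ and $D^k(p)$ are $\mathbb{S}$-projective), and $\mathcal{P}_\alpha$ is rectifiable by hypothesis; since the comparison functor is natural in the operad, these three equivalences assemble into a map of cospans and induce an equivalence on homotopy pullbacks, which is rectifiability of $\mathcal{P}_{\alpha+1}$.

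At a limit ordinal $\lambda$, Corollary \ref{corollary: devissage along cells of 1-categories of P-coalgebras}(2) identifies $\onecoalg{\mathcal{P}_\lambda}$ with the \emph{strict} limit $\lim_{\alpha<\lambda}\onecoalg{\mathcal{P}_\alpha}$ of relative categories. Since $i_1^*N_{\xi}$ is right Quillen, it preserves this limit and sends each object to a fibrant quasi-category; and because the inductive hypothesis makes $\{i_1^*N_{\xi}\onecoalg{\mathcal{P}_\alpha}\}$ into a tower of fibrations, the strict limit computes the homotopy limit, so $\onecoalg{\mathcal{P}_\lambda}[\mathsf{Q.iso}^{-1}] \simeq \holim_{\alpha<\lambda}\onecoalg{\mathcal{P}_\alpha}[\mathsf{Q.iso}^{-1}]$. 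Comparing with $\mathbf{Coalg}_{\mathcal{P}_\lambda}(\mathbf{D}(\kk)) \simeq \holim_{\alpha<\lambda}\mathbf{Coalg}_{\mathcal{P}_\alpha}(\mathbf{D}(\kk))$ from Proposition \ref{prop: taking infinity categories of coalgebras preserves limits}, and using the level-wise inductive equivalences, yields rectifiability of $\mathcal{P}_\lambda$. Taking $\lambda$ equal to the length of the tower gives the statement for $\mathcal{P}$ itself.

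The delicate point, and the reason the fibration statement must travel inside the induction, is precisely this last commutation of localization with the limit: Corollary \ref{corollary: devissage along cells of 1-categories of P-coalgebras}(2) only furnishes a \emph{strict} $1$-categorical limit, whereas the $\infty$-operad side is automatically a homotopy limit, so the two can be matched only once one knows the transition functors are fibrations of quasi-categories and the strict limit therefore models the homotopy limit.
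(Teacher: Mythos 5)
Your proposal is correct and follows essentially the same route as the paper's proof: the same dévissage of $\onecoalg{\mathcal{P}}$ along the cell tower (Corollary \ref{corollary: devissage along cells of 1-categories of P-coalgebras}), the same base case (Proposition \ref{prop: rectification of coalgebras over free operads}), the same successor step via Theorem \ref{thm: key theorem} matched against Proposition \ref{prop: taking infinity categories of coalgebras preserves limits}, and the same use of fibrancy plus stability of fibrations under pullback to identify the strict limit of quasi-categories with the homotopy limit. Your version merely makes explicit what the paper leaves implicit, namely that the fibration property of the transition functors travels along the induction and is what handles the limit stages.
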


\begin{proof}
Since $\mathcal{P}$ is cell cofibrant, it is given as a colimit of cofibrant dg operads along cofibrations
\[
\oneop_0 \hookrightarrow \oneop_1 \hookrightarrow \cdots \hookrightarrow \oneop_\alpha \hookrightarrow \cdots \hookrightarrow \colim_{\alpha} \oneop_\alpha \cong \oneop~, 
\]
where for all $\alpha$, $\oneop_{\alpha +1}$ is obtained from cell attachments onto $\oneop_\alpha$. This gives a limit tower of 1-categories
\[
\onecoalg{\oneop_0}  \twoheadleftarrow \onecoalg{\oneop_1} \twoheadleftarrow \cdots \twoheadleftarrow \onecoalg{\oneop_\alpha}  \twoheadleftarrow  \cdots \lim_{\alpha}\onecoalg{\oneop_\alpha} \cong \onecoalg{\oneop}
\]
by Corollary \ref{corollary: devissage along cells of 1-categories of P-coalgebras}. This limit becomes a homotopy limit of quasi-categories when applying $i_1^*N_{\xi}$: all the objects are fibrant relative categories so they become quasi-categories, and the transition maps are obtained by the pullbacks of Theorem \ref{thm: key theorem} so they all provide fibrations of quasi-categories (using that fibrations are stable under pullbacks).

On the other hand, the colimit 
\[
\oneop_0 \hookrightarrow \oneop_1 \hookrightarrow \cdots \hookrightarrow \oneop_\alpha \hookrightarrow \cdots \hookrightarrow \colim_{\alpha} \oneop_\alpha \cong \oneop~, 
\]
is a homotopy colimit of cofibrant dg operads, so the underlying enriched $\infty$-operad of $\mathcal{P}$ can be written as the colimit of the underlying enriched $\infty$-operads of $\oneop_\alpha$. Therefore, by Proposition \ref{prop: taking infinity categories of coalgebras preserves limits}, we have that 
\[
\lim_{\alpha} \mathbf{Coalg}_{\mathscr{P}_\alpha}(\mathbf{D}(\kk)) \simeq \mathbf{Coalg}_{\mathscr{P}}(\mathbf{D}(\kk))~.
\]
Let us now show by induction that for all $\alpha$, we have an equivalence 
\[
\mathcal{P}\text{-}\mathsf{coalg}~[\mathsf{Q.iso}^{-1}] \simeq \mathbf{Coalg}_{\mathscr{P}_\alpha}(\mathbf{D}(\kk))~.
\]
For $\alpha = 0$, the dg operad $\mathcal{P}_0$ is free, hence the result follows from Proposition \ref{prop: rectification of coalgebras over free operads}. Now, let us assume it is true for some $\alpha$. Then by Theorem \ref{thm: key theorem} together with Proposition \ref{prop: taking infinity categories of coalgebras preserves limits}, the result follows for $\alpha +1$ since both $\infty$-categories can be written as the equivalent (homotopy) pullbacks. Then, the general result follows from the fact that $\infty$-categories can be written as the (homotopy) limit of equivalent towers.
\end{proof}

\subsection{Point-set models for coalgebras over a cofibrant operad}

Finally, we conclude this section by extending the previous results to all cofibrant dg operads. 

\begin{theorem}\label{thm: rectification for cofibrant operads}
Let $\mathcal{P}$ be a cofibrant dg operad. There is an equivalence of $\infty$-categories
\[
\mathcal{P}\text{-}\mathsf{coalg}~[\mathsf{Q.iso}^{-1}] \simeq \mathbf{Coalg}_{\mathcal{P}}(\mathbf{D}(\kk))
\]
between dg $\mathcal{P}$-coalgebras up to quasi-isomorphism and coalgebras in $\mathbf{D}(\kk)$ over the induced enriched $\infty$-operad. 
\end{theorem}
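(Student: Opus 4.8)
The plan is to deduce the general cofibrant case from the cell cofibrant case (\cref{prop: rectification for cell cofibrant operads}) by a retract argument, using that the comparison functor is natural in the operad. Recall from the discussion after \cref{prop: cofibrant operads are coadmissible} that the cofibrant dg operads are exactly the retracts of cell cofibrant ones. So I may fix a cell cofibrant dg operad $\mathcal{Q}$ together with maps $s \colon \mathcal{P} \to \mathcal{Q}$ and $r \colon \mathcal{Q} \to \mathcal{P}$ of dg operads satisfying $r \circ s = \mathrm{id}_{\mathcal{P}}$.

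First I would record the naturality of the comparison functor. Both assignments $\mathcal{P} \mapsto \onecoalg{\mathcal{P}}~[\mathsf{Q.iso}^{-1}]$ and $\mathcal{P} \mapsto \mathbf{Coalg}_{\mathscr{P}}(\mathbf{D}(\kk))$ are contravariantly functorial in $\mathcal{P}$: on the point-set side a map of operads induces a restriction functor on coalgebras, which preserves quasi-isomorphisms (it does not alter the underlying chain complexes) and hence descends to the localizations; on the $\infty$-categorical side the contravariant functoriality is part of \cref{prop: taking infinity categories of coalgebras preserves limits}. I would then check that the comparison functors $F_{\mathcal{P}}$ assemble into a natural transformation
\[
F \colon \big(\mathcal{P} \mapsto \onecoalg{\mathcal{P}}~[\mathsf{Q.iso}^{-1}]\big) \Longrightarrow \big(\mathcal{P} \mapsto \mathbf{Coalg}_{\mathscr{P}}(\mathbf{D}(\kk))\big)
\]
of functors $\mathsf{Op}(\mathsf{Ch}(\kk))^{\mathsf{op}} \longrightarrow \mathbf{Cat}_\infty$.

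Then I would apply this natural transformation to the strict $1$-categorical retract datum $(s, r)$. A retract in a $1$-category is encoded by a functor out of a suitable finite indexing category, so applying the two functors above together with the natural transformation $F$ produces a \emph{coherent} retract diagram in the arrow $\infty$-category $\mathbf{Fun}(\Delta^1, \mathbf{Cat}_\infty)$, exhibiting $F_{\mathcal{P}}$ as a retract of $F_{\mathcal{Q}}$; starting from a strict retract is precisely what guarantees this coherence, bypassing the usual subtleties about idempotents in $\infty$-categories. By \cref{prop: rectification for cell cofibrant operads} the functor $F_{\mathcal{Q}}$ is an equivalence, since $\mathcal{Q}$ is cell cofibrant. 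Finally, equivalences in $\mathbf{Cat}_\infty$ are closed under retracts: a morphism is an equivalence precisely when its image in the homotopy category $\ho{\mathbf{Cat}_\infty}$ is an isomorphism, and isomorphisms in any $1$-category are retract-closed. Hence $F_{\mathcal{P}}$ is an equivalence, which is the claim.

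The main obstacle I anticipate is the naturality step: I must verify that the comparison functor genuinely refines to a natural transformation of $\infty$-functors out of $\mathsf{Op}(\mathsf{Ch}(\kk))^{\mathsf{op}}$, rather than a mere objectwise family of functors, so that the strict retract $(s,r)$ really does produce a coherent retract diagram after applying $F$. Once this coherence is in place, the remainder of the argument is formal.
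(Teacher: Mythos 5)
Your retract strategy is the one the paper announces in its introduction (step (3) of the proof outline there), but it is not the proof the paper actually gives for this theorem, and as written it has a genuine gap. The paper's proof replaces $\mathcal{P}$ not by a retract but by a cell cofibrant operad \emph{quasi-isomorphic} to it: the bar-cobar resolution $\Omega\mathrm{B}(\mathcal{P}\otimes\mathcal{E}) \qi \mathcal{P}$, which is cell cofibrant because $\mathrm{B}(\mathcal{P}\otimes\mathcal{E})$ is quasi-planar. It then concludes by combining \cref{prop: rectification for cell cofibrant operads} with two invariance statements: a quasi-isomorphism of cofibrant operads induces a Quillen equivalence of point-set coalgebra categories (\cite[Proposition 31]{premierpapier}, cf.\ \cref{prop: cofibrant operads are coadmissible}), and the $\infty$-categorical side is invariant under equivalences of enriched $\infty$-operads by functoriality (\cref{prop: taking infinity categories of coalgebras preserves limits}). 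Since the theorem asserts only an abstract equivalence of $\infty$-categories, this zig-zag of unrelated equivalences suffices, and no natural comparison functor is ever needed. Your route, by contrast, stands or falls with naturality, and the maps $s$ and $r$ of a retract are not quasi-isomorphisms, so the invariance statements above cannot substitute for it.

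The gap is twofold, and you flag only the first half. First, the natural transformation $F$ is never constructed --- not in your argument, and (despite being asserted in the introduction) not anywhere in the body of the paper; building a functor $\mathcal{P}\text{-}\mathsf{coalg}~[\mathsf{Q.iso}^{-1}] \to \mathbf{Coalg}_{\mathscr{P}}(\mathbf{D}(\kk))$ natural in $\mathcal{P}$ means coherently rigidifying a strict coalgebra over the lax comonad $\widehat{\mathrm{S}}^c(\mathcal{P})$ into a coalgebra over the pro-comonad $\widehat{\mathbf{S}}^c_{\mathrm{pro}}(\mathscr{P})$, compatibly with restriction along \emph{arbitrary} operad maps; this is substantial work, not a routine verification. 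Second, even granting $F$, your citation of \cref{prop: rectification for cell cofibrant operads} for ``$F_{\mathcal{Q}}$ is an equivalence'' is a mismatch: that proposition produces \emph{some} equivalence $\mathcal{Q}\text{-}\mathsf{coalg}~[\mathsf{Q.iso}^{-1}] \simeq \mathbf{Coalg}_{\mathscr{Q}}(\mathbf{D}(\kk))$, assembled from a zig-zag of identifications (subdivided nerves, complete Segal spaces, towers of homotopy pullbacks), and it nowhere identifies that equivalence with a canonical comparison functor. Abstract equivalence of source and target does not imply that a given functor between them is an equivalence, so to run the retract argument you would have to redo the free case, the cell-attachment step, and the tower argument \emph{for $F$ itself}. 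That amounts to carrying out the introduction's program in full; it would yield the stronger statement that the canonical comparison is an equivalence, but it is strictly more work than the proof the paper gives, and it is not obtainable by citing the paper's results as stated.
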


\begin{proof}
The general result follows from two facts. First, any cofibrant dg operad $\mathcal{P}$ admits a quasi-isomorphism to a cell cofibrant dg operad. A particular choice of such is given by the canonical quasi-isomorphism $\Omega\mathrm{B}(\mathcal{P} \otimes \mathcal{E}) \qi \mathcal{P}$, where $\Omega\mathrm{B}$ is the operadic bar-cobar resolution and where $\mathcal{E}$ is the Barratt--Eccles dg operad of \cite{BergerFresse}. Indeed, since $\mathrm{B}(\mathcal{P} \otimes \mathcal{E})$ is quasi-planar in the sense for \cite{premierpapier}, its cobar construction is cell cofibrant, see \cite[Proposition 9 and 11]{premierpapier} for more details. See also \cite{BergerMoerdijk06}. The second fact is that a quasi-isomorphism between cofibrant dg operads induces a Quillen equivalence between their respective categories of coalgebras by \cite[Proposition 31]{premierpapier}, therefore it suffices to apply Proposition \ref{prop: rectification for cell cofibrant operads} to pass from the cell cofibrant to the cofibrant case. 
\end{proof}

\section{Applications: point-set models for non-finite type \texorpdfstring{$p$}{p}-adic homotopy types}

The goal of this section is to apply the rectification result of Theorem \ref{thm: rectification for cofibrant operads} to give explicit point-set models for the $p$-adic homotopy types of nilpotent spaces. This follows on the one hand from the explicit $\mathbb{E}_\infty$-coalgebra structure constructed by Berger and Fresse in \cite{BergerFresse} and, on the other hand, from the intrinsic $\infty$-categorical version of Mandell's theorem proved by Bachmann and Burklund in \cite{bachmann2024}, where they show that the $\infty$-functor of chains with $\bar{\mathbb{F}}_p$ coefficients with its $\mathbb{E}_\infty$-coalgebra structure fully faithfully encodes the $p$-adic homotopy types of nilpotent spaces, without any finite type assumption.

\subsection{Homotopy types as $\mathbb{E}_\infty$-coalgebras}
Let $\kk$ be a separably closed field of characteristic $p >0$. Let $\mathbf{Spc}$ denote the $\infty$-category of spaces. The functor of singular chains defines a functor 
\[
C_*(-;\kk): \mathbf{Spc} \longrightarrow \mathbf{Coalg}_{\mathbb{E}_\infty}(\mathbf{D}(\kk)) 
\]
from the $\infty$-category of space to the $\infty$-category of $\mathbb{E}_\infty$-coalgebras in chain complexes over $\kk$. We say that a space $X$ is \textit{nilpotent} if its fundamental group is a nilpotent group and if it acts nilpotently on all higher homotopy groups, for every possible choice of base point. We consider the Bousfield localization of space with respect to the homology theory $\mathrm{H}_*(-; \mathbb{F}_p)$, and we say that a space $X$ is $p$-complete if it is a local object with respect to this localization. Let us denote by $\mathbf{Spc}_p^{\mathbf{nil}}$ the $\infty$-category of $p$-complete nilpotent spaces. 

\begin{theorem}[{\cite[Theorem 1.2]{bachmann2024}}]\label{thm: bachmann-burklund}
Let $\kk$ be a separably closed field of characteristic $p >0$. The functor of singular chains
\[
C_*(-;\kk): \mathbf{Spc} \longrightarrow \mathbf{Coalg}_{\mathbb{E}_\infty}(\mathbf{D}(\kk)) 
\]
restricted to the $\infty$-category of $p$-complete nilpotent spaces is fully faithful. 
\end{theorem}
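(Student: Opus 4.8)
The plan is to deduce \cref{thm: bachmann-burklund} from the original formulation of Bachmann--Burklund recorded in \cref{bb}, combined with the comparison between the two notions of coalgebra established in \cref{thm: comparison with coalgebras in the sense of Lurie}. Indeed, the content of \cref{thm: bachmann-burklund} is essentially a reinterpretation of \cref{bb} in the coalgebraic language of this paper: both statements concern the very same functor, but a priori land in two different targets, namely Lurie's $\infty$-category of $\mathbb{E}_\infty$-coalgebras $(\mathbf{Alg}_{\mathbb{E}_\infty}(\mathbf{D}(\kk)^{\mathsf{op}}))^{\mathsf{op}}$ in the sense of \cref{Def: O-coalgebras} on the one hand, and the $\infty$-category $\mathbf{Coalg}_{\mathbb{E}_\infty}(\mathbf{D}(\kk))$ defined via the pro dual Schur functor in \cref{def: infinity categorical P-coalgebras} on the other.

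First I would recall how the chains functor acquires its coalgebra structure, in order to pin down the functor appearing in \cref{bb}. The singular chains functor $C_*(-;\kk)\colon \mathbf{Spc}\longrightarrow \mathbf{D}(\kk)$ is symmetric monoidal with respect to the cartesian product on spaces and the tensor product on chain complexes, by the Eilenberg--Zilber theorem. In the cartesian monoidal $\infty$-category $\mathbf{Spc}$, the diagonal equips every object with a canonical cocommutative coalgebra structure; post-composing with the symmetric monoidal functor $C_*(-;\kk)$, and using that symmetric monoidal functors send coalgebras to coalgebras, produces the functor valued in $\mathbb{E}_\infty$-coalgebras in the sense of \cref{Def: O-coalgebras}. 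This is exactly the functor of \cref{bb}.

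Next I would invoke \cref{thm: comparison with coalgebras in the sense of Lurie}, applied to the $\mathbb{E}_\infty$-operad $\mathscr{O} = \mathbb{E}_\infty$, to obtain an equivalence of $\infty$-categories
\[
\mathbf{Coalg}_{\mathbb{E}_\infty}(\mathbf{D}(\kk)) \simeq \mathbf{Coalg}_{F(\mathbb{E}_\infty)}(\mathbf{D}(\kk)),
\]
where the left-hand side is Lurie's target from \cref{bb} and the right-hand side is the target of the present section, defined through \cref{def: infinity categorical P-coalgebras}. Since equivalences of $\infty$-categories preserve and detect full faithfulness, and since full faithfulness may be tested on any subcategory such as $\mathbf{Spc}_p^{\mathbf{nil}}$, transporting the conclusion of \cref{bb} along this equivalence yields the claim.

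The hard part will be the identification of the two chains functors under this comparison equivalence: one must verify that the equivalence of \cref{thm: comparison with coalgebras in the sense of Lurie} carries the Lurie-theoretic chains functor underlying \cref{bb} to the chains functor valued in $\mathbf{Coalg}_{\mathbb{E}_\infty}(\mathbf{D}(\kk))$ considered here. This is where care is needed. The comparison functor of \cref{thm: comparison with coalgebras in the sense of Lurie} is constructed so as to commute with the forgetful functors to $\mathbf{D}(\kk)$, and both candidate coalgebra structures on $C_*(X;\kk)$ share the same underlying object and are ultimately induced by the same symmetric monoidal diagonal data through the functor $F$. I therefore expect that the naturality of the comparison functor, together with its compatibility with the symmetric monoidal structures used to build both coalgebra structures, reduces this identification to a formal verification; once it is in place, the full faithfulness statement transports without further argument.
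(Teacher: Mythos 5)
The paper does not prove this statement at all: it is imported verbatim from Bachmann--Burklund. In the notation of this section, $\mathbf{Coalg}_{\mathbb{E}_\infty}(\mathbf{D}(\kk))$ means Lurie's $\infty$-category of cocommutative coalgebras, i.e.\ \cref{Def: O-coalgebras} --- this is how the same symbol is used in the proof of the proposition immediately following the theorem, where it is only then identified with the enriched-operadic coalgebras via \cref{thm: comparison with coalgebras in the sense of Lurie}. With that reading, the statement is literally \cite[Theorem 1.2]{bachmann2024}, the same statement as \cref{bb}, and there is nothing to prove.

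Your proposal instead reads the target as the enriched definition (\cref{def: infinity categorical P-coalgebras}) and transports \cref{bb} along the comparison equivalence of \cref{thm: comparison with coalgebras in the sense of Lurie}. That is a coherent plan, but it is doing the work the paper performs one step later (in the proposition identifying $\Omega\mathrm{B}\mathcal{E}\text{-}\mathsf{coalg}~[\mathsf{Q.iso}^{-1}]$ with $\mathbf{Coalg}_{\mathbb{E}_\infty}(\mathbf{D}(\kk))$, and in \cref{thm: point-set model for the chains functor}), rather than proving this particular statement. Two remarks on your ``hard part.'' First, at the $\infty$-categorical level the paper never constructs a chains functor valued in $\mathbf{Coalg}_{F(\mathbb{E}_\infty)}(\mathbf{D}(\kk))$ independently of the comparison equivalence; if you simply define that functor as the composite of Lurie's chains functor with the equivalence, your identification is tautological and full faithfulness transports immediately, so no naturality argument is needed. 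Second, the identification that genuinely requires an argument is with the \emph{point-set} functor built from Berger--Fresse's $\mathcal{E}$-coalgebra structure, and there the paper does not proceed by abstract naturality of the comparison: it argues concretely (proof of \cref{thm: point-set model for the chains functor}) that both functors preserve colimits and agree on the point, and that every space is a colimit of points. If your intent is to cover that identification by ``formal verification via naturality,'' that step is a gap as written; the colimit argument is what actually closes it.
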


\begin{remark}
This theorem is dual to (and generalizes) Mandell's theorem in \cite{Mandell01}, where he shows that the functor of singular cochains is fully faithful on the $\infty$-category of \textit{finite type} $p$-complete nilpotent spaces, that is, $p$-complete nilpotent spaces such that every homology group finitely generated. The idea is that by using chains instead of cochains, one can get rid of the finite type assumption by avoiding an unnecessary dualization. 
\end{remark}

\subsection{Point-set models for $\mathbb{E}_\infty$-coalgebras}
We give a point-set version of Theorem \ref{thm: bachmann-burklund} using the fact that we can give a point-set presentation of the $\infty$-category of $\mathbb{E}_\infty$-coalgebras in chain complexes over $\kk$. 

\medskip

Let $\mathcal{E}$ be the Barratt--Eccles dg operad introduced in \cite{BergerFresse}. Notice that although its underlying dg symmetric sequence is cofibrant, this dg operad is \textit{not} cofibrant as an operad and thus we cannot apply Theorem \ref{thm: rectification for cofibrant operads} to its coalgebras. However, if we consider $\Omega\mathrm{B}\mathcal{E}$, that is, the operad obtained by applying the bar-cobar construction to $\mathcal{E}$, we get a cofibrant dg operad since $\mathrm{B}\mathcal{E}$ is a quasi-planar conilpotent dg cooperad; see \cite[Section 2]{premierpapier} for more details. 

\begin{proposition}
There is an equivalence of $\infty$-categories
\[
\Omega \mathrm{B} \mathcal{E}\text{-}\mathsf{coalg}~[\mathsf{Q.iso}^{-1}] \simeq \mathbf{Coalg}_{\mathbb{E}_\infty}(\mathbf{D}(\kk))~, 
\]
between the $\infty$-category of dg $\Omega\mathrm{B}\mathcal{E}$-coalgebras, localized with respect to quasi-isomorphisms, and the $\infty$-category of $\mathbb{E}_\infty$-coalgebras in chain complexes over $\kk$. 
\end{proposition}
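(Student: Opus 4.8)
The plan is to deduce the statement from the general rectification result of \cref{thm: rectification for cofibrant operads}, which applies because, as just observed, $\Omega\mathrm{B}\mathcal{E}$ is a cofibrant dg operad. Applying that theorem directly yields the first equivalence
\[
\Omega\mathrm{B}\mathcal{E}\text{-}\mathsf{coalg}~[\mathsf{Q.iso}^{-1}] \simeq \mathbf{Coalg}_{\Omega\mathrm{B}\mathcal{E}}(\mathbf{D}(\kk))~,
\]
where on the right we take coalgebras over the enriched $\infty$-operad induced by $\Omega\mathrm{B}\mathcal{E}$. It therefore only remains to identify this $\infty$-category with $\mathbf{Coalg}_{\mathbb{E}_\infty}(\mathbf{D}(\kk))$.

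The second step is to replace $\Omega\mathrm{B}\mathcal{E}$ by $\mathcal{E}$ at the level of enriched $\infty$-operads. The canonical bar-cobar resolution $\Omega\mathrm{B}\mathcal{E} \qi \mathcal{E}$ is a quasi-isomorphism of dg operads, hence becomes an equivalence in $\mathbf{Op}(\mathbf{D}(\kk))$ after localization. Since the assignment $\mathscr{P} \mapsto \mathbf{Coalg}_{\mathscr{P}}(\mathbf{D}(\kk))$ is a functor of $\infty$-categories (indeed it preserves all limits by \cref{prop: taking infinity categories of coalgebras preserves limits}), it carries this equivalence to an equivalence
\[
\mathbf{Coalg}_{\Omega\mathrm{B}\mathcal{E}}(\mathbf{D}(\kk)) \simeq \mathbf{Coalg}_{\mathcal{E}}(\mathbf{D}(\kk))~,
\]
where now $\mathcal{E}$ denotes the enriched $\infty$-operad it induces.

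The final and most delicate step is to identify the enriched $\infty$-operad underlying $\mathcal{E}$ with $F(\mathbb{E}_\infty)$, the image under the symmetric monoidal functor $F \colon \mathbf{Spc} \longrightarrow \mathbf{D}(\kk)$ of the classical $\mathbb{E}_\infty$-operad, and then to invoke \cref{thm: comparison with coalgebras in the sense of Lurie} to obtain $\mathbf{Coalg}_{F(\mathbb{E}_\infty)}(\mathbf{D}(\kk)) \simeq \mathbf{Coalg}_{\mathbb{E}_\infty}(\mathbf{D}(\kk))$. For the identification one uses that $F$ is modeled by the normalized chains functor, while the dg Barratt--Eccles operad $\mathcal{E}$ is precisely the normalized chains of the simplicial Barratt--Eccles $\mathbb{E}_\infty$-operad; in each arity $\mathcal{E}(n) = N_*(E\mathbb{S}_n)$ is a free resolution of $\kk$ with contractible underlying complex and free $\mathbb{S}_n$-action, so that arity-wise $\mathcal{E}(n) \simeq F(\mathbb{E}_\infty(n))$ equivariantly. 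Since the forgetful functor from monoids in $\mathbf{sSeq}(\mathbf{D}(\kk))$ to symmetric sequences is conservative, the comparison map of $\infty$-operads induced by $F$ is then an equivalence. The main obstacle is of a structural rather than computational nature: one must first produce this comparison as an actual \emph{morphism} of $\infty$-operads, that is, compatibly with the composition product, before checking that it is an arity-wise equivalence. Concatenating the three equivalences above gives the claim.
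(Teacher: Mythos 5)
Your proposal is correct and follows essentially the same route as the paper's own proof: first apply Theorem \ref{thm: rectification for cofibrant operads} to the cofibrant operad $\Omega\mathrm{B}\mathcal{E}$, then identify the induced enriched $\infty$-operad with one arising from spaces via $F$ and invoke Theorem \ref{thm: comparison with coalgebras in the sense of Lurie}. If anything, your version is more careful than the paper's, which compresses into a single sentence exactly the identification you isolate as the ``main obstacle'' (that the enriched $\infty$-operad induced by $\Omega\mathrm{B}\mathcal{E}$, equivalently by $\mathcal{E} = N_*(E\mathbb{S}_\bullet)$, is $F$ applied to a space-level $\mathbb{E}_\infty$-operad, compatibly with the composition product).
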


\begin{proof}
Since is a cofibrant dg operad, the $\infty$-category obtained by localizing dg $\Omega \mathrm{B} \mathcal{E}$-coalgebras with respect to quasi-isomorphisms is equivalent to the $\infty$-category of coalgebras over its induced $\infty$-operad, in the sense of Definition \ref{definition: P-coalgebra}. This $\infty$-category is in turn equivalent to the $\infty$-category of $\mathbb{E}_\infty$-coalgebras in the sense of Lurie by Theorem \ref{thm: comparison with coalgebras in the sense of Lurie}, since the induced $\infty$-operad by $\Omega \mathrm{B} \mathcal{E}$ is a model for the (enriched) $\mathbb{E}_\infty$-operad, and thus is an (enriched) $\infty$-operad that ultimately comes from spaces via the construction explained in Theorem \ref{thm: comparison with coalgebras in the sense of Lurie}. 
\end{proof}

Recall that, for any simplicial set $X$, the functor of cellular chains $C_*(X; \kk)$ applied to $X$ admits an explicit dg $\mathcal{E}$-coalgebra structure constructed by Berger and Fresse in \cite{BergerFresse}. Pulling back this functor along the restriction along the canonical map $\Omega \mathrm{B} \mathcal{E} \qi \mathcal{E}$ induced a functor 
\[
C_*(-;\kk): \mathsf{sSet} \longrightarrow \Omega \mathrm{B} \mathcal{E}\text{-}\mathsf{coalg}~, 
\]
which is given by the cellular chains functor endowed with its $\mathcal{E}$-coalgebra structure. 

\begin{theorem}\label{thm: point-set model for the chains functor}
Let $\kk$ be a field. The functor 
\[
C_*(-;\kk): \mathsf{sSet} \longrightarrow \Omega \mathrm{B} \mathcal{E}\text{-}\mathsf{coalg}~, 
\]
is a model for the $\infty$-categorical chains functor. 
\end{theorem}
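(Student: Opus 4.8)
The plan is to read the conclusion ``is a model for the $\infty$-categorical chains functor'' as the assertion that, after localizing both sides at their weak equivalences, the point-set functor induces an $\infty$-functor $\mathbf{Spc}\to\mathbf{Coalg}_{\mathbb{E}_\infty}(\mathbf{D}(\kk))$ equivalent to the $\infty$-categorical singular chains functor of Theorem \ref{thm: bachmann-burklund}. The whole argument will hinge on the universal property of $\mathbf{Spc}$ as the free cocompletion of a point: for any presentable $\infty$-category $\mathbf{C}$, evaluation at $*=\Delta^0$ is an equivalence $\mathbf{Fun}^{\mathrm{L}}(\mathbf{Spc},\mathbf{C})\simeq\mathbf{C}$ between colimit-preserving functors and objects. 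This reduces the comparison of two functors to the comparison of two objects, namely their values on the point.

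First I would check that $C_*(-;\kk)\colon\mathsf{sSet}\to\Omega\mathrm{B}\mathcal{E}\text{-}\mathsf{coalg}$ is homotopical and colimit-preserving at the $\infty$-categorical level. It is the left adjoint of the Quillen adjunction $C_*(-;\kk)\dashv\overline{\mathcal{R}}$ recalled in the introduction, hence left Quillen; since every simplicial set is cofibrant it preserves all weak equivalences, and a morphism of $\Omega\mathrm{B}\mathcal{E}$-coalgebras is a quasi-isomorphism exactly when its underlying chain map is one, so $C_*(-;\kk)$ is a relative functor computing its own left derived functor. It therefore descends to an $\infty$-functor $\mathbf{Spc}=\mathsf{sSet}[\mathsf{W}^{-1}]\to\Omega\mathrm{B}\mathcal{E}\text{-}\mathsf{coalg}[\mathsf{Q.iso}^{-1}]$ which preserves homotopy colimits. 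Composing with the equivalence $\Omega\mathrm{B}\mathcal{E}\text{-}\mathsf{coalg}[\mathsf{Q.iso}^{-1}]\simeq\mathbf{Coalg}_{\mathbb{E}_\infty}(\mathbf{D}(\kk))$ of the preceding proposition (under which colimits correspond to colimits) yields a colimit-preserving functor $\Phi\colon\mathbf{Spc}\to\mathbf{Coalg}_{\mathbb{E}_\infty}(\mathbf{D}(\kk))$.

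Next I would apply the universal property. As $\mathbf{Coalg}_{\mathbb{E}_\infty}(\mathbf{D}(\kk))$ is presentable, $\Phi$ is determined up to equivalence by $\Phi(*)=C_*(\Delta^0;\kk)=\kk$ together with its induced $\mathbb{E}_\infty$-coalgebra structure. The target functor of Theorem \ref{thm: bachmann-burklund} is likewise colimit-preserving: its underlying functor $\mathbf{Spc}\to\mathbf{D}(\kk)$ is linearization, which preserves colimits, the forgetful functor $\mathbf{Coalg}_{\mathbb{E}_\infty}(\mathbf{D}(\kk))\to\mathbf{D}(\kk)$ preserves colimits and is conservative, and a short chase shows the lift preserves colimits as well; moreover it sends $*$ to the monoidal unit $\kk$ with its canonical coalgebra structure. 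Granting that $\Phi(*)$ is this same unit coalgebra, both functors are the image of the object $\kk\in\mathbf{Coalg}_{\mathbb{E}_\infty}(\mathbf{D}(\kk))$ under the equivalence $\mathbf{Fun}^{\mathrm{L}}(\mathbf{Spc},\mathbf{Coalg}_{\mathbb{E}_\infty}(\mathbf{D}(\kk)))\simeq\mathbf{Coalg}_{\mathbb{E}_\infty}(\mathbf{D}(\kk))$, hence equivalent.

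The hard part will be exactly the identification of $\Phi(*)$: one must trace the Berger--Fresse $\mathcal{E}$-coalgebra structure on $C_*(\Delta^0;\kk)=\kk$, pulled back along $\Omega\mathrm{B}\mathcal{E}\qi\mathcal{E}$, through the chain of equivalences furnished by Theorem \ref{thm: rectification for cofibrant operads} and Theorem \ref{thm: comparison with coalgebras in the sense of Lurie}, and confirm that it is carried to the canonical coalgebra structure on the monoidal unit. This is the single place where the explicit point-set structure enters; once it is settled, everything else is formal, following from colimit preservation and the freeness of $\mathbf{Spc}$ under colimits.
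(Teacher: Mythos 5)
Your overall strategy is exactly the paper's: localize, observe that both functors preserve colimits, and reduce the comparison to their values on the point via the fact that every space is a colimit of points (equivalently, your appeal to $\mathbf{Fun}^{\mathrm{L}}(\mathbf{Spc},\mathbf{C})\simeq\mathbf{C}$). However, your proposal has a genuine gap at precisely the step you flag as ``the hard part'': you never identify $\Phi(*)$, writing ``granting that $\Phi(*)$ is this same unit coalgebra.'' The paper closes this step not by tracing the Berger--Fresse structure through the chain of equivalences of Theorems \ref{thm: rectification for cofibrant operads} and \ref{thm: comparison with coalgebras in the sense of Lurie} --- which would indeed be painful --- but by the observation that no tracing is needed: the monoidal unit $\kk$ carries an \emph{essentially unique} (counital) $\mathbb{E}_\infty$-coalgebra structure. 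This is dual to the standard fact that the unit of a symmetric monoidal $\infty$-category is the initial $\mathbb{E}_\infty$-algebra, with a contractible space of algebra structures: any counital coalgebra whose underlying object is $\kk$ has its counit an equivalence of coalgebras onto the canonical structure. Consequently both functors send $*$ to \emph{the} coalgebra $\kk$, with no choice to compare, and the argument concludes.

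Two remarks to make your write-up watertight once you insert this observation. First, the uniqueness genuinely uses counitality: for non-counital cocommutative coalgebra structures on $\kk$, the zero comultiplication and a nonzero one are not isomorphic, so the dichotomy between the counital and non-counital variants (which the paper tracks elsewhere, e.g.\ in the corollaries using reduced chains $\tilde{C}_*$) matters here. Second, your verification that the Bachmann--Burklund functor preserves colimits is fine as stated, since the forgetful functor $\mathbf{Coalg}_{\mathbb{E}_\infty}(\mathbf{D}(\kk)) = \mathrm{CAlg}(\mathbf{D}(\kk)^{\mathrm{op}})^{\mathrm{op}} \to \mathbf{D}(\kk)$ creates colimits (dually to the forgetful functor from algebras creating limits); this is the same point the paper uses implicitly when it says both functors preserve homotopy colimits.
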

 
\begin{proof}
When we localize on the left by weak homotopy equivalences and on the right by quasi-isomorphisms, we obtain a functor 
\[
C_*(-;\kk): \mathbf{Spc} \longrightarrow \mathbf{Coalg}_{\mathbb{E}_\infty}(\mathbf{D}(\kk)) 
\]
which, on the underlying $\infty$-category of chain complexes, agrees with the singular chains functor mentioned before. Let us check that the two functors and their $\mathbb{E}_\infty$-coalgebra structures also agree. This follows from the fact that the image of the point $\{*\}$ by the two functors is the unique coalgebra structure on $\kk$, and from the fact that both functors preserve (homotopy) colimits. Since any space is a homotopy colimit of points, the result follows directly. 
\end{proof}

In fact, the functor $C_*(-;\kk)$ admits a right adjoint $\overline{\mathcal{R}}$ and there is a Quillen adjunction 
\[
\begin{tikzcd}[column sep=5pc,row sep=3pc]
         \mathsf{sSet} \arrow[r, shift left=1.1ex, "C_*(-;\kk)"{name=F}] &\Omega \mathrm{B} \mathcal{E}\text{-}\mathsf{coalg}, \arrow[l, shift left=.75ex, "\overline{\mathcal{R}}"{name=U}]
            \arrow[phantom, from=F, to=U, , "\dashv" rotate=-90]
\end{tikzcd}
\]
between the category of simplicial sets and the category of dg $\Omega\mathrm{B}\mathcal{E}$-coalgebras when we endow it with the transferred model structure from chain complexes over $\kk$. 

\begin{corollary}\label{cor: derived unit is an Fp equivalence}
Let $\kk$ be a separably closed field of characteristic $p >0$. Let $X$ be a nilpotent simplicial set. The derived unit of the adjunction 
\[
\eta_X: X \longrightarrow \mathbb{R}\overline{\mathcal{R}}C_*(X;\kk)
\]
is an equivalence in homology with coefficients in $\mathbb{F}_p$. 
\end{corollary}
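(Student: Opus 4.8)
The plan is to identify the derived unit with the unit of a localized $\infty$-categorical adjunction, reduce to the $p$-complete case by naturality, and then invoke the Bachmann--Burklund full faithfulness in a form strong enough to recover an honest equivalence.

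First I would record that the left Quillen functor $C_*(-;\kk)$ already preserves weak equivalences (cellular chains send weak homotopy equivalences to quasi-isomorphisms), so that no left derivation is needed and the derived unit reads $\eta_X\colon X\to\mathbb{R}\overline{\mathcal{R}}\,C_*(X;\kk)$. By Theorem \ref{thm: point-set model for the chains functor} the localization of $C_*(-;\kk)$ is the $\infty$-categorical chains functor $C_*\colon\mathbf{Spc}\to\mathbf{Coalg}_{\mathbb{E}_\infty}(\mathbf{D}(\kk))$ of Theorem \ref{thm: bachmann-burklund}; the Quillen adjunction then induces the corresponding $\infty$-categorical adjunction $C_*\dashv\mathbb{R}\overline{\mathcal{R}}$, and $\eta_X$ is its unit. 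Here the target is understood via the equivalence $\Omega\mathrm{B}\mathcal{E}\text{-}\mathsf{coalg}~[\mathsf{Q.iso}^{-1}]\simeq\mathbf{Coalg}_{\mathbb{E}_\infty}(\mathbf{D}(\kk))$ together with Theorem \ref{thm: comparison with coalgebras in the sense of Lurie}, which match it with Lurie's $\mathbb{E}_\infty$-coalgebras, so that Theorem \ref{thm: bachmann-burklund} genuinely applies to this functor.

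Second, I would reduce to $p$-complete spaces. Let $\lambda_X\colon X\to X^\wedge_p$ be the $p$-completion; since $X$ is nilpotent, $X^\wedge_p$ is $p$-complete nilpotent and $\lambda_X$ is an $\mathbb{F}_p$-homology equivalence. As $C_*(-;\kk)$ inverts $\mathbb{F}_p$-homology equivalences (a $\kk$-homology isomorphism is a quasi-isomorphism on underlying chains, and the forgetful functor from coalgebras is conservative), $C_*(\lambda_X)$ is an equivalence, hence so is $\mathbb{R}\overline{\mathcal{R}}\,C_*(\lambda_X)$. Naturality of the unit produces a commuting square relating $\eta_X$ and $\eta_{X^\wedge_p}$ through $\lambda_X$ and this equivalence; since $\mathbb{F}_p$-homology equivalences satisfy $2$-out-of-$3$, it follows that $\eta_X$ is an $\mathbb{F}_p$-homology equivalence as soon as $\eta_{X^\wedge_p}$ is. This reduces the claim to the case where $X$ is $p$-complete nilpotent.

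Finally, for $X$ $p$-complete nilpotent I would apply full faithfulness of $C_*$ on $\mathbf{Spc}_p^{\mathbf{nil}}$ to the pair $(\ast,X)$. The resulting chain of equivalences $\mathrm{Map}_{\mathbf{Spc}}(\ast,X)\simeq\mathrm{Map}_{\mathbf{Coalg}}(C_*\ast,C_*X)\simeq\mathrm{Map}_{\mathbf{Spc}}(\ast,\mathbb{R}\overline{\mathcal{R}}\,C_*X)$ is, by the triangle identity of the adjunction, postcomposition with $\eta_X$; since $\ast$ is terminal and $\mathrm{Map}_{\mathbf{Spc}}(\ast,-)\simeq\mathrm{id}$, this shows that $\eta_X$ is an equivalence, a fortiori an $\mathbb{F}_p$-homology equivalence, completing the argument. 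The main obstacle is conceptual rather than computational: Bachmann--Burklund only supply full faithfulness on the subcategory of $p$-complete nilpotent spaces, which a priori constrains the unit only ``internally'' to that subcategory and not as a map in $\mathbf{Spc}$. The key point that unlocks the argument is that the terminal object lies in this subcategory and corepresents the underlying-space functor, so that testing full faithfulness against $\ast$ upgrades this local information to a genuine equivalence of the unit. A secondary point requiring care is the first step's identification of the point-set derived unit with the $\infty$-categorical one, which rests squarely on Theorem \ref{thm: point-set model for the chains functor}.
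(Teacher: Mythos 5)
Your proposal is correct and takes essentially the same approach as the paper: the paper's own proof is the single line ``Immediate from Theorems~\ref{thm: bachmann-burklund} and~\ref{thm: point-set model for the chains functor}'', and your argument is precisely a careful elaboration of that line --- identifying the derived unit of the Quillen adjunction with the unit of the induced $\infty$-categorical adjunction via Theorem~\ref{thm: point-set model for the chains functor}, reducing to the $p$-complete nilpotent case by naturality along $X \to X^\wedge_p$, and then invoking Theorem~\ref{thm: bachmann-burklund}. Your observation that one must test full faithfulness against the terminal object $\ast$ (which lies in $\mathbf{Spc}_p^{\mathbf{nil}}$ and corepresents the identity on spaces) to upgrade subcategory-level full faithfulness to an actual equivalence of the unit is exactly the point the paper's ``immediate'' glosses over, and it is handled correctly.
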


\begin{proof}
Immediate from Theorems \ref{thm: bachmann-burklund} and \ref{thm: point-set model for the chains functor}. 
\end{proof}

\subsection{Lie-type models}
In \cite{lucio2024higherlietheorypositive}, the first author used the Koszul duality between dg $\Omega\mathrm{B} \mathcal{E}$-coalgebras and absolute partition $\mathcal{L}_\infty$-algebras to obtain models in terms of this later algebraic structure. Roughly speaking, absolute partition $\mathcal{L}_\infty$-algebras can be understood as a particular choice of point-set models for the partition Lie algebras of Brantner and Matthew in \cite{brantnermathew}, but where infinite sums of structural operations are well defined by definition. For an intuition on the notion of absolute algebras, see \cite{absolutealgebras}. The results of \cite{lucio2024higherlietheorypositive}, and in particular Theorem D in \textit{op.cit.}, where obtained by applying linear duality to the results of \cite{Mandell01}. Using the point-set version of the results in \cite{bachmann2024} we have just given, we end this section by removing the finite type assumptions in the results of \cite{lucio2024higherlietheorypositive}. For the rest of this subsection, let $\kk$ be a separably closed field of characteristic $p >0$. 

\medskip

Recall that in \cite{lucio2024higherlietheorypositive}, the first author constructed a Quillen adjunction 
\[
\begin{tikzcd}[column sep=5pc,row sep=3pc]
          \mathsf{sSet}_* \arrow[r, shift left=1.1ex, "\mathcal{L}_*"{name=A}]
          &\mathsf{abs}~\mathcal{L}_\infty^\pi\text{-}\mathsf{alg}^{\mathsf{qp}\text{-}\mathsf{comp}} \arrow[l, shift left=.75ex, "\mathcal{R}_*"{name=B}]
           \arrow[phantom, from=A, to=B, , "\dashv" rotate=-90]
\end{tikzcd}
\]
between pointed simplicial sets and absolute partition $\mathcal{L}_\infty$-algebras which satisfy a separateness axiom called \textit{qp-completeness} in the terminology of \cite{premierpapier}. 

\begin{theorem}
Let $X$ be a pointed nilpotent simplicial set. The unit of adjunction
\[
\eta_X: X \qi \mathcal{R}_*\mathcal{L}_*(X) 
\]
is an equivalence in homology with coefficients in $\mathbb{F}_p$. 
\end{theorem}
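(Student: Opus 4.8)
The plan is to factor the adjunction $\mathcal{L}_* \dashv \mathcal{R}_*$ through the coalgebraic chains adjunction and the operadic Koszul duality, and then to reduce the statement to \cref{cor: derived unit is an Fp equivalence}. Recall from \cite{lucio2024higherlietheorypositive} that $\mathcal{L}_*$ is defined as the composite of the reduced chains functor $\tilde{C}_*(-;\kk) : \mathsf{sSet}_* \to \Omega\mathrm{B}\mathcal{E}\text{-}\mathsf{coalg}$ (landing in non-counital $\Omega\mathrm{B}\mathcal{E}$-coalgebras) with the Koszul duality functor sending such a coalgebra to its associated absolute partition $\mathcal{L}_\infty$-algebra; dually, $\mathcal{R}_*$ is the composite of the Koszul adjoint with $\overline{\mathcal{R}}$. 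Thus $\mathcal{L}_* \dashv \mathcal{R}_*$ is obtained by vertically composing the reduced chains adjunction with the Koszul duality adjunction.

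First I would invoke the fact, established in \cite{lucio2024higherlietheorypositive} (building on \cite{premierpapier}), that the Koszul duality adjunction is a Quillen \emph{equivalence} between $\Omega\mathrm{B}\mathcal{E}$-coalgebras and qp-complete absolute partition $\mathcal{L}_\infty$-algebras --- this is precisely the role of the qp-completeness condition imposed on the target. Writing $F_1 \dashv G_1$ for the reduced chains adjunction and $F_2 \dashv G_2$ for Koszul duality, the unit of the composite factors as $\eta_X = G_1(\eta^{F_2}_{F_1 X}) \circ \eta^{F_1}_X$, so after deriving we obtain $\mathbb{R}\eta_X = \mathbb{R}G_1(\mathbb{R}\eta^{F_2}_{\mathbb{L}F_1 X}) \circ \mathbb{R}\eta^{F_1}_X$. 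Since $F_2 \dashv G_2$ is a Quillen equivalence, its derived unit $\mathbb{R}\eta^{F_2}$ is an equivalence, and because $\mathbb{R}G_1 = \mathbb{R}\overline{\mathcal{R}}$ preserves equivalences, the first factor is an equivalence. Hence $\mathbb{R}\eta_X$ is an $\mathbb{F}_p$-homology equivalence if and only if the derived chains unit $\mathbb{R}\eta^{F_1}_X : X \to \mathbb{R}\overline{\mathcal{R}}\,\tilde{C}_*(X;\kk)$ is one.

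It then remains to record that the reduced chains unit is an $\mathbb{F}_p$-homology equivalence for nilpotent $X$. This is the pointed counterpart of \cref{cor: derived unit is an Fp equivalence}: combining \cref{thm: point-set model for the chains functor} (which identifies the point-set functor with the $\infty$-categorical chains functor together with its coalgebra structure) and \cref{thm: bachmann-burklund} (full faithfulness of chains on $p$-complete nilpotent spaces), the derived chains unit exhibits the $\mathbb{F}_p$-localization of $X$, which agrees with $X$ in $\mathbb{F}_p$-homology when $X$ is nilpotent. The passage between the unpointed statement of \cref{cor: derived unit is an Fp equivalence} and the pointed/reduced one is routine, using the splitting $C_*(X;\kk) \simeq \tilde{C}_*(X;\kk) \oplus \kk$ induced by the basepoint.

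The main obstacle I anticipate is not the homotopical bookkeeping of units but justifying the factorization cleanly: one must verify that the point-set functors $\mathcal{L}_*$ and $\mathcal{R}_*$ of \cite{lucio2024higherlietheorypositive} genuinely realize the composites described above at the derived level, and that qp-completeness is exactly what upgrades the Koszul duality from a Quillen adjunction to a Quillen equivalence. Once this is secured, the two-out-of-three argument through the Quillen-equivalence factor concludes the proof immediately.
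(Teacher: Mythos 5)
Your proposal is correct and takes essentially the same route as the paper: the paper's proof consists exactly of the observation that $\mathcal{L}_* \dashv \mathcal{R}_*$ is a model for the derived adjunction of the pointed chains adjunction $\tilde{C}_*(-;\kk) \dashv \overline{\mathcal{R}}_*$, combined with \cref{cor: derived unit is an Fp equivalence}. Your factorization through the Koszul duality Quillen equivalence, and the two-out-of-three argument on units, is just the explicit justification of that identification, which the paper delegates to \cite{lucio2024higherlietheorypositive}.
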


\begin{proof}
Follows directly from the fact that the adjunction $\mathcal{L}_* \dashv \mathcal{R}_*$ constructed in \cite[Section 2]{lucio2024higherlietheorypositive} is a model for the derived adjunction of the (pointed) adjunction $\tilde{C}_*(-;\kk) \dashv \overline{\mathcal{R}}_*$, together with Corollary \ref{cor: derived unit is an Fp equivalence}. 
\end{proof}

Using absolute partition $\mathcal{L}_\infty$-algebras, one can also obtain algebraic models for $p$-adic mapping spaces. Note that here we are using an unpointed version of the adjunction $\mathcal{L}_* \dashv \mathcal{R}_*$ involving \textit{curved} absolute partition $\mathcal{L}_\infty$-algebras to get unpointed mapping spaces. See \cite{lucio2024higherlietheorypositive} for more details. 

\begin{theorem}
Let $X$ be a simplicial set and let $Y$ be a nilpotent simplicial set. There is a weak equivalence of simplicial sets 
\[
\mathrm{Map}(X, Y_{\mathbb{F}_p}) \simeq \mathcal{R}\left(\mathrm{hom}(\mathrm{H}_*(X), \mathcal{L}(Y))\right)~,
\]
where $Y_{\mathbb{F}_p}$ denotes the Bousfield-Kan $p$-completion of $Y$. 
\end{theorem}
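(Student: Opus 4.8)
The plan is to reduce the computation of the mapping space to the algebraic side via the Quillen adjunction $\mathcal{L} \dashv \mathcal{R}$, and then to recognize the resulting algebra as a convolution (internal hom) construction. \textbf{Step 1: replace $Y_{\mathbb{F}_p}$ by its algebraic model.} The unpointed, curved analogue of the preceding unit theorem (cf.\ \cite{lucio2024higherlietheorypositive} and \cref{cor: derived unit is an Fp equivalence}) shows that for nilpotent $Y$ the derived unit $Y \longrightarrow \mathbb{R}\mathcal{R}\,\mathcal{L}(Y)$ is an $\mathbb{F}_p$-homology equivalence. Since every object in the image of $\mathcal{R}$ is local with respect to $\mathrm{H}_*(-;\mathbb{F}_p)$, the target $\mathbb{R}\mathcal{R}\,\mathcal{L}(Y)$ is $p$-complete, so this derived unit exhibits an equivalence $\mathbb{R}\mathcal{R}\,\mathcal{L}(Y) \simeq Y_{\mathbb{F}_p}$. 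Hence
\[
\mathrm{Map}(X, Y_{\mathbb{F}_p}) \simeq \mathrm{Map}(X, \mathbb{R}\mathcal{R}\,\mathcal{L}(Y))~.
\]

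\textbf{Step 2: transpose and identify the convolution algebra.} Using the simplicial enrichment of the adjunction, for which $\mathcal{R}(A)_n = \mathrm{Hom}_{\mathsf{alg}}(\mathcal{L}(\Delta^n), A)$ with $\mathsf{alg}$ the category of absolute partition $\mathcal{L}_\infty$-algebras, one computes $\mathrm{Map}(X, \mathbb{R}\mathcal{R}(A))_n \simeq \mathrm{Hom}_{\mathsf{alg}}(\mathcal{L}(X \times \Delta^n), A)$ for $A$ bifibrant. The heart of the argument is then the cotensor/convolution identity
\[
\mathrm{Hom}_{\mathsf{alg}}(\mathcal{L}(X \times \Delta^n), A) \cong \mathrm{Hom}_{\mathsf{alg}}\bigl(\mathcal{L}(\Delta^n), \mathrm{hom}(\mathrm{H}_*(X), A)\bigr) = \mathcal{R}\bigl(\mathrm{hom}(\mathrm{H}_*(X), A)\bigr)_n~,
\]
where $\mathrm{hom}(\mathrm{H}_*(X), A)$ is the convolution $\mathcal{L}_\infty$-algebra associated to the coalgebra $C_*(X;\kk)$ and the algebra $A$. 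This uses that $\mathcal{L}$ sends the cartesian factor $X$ to a cotensor by $C_*(X;\kk)$ on the Koszul-dual coalgebra side, together with the fact that over a field $\mathrm{H}_*(X) \simeq C_*(X;\kk)$. Setting $A = \mathcal{L}(Y)$ and combining with Step 1 yields the asserted equivalence $\mathrm{Map}(X, Y_{\mathbb{F}_p}) \simeq \mathcal{R}\bigl(\mathrm{hom}(\mathrm{H}_*(X), \mathcal{L}(Y))\bigr)$. As a sanity check, taking $X = \ast$ gives $\mathrm{hom}(\mathrm{H}_*(\ast), \mathcal{L}(Y)) = \mathcal{L}(Y)$ and recovers $\mathcal{R}\,\mathcal{L}(Y) \simeq Y_{\mathbb{F}_p}$.

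\textbf{The main obstacle} is the convolution formula in Step 2: one must establish that $\mathrm{hom}(\mathrm{H}_*(X), -)$ is genuinely right adjoint to tensoring $\mathcal{L}$ by $\mathrm{H}_*(X)$, and that this tensoring is compatible with the simplicial Maurer--Cartan structures so as to intertwine the two functors $\mathcal{R}$. This is exactly where the \emph{absolute} framework is indispensable: because $X$ is permitted to be of infinite type, $\mathrm{H}_*(X)$ may be infinite-dimensional, and the infinite sums of structure operations appearing in $\mathrm{hom}(\mathrm{H}_*(X), \mathcal{L}(Y))$ only make sense since absolute partition $\mathcal{L}_\infty$-algebras are defined to admit them. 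One must also check that no finiteness or connectivity hypothesis on $X$ intervenes, which is guaranteed by the $\mathrm{H}_*(-;\mathbb{F}_p)$-locality of $Y_{\mathbb{F}_p}$: the mapping space $\mathrm{Map}(X, Y_{\mathbb{F}_p})$ depends on $X$ only through $C_*(X;\kk)$, so it is insensitive to replacing $X$ by any chain-level representative of its homology.
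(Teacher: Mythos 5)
Your proposal is correct and takes essentially the same approach as the paper: the paper's own proof simply instructs one to rerun the arguments of \cite[Corollary 3.28]{lucio2024higherlietheorypositive}, substituting the strengthened (finite-type-free) unit theorem for \cite[Corollary 3.7]{lucio2024higherlietheorypositive}, and your Step 1 (unit equivalence plus $\mathbb{F}_p$-locality of the target, identifying $Y_{\mathbb{F}_p}$ with $\mathbb{R}\mathcal{R}\mathcal{L}(Y)$) together with Step 2 (the adjunction/convolution identity) is exactly a reconstruction of that argument. The two lemmas you defer as the ``main obstacle'' --- locality of the image of $\mathcal{R}$ and the cotensor identity $\mathrm{Hom}(\mathcal{L}(X\times\Delta^n),A)\cong\mathrm{Hom}\bigl(\mathcal{L}(\Delta^n),\mathrm{hom}(\mathrm{H}_*(X),A)\bigr)$ --- are precisely what the paper imports from that reference rather than re-proving.
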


\begin{proof}
Follows from replacing \cite[Corollary 3.7]{lucio2024higherlietheorypositive} with its stronger version, and using the same arguments as in \cite[Corollary 3.28]{lucio2024higherlietheorypositive}. 
\end{proof}

\begin{remark}[A combinatorial presentation of the homotopy groups of a space]
In \cite[Theorem 3.17]{lucio2024higherlietheorypositive}, the author gave a combinatorial description of the homotopy groups of the $p$-completion of a finite type pointed connected nilpotent simplicial set purely in terms of certain equations in its absolute partition $\mathcal{L}_\infty$-model $\mathcal{L}_*(X)$. This combinatorial description is also valid for any absolute partition $\mathcal{L}_\infty$-algebra, see \cite[Theorem B]{lucio2024higherlietheorypositive}. Therefore we can also remove the finite type hypothesis in the aforementioned theorem. 
\end{remark}

\renewcommand{\thesection}{A}

\section{Appendix. Relative categories and complete Segal spaces}\label{Appendix A: Relative categories}

The goal of this appendix is to review different models for the $\infty$-category of $\infty$-categories, and to recall the links between relative categories and complete Segal spaces. We also construct cylinder objects and path objects in these later two model categories. 

\subsection{Relative categories, complete Segal spaces and the subdivided nerve} We recall the definition a relative category and the existence of a model structure on all relative categories, called the Barwick--Kan model structure, which presents the the $\infty$-category of $\infty$-categories. We refer to \cite{BarwickKan1} for more details. 

\begin{definition}[Relative category]
A \textit{relative category} $(\mathsf{C},\mathsf{W})$ is the data of a category $\mathsf{C}$ equip\-ped with a subcategory $\mathsf{W}$ which contains all the objects of $\mathsf{C}$ and whose arrows are called \textit{weak equivalences}. 
\end{definition}

A morphism between two relative categories $(\mathsf{C},\mathsf{W})$ and $(\mathsf{C}',\mathsf{W}')$ is a functor $F: \mathsf{C} \longrightarrow \mathsf{C}'$ such that $F(\mathsf{W}) \subseteq \mathsf{W}'$. We denote the 1-category of all relative categories by $\mathsf{RelCat}$. 

\medskip

Let $\mathsf{bisSets}$ denote the 1-category of bisimplicial sets. It admits a model structure, called the Rezk model structure \cite{Rezk01}, where every object is cofibrant and the fibrant objects are precisely \textit{complete Segal spaces}. When localized at weak equivalences, it presents the $(\infty,1)$-category of $\infty$-categories. 
\medskip

The idea of Barwick--Kan is to construct an adjunction 
\[
\begin{tikzcd}[column sep=5pc,row sep=3pc]
         \mathsf{bisSet} \arrow[r, shift left=1.1ex, "K_{\xi}"{name=F}] & \mathsf{RelCat} , \arrow[l, shift left=.75ex, "N_{\xi}"{name=U}]
            \arrow[phantom, from=F, to=U, , "\dashv" rotate=-90]
\end{tikzcd}
\]
between bisimplicial sets and relative categories, and to transfer the Rezk model structure along this adjunction, in order to obtain a Quillen equivalent model structure on relative categories. The functor $N_{\xi}$ is called the\textit{ subdivided nerve}, and it is constructed by specifying the following bisimplicial object in relative categories: $\Delta[p,q] \coloneqq \mathsf{p}\times\mathsf{q}^{\simeq}$ for all $p,q \geq 0$. Here $\mathsf{p}$ is the totally ordered set $\{1,\dots, p\}$, considered in the usual way as a category with exactly one arrow $n \longrightarrow m$ if $n \leq m$, and no weak equivalences. Similarly, $\mathsf{q}^{\simeq}$ is the relative category given by the totally ordered set $\{1,\dots, q\}$, except every arrow is a weak equivalence. 

\begin{remark}
    Our notations differ from those in \cite{BarwickKan1}. The relative categories that we write as $\mathsf{p}$ and $\mathsf{p}^\simeq$ are written as $\check{\mathsf{p}}$ and $\hat{\mathsf{p}}$, respectively, in \textit{op.cit.}
\end{remark}

\begin{theorem}[\cite{BarwickKan1}]
The Quillen adjunction 
\[
\begin{tikzcd}[column sep=5pc,row sep=3pc]
         \mathsf{bisSet} \arrow[r, shift left=1.1ex, "K_{\xi}"{name=F}] & \mathsf{RelCat} , \arrow[l, shift left=.75ex, "N_{\xi}"{name=U}]
            \arrow[phantom, from=F, to=U, , "\dashv" rotate=-90]
\end{tikzcd}
\]
is a Quillen equivalence. 
\end{theorem}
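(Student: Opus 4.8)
The plan is to realize the Barwick--Kan model structure on $\mathsf{RelCat}$ as the \emph{right transfer} of the Rezk model structure along the right adjoint $N_\xi$, and then to deduce the Quillen equivalence from the fact that the unit of the adjunction is a Rezk equivalence. Concretely, I would declare a map $f$ in $\mathsf{RelCat}$ to be a weak equivalence (resp.\ a fibration) precisely when $N_\xi f$ is a weak equivalence (resp.\ a fibration) in the Rezk model structure, and take the cofibrations to be generated by $K_\xi$ applied to the generating (trivial) cofibrations $I$ (resp.\ $J$) of $\mathsf{bisSet}$. With this definition $N_\xi$ both preserves and reflects weak equivalences by fiat, and $(K_\xi,N_\xi)$ is automatically a Quillen pair: being a left adjoint, $K_\xi$ preserves colimits, so it sends the generating (trivial) cofibrations to (trivial) cofibrations and hence sends every (trivial) cofibration to one.

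The first and main task is to check that this data really assembles into a model structure. Since $\mathsf{RelCat}$ is locally presentable and the domains of $K_\xi(I)$ and $K_\xi(J)$ are small (they are finite relative categories, hence compact, and $N_\xi$ preserves filtered colimits), the small object argument applies and one may invoke the standard recognition theorem for right-induced cofibrantly generated model structures. The nontrivial hypothesis to verify is the \emph{acyclicity condition}: every map obtained as a transfinite composite of pushouts of maps in $K_\xi(J)$ must be sent by $N_\xi$ to a Rezk equivalence. This is exactly where the combinatorics of the subdivided nerve enter, and I expect it to be the hard part of the argument: one must control how $N_\xi$ interacts with the pushouts and filtered colimits building up these cells, which is the substance of the detailed analysis of the subdivision functor carried out in \cite{BarwickKan1}.

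Granting the model structure, the Quillen equivalence follows from the standard criterion that a Quillen pair $(K_\xi \dashv N_\xi)$ is an equivalence as soon as $N_\xi$ reflects weak equivalences between fibrant objects and the derived unit is a weak equivalence on every cofibrant object. The first condition is automatic, since by construction $N_\xi$ reflects \emph{all} weak equivalences. For the second, recall that every object of $\mathsf{bisSet}$ is cofibrant, so for $X \in \mathsf{bisSet}$ the derived unit is the composite
\[
X \xrightarrow{\ \eta_X\ } N_\xi K_\xi X \xrightarrow{\ N_\xi(j)\ } N_\xi\big((K_\xi X)^{\mathrm{fib}}\big),
\]
where $j \colon K_\xi X \to (K_\xi X)^{\mathrm{fib}}$ is a fibrant replacement in $\mathsf{RelCat}$. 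The map $N_\xi(j)$ is a Rezk equivalence because $j$ is a weak equivalence of relative categories and $N_\xi$ preserves these (again by the transferred definition), while $\eta_X$ is a Rezk equivalence by \cite[Proposition 10.3]{BarwickKan1}. Hence the derived unit is a Rezk equivalence and the criterion is met.

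The only genuinely substantial input beyond formal model-category bookkeeping is thus \cite[Proposition 10.3]{BarwickKan1}, that the unit $X \to N_\xi K_\xi X$ is a (Reedy, hence Rezk) weak equivalence for every bisimplicial set $X$. I would prove this by first checking it on the representables $\Delta[p,q]$, where $N_\xi K_\xi \Delta[p,q]$ can be compared with $\Delta[p,q]$ directly, as in the computation used in the proof of \cref{cor:piquasicateq}, and then bootstrapping to all $X$ using that both functors preserve the colimits presenting $X$ from representables together with the closure of Reedy equivalences under the relevant (homotopy) colimits.
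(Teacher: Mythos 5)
Your proposal cannot be compared against an internal argument, because the paper does not prove this statement: it is quoted verbatim from Barwick--Kan \cite{BarwickKan1}, so the relevant benchmark is the proof in \emph{op.\ cit.} Your outline does reproduce that proof's architecture correctly: the Barwick--Kan model structure is, by definition, right-transferred along $N_\xi$ (weak equivalences and fibrations created by $N_\xi$, cofibrations generated by $K_\xi$ of the generating (trivial) cofibrations of $\mathsf{bisSet}$), and, granting the existence of the transferred structure and the fact that the unit $X \to N_\xi K_\xi X$ is a Reedy weak equivalence, your deduction of the Quillen equivalence is sound: $N_\xi$ reflects all weak equivalences by fiat, every object of $\mathsf{bisSet}$ is Rezk-cofibrant, and the derived unit factors as $\eta_X$ followed by $N_\xi$ of a fibrant replacement map, both of which are weak equivalences. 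That part is correct bookkeeping.

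The gaps are exactly at the two points carrying the mathematical content. First, the acyclicity condition for the transfer is not argued at all; you defer it to ``the detailed analysis of the subdivision functor carried out in \cite{BarwickKan1}'', which is circular as a standalone proof, since the statement being proved \emph{is} that paper's theorem. Second, and more seriously, your proposed proof of the unit statement (Proposition 10.3 of \cite{BarwickKan1}) would fail as written. You check $\eta$ on representables and then ``bootstrap to all $X$'' using that ``both functors preserve the colimits presenting $X$ from representables''. But $N_\xi K_\xi$ does not preserve those colimits: $K_\xi$ preserves all colimits, being a left adjoint, while $N_\xi$ is a \emph{right} adjoint and only preserves filtered colimits (because the $K_\xi\Delta[p,q]$ are finitely presentable); it has no reason to preserve the pushouts by which the skeletal filtration builds $X$ out of representables. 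Moreover, Reedy/Rezk weak equivalences are not closed under colimits that are not homotopy colimits, and one cannot certify the cell attachments as homotopy pushouts on the $N_\xi K_\xi$ side precisely because $N_\xi$ need not send them to pushouts at all. So the induction collapses at the first cell attachment. This is why, in \cite{BarwickKan1}, the unit equivalence is established by a genuine combinatorial analysis of iterated subdivisions of relative posets rather than by a formal colimit argument; any complete proof must supply an argument of that nature (or an alternative, e.g.\ a natural zigzag of Reedy equivalences comparing $N_\xi K_\xi$ with the identity), not a density-colimit bootstrap.
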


In general, cofibrant objects and fibrant objects are quite hard to describe in the model structure of Barwick--Kan. However, relative categories arising from model categories are fibrant.

\begin{theorem}[{\cite[Main Theorem]{Meier16}}]
Let $\mathsf{C}$ be a model category with weak equivalences given by $\mathsf{W}$. Then its underlying relative category $(\mathsf{C},\mathsf{W})$ is a fibrant object in $\mathsf{RelCat}$. 
\end{theorem}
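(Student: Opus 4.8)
The plan is to exploit the fact that the Barwick--Kan model structure on $\mathsf{RelCat}$ is right-induced from the Rezk model structure along the subdivided nerve $N_\xi$: fibrations and weak equivalences are exactly the maps $f$ for which $N_\xi(f)$ is a Rezk fibration, resp. equivalence. Since $N_\xi$ is a right adjoint it preserves the terminal object, so a relative category $\mathsf{C}$ is fibrant if and only if $N_\xi(\mathsf{C})$ is a fibrant bisimplicial set, i.e.\ a complete Segal space. The whole problem thus reduces to showing that $N_\xi(\mathsf{C})$ is a complete Segal space whenever $(\mathsf{C},\mathsf{W})$ underlies a model category, which amounts to three conditions: Reedy fibrancy, the Segal condition, and completeness.

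First I would replace $N_\xi(\mathsf{C})$ by Rezk's classification diagram $N(\mathsf{C})$, whose row of simplicial degree $n$ is the nerve of the subcategory of weak equivalences in the category $\mathsf{C}^{[n]}$ of $n$-chains of composable arrows. Barwick--Kan provide a natural Rezk-equivalence comparing $N_\xi$ with this classification diagram (the same type of unit comparison $N_\xi K_\xi \to \mathrm{id}$ invoked in \cref{cor:piquasicateq}), so it suffices to verify the three conditions for $N(\mathsf{C})$, up to a Reedy fibrant replacement. Reedy fibrancy is the formal part and can be arranged using the explicit structure of the representing objects $\Delta[p,q] = \mathsf{p} \times \mathsf{q}^{\simeq}$.

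The genuine homotopy-theoretic content sits in the Segal and completeness conditions, and this is where the model-category axioms enter. For the Segal condition one must show the Segal map $N(\mathsf{C})_n \longrightarrow N(\mathsf{C})_1 \times_{N(\mathsf{C})_0} \cdots \times_{N(\mathsf{C})_0} N(\mathsf{C})_1$ is a weak equivalence; following Rezk, the essential input is the functorial factorization together with the existence of cylinder and path objects, which allow one to rectify a compatible family of arrows sharing endpoints into an honest chain. Structurally, this reflects the fact that the mapping spaces of the hammock localization of $\mathsf{C}$ are computable inside its (co)fibration-category fragment, so that composition is coherently associative. For completeness one checks that the homotopy autoequivalences in $N(\mathsf{C})_1$ are detected by the degeneracy from $N(\mathsf{C})_0$; this is precisely the statement that an arrow which becomes invertible in $\ho{\mathsf{C}}$ is already a weak equivalence, which is the \emph{saturation} (two-out-of-six) property enjoyed by the weak equivalences of any model category.

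The hard part will be the Segal condition: unlike the formal reduction of the first paragraph and the completeness condition (a clean consequence of two-out-of-six), establishing that composition of zig-zags is coherently unobstructed requires the full strength of the factorization axioms and a careful analysis of the functor categories $\mathsf{C}^{[n]}$ of weak equivalences. It is exactly to make these resolution arguments uniform that one works with the subdivided nerve $N_\xi$ rather than the naive nerve. Finally, inspecting which axioms were actually used shows that only the fibration-category fragment is needed, which is how the argument generalizes from model categories to Meier's stated setting of fibration categories.
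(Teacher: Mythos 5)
You have correctly identified the first reduction, which is also how Meier's cited proof begins: since the Barwick--Kan model structure is transferred along $N_\xi$ and $N_\xi$, being a right adjoint, preserves the terminal object, $(\mathsf{C},\mathsf{W})$ is fibrant in $\mathsf{RelCat}$ if and only if $N_\xi(\mathsf{C},\mathsf{W})$ is a complete Segal space \emph{on the nose}. (Note that the paper itself does not prove this statement; it is imported from Meier.) The genuine gap comes immediately afterwards, when you pass to Rezk's classification diagram $N(\mathsf{C})$ via a Rezk equivalence and propose to ``verify the three conditions for $N(\mathsf{C})$, up to a Reedy fibrant replacement.'' Being a complete Segal space is a \emph{fibrancy} property in a left Bousfield localization, and fibrancy is not invariant under the localized (Rezk) equivalences: in fact \emph{every} bisimplicial set is Rezk-equivalent to a complete Segal space, namely its own Rezk fibrant replacement. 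So your criterion is vacuously satisfied by every relative category, while not every relative category is fibrant (for instance, one whose class of weak equivalences is not saturated fails the completeness condition, which is exactly the two-out-of-six input you correctly identify elsewhere in your sketch). The argument as written therefore proves too much; the inference from ``$N_\xi(\mathsf{C})$ is Rezk-equivalent to something whose fibrant replacement is a CSS'' to ``$N_\xi(\mathsf{C})$ is a CSS'' is invalid. This is precisely why the statement was not a formal corollary of Rezk's theorem on classification diagrams plus the Barwick--Kan comparison, both of which long predate Meier's result.

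To repair the argument one needs two non-formal inputs, and this is where the model-category (in Meier's generality, fibration-category) axioms genuinely enter. First, the comparison between $N_\xi(\mathsf{C})$ and the classification diagram must be upgraded from a Rezk equivalence to a \emph{levelwise} (Reedy) equivalence; the Segal and completeness conditions are statements that certain maps of spaces are weak equivalences, so they do transfer along levelwise equivalences between Reedy fibrant objects, but not along mere Rezk equivalences. Establishing this levelwise comparison for fibration categories is the role played by Cisinski-type theorems on categories of weak equivalences in Meier's proof. Second, one must prove that $N_\xi(\mathsf{C})$ itself is Reedy fibrant --- the step you dismiss as ``the formal part [that] can be arranged.'' It cannot be arranged by replacement, since replacing changes the object and fibrancy of $(\mathsf{C},\mathsf{W})$ is equivalent to fibrancy of $N_\xi(\mathsf{C})$ itself; it must be proven by constructing explicit lifts from the factorization and lifting axioms of $\mathsf{C}$. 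Finally, a small conceptual correction: the subdivided nerve is not a device chosen ``to make the resolution arguments uniform''; it is forced by the definition of the Barwick--Kan model structure, and it is the source of the difficulty rather than a convenience.
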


Finally, let us mention that since it is also possible to compare bisimplicial spaces together with Rezk's model structure with simplicial sets with Joyal's model structure using \cite{JoaylTierney07}, we can consider the following composition
\[
\begin{tikzcd}[column sep=5pc,row sep=3pc]
		\mathsf{sSet} 	\arrow[r, shift left=1.1ex, "p_1^*"{name=A}]
         &\mathsf{bisSet} \arrow[r, shift left=1.1ex, "K_{\xi}"{name=F}] \arrow[l, shift left=.75ex, "i_1^*"{name=B}] & \mathsf{RelCat} , \arrow[l, shift left=.75ex, "N_{\xi}"{name=U}]
            \arrow[phantom, from=F, to=U, , "\dashv" rotate=-90] \arrow[phantom, from=A, to=B, , "\dashv" rotate=-90]
\end{tikzcd}
\]
of Quillen equivalences to obtain a quasi-category from a fibrant relative category. Notice, however, that if $\mathsf{C}$ is a model category, the quasi-category $i_1^* N_\xi\mathsf{C}$ is quite different from the quasi-category $\mathcal{N}^{coh}(\mathsf{C}^{cf})$ obtained by applying the coherent nerve to the simplicially enriched subcategory of fibrant-cofibrant objects in $\mathsf{C}$ --- even though both are models for the same underlying $\infty$-category. This distinction will become relevant in Subsection \ref{S: the gluing proof}. 

\subsection{Internal homs in relative categories and bisimplicial sets}\label{subsection: internal homs}
Both the category of bisimplicial sets $\mathsf{bisSets}$ and the category of relative categories $\mathsf{RelCat}$ are cartesian closed symmetric monoidal model categories. Let us  quickly review the construction of their internal homs.

\begin{definition}
The internal hom for the cartesian structure on bisimplicial sets is defined as follows. Given two bisimplicial sets $X$ and $Y$, we denote by $Y^X$ the bisimplicial set whose bisimplices are given by
\[
(Y^X)_{p,q} = \mathrm{Hom}_{\mathsf{bisSets}}(X\times\Delta[p,q],Y)
\]
and where the bisimplicial structure is induced by that of $\Delta[\bullet,\bullet]$.
    \end{definition}

\begin{definition}
    
The internal hom for the cartesian structure on relative categories is defined as follows. Given two relative categories $\mathsf{C}$ and $\mathsf{D}$, we denote by $\mathsf{D}^\mathsf{C}$ the relative category
\begin{itemize}
\item[-] whose objects are relative functors $\mathsf{C} \rightarrow \mathsf{D}$;
\item[-] whose morphisms are relative functors $\mathsf{C}\times\mathsf{1}\rightarrow \mathsf{D}$;
\item[-] whose weak equivalences are the relative functors $\mathsf{C}\times\mathsf{1}^{\simeq}\rightarrow \mathsf{D}$.
\end{itemize}
\end{definition}

\subsection{The lax simplicial model structure on relative categories}
A classical result by Rezk ensures the presence of a simplicial enrichment compatible with the model structure for complete Segal spaces. See \cite{Rezk01} for more details. 

\begin{proposition}
The model category of bisimplicial sets $\mathsf{bisSets}$ forms a simplicial model category in which, given two bisimplicial sets $X$ and $Y$, the simplicial mapping space is defined by the $0$th row of the internal hom. That is,
\[
\mathrm{Map}_{\mathsf{bisSets}}(X,Y)_n = (X^Y)_{n,0} = \mathrm{Hom}_{\mathsf{bisSets}}(X\times\Delta[0,n],Y).
\]
\end{proposition}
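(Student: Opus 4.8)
The plan is to identify the asserted simplicial structure with the standard levelwise structure on a diagram category, and then transport it across the left Bousfield localization that defines Rezk's model structure. First I would regard a bisimplicial set as a simplicial object in simplicial sets, $X_\bullet\colon\Delta^{\op}\to\mathsf{sSet}$, $[p]\mapsto X_{p,\bullet}$, so that the second simplicial coordinate is the homotopical ``space'' direction; under this identification $\Delta[0,n]$ is the constant simplicial space with value $\Delta^n$. The functor category $\mathsf{Fun}(\Delta^{\op},\mathsf{sSet})$ carries the Reedy model structure over the Kan--Quillen model structure, and it is standard that a diagram category of simplicial sets with its Reedy model structure is a simplicial model category whose tensoring is computed levelwise, $(X\otimes K)_{p,\bullet}=X_{p,\bullet}\times K$. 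Since $X\otimes K=X\times(\text{const }K)$ and $\text{const }\Delta^n=\Delta[0,n]$, the induced mapping space satisfies
\[
\mathrm{Map}_{\mathsf{bisSets}}(X,Y)_n=\mathrm{Hom}_{\mathsf{bisSets}}(X\times\Delta[0,n],Y),
\]
which is exactly the stated formula and realizes $\mathrm{Map}(X,Y)$ as a row of the internal hom $Y^X$. The required two-variable adjunction $\mathrm{Map}(X\otimes K,Y)\cong\mathrm{Map}_{\mathsf{sSet}}(K,\mathrm{Map}(X,Y))\cong\mathrm{Map}(X,Y^K)$ then follows formally from cartesian closedness of $\mathsf{bisSets}$ together with the fact that $K\mapsto\text{const }K$ preserves finite products.

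Next I would recall, following Rezk \cite{Rezk01}, that the complete Segal space model structure on $\mathsf{bisSets}$ is the left Bousfield localization of the Reedy model structure at a small set of maps: the Segal (spine) inclusions enforcing the Segal condition, together with a single map enforcing completeness. The Reedy model structure here is left proper (every object is cofibrant, its cofibrations being the monomorphisms) and cellular, and it is simplicial by the previous paragraph. I would then invoke the general localization theorem that the left Bousfield localization of a left proper cellular simplicial model category at a set of maps is again a simplicial model category, with the \emph{same} tensoring, cotensoring and simplicial mapping space (Hirschhorn, \emph{Model Categories and Their Localizations}, Theorem 4.1.1). Applied to this localization, it produces precisely the simplicial model structure on $\mathsf{bisSets}$ with Rezk's model structure and the mapping space of the first step, which is the assertion of the proposition.

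The hard part, which the localization theorem packages for us, is the acyclicity half of the pushout--product axiom for the localized weak equivalences. The cofibration half is automatic: cofibrations are monomorphisms, and the Leibniz (pushout--product) of two monomorphisms is again a monomorphism in any presheaf category. What remains is to show that for a monomorphism $i\colon A\to B$ and a Kan-trivial cofibration $j\colon K\to L$ of simplicial sets, the Leibniz product
\[
A\otimes L\,\cup_{A\otimes K}\,B\otimes K\longrightarrow B\otimes L
\]
is a Rezk-trivial cofibration. Were one to prove this by hand, the main obstacle would be controlling the interaction of $-\otimes K$ with the local equivalences, i.e.\ showing that tensoring with simplicial sets is a left Quillen operation for the localized structure; this ultimately rests on the constant-diagram functor $\mathsf{sSet}\to\mathsf{bisSets}$ being a monoidal left Quillen functor and on the localizing set being stable, up to local equivalence, under this tensoring. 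Citing Hirschhorn's theorem lets us bypass this verification, and it is the only genuinely nontrivial point in the argument.
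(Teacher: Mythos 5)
Your argument is correct and coincides with the proof of the result the paper is quoting: the paper itself gives no proof of this proposition, citing Rezk \cite{Rezk01}, and Rezk's own argument is exactly the one you give (the Reedy model structure on simplicial spaces is simplicial with the levelwise tensoring, and the complete Segal space structure is a left Bousfield localization of it at a set of maps, which by Hirschhorn's localization theorem inherits the same simplicial structure, tensoring and mapping spaces). The only point worth flagging is notational: the paper's expression $(X^Y)_{n,0}$ should read $(Y^X)_{0,n}$ to be consistent with its own definition of the internal hom, and your identification of $\Delta[0,n]$ as the constant simplicial space on $\Delta^n$ resolves this correctly.
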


The simplicial model structure on relative categories, however, does not seem to appear in the literature, so we provide it here. For this, let us recall first that the cartesian product of  $\mathsf{RelCat}$ is a left Quillen bifunctor adjoint to an internal hom bifunctor that we note $\mathsf{C}^{\mathsf{D}}$, defined in the previous section. 

\medskip

Now, given any closed symmetric monoidal model categories $\mathsf{C}$ and $\mathsf{M}$, a symmetric monoidal left Quillen functor $L:\mathsf{M}\rightarrow\mathsf{C}$ equips $\mathsf{C}$ with the structure of a symmetric monoidal model category tensored and enriched over $\mathsf{M}$, where the enrichment is defined uniquely as a right adjoint to the external tensor product

\[
\begin{tikzcd}[column sep=3pc,row sep=0pc]
\mathsf{C}\times\mathsf{M} \arrow[r]
&\mathsf{C} \\
(X,A) \arrow[r,mapsto]
&X \otimes L(A)~. 
\end{tikzcd}
\]

Indeed, the pushout-product axiom for this external tensor product follows then from the pushout-product axiom of the internal tensor product of $\mathsf{C}$.

\medskip

We are going to apply this to the case where $\mathsf{C}= \mathsf{RelCat}$ and $\mathsf{M}=\mathsf{sSet}$, so let us provide such a left Quillen functor. First, let us recall from \cite{JoaylTierney07} the construction of the box bifunctor 
\[
\square: \mathsf{sSet} \times \mathsf{sSet} \longrightarrow \mathsf{bisSet}
\]
defined as $X \square Y_{m,n} \coloneqq X_m\times Y_n$. By \cite[Proposition 4.6]{JoaylTierney07}, it defines a left Quillen bifunctor
\[
\square :  \mathsf{sSet} \times  \mathsf{sSet} \rightarrow \mathsf{bisSet}
\]
where the category of simplicial sets $\mathsf{sSet}$ is endowed with the Joyal model structure and the category of bisimplicial sets $\mathsf{bisSet}$ is endowed with the Reedy model structure. Let us 
point out that cofibrations in the Joyal model structure (monomorphisms) agree with cofibrations in the Kan--Quillen model structure, and that class of weak equivalences of simplicial sets in the Kan--Quillen model structure is included in the class of weak equivalences in the Joyal model structure. This implies that the box functor also defines a left Quillen bifunctor when $\mathsf{sSet}$ is endowed with the Kan--Quillen model structure. 

\medskip

The second factor projection $p_2:\Delta\times\Delta \longrightarrow\Delta$ induces a functor $p_2^*:\mathsf{sSet} \rightarrow \mathsf{bisSet}$ which also given by $\Delta^0 \square (-)$, so it forms a left Quillen functor. The left Quillen functor functor $L$ is will be given as the composite
\[
L \coloneqq K_{\xi} \circ p_2^*: \mathsf{sSet}\rightarrow \mathsf{RelCat}.
\]

Notice, however, that the functor $L$ defined above is not strong monoidal. 

\begin{lemma}\label{lemma:laxmonL}
The functor $L$ is a symmetric lax monoidal functor.
\end{lemma}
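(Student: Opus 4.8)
The plan is to split $L = K_\xi \circ p_2^*$ and treat the two factors separately. The functor $p_2^* : \mathsf{sSet} \to \mathsf{bisSet}$ is strong symmetric monoidal: products of presheaves are computed objectwise, and $p_2^*$ is restriction along $p_2 \colon \Delta\times\Delta \to \Delta$, so $(p_2^*(X\times Y))_{m,n} = (X\times Y)_n = X_n\times Y_n = (p_2^*X \times p_2^* Y)_{m,n}$, and it sends the point to the point. Thus the lax structure on $L$ amounts to a lax structure on $K_\xi$ evaluated on the objects $p_2^*(-)$, and since every functor in sight preserves terminal objects, the unit constraint is an identity. The genuine content is therefore the construction of natural maps $L(X)\times L(Y) \to L(X\times Y)$ in the forward (lax) direction, together with their coherence.

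I would next reduce this to representables. Since $p_2^*$ (a restriction functor, hence simultaneously a left and a right adjoint) and $K_\xi$ (left adjoint to $N_\xi$) are cocontinuous, $L$ preserves all colimits; and because $\mathsf{sSet}$ and $\mathsf{RelCat}$ are cartesian closed, $-\times-$ preserves colimits in each variable. Hence the two bifunctors $(X,Y)\mapsto L(X)\times L(Y)$ and $(X,Y)\mapsto L(X\times Y)$ both preserve colimits separately in each variable. By density of representables in $\mathsf{sSet} = \mathsf{Psh}(\Delta)$, a natural transformation between such bifunctors is the unique cocontinuous extension of its restriction to pairs of representables, and the associativity, unit and symmetry diagrams commute as soon as they commute on representables. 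Using $L(\Delta^q) = K_\xi(\Delta[0,q]) = \xi\mathsf{q}^\simeq$ together with the strong monoidality of $p_2^*$, I identify $L(\Delta^m\times\Delta^n) \cong K_\xi(\Delta[0,m]\times\Delta[0,n])$, so it remains to produce natural maps $\mu_{m,n}\colon \xi\mathsf{m}^\simeq \times \xi\mathsf{n}^\simeq \to K_\xi(\Delta[0,m]\times\Delta[0,n])$ of bi-cosimplicial relative categories, and to check the three coherences there.

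The map $\mu_{m,n}$ is where the work lies, and this is the step I expect to be the main obstacle. It is tempting to extract it from the adjunction $K_\xi \dashv N_\xi$: since $N_\xi(\mathsf C)_{p,q}$ is a hom-set out of a single fixed relative category, $N_\xi$ preserves products and is strong monoidal. However, doctrinal adjunction then only endows its left adjoint $K_\xi$ with an \emph{oplax} structure, giving the wrong-way comparison $K_\xi(A\times B)\to K_\xi A\times K_\xi B$; the same is true of $L$. Thus the lax maps cannot be obtained formally and must be built by hand from the combinatorics of the two-fold subdivision $\xi$ applied to products of the linearly ordered relative categories $\mathsf{m}^\simeq$ and $\mathsf{n}^\simeq$. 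Concretely, $\Delta^m\times\Delta^n$ is the nerve of the product poset $[m]\times[n]$, whose nondegenerate simplices are the monotone lattice paths (shuffles) from $(0,0)$ to $(m,n)$; each such path is a simplex of $\Delta^m\times\Delta^n$ and hence contributes a structure map into the colimit $L(\Delta^m\times\Delta^n)$. The map $\mu_{m,n}$ is the resulting Eilenberg--Zilber/shuffle comparison, assembling a pair of subdivided chains into the subdivision of the product grid; its naturality in $[m]$ and $[n]$, and the verification of associativity, unitality and symmetry, reduce after the first two steps to a finite check on these shuffle decompositions, which is routine but combinatorially involved.
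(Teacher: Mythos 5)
Your first two steps coincide with the paper's proof: $p_2^*$ is strong monoidal, $L$ is cocontinuous, and by density the problem reduces to producing coherent natural maps $L(\Delta^m)\times L(\Delta^n)\to L(\Delta^m\times\Delta^n)$ on representables. The genuine gap is that you never produce them. Your ``Eilenberg--Zilber/shuffle comparison'' is named but not defined: enumerating the (maximal) nondegenerate simplices of $\Delta^m\times\Delta^n$ gives maps \emph{into} the colimit $L(\Delta^m\times\Delta^n)$ from copies of $L(\Delta^{m+n})$, but it says nothing about how a pair of objects or morphisms of $\xi\mathsf{m}^{\simeq}\times\xi\mathsf{n}^{\simeq}$ selects such data, which is what a map \emph{out of} the product requires. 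Nor is this a ``routine'' deferral: the analogous map provably does not exist for the single subdivision. If $\mu_{X,Y}\colon\mathrm{Sd}(X)\times\mathrm{Sd}(Y)\to\mathrm{Sd}(X\times Y)$ were natural and unital, then naturality against the vertex inclusions $\Delta^0\to\Delta^1$ forces $\mu(\{i\},T)=\{i\}\times T$ and $\mu(S,\{j\})=S\times\{j\}$, and monotonicity then forces $\mu(\{01\},\{01\})$ to contain all four vertices of $[1]\times[1]$, which is not a chain. So any correct construction must genuinely exploit the \emph{two-fold} subdivision $\xi$, where these constraints no longer collide; that construction is exactly the content of the lemma, and it is absent from your proposal.

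This is also where your route diverges from the paper's. The paper does not attack the combinatorics of $\xi$ directly: it observes that $K_\xi$ sends $\Delta[0,p]$, and products of these, to \emph{maximal} relative categories, and that $(-)^{\simeq}\colon\mathsf{Cat}\to\mathsf{RelCat}$ preserves products, so the whole comparison takes place between underlying categories; these are computed as $c\circ\mathrm{Sd}^2\circ\mathrm{diag}$, where $\mathrm{diag}$ is strong monoidal, and the lax structure is then extracted from the adjunction $c\,\mathrm{Sd}^2\dashv\mathrm{Ex}^2\mathcal{N}$. Your doctrinal-adjunction objection is correct as category theory --- the mate of a strong monoidal structure on a right adjoint is an \emph{oplax} structure on its left adjoint, and a left adjoint of a strong monoidal functor need not be lax monoidal at all (the free-monoid functor $\mathsf{Set}\to\mathsf{Mon}$ admits no unital lax structure for the cartesian products) --- so it in fact cuts against the paper's concluding step as well as against your own shortcut. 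But having rightly rejected the formal argument, your proposal owes an explicit construction and coherence verification of $\mu_{m,n}$ and supplies neither; and the maximality reduction, which is the paper's key device for making the representable case tractable, does not appear in your write-up.
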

\begin{proof}
The functor $p_2^*$ is strong monoidal by construction: for any simplicial sets $K$ and $L$, we have
\[
p_2^*(K\times L) = \Delta^0\square (K\times L)\cong (\Delta^0\square K) \times (\Delta^0\square L).
\]
Therefore, we have then to understand the behaviour of $K_{\xi}$ with respect to the cartesian product. The functor $L$ preserves colimits because so does $\Delta^0\square (-)$ and $K_{\xi}$. Moreover, every simplicial set is a colimit of standard simplices, and the cartesian product preserves colimits in each variable. Thus, we just need to check what happens with standard simplices, that is, to compare $L(\Delta^p\times\Delta^q)$ with $L(\Delta^p)\times L(\Delta^q)$ for every integers $p$ and $q$.

\medskip

First, we have $p_2^*(\Delta^p\times\Delta^q) = \Delta[0,p]\times\Delta[0,q]$. By construction, the functor $K_{\xi}$ sends any $\Delta[0,p]$ and any cartesian product of these to a maximal relative category, where every arrow is a weak equivalence. Moreover, the maximal functor 
\[
(-)^{\simeq}: \mathsf{Cat} \longrightarrow \mathsf{RelCat}
\]
which send a category $\mathsf{C}$ to the relative category  $(\mathsf{C},\mathsf{C})$ preserves cartesian products (as it is actually right adjoint to the forgetful functor), so we only need to compare the underlying categories of the relative categories $K_{\xi}(\Delta[0,p]\times\Delta[0,q])$ and $K_{\xi}(\Delta[0,p])\times K_{\xi}(\Delta[0,q])$. Recall the commutative square
\[
\begin{tikzcd}[column sep=3pc,row sep=3pc]
\mathsf{sSet} \arrow[r, "K_{\xi}"] \arrow[d, "\mathrm{diag}"'] 
  &\mathsf{RelCat} \arrow[d, "U"] \\
\mathsf{sSet} \arrow[r, "c ~ \circ ~\mathrm{Sd}^2"'] 
  & \mathsf{Cat}
\end{tikzcd}
\]
where $U$ is the forgetful functor, $\mathrm{diag}$ the diagonal of bisimplicial sets, $\mathrm{Sd}^2$ the iterated barycentric subdivision and $c$ the fundamental category. 

\medskip

From this square we can deduce that the underlying categories of
$K_{\xi}(\Delta[0,p]\times\Delta[0,q])$ and $K_{\xi}(\Delta[0,p])\times K_{\xi}(\Delta[0,q])$ are respectively given by $c\mathrm{Sd}^2\mathrm{diag}(\Delta[0,p] \times\Delta[0,q])$ and by the cartesian product $c\mathrm{Sd}^2\mathrm{diag}(\Delta[0,p]) \times c\mathrm{Sd}^2\mathrm{diag}(\Delta[0,q])$.

\medskip

The diagonal functor $\mathrm{diag}$ is clearly strong monoidal. The functor $c\circ \mathrm{Sd}^2$ admits $\mathrm{Ex}^2\circ\mathcal{N}$ as right adjoint, where $\mathcal{N}$ is the nerve and $\mathrm{Ex}^2$ the two-fold iteration of Kan's Ex functor. As a right adjoint, $\mathrm{Ex}^2\circ\mathcal{N}$ preserves cartesian products, so it is strong monoidal since the monoidal structure is the cartesian one. Any left adjoint to a strong monoidal functor is lax monoidal, hence the lax monoidality of $c\circ \mathrm{Sd}^2\circ \mathrm{diag}$.
\end{proof}

\begin{remark}
The lax monoidality can also be seen from the construction of the subdivision functor: on a standard simplex, it is defined as the nerve of the category of non-degenerate simplices. The definition is then extended to any simplicial set $X$ by setting  
\[
\mathrm{Sd}(X) \coloneqq \colim_{\Delta^n \rightarrow X}\mathrm{Sd}(\Delta^n)~. 
\]
\end{remark}

Lax monoidality is sufficient to prove the following result. 

\begin{theorem}\label{thm: lax simplicial tensoring and cotensoring}
The model category $\mathsf{RelCat}$ forms a lax simplicial model category endowed with:

\begin{itemize}
\item[(1)] the tensoring defined by $\mathsf{C} \times L(X)$ for any relative category $\mathsf{C}$ and simplicial set $X$, which preserves colimits in each variable and defines a left Quillen bifunctor;


\item[(2)] cotensoring defined by $\mathsf{C}^{L(X)}$ for any relative category $\mathsf{C}$ and simplicial set $X$;


\item[(3)] simplicial hom spaces defined by
\[
\mathrm{Map}_{\mathsf{RelCat}}(\mathsf{C},\mathsf{D}) \coloneqq \mathrm{Hom}_{\mathsf{RelCat}}(\mathsf{C}\times L(\Delta^{\bullet}),\mathsf{D})\cong \mathrm{Hom}_{\mathsf{RelCat}}(\mathsf{C},\mathsf{D}^{L(\Delta^{\bullet})})
\]
for any relative categories $\mathsf{C}$ and $\mathsf{D}$, where the simplicial structure is induced by the cosimplicial structure of $\Delta^{\bullet}$.
\end{itemize}
\end{theorem}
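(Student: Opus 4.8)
The statement to prove is Theorem \ref{thm: lax simplicial tensoring and cotensoring}: that $\mathsf{RelCat}$ is a \emph{lax} simplicial model category, with tensoring $\mathsf C \times L(X)$, cotensoring $\mathsf C^{L(X)}$, and the indicated mapping spaces. The key input is already in hand, namely that $L = K_\xi \circ p_2^*$ is a symmetric \emph{lax} monoidal left Quillen functor (Lemma \ref{lemma:laxmonL}). So the plan is to invoke the general mechanism, described in the paragraph immediately preceding the theorem, by which a symmetric monoidal left Quillen functor $L : \mathsf M \to \mathsf C$ between closed symmetric monoidal model categories endows $\mathsf C$ with the structure of a module (tensored, cotensored, enriched) over $\mathsf M$. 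Here $\mathsf M = \mathsf{sSet}$ with the Kan--Quillen model structure and $\mathsf C = \mathsf{RelCat}$ with the Barwick--Kan structure.

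\textbf{First steps.} First I would set up the external tensor product
\[
\begin{tikzcd}[column sep=3pc,row sep=0pc]
\mathsf{RelCat}\times\mathsf{sSet} \arrow[r]
&\mathsf{RelCat} \\
(\mathsf C,X) \arrow[r,mapsto]
&\mathsf C \times L(X)~,
\end{tikzcd}
\]
and verify it is part of a two-variable adjunction. Since $\mathsf{RelCat}$ is cartesian closed with internal hom $\mathsf D^\mathsf C$, and since $L$ preserves colimits (being a left adjoint, composed of two left adjoints), the functor $\mathsf C \times L(-)$ preserves colimits in each variable; its right adjoints are the cotensoring $\mathsf C^{L(X)}$ and the mapping space $\mathrm{Map}(\mathsf C, \mathsf D)_\bullet = \mathrm{Hom}_{\mathsf{RelCat}}(\mathsf C \times L(\Delta^\bullet), \mathsf D)$. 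The adjunction isomorphisms of (3) then follow formally from the cartesian closedness of $\mathsf{RelCat}$ together with the defining adjunction of the internal hom. The simplicial structure on the mapping space is induced by the cosimplicial object $L(\Delta^\bullet)$, i.e.\ by functoriality in $\Delta^\bullet$.

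\textbf{The model-categorical axiom.} The substance of the theorem is the pushout--product (SM7) axiom for the external tensoring. The strategy is to reduce it to the internal pushout--product axiom of $\mathsf{RelCat}$. Given a cofibration $i : A \hookrightarrow B$ in $\mathsf{sSet}$ and a cofibration $j : \mathsf C \hookrightarrow \mathsf D$ in $\mathsf{RelCat}$, I apply the left Quillen functor $L$ to $i$ to obtain a cofibration $L(i) : L(A) \to L(B)$ in $\mathsf{RelCat}$, which is moreover acyclic if $i$ is. Then the external pushout--product of $i$ and $j$ coincides with the \emph{internal} pushout--product of $L(i)$ and $j$ in the cartesian model category $\mathsf{RelCat}$, because the external tensor is literally the composite of $L$ in the second variable with the cartesian product. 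Since the cartesian product on $\mathsf{RelCat}$ satisfies its own pushout--product axiom (being a monoidal model category), the external axiom follows. The verification that $L$ is left Quillen — sending (acyclic) cofibrations to (acyclic) cofibrations — is where the passage through the Joyal/Reedy structures set up before Lemma \ref{lemma:laxmonL} is used, together with the Quillen equivalence $K_\xi \dashv N_\xi$.

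\textbf{The main obstacle.} The subtle point, and the reason the theorem asserts only a \emph{lax} simplicial structure, is that $L$ is lax but not strong monoidal: $L(X \times Y) \not\cong L(X) \times L(Y)$ in general, as already observed after Lemma \ref{lemma:laxmonL}. Consequently the associativity/coherence of the $\mathsf{sSet}$-action, $(\mathsf C \otimes L(X)) \otimes L(Y) \simeq \mathsf C \otimes L(X \times Y)$, does not hold as an isomorphism but only up to the lax structure map $L(X) \times L(Y) \to L(X \times Y)$; this is precisely what ``lax'' encodes and what must be tracked carefully rather than asserted. I expect this coherence bookkeeping — confirming that the lax monoidal structure maps of $L$ assemble the external tensor into a genuine (lax) $\mathsf{sSet}$-module structure compatible with the model structures — to be the main point requiring care, whereas the pushout--product reduction and the adjunction isomorphisms are formal once $L$ is known to be lax monoidal and left Quillen.
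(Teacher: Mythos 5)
Your proposal is correct and follows essentially the same route as the paper: the paper's proof likewise reduces the pushout--product axiom for $\mathsf{C} \times L(-)$ to the pushout--product axiom of the cartesian product in $\mathsf{RelCat}$ (using that $\times$ is a left Quillen bifunctor and $L$ is left Quillen), obtains the pullback--corner axiom and the tensoring--cotensoring adjunction from cartesian closedness, and defines the simplicial hom via $\mathrm{Hom}_{\mathsf{RelCat}}(\mathsf{C}\times L(\Delta^{\bullet}),\mathsf{D})$. The coherence bookkeeping you flag as the main obstacle is precisely what the adjective \emph{lax} is designed to absorb; the paper does not track it beyond remarking that lax monoidality of $L$ suffices.
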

\begin{proof}
The conditions needed for a lax simplicial model category are all satisfied because of the following properties of the cartesian product and the internal hom: 
\begin{itemize}
\item The pushout-product axiom for the tensoring is an application of the push\-out-product axiom for the cartesian product, since $\times$ is a left Quillen bifunctor and $L$ a left Quillen functor.

\item The dual pushout-product, or pullback-corner axiom for the cotensoring boils down to an application of the pullback-corner axiom for the internal hom, which holds true in any closed symmetric monoidal model category.

\item The adjunction relation between tensoring and cotensoring is a consequence of the adjunction relation between the cartesian product and the internal hom.

\item The existence of the tensoring over $\mathsf{sSet}$ for any model category $\mathsf{C}$ implies automatically the existence of a simplicial hom defined between two objects $A$ and $B$ by setting 
\[
\mathrm{Map}_{\mathsf{C}}(A,B)= \mathrm{Hom}_{\mathsf{C}}(A \otimes \Delta^{\bullet},B),
\]
which is the unique definition forced by the adjunction requirement. This is a consequence of the fact that, for any simplicial set $K$, we have $\mathrm{Hom}_{\mathsf{sSet}}(\Delta^n,K)= K_n$ by the Yoneda lemma. \qedhere
\end{itemize}
\end{proof}
\begin{remark}

Note that $L(\Delta^n)=\xi\mathsf{n}^{\simeq}$, so the theorem above provides functorial simplicial and cosimplicial resolutions in the model category of relative categories $\mathsf{RelCat}$. They are given, for any relative category $\mathsf{C}$, by $\mathsf{C} \times \xi (\bullet)^{\simeq}$ and $\mathsf{C}^{\xi(\bullet)^{\simeq}}$.
\end{remark}

\subsection{Cylinders, path objects and arrow categories in relative categories and complete Segal spaces}
The goal of this final subsection is to construct explicit cylinder and path objects in relative categories and in bisimplicial sets. This will imply, in particular, that certain maps which are relevant in Subsection \ref{Subsection: point-set models for coalgebras over endofunctors} are indeed fibrations. 

\subsubsection{The usual path and cylinder objects} Let us first recall the usual definition of a cylinder or a path object in a general model category. 

\begin{definition}
Let $\mathsf{C}$ be a model category.
\begin{itemize}
\item[(1)] A good cylinder object of $X \in \mathrm{ob}(\mathsf{C})$ is a factorization of the codiagonal map $\mathrm{id} \amalg \mathrm{id}: X \amalg X \longleftarrow X$ as 
\[
X \amalg X\rightarrowtail \mathrm{Cyl}(X)\stackrel{\sim}{\rightarrow} X
\]
where the first map is a cofibration and the second map a weak equivalence.


\item[(2)] A good path object $Y\in \mathrm{ob}(\mathsf{C})$ is a factorization of the diagonal map $(\mathrm{id},\mathrm{id}): Y \longrightarrow Y \times Y$ as 
\[
Y\stackrel{\sim}{\rightarrow}\mathrm{Path}(Y)\twoheadrightarrow Y\times Y~,
\]
where the first map is a weak equivalence and the second one a fibration.
\end{itemize}
\end{definition}

\begin{proposition}\label{prop: goodpath}
Let $\mathsf{C}$ be a lax simplicial model category. The tensoring $(-)\otimes\Delta^1$ and the cotensoring $(-)^{\Delta^1}$ define, respectively, functorial good cylinder objects on cofibrant objects and functorial good path objects on fibrant objects.

\end{proposition}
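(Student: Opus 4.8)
The plan is to reproduce the classical \emph{SM7} argument --- that in a simplicial model category $X\otimes\Delta^1$ is a cylinder object for cofibrant $X$ and $Y^{\Delta^1}$ a path object for fibrant $Y$ --- while checking that it uses \emph{only} the two structural facts recorded in \cref{thm: lax simplicial tensoring and cotensoring}: that the tensoring $(-)\otimes(-)$ is a left Quillen bifunctor, and that the cotensoring $(-)^{(-)}$ satisfies the pullback-corner axiom. The whole construction is driven by the standard interval
\[
\partial\Delta^1 \overset{i}{\hookrightarrow} \Delta^1 \overset{s}{\longrightarrow} \Delta^0
\]
in $\mathsf{sSet}$, equipped with the Kan--Quillen model structure (for which $L$ is left Quillen and for which $s$ is a weak equivalence between cofibrant objects). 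Before anything else I would record the two identifications that make the argument run: since the tensoring preserves colimits in each variable and $L(\Delta^0)$ is the terminal relative category, i.e.\ the unit for the cartesian product, one has $X\otimes\Delta^0\cong X$ and, using $\partial\Delta^1=\Delta^0\amalg\Delta^0$, also $X\otimes\partial\Delta^1\cong X\amalg X$; dually $Y^{\Delta^0}\cong Y$ and $Y^{\partial\Delta^1}\cong Y\times Y$.

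For the cylinder object I would take $X$ cofibrant, so that $\emptyset\to X$ is a cofibration, and form the pushout-product of $\emptyset\to X$ against the cofibration $i$. Since $\emptyset\otimes K\cong\emptyset$ for all $K$, this pushout-product collapses to the map $X\otimes\partial\Delta^1\to X\otimes\Delta^1$, which is therefore a cofibration; under the identification above it is exactly the canonical map $X\amalg X\rightarrowtail X\otimes\Delta^1$. Because $X$ is cofibrant, $X\otimes(-)$ is a left Quillen functor, so by Ken Brown's lemma it carries the weak equivalence $s$ to a weak equivalence $X\otimes\Delta^1\overset{\sim}{\to}X\otimes\Delta^0\cong X$. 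Composing the two maps, each endpoint inclusion followed by $s$ is the identity, so the composite $X\amalg X\to X$ is the fold map; this is precisely the factorization required of a good cylinder object, and it is functorial in $X$ because $\otimes$ is a bifunctor.

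The path object is the formal dual. I would take $Y$ fibrant, so $Y\to\ast$ is a fibration, and form the pullback-corner map of $i$ against $Y\to\ast$. Using $\ast^{(-)}\cong\ast$ together with the fact that the cotensor sends the coproduct $\partial\Delta^1=\Delta^0\amalg\Delta^0$ to the product $Y^{\Delta^0}\times Y^{\Delta^0}\cong Y\times Y$, this map becomes a fibration $Y^{\Delta^1}\twoheadrightarrow Y\times Y$. For fibrant $Y$ the cotensoring $Y^{(-)}\colon\mathsf{sSet}^{\mathrm{op}}\to\mathsf{C}$ is a right Quillen functor, so it carries $s$ to a weak equivalence $Y\cong Y^{\Delta^0}\overset{\sim}{\to}Y^{\Delta^1}$; postcomposing with the two projections recovers the diagonal $Y\to Y\times Y$, giving the desired good path object, again functorially.

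The only point that might look delicate is the pair of identifications $X\otimes\Delta^0\cong X$ and $X\otimes\partial\Delta^1\cong X\amalg X$ (and their duals), since $L$ is merely lax monoidal. I expect this to be the main --- but ultimately harmless --- obstacle: the argument never multiplies two simplices together, applying $L$ only to $\Delta^0$, to $\Delta^1$, and to the coproduct $\Delta^0\amalg\Delta^0$, so the failure of $L$ to be strong monoidal is never invoked. All that is needed is that $L$ preserves coproducts and the initial object (it is a composite of left adjoints) and sends $\Delta^0$ to the monoidal unit, after which the proposition reduces cleanly to the two Quillen-bifunctor properties of \cref{thm: lax simplicial tensoring and cotensoring}.
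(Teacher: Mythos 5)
Your proof is correct and follows essentially the same route as the paper's: both obtain the cofibration $X\amalg X\rightarrowtail X\otimes\Delta^1$ (resp.\ the fibration $Y^{\Delta^1}\twoheadrightarrow Y\times Y$) from the pushout-product (resp.\ pullback-corner) axiom applied to $\partial\Delta^1\hookrightarrow\Delta^1$, after the same identifications $X\otimes\Delta^0\cong X$ and $X\otimes\partial\Delta^1\cong X\amalg X$. The only cosmetic difference is the weak-equivalence step: where you invoke Ken Brown's lemma for $X\otimes\Delta^1\xrightarrow{\sim}X$ (and its dual for $Y\xrightarrow{\sim}Y^{\Delta^1}$), the paper argues by two-out-of-three, using that $\mathrm{id}\otimes d^0$ and $\mathrm{id}\otimes d^1$ are acyclic cofibrations since the $d^i$ are the horn inclusions $\Lambda^1_i\hookrightarrow\Delta^1$.
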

\begin{proof}
\textit{Tensoring by $\Delta^1$.}
Let $A$ be a cofibrant object in $\mathsf{C}$. The cofaces $d^0,d^1:\Delta^0\rightarrow\Delta^1$ induce two morphisms $\mathrm{id}\otimes d^0,\mathrm{id}\otimes d^1:A \cong A \otimes \Delta^0\rightarrow A\otimes\Delta^1$. Since actually $d^0$ and $d^1$ are the inclusions of horns $\Lambda_0^1\rightarrow\Delta^1$ and $\Lambda_1^1\rightarrow\Delta^1$, they are acyclic cofibrations, so $\mathrm{id}\otimes d^0,\mathrm{id}\otimes d^1$ are acyclic cofibrations as well by the pushout-product axiom satisfied by the tensoring, using that $A$ is cofibrant. Thus, the morphism
\[
\begin{tikzcd}
A\amalg A \cong  A\otimes\Delta^1 \amalg A\otimes\Delta^1 \arrow[rr,"(\mathrm{id} \otimes d^0 \amalg \mathrm{id} \otimes d^1)"]
&
&A\otimes\Delta^1
\end{tikzcd}
\]
equals the morphism $A\otimes\partial\Delta^1\rightarrow A\otimes\Delta^1$. And it is induced by the cofibration $\partial\Delta^1\rightarrow\Delta^1$, so it forms a cofibration as well still by the external pushout-product property.

\medskip

Finally, the codegeneracy $s^0:\Delta^1\rightarrow\Delta^0$ induces $\mathrm{id} \otimes s^0:A\otimes\Delta^1\rightarrow A\otimes\Delta^0 \cong A$ which satisfies that $(\mathrm{id} \otimes s^0)\circ (\mathrm{id} \otimes d^0) = (\mathrm{id}\otimes s^0d^0)= \mathrm{id} = (\mathrm{id} \otimes s^0)\circ (\mathrm{id} \otimes d^1)$, so by the two-out-of-three property of weak equivalences the morphism $\mathrm{id} \otimes s^0$ is a weak equivalence. In conclusion, we do get a factorization of $\mathrm{id} \amalg \mathrm{id}: A\amalg A\rightarrow A$ as
\[
A \amalg A \rightarrowtail A\otimes\Delta^1\stackrel{\sim}{\rightarrow} A
\]
by considering $(\mathrm{id} \otimes d^0 \amalg \mathrm{id} \otimes d^1)$ followed by $\mathrm{id} \otimes s^0$. 

\medskip

\textit{Cotensoring by $\Delta^1$.} The proof is very similar to the previous one. Let $A$ be a fibrant object in $\mathsf{C}$. The acyclic cofibrations $d^0$ and $d^1$ induce two maps $(d^0)^*,(d^1)^*:A^{\Delta^1}\rightarrow A$, which are acyclic fibrations by the pullback-corner axiom for the cotensoring since $A$ is fibrant. Then, the morphism $((d^0)^*,(d^1)^*): A^{\Delta^1}\rightarrow A\times A$ is the cotensoring of $\mathrm{id}$ with the cofibration $\partial\Delta^1\rightarrow \Delta^1$, so it forms a fibration still  by the pullback-corner axiom.

\medskip

Finally $s^0$ induces a morphism $(s^0)^*:A\rightarrow A^{\Delta^1}$ satisfying $(d^0)^*\circ (s^0)^* = (d^1)^*\circ (s^0)^* = \mathrm{id}$, so $(s^0)^*$ is a weak equivalence and we get a factorization
\[
A\stackrel{\sim}{\rightarrow} A^{\Delta^1}\twoheadrightarrow A\times A
\]
as $(s^0)^*$ followed by $((d^0)^*,(d^1)^*)$. 
\end{proof}

\begin{corollary}\label{cor:evaluationfibration}\leavevmode

\begin{itemize}
    \item[(1)] The endofunctor $(-)^{\Delta[0,1]}$ of bisimplicial sets defines a functorial good path object for any fibrant object in the Rezk model structure. 


    \item[(2)]  The endofunctor $(-)^{\xi\mathsf{1}^{\simeq}}$ of relative categories defines a functorial good path object for any fibrant object in the Barwick--Kan model structure.
\end{itemize}


In particular, we get that:

 
\begin{itemize}
\item[(1)] For any fibrant relative category $\mathsf{C}$, the map  $(\mathrm{ev}_0,\mathrm{ev}_1):\mathsf{C}^{\xi\mathsf{1}^{\simeq}}\rightarrow \mathsf{C}\times \mathsf{C}$ is a fibration in $\mathsf{RelCat}$;


\item[(2)] For any complete Segal space $X$, the map  $(\mathrm{ev}_0,\mathrm{ev}_1):X^{\Delta[0,1]}\rightarrow X\times X$ is a fibration in $\mathsf{bisSet}.$
\end{itemize}
\end{corollary}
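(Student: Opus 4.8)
The plan is to deduce every assertion from Proposition \ref{prop: goodpath} by recognizing both $(-)^{\Delta[0,1]}$ and $(-)^{\xi\mathsf{1}^{\simeq}}$ as the ``cotensor by $\Delta^1$'' functor in the relevant (lax) simplicial model structure. Once this identification is made, the two path-object statements are literally the good-path-object factorization $Y\xrightarrow{\sim} Y^{\Delta^1}\twoheadrightarrow Y\times Y$ produced by Proposition \ref{prop: goodpath}, and the two ``in particular'' statements are exactly its second (fibration) leg, namely the map $((d^0)^*,(d^1)^*)=(\mathrm{ev}_0,\mathrm{ev}_1)$.

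First I would treat bisimplicial sets. Since $\mathsf{bisSets}$ with the Rezk model structure is a simplicial model category (as recalled above), it is in particular a lax simplicial model category, so Proposition \ref{prop: goodpath} applies and $(-)^{\Delta^1}$ is a functorial good path object on fibrant objects, i.e.\ on complete Segal spaces. It then remains to identify the simplicial cotensor $X^{\Delta^1}$ with the internal hom $X^{\Delta[0,1]}$. This is a one-line Yoneda computation: for any bisimplicial set $A$,
\[
\mathrm{Hom}_{\mathsf{bisSets}}(A, X^{\Delta[0,1]}) \cong \mathrm{Hom}_{\mathsf{bisSets}}(A\times\Delta[0,1], X) \cong \mathrm{Hom}_{\mathsf{sSet}}(\Delta^1, \mathrm{Map}_{\mathsf{bisSets}}(A,X)),
\]
using the internal-hom adjunction together with $\mathrm{Map}_{\mathsf{bisSets}}(A,X)_n = \mathrm{Hom}_{\mathsf{bisSets}}(A\times\Delta[0,n], X)$; the right-hand side is precisely the defining property of the cotensor $X^{\Delta^1}$. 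Hence $X^{\Delta[0,1]}\cong X^{\Delta^1}$, and under this identification the endpoint evaluation $(\mathrm{ev}_0,\mathrm{ev}_1)$ coincides with the fibration $((d^0)^*,(d^1)^*)$ supplied by Proposition \ref{prop: goodpath}.

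Next I would treat relative categories. By Theorem \ref{thm: lax simplicial tensoring and cotensoring}, $\mathsf{RelCat}$ is a lax simplicial model category whose cotensor is $\mathsf{C}^{L(X)}$, so Proposition \ref{prop: goodpath} again yields a functorial good path object $(-)^{\Delta^1}$ on fibrant objects. Here the identification is immediate from the Remark following Theorem \ref{thm: lax simplicial tensoring and cotensoring}, namely $L(\Delta^1)=\xi\mathsf{1}^{\simeq}$, whence $\mathsf{C}^{\Delta^1}=\mathsf{C}^{L(\Delta^1)}=\mathsf{C}^{\xi\mathsf{1}^{\simeq}}$. As before, the fibration leg of the factorization is exactly $(\mathrm{ev}_0,\mathrm{ev}_1)\colon \mathsf{C}^{\xi\mathsf{1}^{\simeq}}\to \mathsf{C}\times\mathsf{C}$.

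The argument is essentially formal once Proposition \ref{prop: goodpath} and the two enrichment results are in hand; the only genuine content is the bookkeeping matching the abstract cotensor by $\Delta^1$ with the concrete endofunctors $(-)^{\Delta[0,1]}$ and $(-)^{\xi\mathsf{1}^{\simeq}}$. I expect no substantive obstacle, but the most error-prone point is the bisimplicial identification: one must be careful that the simplicial enrichment of $\mathsf{bisSets}$ is built from the row $\Delta[0,n]$ rather than the column $\Delta[n,0]$, so that cotensoring by $\Delta^1$ indeed lands on $\Delta[0,1]$ and not on the transposed simplex.
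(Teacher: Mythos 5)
Your proposal is correct and follows exactly the route the paper intends: the corollary is deduced from Proposition \ref{prop: goodpath} applied to the two (lax) simplicial model structures, after identifying the cotensor by $\Delta^1$ with $(-)^{\Delta[0,1]}$ in $\mathsf{bisSet}$ (via the Yoneda computation you give, using that the enrichment is built from the row $\Delta[0,n]$) and with $(-)^{L(\Delta^1)}=(-)^{\xi\mathsf{1}^{\simeq}}$ in $\mathsf{RelCat}$ (via Theorem \ref{thm: lax simplicial tensoring and cotensoring} and the remark following it). The paper leaves these identifications implicit, so your write-up simply makes the intended argument explicit, including the matching of $(\mathrm{ev}_0,\mathrm{ev}_1)$ with the fibration leg $((d^0)^*,(d^1)^*)$.
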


\begin{remark}\label{rem: quasicatpath}
In the Joyal model category on simplicial sets, there is an explicit path object whose construction is similar to $(-)^{\xi\mathsf{1}^{\simeq}}$ in $\mathsf{RelCat}$. Given a quasi-category $\mathsf{C}$, the path quasi-category $\mathrm{Path}(\mathsf{C})$ is the full sub-quasi-category $\mathrm{Fun}^{iso}(\Delta^1,\mathsf{C})\subset \mathrm{Fun}(\Delta^1,\mathsf{C})$ spanned by the objects corresponding to functors $\Delta^1\rightarrow \mathsf{C}$ which represent an equivalence in $\mathsf{C}$. The restriction maps $(r_0,r_1)$ along $\{0\}\hookrightarrow \Delta^1\hookleftarrow \{1\}$ fit in a factorization of the diagonal
\[
\mathsf{C}\stackrel{\sim}{\hookrightarrow} \textsf{Path}(\mathsf{C})\twoheadrightarrow\mathsf{C}\times\mathsf{C}
\]
where the second map is an isofibration of $\infty$-categories (hence a fibration in the Joyal model structure).
\end{remark}

\subsubsection{Models for the $\infty$-category of morphisms}
Notice that in Corrollary \ref{cor:evaluationfibration}, both path objects are constructed by constructing a model of the $\infty$-category of \textit{equivalences} in a general $\infty$-category $\mathsf{C}$. In this subsection, we construct similar models for the $\infty$-category of arrows in a general $\infty$-category $\mathsf{C}$, which \textit{need not} be equivalences. The end goal is to show a version of Corollary \ref{cor:evaluationfibration} for these models. 

\begin{remark}
Notice that the difference between $\mathrm{Fun}^{iso}(\Delta^1,\mathsf{C})$  and $\mathrm{Fun}(\Delta^1,\mathsf{C})$ in Remark \ref{rem: quasicatpath} (where $\mathsf{C}$ is a quasi-category) parallels the difference between functors $\mathsf{1}^{\simeq}\rightarrow\mathsf{C}$ and $\mathsf{1}\rightarrow\mathsf{C}$ (where $\mathsf{C}$ is a relative category), and also parallels the difference between $\mathrm{Hom}_{\mathsf{bisSet}}(\Delta[0,1],\mathsf{C})$ and $\mathrm{Hom}_{\mathsf{bisSet}}(\Delta[1,0],\mathsf{C})$ (where $\mathsf{C}$ is a complete Segal space). In all of these situations, the first object models equivalences in $\mathsf{C}$ and the second one general arrows in $\mathsf{C}$. 
\end{remark}

Precisely, we would like the evaluation maps of the models of these $\infty$-categories of morphisms to be fibrations in the respective model structures of bisimplicial sets and relative categories. It turns out that the arguments provided above to get Corollary \ref{cor:evaluationfibration} work as well for $(-)^{\Delta[1,0]}$ and $(-)^{\mathsf{1}}$ via a slight modification of the left Quillen functor $L$ which induces the tensoring and the cotensoring in Theorem \ref{thm: lax simplicial tensoring and cotensoring}.

\medskip

We consider the projection $p_1:\Delta\times\Delta\rightarrow\Delta$ on the first factor induces a functor $p_1^*=(-)\square\Delta^0$, which forms another Quillen equivalence between the model categories of quasi-categories and bisimplicial sets, see \cite{JoaylTierney07} for more details. This time, we have $(K_{\xi}\circ p_1^*)=K_{\xi}\Delta[1,0] = \xi\mathsf{1}$. The functor $K_{\xi}\circ p_1^*$ shares with $K_{\xi}\circ p_2^*$ the crucial properties we need:
\begin{itemize}
\item[$\bullet$] it is a left Quillen functor, fitting in a Quillen equivalence between the Joyal model structure on simplicial sets and the Barwick--Kan model structure of relative categories;

\item[$\bullet$] it is lax symmetric monoidal by exactly the same proof as for Lemma \ref{lemma:laxmonL}: the sole modification is that one replaces the sentence "the maximal functor $(-)^{\simeq}: \mathsf{Cat} \longrightarrow \mathsf{RelCat}$ preserves cartesian products" by "the minimal functor $\mathsf{Cat} \longrightarrow \mathsf{RelCat}$, given by sending a category to the relative category with no weak equivalences, preserves cartesian products".
\end{itemize}

From this, the arguments above apply here as well: the functor $(-)\times (K_{\xi}\circ p_1^*)(-)$ is a left Quillen bifunctor satisfying the appropriate adjunction relation with $(-)^{(K_{\xi}\circ p_1^*)(-)}$. This follows from the adjunction relation between tensor product and internal hom in the closed symmetric monoidal category $\mathsf{RelCat}$, and the pushout-product and pullback-corner properties are satisfied by construction as well.

\begin{proposition}\label{prop: evaluationfibrations}\leavevmode
\begin{itemize}
\item[(1)] For any fibrant relative category $\mathsf{C}$, the map  $(\mathrm{ev}_0,\mathrm{ev}_1):\mathsf{C}^{\xi\mathsf{1}}\rightarrow \mathsf{C}\times \mathsf{C}$ is a fibration in $\mathsf{RelCat}$.

\item[(2)] For any complete Segal space $X$, the map  $(\mathrm{ev}_0,\mathrm{ev}_1):X^{\Delta[1,0]}\rightarrow X\times X$ is a fibration in $\mathsf{bisSet}$.
\end{itemize}
\end{proposition}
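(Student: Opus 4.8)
The plan is to run the argument that produced the lax simplicial model structure of \cref{thm: lax simplicial tensoring and cotensoring} with the functor $K_\xi \circ p_1^*$ in place of $L = K_\xi \circ p_2^*$, and then to extract only the \emph{fibration half} of the path-object construction of \cref{prop: goodpath}. By the variant of \cref{lemma:laxmonL} indicated above (replacing the maximal functor $(-)^\simeq$ by the minimal functor $\mathsf{Cat} \to \mathsf{RelCat}$), the functor $K_\xi \circ p_1^*$ is a lax symmetric monoidal left Quillen functor from the Joyal model structure on $\mathsf{sSet}$ to the Barwick--Kan model structure on $\mathsf{RelCat}$. Hence the external tensoring $(-) \times (K_\xi \circ p_1^*)(-)$ is a left Quillen bifunctor whose adjoint cotensoring $(-)^{(K_\xi \circ p_1^*)(-)}$ satisfies the pullback-corner axiom, this being inherited from the pullback-corner axiom for the internal hom of the closed symmetric monoidal model category $\mathsf{RelCat}$ together with the fact that $K_\xi \circ p_1^*$ sends cofibrations to cofibrations.

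To prove $(1)$, I would apply this pullback-corner property to the cofibration $\partial\Delta^1 \hookrightarrow \Delta^1$ (a monomorphism, hence a cofibration in the Joyal model structure) and to the fibration $\mathsf{C} \to \ast$ attached to a fibrant relative category $\mathsf{C}$. Since $K_\xi \circ p_1^*$ preserves colimits, one computes $(K_\xi \circ p_1^*)(\Delta^1) = \xi\mathsf{1}$, that $(K_\xi \circ p_1^*)(\Delta^0)$ is the terminal relative category, and that $\partial\Delta^1 = \Delta^0 \sqcup \Delta^0$, so that $(K_\xi \circ p_1^*)(\partial\Delta^1)$ is a coproduct of two points and $\mathsf{C}^{(K_\xi \circ p_1^*)(\partial\Delta^1)} \cong \mathsf{C} \times \mathsf{C}$, the two projections being $\mathrm{ev}_0$ and $\mathrm{ev}_1$. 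The pullback-corner map then reads precisely $(\mathrm{ev}_0,\mathrm{ev}_1) \colon \mathsf{C}^{\xi\mathsf{1}} \to \mathsf{C} \times \mathsf{C}$, and is a fibration. Part $(2)$ is identical after replacing $\mathsf{RelCat}$ by $\mathsf{bisSet}$, the functor $K_\xi \circ p_1^*$ by $p_1^* = (-)\square\Delta^0 \colon \mathsf{sSet} \to \mathsf{bisSet}$, which is left Quillen for the Joyal and Rezk structures, and $\xi\mathsf{1}$ by $\Delta[1,0] = p_1^*(\Delta^1)$; here the cotensoring is the internal hom of $\mathsf{bisSet}$ and the pullback-corner axiom is again that of a closed symmetric monoidal model category, with $\partial\Delta^1 \hookrightarrow \Delta^1$ sent to a monomorphism since Rezk cofibrations are monomorphisms.

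The one genuine subtlety — and the reason this does \emph{not} follow formally from \cref{cor:evaluationfibration} — is that $(-)^{\xi\mathsf{1}}$ is not a good path object: the degeneracy-induced section $\mathsf{C} \to \mathsf{C}^{\xi\mathsf{1}}$ is not a weak equivalence, because $\xi\mathsf{1}$ models the walking arrow rather than the walking equivalence. I therefore cannot invoke the full conclusion of \cref{prop: goodpath}, which factors the diagonal. The crucial observation that makes the argument go through is that the fibrancy of $(\mathrm{ev}_0,\mathrm{ev}_1)$ depends only on $\partial\Delta^1 \hookrightarrow \Delta^1$ being a cofibration, and not on the cofaces $d^0,d^1 \colon \Delta^0 \to \Delta^1$ being \emph{acyclic} cofibrations (acyclicity is exactly what would be needed to make the section a weak equivalence, and is what fails here). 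Isolating this half of the pullback-corner argument is the main point to get right; the remaining verifications — that $K_\xi \circ p_1^*$ is left Quillen and lax monoidal, and that the evaluation maps coincide with the two cofaces — are routine and parallel the $\xi\mathsf{1}^{\simeq}$ case verbatim.
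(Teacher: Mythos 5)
Your proposal is correct and follows essentially the same route as the paper: the paper likewise introduces the lax symmetric monoidal left Quillen functor $K_\xi \circ p_1^*$ (with the same maximal-to-minimal functor modification of \cref{lemma:laxmonL}), and then deduces the fibration property by cotensoring the cofibration $\partial\Delta^1 \hookrightarrow \Delta^1$ against a fibrant object via the pullback-corner axiom, exactly the ``fibration half'' of \cref{prop: goodpath} that you isolate. Your remark that $(-)^{\xi\mathsf{1}}$ fails to be a good path object, so that \cref{cor:evaluationfibration} cannot be invoked directly, is precisely the point the paper's construction is designed to circumvent.
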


\begin{proof}
    In both cases, it follows from the the following argument, given in the proof of Proposition \ref{prop: goodpath}. The morphism $((d^0)^*,(d^1)^*): A^{\Delta^1}\rightarrow A\times A$ is the cotensoring of $\mathrm{id}$ with the cofibration $\partial\Delta^1\rightarrow \Delta^1$, so it forms a fibration still  by the pullback-corner axiom.
\end{proof}

\renewcommand{\thesection}{B}

\section{Appendix. Fibrations of quasi-categories}\label{Appendix B: fibrations of quasi-categories}

In this appendix, we give a characterization of fibrations of quasi-categories in the Joyal model structure which will be useful in the proof of Theorem \ref{thm: key theorem}. See Subsection \ref{S: the gluing proof} for more details.

\begin{notation}
If $F$ is a functor  between quasi-categories, we denote by $\ho{F}$ the corresponding functor between homotopy categories, which we implictly view again as quasi-categories via the nerve functor.
\end{notation}

\begin{lemma}\label{pullbackff}
Let $\mathsf{C},\mathsf{D}$ be two quasi-categories. A functor $F:\mathsf{C}\rightarrow\mathsf{D}$ is fully faithful if and only if the square
\[
\xymatrix{
\mathsf{C}\ar[r]^-F\ar[d] & \mathsf{D}\ar[d] \\
\ho{\mathsf{C}}\ar[r]_-{\ho{F}} & \ho{\mathsf{D}}
}
\]
is a pullback of quasi-categories.
\end{lemma}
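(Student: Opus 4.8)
The plan is to reformulate the statement in terms of the canonical comparison functor
\[
G\colon \mathsf{C}\longrightarrow P \coloneqq \ho{\mathsf{C}}\times_{\ho{\mathsf{D}}}\mathsf{D},
\]
supplied by the universal property of the (homotopy) pullback from the maps $F$ and $\mathsf C\to\ho{\mathsf C}$: the square is a pullback exactly when $G$ is an equivalence of quasi-categories. Since a functor is an equivalence if and only if it is essentially surjective and fully faithful, I would verify these two conditions, reducing in each case to the behaviour of $F$ on mapping spaces and using that the mapping spaces of $\ho{\mathsf C}$ and $\ho{\mathsf D}$ are \emph{discrete}, given by $\pi_0$ of the mapping spaces of $\mathsf C$ and $\mathsf D$. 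To make the mapping-space computations legitimate I would fix a fibrant model of the pullback, using Proposition \ref{prop: evaluationfibrations} so that the evaluation maps are fibrations and the strict pullback already computes the homotopy pullback.

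First I would treat objects. The functors $\mathsf C\to\ho{\mathsf C}$ and $\mathsf D\to\ho{\mathsf D}$ are bijective on objects, and every isomorphism in $\ho{\mathsf D}$ lifts to an equivalence in $\mathsf D$. An object of $P$ consists of a pair $(c,d)$ together with an isomorphism $\ho{F}(c)\cong[d]$ in $\ho{\mathsf D}$; lifting this isomorphism to an equivalence $Fc\simeq d$ in $\mathsf D$ exhibits $(c,d)$ as equivalent to $G(c)=(c,Fc)$. Hence $G$ is essentially surjective, and the entire content of the statement is concentrated in full faithfulness.

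For full faithfulness I would compute, for $x,y\in\mathsf C$, the mapping space $\mathrm{Map}_P(Gx,Gy)$ as the homotopy pullback of
\[
\pi_0\mathrm{Map}_{\mathsf C}(x,y)\longrightarrow \pi_0\mathrm{Map}_{\mathsf D}(Fx,Fy)\longleftarrow \mathrm{Map}_{\mathsf D}(Fx,Fy),
\]
where the right-hand arrow is the projection onto connected components. The homotopy fibre of this projection over a point is the corresponding component of $\mathrm{Map}_{\mathsf D}(Fx,Fy)$, so $\mathrm{Map}_P(Gx,Gy)$ is identified with the components of $\mathrm{Map}_{\mathsf D}(Fx,Fy)$ in the image of $F$, reindexed by $\pi_0\mathrm{Map}_{\mathsf C}(x,y)$, and the comparison map $\mathrm{Map}_{\mathsf C}(x,y)\to\mathrm{Map}_P(Gx,Gy)$ is induced componentwise by $F$. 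If $F$ is fully faithful, then each $\mathrm{Map}_{\mathsf C}(x,y)\to\mathrm{Map}_{\mathsf D}(Fx,Fy)$ is an equivalence, hence induces a bijection on $\pi_0$ and equivalences on components, so $G$ is fully faithful and therefore an equivalence.

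The main obstacle is the reverse implication and the associated $\pi_0$-bookkeeping. The direct half ($F$ fully faithful $\Rightarrow$ pullback) is exactly the computation above. For the converse one must recover from the pullback condition not only the componentwise equivalences but also the bijection on $\pi_0$ of mapping spaces: a priori $G$ could be an equivalence while $F$ merely identifies $\mathrm{Map}_{\mathsf C}(x,y)$ with a \emph{union of components} of $\mathrm{Map}_{\mathsf D}(Fx,Fy)$, missing some. Controlling this—arguing that no components of $\mathrm{Map}_{\mathsf D}(Fx,Fy)$ are omitted, so that $F$ is genuinely full and not merely fully faithful onto its essential image—is the delicate point of the argument, and is where I would have to invoke the precise structure of the situation (the fibrant models and the explicit evaluation fibrations of Proposition \ref{prop: evaluationfibrations}) rather than formal nonsense alone.
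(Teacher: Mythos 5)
Your argument for the forward direction (fully faithful implies the square is a homotopy pullback) is essentially correct and complete: the comparison functor $G$ is always essentially surjective because isomorphisms in $\ho{\mathsf{D}}$ lift to equivalences in $\mathsf{D}$, and your identification of $\mathrm{Map}_P(Gx,Gy)$ as the homotopy pullback of the discrete $\pi_0$-sets against $\mathrm{Map}_{\mathsf{D}}(Fx,Fy)$ is the standard computation. (The fibrancy input you actually need here is not Proposition \ref{prop: evaluationfibrations} but the fact that the canonical map $\mathsf{D}\rightarrow\mathcal{N}(\ho{\mathsf{D}})$ is a fibration in the Joyal model structure, so that the strict pullback along it already computes the homotopy pullback.) Since the paper's own proof is just a citation to \cite[Remark 1.2.11.1]{Lurie09}, and since the only place the paper uses the lemma --- the proof of Proposition \ref{prop:fibrationff}, where $F$ is assumed fully faithful and one concludes that the square is a pullback --- invokes precisely this forward direction, the half you prove covers everything the paper needs.

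However, as a proof of the stated biconditional your proposal has a genuine gap, and it is one that cannot be closed: the converse direction, which you defer, is false as stated. Your own worry pinpoints the failure mode exactly. The pullback condition says only that $F$ carries each component of $\mathrm{Map}_{\mathsf{C}}(x,y)$ by an equivalence onto the corresponding component of $\mathrm{Map}_{\mathsf{D}}(Fx,Fy)$; it forces neither surjectivity nor injectivity of $\pi_0\mathrm{Map}_{\mathsf{C}}(x,y)\rightarrow\pi_0\mathrm{Map}_{\mathsf{D}}(Fx,Fy)$. Concretely, take $\mathsf{C}=\Delta^0$, take $\mathsf{D}=\mathcal{N}(\mathrm{B}\mathbb{Z})$ to be the nerve of the one-object groupoid with automorphism group $\mathbb{Z}$, and let $F$ be the inclusion of the object. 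Here $\mathsf{D}\rightarrow\mathcal{N}(\ho{\mathsf{D}})$ is an isomorphism, so the square is a pullback both strictly and in the homotopy sense, yet $F$ is not fully faithful: $\mathrm{Map}_{\mathsf{D}}(d,d)\simeq\mathbb{Z}$ is discrete with infinitely many components and $F$ hits only the identity component. More generally, pulling back any non-full subcategory of $\ho{\mathsf{D}}$ produces such an example; this is exactly Lurie's construction of (non-full) subcategories of $\infty$-categories, which is what the cited remark is about. So no appeal to fibrant models or evaluation fibrations can rescue the converse; the statement can only be repaired by adding the hypothesis that $\ho{F}$ is full (equivalently, fully faithful), and with that hypothesis your componentwise analysis upgrades immediately to full faithfulness of $F$.
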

\begin{proof}
See \cite[Remark 1.2.11.1]{Lurie09}.
\end{proof}

\begin{lemma}\label{lemma:nerveofhF}
Let $F:\mathsf{C}\rightarrow\mathsf{D}$ be a functor between two quasi-categories. If $\ho{F}$ is an isofibration and is full, then $\ho{F}$ is a fibration of quasi-categories.
\end{lemma}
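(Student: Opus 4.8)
The statement to prove is \cref{lemma:nerveofhF}: if $F\colon \mathsf{C}\to \mathsf{D}$ is a functor of quasi-categories such that $\ho{F}$ is both an isofibration and full, then $\ho{F}$ is a fibration of quasi-categories (in the Joyal model structure). My first move is to recall that a functor between ordinary categories, viewed via the nerve as a map of quasi-categories, is a fibration in the Joyal model structure precisely when it is an isofibration of categories. This is a standard fact: fibrations between nerves of $1$-categories in the Joyal model structure are exactly the isofibrations, since the nerve functor is a right Quillen functor from the canonical model structure on $\mathsf{Cat}$ (with isofibrations as fibrations) to the Joyal model structure on $\mathsf{sSet}$, and it detects and preserves these fibrations. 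Thus I would like to reduce the claim to showing that $\ho{F}$ is an isofibration of categories.

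\textbf{Key steps.} First I would observe that we are \emph{given} that $\ho{F}\colon \ho{\mathsf{C}}\to \ho{\mathsf{D}}$ is an isofibration of homotopy categories. The hypothesis of fullness is a red herring for the fibration conclusion per se: what fullness buys us, together with isofibration, is irrelevant to whether the nerve of $\ho{F}$ is a Joyal fibration. Indeed, the correct statement is simply that the nerve of \emph{any} isofibration of categories is a fibration of quasi-categories. So the heart of the proof is the standard lemma: for a functor $G$ of $1$-categories, $\mathcal N(G)$ is a fibration in the Joyal model structure if and only if $G$ is an isofibration. I would cite this from Lurie \cite{Lurie09} (it follows from the characterization of isofibrations via the right lifting property against the inclusion $\{0\}\hookrightarrow \mathsf{J}$, where $\mathsf{J}$ is the nerve of the walking isomorphism, combined with the inner anodyne lifting that the nerve automatically satisfies) or from Joyal--Tierney. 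Concretely, $\mathcal N(G)$ must have the right lifting property against all inner horn inclusions $\Lambda^n_k\hookrightarrow \Delta^n$ for $0<k<n$, which holds automatically because nerves of categories are $2$-coskeletal and have unique inner horn fillers, and against $\{0\}\hookrightarrow \mathsf J$, which is exactly the isofibration condition. Hence the conclusion follows once we know $\ho{F}$ is an isofibration, which is a hypothesis.

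\textbf{The main obstacle.} The genuine subtlety here is not a difficult argument but correctly pinning down what role each hypothesis plays, and ensuring the cited characterization of Joyal fibrations between nerves is applied correctly; the fullness hypothesis is used in the surrounding argument (via \cref{pullbackff}, which expresses fully faithful functors as pullbacks) and must not be conflated with what is needed here. The cleanest route is to state and use the following: for a functor $G$ of $1$-categories, $\mathcal N(G)$ is a Joyal fibration iff $G$ is an isofibration. I would phrase the proof as: \emph{By hypothesis $\ho{F}$ is an isofibration of categories, and the nerve of an isofibration of categories is a fibration of quasi-categories in the Joyal model structure; since we implicitly view $\ho{F}$ as a map of quasi-categories through the nerve, this gives the claim.} The only care required is to make sure the convention in the \textsc{Notation} block—that $\ho{F}$ is viewed as a quasi-category map via the nerve—is invoked explicitly, so that ``$\ho{F}$ is a fibration of quasi-categories'' is unambiguously a statement about $\mathcal N(\ho{F})$.
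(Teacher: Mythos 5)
Your proof is correct, and it takes a genuinely different---and in fact stronger---route than the paper's. The paper also starts from Joyal's characterization (a map between quasi-categories is a fibration iff it is an inner fibration inducing an isofibration on homotopy categories), and the isofibration half is the hypothesis since $\ho{\ho{F}}\cong\ho{F}$; but for the inner-fibration half it transposes each lifting problem against $\Lambda^n_k\hookrightarrow\Delta^n$ across the adjunction $\ho{-}\dashv\mathcal{N}$ to a lifting problem of $1$-categories, which it then solves using that $\ho{\Lambda^n_k}$ and $\ho{\Delta^n}$ have the same objects together with fullness of $\ho{F}$. You instead invoke the standard fact that the nerve of an isofibration of $1$-categories is already a Joyal fibration: $\mathcal{N}$ is right Quillen from the canonical model structure on $\mathsf{Cat}$, or, concretely, inner horns in nerves of categories have unique fillers (so the nerve of \emph{any} functor is an inner fibration), and the remaining lifting property against $\{0\}\hookrightarrow J$, where $J$ is the nerve of the groupoid with two objects and a unique isomorphism between them, is exactly the isofibration hypothesis. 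Your diagnosis that fullness is a red herring is also correct: for inner horns the comparison $\ho{\Lambda^n_k}\longrightarrow\ho{\Delta^n}$ is actually an isomorphism of categories (for $n\geq 3$ the horn contains the whole $1$-skeleton and enough $2$-simplices to force all the relations of $[n]$; for $n=2$, $k=1$, both sides are the poset $[2]$), so even the paper's transposed lifting problem is solvable with no hypothesis on $\ho{F}$ beyond functoriality. What each approach buys: yours is more economical and proves the lemma with the fullness hypothesis deleted, correctly isolating fullness where it is genuinely needed (in \cref{prop:fibrationff}, via \cref{pullbackff}); the paper's argument is self-contained---it imports neither the folk model structure nor uniqueness of inner horn fillers---and its extra hypothesis costs nothing in practice, since the lemma is only ever applied to $\ho{F}$ for $F$ fully faithful, where fullness of $\ho{F}$ is automatic.
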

\begin{proof}
A fibration in the Joyal model structure with a quasi-category as the codomain is characterized as an inner fibration whose induced 1-functor between homotopy categories is an isofibration. Since the homotopy category of the nerve of a 1-category is isomorphic to the category itself, the functor $\ho{\ho{F}}\cong \ho F$ is an isofibration by assumption, so we just have to prove that $\ho{F}$ is an inner fibration. For this, let us consider a commutative square of the form
\[
\begin{tikzcd}
\Lambda^n_k 
  \arrow[d, hook] 
  \arrow[r] 
& \ho{\mathsf{C}}
  \arrow[d, "hF"] \\
\Delta^n 
  \arrow[r] 
  \arrow[ur, dashed] 
& \ho{\mathsf{D}}
\end{tikzcd}
\]
We want to show the existence of a lift represented by the dotted arrow. Using the adjunction $\ho{-}: \mathsf{Cat} \rightleftarrows \mathsf{sSet}: \mathcal{N}$ between the nerve functor and the homotopy category functor, it is equivalent to proving the existence of a lift in the commutative square of categories
\[
\begin{tikzcd}
\ho{\Lambda^n_k}
  \arrow[d, hook] 
  \arrow[r] 
&\ho{\mathsf{C}} 
  \arrow[d, "hF"] \\
\ho{\Delta^n} 
  \arrow[r] 
  \arrow[ur, dashed] 
&\ho{\mathsf{D}}
\end{tikzcd}
\]
The horizontal functors $\ho{\Delta^n} \longrightarrow \ho{\mathsf{D}}$ and $\ho{\Lambda^n_k} \longrightarrow \ho{\mathsf{C}}$ are diagrams and the left vertical map is just an inclusion of categories, so the existence of the dotted lift amounts to determine whether an $\ho{\Delta^n}$-shaped diagram in $\ho{\mathsf{D}}$ whose $\ho{\Lambda^n_k}$-shaped subdiagram is in the image of $\ho{F}$ is itself in the image of $\ho{F}$. This holds true because the categories $\ho{\Lambda^n_k}$ and $\ho{\Delta^n}$ have the same objects and the functor $\ho{F}$ is full.
\end{proof}

\begin{proposition}\label{prop:fibrationff}
Let $F:\mathsf{C}\rightarrow\mathsf{D}$ be a functor between quasi-categories satisfying the following properties:
\begin{itemize}
\item[(i)] $\ho{F}$ is an isofibration;

\item[(ii)] $F$ is fully faithful.

\end{itemize}
Then the functor $F$ is a fibration of quasi-categories.
\end{proposition}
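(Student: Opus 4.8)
The plan is to combine the two preceding lemmas mechanically: Lemma \ref{pullbackff} identifies full faithfulness of $F$ with the square relating $F$ and $\ho{F}$ being a pullback, while Lemma \ref{lemma:nerveofhF} promotes $\ho{F}$ to a genuine fibration of quasi-categories. The guiding principle is that fibrations in the Joyal model structure are stable under base change, so once $\ho{F}$ is known to be a fibration, the pullback $F$ will inherit this property.

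First I would check that hypothesis (ii) forces $\ho{F}$ to be full. Indeed, full faithfulness of $F$ means that for every pair of objects $x,y$ of $\mathsf{C}$ the induced map on mapping spaces $\mathrm{Map}_{\mathsf{C}}(x,y)\to\mathrm{Map}_{\mathsf{D}}(Fx,Fy)$ is an equivalence; applying $\pi_0$ yields a bijection $\ho{\mathsf{C}}(x,y)\xrightarrow{\sim}\ho{\mathsf{D}}(Fx,Fy)$, which is in particular surjective, so $\ho{F}$ is full. Together with hypothesis (i), that $\ho{F}$ is an isofibration, Lemma \ref{lemma:nerveofhF} applies and shows that $\ho{F}$ is a fibration of quasi-categories.

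Next I would invoke Lemma \ref{pullbackff}: since $F$ is fully faithful, the commutative square
\[
\begin{tikzcd}
\mathsf{C}\arrow[r,"F"]\arrow[d] & \mathsf{D}\arrow[d] \\
\ho{\mathsf{C}}\arrow[r,"\ho{F}"] & \ho{\mathsf{D}}
\end{tikzcd}
\]
is a pullback of quasi-categories, so $F$ is precisely the pullback of $\ho{F}$ along the canonical functor $\mathsf{D}\to\ho{\mathsf{D}}$. Because fibrations in any model category are closed under base change, the pullback of the fibration $\ho{F}$ is again a fibration, and hence $F$ is a fibration of quasi-categories, as desired.

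The one point requiring care is that the stability-under-pullback step needs the square of Lemma \ref{pullbackff} to be a \emph{strict} pullback of simplicial sets, not merely a homotopy pullback, so that the elementary lifting-property argument for base change applies directly; this is exactly what the cited \cite[Remark 1.2.11.1]{Lurie09} provides. Everything else is a formal concatenation of the two lemmas, so I do not expect any genuine obstacle beyond verifying that this strictness is in force.
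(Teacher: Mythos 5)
Your proposal is correct relative to the paper and follows essentially the same route as the paper's own proof: full faithfulness of $F$ forces $\ho{F}$ to be full (via $\pi_0$ of mapping spaces), so Lemma \ref{lemma:nerveofhF} makes $\ho{F}$ a fibration of quasi-categories, and Lemma \ref{pullbackff} exhibits $F$ as the pullback of $\ho{F}$, whence $F$ is a fibration by stability of fibrations under base change. The extra details you supply --- the explicit $\pi_0$ argument for fullness of $\ho{F}$, and the caveat that Lemma \ref{pullbackff} must be read as producing a \emph{strict} pullback of simplicial sets for the base-change step to apply --- are precisely the points the paper's two-line proof leaves implicit, and your reliance on the citation of \cite[Remark 1.2.11.1]{Lurie09} for that strictness is the same reliance the paper itself makes.
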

\begin{proof}
Let $F$ be a functor satisfying these assumptions. By Lemma \ref{lemma:nerveofhF}, assumptions (i) and (ii) imply that $\ho{F}$ is a fibration of quasi-categories. By Lemma \ref{pullbackff}, $F$ is the pullback of $\ho{F}$, and fibrations are stable under pullbacks, so $F$ is also a fibration. 
\end{proof}

\bibliographystyle{alpha}
\bibliography{bibax}

@preamble{
  "\def\cprime{$'$} "}

@article{ChuHaugseng20,
 author = {Chu, Hongyi and Haugseng, Rune},
 title = {Enriched {{\(\infty \)}}-operads},
 fjournal = {Advances in Mathematics},
 journal = {Adv. Math.},
 issn = {0001-8708},
 volume = {361},
 pages = {85},
 note = {Id/No 106913},
 year = {2020},
 language = {English},
 doi = {10.1016/j.aim.2019.106913},
 keywords = {18N70},
 url = {hdl.handle.net/21.11116/0000-0005-F58C-9},
 zbMATH = {7152648},
 Zbl = {1475.18032}
}

@misc{arakawa2026,
      title={On the equivalence of {B}rantner's and {C}hu--{H}augseng's approaches to enriched $\infty$-operads, \href{https://arxiv.org/abs/2603.23019}{arxiv.2603.23019}}, 
      author={Kensuke Arakawa},
      year={2026},
      eprint={2603.23019},
      archivePrefix={arXiv},
      primaryClass={math.AT},
      url={https://arxiv.org/abs/2603.23019}, 
}

@misc{Moreno22,
 author = {Flynn-Connolly, Ois{\'{\i}}n and Moreno-Fern{\'a}ndez, Jos{\'e} M. and Wierstra, Felix},
 title = {A recognition principle for iterated suspensions as coalgebras over the little cubes operad},
 year = {2022},
 howpublished = {Preprint, {arXiv}:2210.00839 [math.{AT}] (2022)},
 keywords = {18M75,55P40,55P48},
 url = {https://arxiv.org/abs/2210.00839},
 arXiv = {arXiv:2210.00839}
}

@article{Kelly80,
 author = {Kelly, G. M.},
 title = {A unified treatment of transfinite constructions for free algebras, free monoids, colimits, associated sheaves, and so on},
 fjournal = {Bulletin of the Australian Mathematical Society},
 journal = {Bull. Aust. Math. Soc.},
 issn = {0004-9727},
 volume = {22},
 pages = {1--83},
 year = {1980},
 language = {English},
 doi = {10.1017/S0004972700006353},
 keywords = {18C15,18A40,18F20,18A25,18D10},
 zbMATH = {3681973},
 Zbl = {0437.18004}
}

@article{BergerMoerdijk06,
 author = {Berger, Clemens and Moerdijk, Ieke},
 title = {The {Boardman}-{Vogt} resolution of operads in monoidal model categories},
 fjournal = {Topology},
 journal = {Topology},
 issn = {0040-9383},
 volume = {45},
 number = {5},
 pages = {807--849},
 year = {2006},
 language = {English},
 doi = {10.1016/j.top.2006.05.001},
 keywords = {18D50,18G55,55U35,55P35,55P48},
 zbMATH = {5054362},
 Zbl = {1105.18007}
}

@article{Hazewinkel03,
 author = {Hazewinkel, Michiel},
 title = {Cofree coalgebras and multivariable recursiveness.},
 fjournal = {Journal of Pure and Applied Algebra},
 journal = {J. Pure Appl. Algebra},
 issn = {0022-4049},
 volume = {183},
 number = {1-3},
 pages = {61--103},
 year = {2003},
 language = {English},
 doi = {10.1016/S0022-4049(03)00013-6},
 keywords = {16W30},
 zbMATH = {2005422},
 Zbl = {1048.16022}
}

@misc{ramzi2024locallyrigidinftycategories,
      title={Locally rigid $\infty$-categories, \href{https://arxiv.org/abs/2410.21524}{arxiv.2410.21524}}, 
      author={Maxime Ramzi},
      year={2024},
      eprint={2410.21524},
      archivePrefix={arXiv},
      primaryClass={math.CT},
      url={https://arxiv.org/abs/2410.21524}, 
}

@article {bfn,
    AUTHOR = {Ben-Zvi, David and Francis, John and Nadler, David},
     TITLE = {Integral transforms and {D}rinfeld centers in derived
              algebraic geometry},
   JOURNAL = {J. Amer. Math. Soc.},
  FJOURNAL = {Journal of the American Mathematical Society},
    VOLUME = {23},
      YEAR = {2010},
    NUMBER = {4},
     PAGES = {909--966},
      ISSN = {0894-0347,1088-6834},
   MRCLASS = {14D23 (14F05 18D10 18E30)},
  MRNUMBER = {2669705},
MRREVIEWER = {Andrei\ D.\ Halanay},
       DOI = {10.1090/S0894-0347-10-00669-7},
       URL = {https://doi.org/10.1090/S0894-0347-10-00669-7},
}

@misc{brantnermathew,
  doi = {10.48550/ARXIV.1904.07352},
  
  url = {https://arxiv.org/abs/1904.07352},
  
  author = {Brantner, Lukas and Mathew, Akhil},
  
  keywords = {Algebraic Geometry (math.AG), Algebraic Topology (math.AT), FOS: Mathematics, FOS: Mathematics, 14D23, 14D15, 14B12, 13D10},
  
  title = {Deformation Theory and Partition {L}ie Algebras, \href{https://arxiv.org/abs/1904.07352}{arxiv.1904.07352}, \textit{to appear in Acta Mathematica}},
  
  publisher = {arXiv},
  
  year = {2019},
  
  copyright = {arXiv.org perpetual, non-exclusive license}
}

@article {GKR,
    AUTHOR = {Garner, Richard and K{\c{e}}dziorek, Magdalena and Riehl, Emily},
     TITLE = {Lifting accessible model structures},
   JOURNAL = {J. Topol.},
  FJOURNAL = {Journal of Topology},
    VOLUME = {13},
      YEAR = {2020},
    NUMBER = {1},
     PAGES = {59--76},
      ISSN = {1753-8416,1753-8424},
   MRCLASS = {18C35 (18N40 55U35)},
  MRNUMBER = {3999672},
MRREVIEWER = {Yann\ Palu},
       DOI = {10.1112/topo.12123},
       URL = {https://doi-org.ezp.sub.su.se/10.1112/topo.12123},
}

@misc{lucio2024higherlietheorypositive,
      title={Higher {L}ie theory in positive characteristic, \href{https://arxiv.org/abs/2306.07829}{arxiv.2306.07829}}, 
      author={{R}oca {i} {L}ucio, Victor},
      year={2024},
      eprint={2306.07829},
      archivePrefix={arXiv},
      primaryClass={math.AT},
      url={https://arxiv.org/abs/2306.07829}, 
}

@article{Mandell01,
title = {$\mathbb{E}_\infty$-algebras and $p$-adic homotopy theory},
journal = {Topology},
volume = {40},
number = {1},
pages = {43-94},
year = {2001},
issn = {0040-9383},
doi = {https://doi.org/10.1016/S0040-9383(99)00053-1},
url = {https://www.sciencedirect.com/science/article/pii/S0040938399000531},
author = {Michael A. Mandell},
abstract = {Let Fp denote the field with p elements and F̄p its algebraic closure. We show that the singular cochain functor with coefficients in F̄p induces a contravariant equivalence between the homotopy category of connected p-complete nilpotent spaces of finite p-type and a full subcategory of the homotopy category of E∞F̄p-algebras.}
}

@incollection{JoaylTierney07,
 author = {Joyal, Andr{\'e} and Tierney, Myles},
 title = {Quasi-categories vs {Segal} spaces},
 booktitle = {Categories in algebra, geometry and mathematical physics. Conference and workshop in honor of Ross Street's 60th birthday, Sydney and Canberra, Australia, July 11--16/July 18--21, 2005},
 isbn = {978-0-8218-3970-6},
 pages = {277--326},
 year = {2007},
 publisher = {Providence, RI: American Mathematical Society (AMS)},
 language = {English},
 keywords = {55U35,18G30,18G55},
 zbMATH = {5219541},
 Zbl = {1138.55016}
}

@article{Meier16,
 author = {Meier, Lennart},
 title = {Fibration categories are fibrant relative categories},
 fjournal = {Algebraic \& Geometric Topology},
 journal = {Algebr. Geom. Topol.},
 issn = {1472-2747},
 volume = {16},
 number = {6},
 pages = {3271--3300},
 year = {2016},
 language = {English},
 doi = {10.2140/agt.2016.16.3271},
 keywords = {18D99,55U10,55U35},
 zbMATH = {6666100},
 Zbl = {1353.18010}
}

@article{Rezk01,
 author = {Rezk, Charles},
 title = {A model for the homotopy theory of homotopy theory},
 fjournal = {Transactions of the American Mathematical Society},
 journal = {Trans. Am. Math. Soc.},
 issn = {0002-9947},
 volume = {353},
 number = {3},
 pages = {973--1007},
 year = {2001},
 language = {English},
 doi = {10.1090/S0002-9947-00-02653-2},
 keywords = {18G30,55U35},
 zbMATH = {1549521},
 Zbl = {0961.18008}
}

@article{BarwickKan1,
 author = {Barwick, Clark and Kan, Daniel M.},
 title = {Relative categories: another model for the homotopy theory of homotopy theories},
 fjournal = {Indagationes Mathematicae. New Series},
 journal = {Indag. Math., New Ser.},
 issn = {0019-3577},
 volume = {23},
 number = {1-2},
 pages = {42--68},
 year = {2012},
 language = {English},
 doi = {10.1016/j.indag.2011.10.002},
 keywords = {18G55},
 zbMATH = {6009286},
 Zbl = {1245.18006}
}

@article{Ktheorynonarch,
 author = {Kerz, Moritz and Saito, Shuji and Tamme, Georg},
 title = {{{\(K\)}}-theory of non-{Archimedean} rings. {I}},
 fjournal = {Documenta Mathematica},
 journal = {Doc. Math.},
 issn = {1431-0635},
 volume = {24},
 pages = {1365--1411},
 year = {2019},
 language = {English},
 doi = {10.25537/dm.2019v24.1365-1411},
 keywords = {19D25,14G22},
 zbMATH = {7094915},
 Zbl = {1442.19006}
}

@article{DrummondColeHirshLejay20,
 author = {Drummond-Cole, Gabriel C. and Hirsh, Joseph and Lejay, Damien},
 title = {Representations are adjoint to endomorphisms},
 fjournal = {Journal of Homotopy and Related Structures},
 journal = {J. Homotopy Relat. Struct.},
 issn = {2193-8407},
 volume = {15},
 number = {2},
 pages = {377--393},
 year = {2020},
 language = {English},
 doi = {10.1007/s40062-019-00252-1},
 keywords = {18D20,18A25,18A40,18C15,18M60},
 zbMATH = {7217905},
 Zbl = {1444.18012}
}

@misc{poinsetkoszul,
  doi = {},
  
  url = {},
  
  author = {Petersen, Dan and {R}oca {i} {L}ucio, Victor and Yalin, Sinan},
  
  keywords = {A},
  
  title = {Unifying {K}oszul dualities via point-set models,\href{https://arxiv.org/abs/2601.03101}{arXiv.2601.03101}},
  
  publisher = {arXiv},
  
  year = {2026},
  
  copyright = {arXiv.org perpetual, non-exclusive license}
}

@misc{rectification,
  doi = {},
  
  url = {},
  
  author = {Petersen, Dan and {R}oca {i} {L}ucio, Victor and Yalin, Sinan},
  
  keywords = {A},
  
  title = {Rectification of coalgebras, \textit{in preparation}},
  
  publisher = {arXiv},
  
  year = {2025},
  
  copyright = {arXiv.org perpetual, non-exclusive license}
}

@misc{premierpapier,
  doi = {},
  
  url = {},
  
  author = {Grignou, Brice Le and {R}oca {i} {L}ucio, Victor},
  
  keywords = {A},
  
  title = {Homotopical operadic calculus in positive characteristic, \href{https://arxiv.org/abs/2310.13095}{arXiv.2310.13095}},
  
  publisher = {arXiv},
  
  year = {2023},
  
  copyright = {arXiv.org perpetual, non-exclusive license}
}

@Book{Hovey,
 Author = {Hovey, Mark},
 Title = {Model categories},
 FSeries = {Mathematical Surveys and Monographs},
 Series = {Math. Surv. Monogr.},
 ISSN = {0076-5376},
 Volume = {63},
 ISBN = {0-8218-1359-5},
 Year = {1999},
 Publisher = {Providence, RI: American Mathematical Society},
 Language = {English},
 Keywords = {55-02,18G30,55U35},
 zbMATH = {1226952},
 Zbl = {0909.55001}
}

@Book{Fresse09,
 Author = {Fresse, Benoit},
 Title = {Modules over operads and functors},
 FSeries = {Lecture Notes in Mathematics},
 Series = {Lect. Notes Math.},
 ISSN = {0075-8434},
 Volume = {1967},
 ISBN = {978-3-540-89055-3; 978-3-540-89056-0},
 Year = {2009},
 Publisher = {Berlin: Springer},
 Language = {English},
 DOI = {10.1007/978-3-540-89056-0},
 Keywords = {18D50,18-02,55P48,18G55,18A25},
 zbMATH = {5489865},
 Zbl = {1178.18007}
}

@Book{Lurie09,
 Author = {Lurie,Jacob},
 Title = {Higher Topos Theory},
 Series = {Ann. of Math. Stud.},
 ISSN = {0075-8434},
 Volume = {170},
 Year = {2009},
 Publisher = {Princeton University Press}
}

@misc{pdalgebras,
  doi = {10.48550/ARXIV.2104.03870},
  
  url = {https://arxiv.org/abs/2104.03870},
  
  author = {Brantner, Lukas and Campos, Ricardo and Nuiten, Joost},
  
  keywords = {Algebraic Geometry (math.AG), Algebraic Topology (math.AT), FOS: Mathematics, FOS: Mathematics, 14D23, 14D15, 18M70, 18N70, 17B55, 13A35},
  
  title = {{PD} Operads and Explicit Partition {L}ie Algebras, \href{https://arxiv.org/abs/2104.03870}{arxiv.2104.03870}; \textit{	to appear in Memoirs of the AMS.}},
  
  publisher = {arXiv},
  
  year = {2021},
  
  copyright = {arXiv.org perpetual, non-exclusive license}
}

@misc{bachmann2024,
      title={$\mathbb{E}_{\infty}$-coalgebras and $p$-adic homotopy theory, \href{https://arxiv.org/abs/2402.15850}{arxiv.2402.15850}}, 
      author={Tom Bachmann and Robert Burklund},
      year={2024},
      eprint={2402.15850},
      archivePrefix={arXiv},
      primaryClass={math.AT},
      url={https://arxiv.org/abs/2402.15850}, 
}

@book{Lurie11Rational,
    AUTHOR = {Lurie, Jacob},
     TITLE = {Derived Algebraic Geometry XIII: Rational and $p$-adic homotopy theory, \href{https://www.math.ias.edu/~lurie/papers/DAG-XIII.pdf}{https://www.math.ias.edu/~lurie/papers/DAG-XIII.pdf}},
    SERIES = {},
      YEAR = {2011},
}

@Article{BHKKRS,
 Author = {Bayeh, Mazieh and Hess, Kathryn and Karpova, Varvara and Kedziorek, Magdalena and Riehl, Emily and Shipley, Brooke},
 Title = {Left-induced model structures and diagram categories},
 FJournal = {Contemporary Mathematics},
 Journal = {Contemp. Math.},
 Volume = {641},
 Pages = {49--81},
 Year = {2015},
 Language = {English},
}

@Article{HKRS,
 Author = {Hess, Kathryn and Kedziorek, Magdalena and Riehl, Emily and Shipley, Brooke},
 Title = {A necessary and sufficient condition for induced model structures},
 FJournal = {Journal of Topology},
 Journal = {J. of Topology},
 Volume = {10},
 Pages = {324--369},
 Year = {2017},
 Language = {English},
}

@article{peroux,
 author = {P{\'e}roux, Maximilien},
 title = {Coalgebras in the {Dwyer}-{Kan} localization of a model category},
 fjournal = {Proceedings of the American Mathematical Society},
 journal = {Proc. Am. Math. Soc.},
 issn = {0002-9939},
 volume = {150},
 number = {10},
 pages = {4173--4190},
 year = {2022},
 language = {English},
 doi = {10.1090/proc/15949},
 keywords = {16T15,18N40,18N70,55P42,55P43},
 zbMATH = {7593821},
 Zbl = {1518.16034}
}

@misc{ellipticI,
      title={Elliptic Cohomology {I}: Spectral Abelian Varieties, \href{https://www.math.ias.edu/~lurie/papers/Elliptic-I.pdf}{https://www.math.ias.edu/~lurie/papers/Elliptic-I.pdf}}, 
      author={Lurie,Jacob},
      year={2018},
      eprint={},
      archivePrefix={},
      primaryClass={}
}

@misc{HigherAlgebra,
      title={Higher Algebra, \href{https://www.math.ias.edu/~lurie/papers/HA.pdf}{https://www.math.ias.edu/~lurie/papers/HA.pdf}}, 
      author={Lurie,Jacob},
      year={2017},
      eprint={},
      archivePrefix={},
      primaryClass={}
}

@misc{heuts2024,
      title={Koszul duality and a conjecture of {F}rancis--{G}aitsgory, \href{https://arxiv.org/pdf/2408.06173}{arxiv.2408.06173}}, 
      author={Gijs Heuts},
      year={2024},
      eprint={2408.06173},
      archivePrefix={arXiv},
      primaryClass={math.AT},
      url={https://arxiv.org/abs/2408.06173}, 
}

@misc{ShiPhD,
  doi = {},
  
  url = {},
  
  author = {Shi, Yuqing },
  
  keywords = {A},
  
  title = {{U}nstable {p}eriodic {h}omotopy {t}heory, {T}hesis ({P}h.{D}.)--{U}trecht {U}niversity.},
  publisher = {arXiv},
note = {\href{https://dspace.library.uu.nl/bitstream/handle/1874/431446/shi%20-%20thesisonlineubu%20-%20650bf9a776efd.pdf?sequence=1&isAllowed=y}{Available online.}},
  year = {2023},
  copyright = {arXiv.org perpetual, non-exclusive license}
}

@article {ps,
    AUTHOR = {P{\'e}roux, Maximilien and Shipley, Brooke},
     TITLE = {Coalgebras in symmetric monoidal categories of spectra},
   JOURNAL = {Homology Homotopy Appl.},
  FJOURNAL = {Homology, Homotopy and Applications},
    VOLUME = {21},
      YEAR = {2019},
    NUMBER = {1},
     PAGES = {1--18},
      ISSN = {1532-0073,1532-0081},
   MRCLASS = {55P43 (16T15 18D10 55P42)},
  MRNUMBER = {3852287},
MRREVIEWER = {Birgit\ Richter},
       DOI = {10.4310/HHA.2019.v21.n1.a1},
       URL = {https://doi-org.ezp.sub.su.se/10.4310/HHA.2019.v21.n1.a1},
}

@article {sore,
    AUTHOR = {Sore, W. Hermann B.},
     TITLE = {On a {Q}uillen adjunction between the categories of
              differential graded and simplicial coalgebras},
   JOURNAL = {J. Homotopy Relat. Struct.},
  FJOURNAL = {Journal of Homotopy and Related Structures},
    VOLUME = {14},
      YEAR = {2019},
    NUMBER = {1},
     PAGES = {91--107},
      ISSN = {2193-8407,1512-2891},
   MRCLASS = {18G55 (16T15 18G30)},
  MRNUMBER = {3913972},
MRREVIEWER = {Mark\ W.\ Johnson},
       DOI = {10.1007/s40062-018-0210-x},
       URL = {https://doi-org.ezp.sub.su.se/10.1007/s40062-018-0210-x},
}

@article {sullivan,
	AUTHOR = {Sullivan, Dennis},
	TITLE = {Infinitesimal computations in topology},
	JOURNAL = {Inst. Hautes \'Etudes Sci. Publ. Math.},
	FJOURNAL = {Institut des Hautes \'Etudes Scientifiques. Publications
		Math\'ematiques},
	VOLUME = {47},
	YEAR = {1977},
	PAGES = {269--331},
	ISSN = {0073-8301},
	MRCLASS = {57D99 (55D99 58A10)},
	MRNUMBER = {0646078 (58 \#31119)},
	MRREVIEWER = {J. F. Adams},
}

@article {quillen69,
 	AUTHOR = {Quillen, Daniel},
 	TITLE = {Rational homotopy theory},
 	JOURNAL = {Ann. of Math. (2)},
 	FJOURNAL = {Annals of Mathematics. Second Series},
 	VOLUME = {90},
 	YEAR = {1969},
 	PAGES = {205--295},
 	ISSN = {0003-486X},
 	MRCLASS = {55.40},
 	MRNUMBER = {0258031},
 	MRREVIEWER = {J. F. Adams},
 	URL = {https://doi.org/10.2307/1970725},
 }

@article {fox,
    AUTHOR = {Fox, Thomas F.},
     TITLE = {The construction of cofree coalgebras},
   JOURNAL = {J. Pure Appl. Algebra},
  FJOURNAL = {Journal of Pure and Applied Algebra},
    VOLUME = {84},
      YEAR = {1993},
    NUMBER = {2},
     PAGES = {191--198},
      ISSN = {0022-4049,1873-1376},
   MRCLASS = {16W30},
  MRNUMBER = {1201051},
MRREVIEWER = {Evgenii\ E.\ Demidov},
       DOI = {10.1016/0022-4049(93)90038-U},
       URL = {https://doi-org.ezp.sub.su.se/10.1016/0022-4049(93)90038-U},
}

@article {haugseng-rectification,
    AUTHOR = {Haugseng, Rune},
     TITLE = {Rectification of enriched {$\infty$}-categories},
   JOURNAL = {Algebr. Geom. Topol.},
  FJOURNAL = {Algebraic \& Geometric Topology},
    VOLUME = {15},
      YEAR = {2015},
    NUMBER = {4},
     PAGES = {1931--1982},
      ISSN = {1472-2747,1472-2739},
   MRCLASS = {18G55 (18D10 18D20 55P48 55U35)},
  MRNUMBER = {3402334},
MRREVIEWER = {Philippe\ Gaucher},
       DOI = {10.2140/agt.2015.15.1931},
       URL = {https://doi-org.ezp.sub.su.se/10.2140/agt.2015.15.1931},
}

@Article{PavlovScholbach18,
 Author = {Pavlov, Dmitri and Scholbach, Jakob},
 Title = {Admissibility and rectification of colored symmetric operads},
 FJournal = {Journal of Topology},
 Journal = {J. of Topology},
 Volume = {11},
 Number = {3},
 Pages = {559--601},
 Year = {2018},
 Language = {English},
}

@misc{BrantnerPhD,
  doi = {},
  
  url = {},
  
  author = {Brantner, Lukas },
  
  keywords = {A},
  
  title = {The {L}ubin--{T}ate {T}heory of {S}pectral {L}ie
  {A}lgebras, {T}hesis ({P}h.{D}.)--{H}arvard {U}niversity. {E}dited Version
  available at
  \href{https://people.maths.ox.ac.uk/brantner/brantnerthesis.pdf}{https://people.maths.ox.ac.uk/brantner/brantnerthesis.pdf}},
  
  publisher = {arXiv},
  
  year = {2017},
  
  copyright = {arXiv.org perpetual, non-exclusive license}
}

@Article{HinichRectification,
 Author = {Hinich, Vladimir},
 Title = {Rectification of algebras and modules},
 FJournal = {Documenta Mathematica},
 Journal = {Doc. Math.},
 ISSN = {1431-0635},
 Volume = {20},
 Pages = {879--926},
 Year = {2015},
 Language = {English},
 Keywords = {18G55,18D05,18D50},
 zbMATH = {6572171},
 Zbl = {1339.18013}
}

@misc{anelcofree2014,
    title={Cofree coalgebras over operads and representative functions, \href{https://arxiv.org/abs/1409.4688}{arxiv.1409.4688}},
    author={Mathieu Anel},
    year={2014},
    eprint={1409.4688},
    archivePrefix={arXiv},  
    
}

@Article{BergerFresse,
 Author = {Berger, Clemens and Fresse, Benoit},
 Title = {Combinatorial operad actions on cochains},
 FJournal = {Mathematical Proceedings of the Cambridge Philosophical Society},
 Journal = {Math. Proc. Camb. Philos. Soc.},
 ISSN = {0305-0041},
 Volume = {137},
 Number = {1},
 Pages = {135--174},
 Year = {2004},
 Language = {English},
 DOI = {10.1017/S0305004103007138},
 Keywords = {55P48},
 zbMATH = {2103063},
 Zbl = {1056.55006}
}

@Book{LodayVallette,
 Author = {Loday, Jean-Louis and Vallette, Bruno},
 Title = {Algebraic operads},
 FSeries = {Grundlehren der Mathematischen Wissenschaften},
 Series = {Grundlehren Math. Wiss.},
 ISSN = {0072-7830},
 Volume = {346},
 ISBN = {978-3-642-30361-6; 978-3-642-30362-3},
 Year = {2012},
 Publisher = {Berlin: Springer},
 Language = {English},
 DOI = {10.1007/978-3-642-30362-3},
 Keywords = {18-02,18D50,18G50,55P48,57T30,16S37,16E99,18G55,55U35,17A32,17A30},
 URL = {hdl.handle.net/21.11116/0000-0004-1D0F-D},
 zbMATH = {6043075},
 Zbl = {1260.18001}
}

@misc{linearcoalgebras,
  doi = {10.48550/ARXIV.1803.01376},
  
  url = {https://arxiv.org/abs/1803.01376},
  
  author = {Grignou, Brice Le and Lejay, Damien},
  
  keywords = {Algebraic Topology (math.AT), Category Theory (math.CT), K-Theory and Homology (math.KT), FOS: Mathematics, FOS: Mathematics},
  
  title = {Homotopy Theory of linear coalgebras, \href{https://arxiv.org/abs/1803.01376}{arxiv.1803.01376}},
  
  publisher = {arXiv},
  
  year = {2018},
  
  copyright = {arXiv.org perpetual, non-exclusive license}
}

@article{absolutealgebras,
 author = {Roca i Lucio, Victor},
 title = {Absolute algebras, contramodules, and duality squares},
 fjournal = {Journal of Pure and Applied Algebra},
 journal = {J. Pure Appl. Algebra},
 issn = {0022-4049},
 volume = {229},
 number = {7},
 pages = {53},
 note = {Id/No 107998},
 year = {2025},
 language = {English},
 doi = {10.1016/j.jpaa.2025.107998},
 keywords = {18N40,18M70},
 zbMATH = {8055666}
}
\end{document}